\documentclass{amsart}

\usepackage{amssymb,amsfonts}
\usepackage[all,arc]{xy}
\usepackage{enumerate}
\usepackage{mathrsfs}
\usepackage{url}
\usepackage{mathtools}


\newtheorem{thm}{Theorem}[section]
\newtheorem{cor}[thm]{Corollary}
\newtheorem{prop}[thm]{Proposition}
\newtheorem{lem}[thm]{Lemma}

\theoremstyle{definition}
\newtheorem{defn}[thm]{Definition}

\newtheorem{fact}[thm]{Fact}
\newtheorem{obs}[thm]{Observation}

\theoremstyle{remark}
\newtheorem{rem}[thm]{Remark}

\global\long\def\tp{\operatorname{tp}}

\global\long\def\cdt{\operatorname{cdt}}

\global\long\def\inp{\operatorname{inp}}

\global\long\def\sct{\operatorname{sct}}

\global\long\def\qftp{\operatorname{qftp}}

\global\long\def\alg{\operatorname{alg}}

\global\long\def\pfc{\operatorname{pfc}}

\global\long\def\acl{\operatorname{acl}}

\global\long\def\SOP{\operatorname{SOP}}

\global\long\def\ACF{\operatorname{ACF}}

\global\long\def\NSOP{\operatorname{NSOP}}

\global\long\def\Sk{\operatorname{Sk}}

\global\long\def\TP{\operatorname{TP}}

\newcommand{\M}{\mathbb{M}}

\makeatletter
\let\c@equation\c@thm
\makeatother
\numberwithin{equation}{section}

\def\Ind{\setbox0=\hbox{$x$}\kern\wd0\hbox to 0pt{\hss$\mid$\hss} \lower.9\ht0\hbox to 0pt{\hss$\smile$\hss}\kern\wd0} 

\def\Notind{\setbox0=\hbox{$x$}\kern\wd0\hbox to 0pt{\mathchardef \nn=12854\hss$\nn$\kern1.4\wd0\hss}\hbox to 0pt{\hss$\mid$\hss}\lower.9\ht0 \hbox to 0pt{\hss$\smile$\hss}\kern\wd0} 

\def\ind{\mathop{\mathpalette\Ind{}}} 

\def\nind{\mathop{\mathpalette\Notind{}}} 

\usepackage{cite}

\title{On Model-Theoretic Tree Properties}

\author{Artem Chernikov and Nicholas Ramsey}\thanks{A.C. was partially supported by ValCoMo (ANR-13-BS01-0006), by EPSRC grant EP/K020692/1, by the Fondation Sciences Mathematiques de Paris
(FSMP) and by a public grant overseen by the French National Research Agency (ANR)
as part of the Investissements d'avenir program (reference: ANR-10-LABX-0098)}

\date{\today}

\begin{document}

\begin{abstract}
We study model theoretic tree properties ($\TP, \TP_1, \TP_2$) and their associated cardinal invariants ($\kappa_{\cdt}, \kappa_{\sct}, \kappa_{\inp}$, respectively). In particular, we obtain a quantitative refinement of Shelah's theorem ($\TP \Rightarrow \TP_1 \lor \TP_2$) for countable theories,  show that $\TP_1$ is always witnessed by a formula in a single variable (partially answering a question of Shelah) and that weak $k-\TP_1$ is equivalent to $\TP_1$ (answering a question of Kim and Kim). Besides, we give a characterization of $\NSOP_1$ via a version of independent amalgamation of types and apply this criterion to verify that some examples in the literature are indeed $\NSOP_1$.
\end{abstract}

\maketitle

\setcounter{tocdepth}{1}
\tableofcontents

\section{Introduction}

One of the central tasks of abstract model theory is to understand what kinds of complete first-order theories there are and how complicated they can be.  In practice, this is achieved by classifying theories according to the combinatorial configurations that do or do not appear among the definable sets in their models.  The most meaningful of these configurations, the so-called \emph{dividing lines}, have the property that their absence signals the existence of some positive structure, while their presence indicates some kind of complexity.  Dividing lines come in two flavors:  local properties, which describe the combinatorics of sets defined by instances of a single formula, and global properties, which describe the interaction of definable sets generally.  Stability, simplicity, NIP are examples of the former, while \(\omega\)-stability, supersimplicity, and strong dependence are examples of the latter (see e.g. \cite{Conant}).  

In this paper, we study some questions around Shelah's tree property $\TP$ and its relatives $\SOP_1$, $\TP_1$, $\TP_2$ and weak \(k\)-TP\(_{1}\), as well as their global analogues detected by the cardinal invariants \(\kappa_{\text{cdt}}(T)\), \(\kappa_{\text{inp}}(T)\), and \(\kappa_{\text{sct}}(T)\).  Our point of departure is the third chapter of Shelah's \emph{Classification Theory}.  There, Shelah investigates the global combinatorics of stable theories in terms of a cardinal invariant \(\kappa(T)\) quantifying the complexity of forking in models of \(T\).  In the final section of this chapter, he introduces variations on \(\kappa(T)\) with the invariants \(\kappa_{\text{cdt}}(T)\), \(\kappa_{\text{sct}}(T)\), and \(\kappa_{\text{inp}}(T)\) and proves several results about how they relate.  In contemporary language, these invariants bound the size of approximations to the tree property, the tree property of first kind, and the tree property of the second kind consistent with \(T\), respectively.  Later as the theory developed, a property of stable theories that forking satisfies \emph{local character} was isolated and theories satisfying this condition, the \emph{simple theories}, were intensively studied \cite{shelah1980simple, wagner2000simple, casanovas2011simple}.  These theories are exactly the theories without the tree property, which is to say those theories with \(\kappa_{\text{cdt}}(T)\) bounded.  Nonetheless, until recently, the aforementioned invariants have received very little attention and many basic questions remain unaddressed.  

Here, we focus on two such questions.  Shelah proved that a theory has the tree property if and only if it has the tree property of the first kind or the tree property of the second kind \cite{ShelahCT}.  In terms of the invariants, this amounts to the assertion that \(\kappa_{\text{cdt}}(T) = \infty\) if and only if \(\kappa_{\text{inp}}(T) + \kappa_{\text{sct}}(T) = \infty\).  It is natural to ask if this relationship persists when \(\kappa_{\text{cdt}}(T)\) is bounded --- in other words, if the equality \(\kappa_{\text{cdt}}(T) = \kappa_{\text{inp}}(T) + \kappa_{\text{sct}}(T)\) holds in general. Shelah also proved that \(\kappa_{\text{cdt}}(T) = \kappa\) is always witnessed by a sequence of formulas in a single free variable when \(\kappa\) is an infinite cardinal or \(\infty\).  Recently, the first named author proved an analogous result for \(\kappa_{\inp}(T)\) \cite{MR3129735}.  We consider here whether or not the computation of \(\kappa_{\text{sct}}(T)\) similarly reduces to a single free variable.  These questions were both raised by Shelah (Question 7.14 in \cite{ShelahCT}).   

We do not give a complete answer to any of them, but for each of these questions there are two model-theoretically natural special cases to consider:  first, the case of countable theories and, secondly, the case where one or more of the invariants in question are unbounded (which reduces to a question about configurations in a single formula).   In Section \ref{Invariants}, we show that \(\kappa_{\text{cdt}}(T) = \kappa_{\text{inp}}(T) + \kappa_{\text{sct}}(T)\) for countable \(T\).  In Section \ref{TP1}, we show that if \(\kappa_{\text{sct}}(T) = \infty\) then this will be witnessed by a formula in a single free variable by showing that $\TP_1$ is always witnessed by a formula in one free variable.  The main ingredient in our argument is the notion of a \emph{strongly indiscernible tree}, which is more easily manipulated than the \(s\)-indiscernible trees used in other studies of the tree property of the first kind.  

At the present state of the theory, the class of non-simple theories without the strict order property is poorly understood even at the level of syntax.  In their study of the order \(\unlhd^{*}\), Dzamonja and Shelah introduced a weakening of $\TP_1$ called $\SOP_1$ \cite{DjSh:692}.  Subsequently, Kim and Kim introduced two infinite families of properties called \(k\)-TP\(_{1}\) and \emph{weak} \(k\)-TP$_1$ for \(k \geq 2\) and showed 
$$
\TP_1 \iff k\text{-TP}_{1} \iff \text{weak }2\text{-TP}_{1} \implies \text{weak }3\text{-TP}_{1} \implies \ldots \implies \SOP_1
$$
It was left open whether the properties weak \(k\)-TP\(_{1}\) are inequivalent for distinct \(k\) and whether or not weak \(k\)-TP\(_{1}\) is equivalent to $\TP_1$ \cite{KimKimNTP1}.  In our work on proving that $\TP_1$ is witnessed by a formula in one free variable, we obtained unexpectedly a simple and direct proof that the weak \(k\)-TP$_1$ hierarchy collapses and that they are all equivalent to $\TP_1$.  

In the final two sections of the paper, we study theories without the property $\SOP_1$.  We show that independent amalgamation fails in a strong way in theories with $\SOP_1$ and that they are in fact characterized by this feature.  This gives rise to a useful criterion for showing that a theory is NSOP\(_{1}\) (and hence NTP\(_{1}\)).  Leveraging work of Granger \cite{Granger} and Chatzidakis \cite{ZoeFree}, this allows us to conclude that both the two sorted theory of infinite-dimensional vector spaces over an algebraically closed field with a generic bilinear form, as well as the theory of \(\omega\)-free PAC fields of characteristic zero are NSOP\(_{1}\).  Finally, we generalize the construction of the theory of parametrized equivalence relations \(T^{*}_{\text{feq}}\) to give a general method for constructing NSOP\(_{1}\) theories from simple ones.  We learned after this work was completed that essentially the same construction had been studied by Baudisch \cite{Baudisch}, but our emphasis is different.  We show that the independence theorem holds for these structures, allowing us to obtain a proof that \(T^{*}_{\text{feq}}\) is NSOP\(_{1}\) as a corollary.  

\subsection*{Acknowledgements}

We would like to thank the referee for numerous suggestions on improving the presentation, Zo\'e Chatzidakis for her help with Lemma \ref{lem: strong finite char PAC}, and Alex Kruckman for pointing out an error and a way to fix it in Section \ref{sec: parametrization} of an earlier version of the article.

\section{Preliminaries on indiscernible trees}

We fix a complete first-order theory $T$ in a language $L$, $\M \models T$ is a monster model. In several of the arguments below, we will make use of the notion of an indiscernible tree.  For our purposes, there are two different languages we will need to place on the index model:  \(L_{s,\lambda} = \{\vartriangleleft, \wedge, <_{lex}, (P_{\alpha} : \alpha < \lambda)\}\) and \(L_{0} = \{ \vartriangleleft, \wedge, <_{lex}\}\) where \(\lambda\) is a cardinal.  We may view the tree \(\kappa^{<\lambda}\) as an \(L_{s,\lambda}-\) or \(L_{0}\)-structure in a natural way, interpreting \(\vartriangleleft\) as the tree partial order, \(\wedge\) as the binary meet function, \(<_{lex}\) as the lexicographic order, and \(P_{\alpha}\) as a predicate which identifies the \(\alpha\)th level (we will only consider \(\kappa = 2\) and \(\kappa = \omega\)).  See \cite{KimKimScow} and \cite{TakeuchiTsuboi} for more details.  

\begin{defn}
Suppose that \((a_{\eta})_{\eta \in \kappa^{<\lambda}}\) and \((a_{\alpha,i})_{\alpha < \kappa, i < \omega}\) are collections of tuples and \(C\) is a set of parameters in some model.
\begin{enumerate}
\item We say \((a_{\eta})_{\eta \in \kappa^{<\lambda}}\) is an \(s\)\emph{-indiscernible tree over $C$} if
\[
\text{qftp}_{L_{s,\lambda}}(\eta_{0}, \ldots, \eta_{n-1}) = \text{qftp}_{L_{s,\lambda}}(\nu_{0}, \ldots, \nu_{n-1})
\] 
implies \(\text{tp}(a_{\eta_{0}}, \ldots, a_{\eta_{n-1}}/C) = \text{tp}(a_{\nu_{0}}, \ldots, a_{\nu_{n-1}}/C)\), for all $n \in \omega$.  
\item We say \((a_{\eta})_{\eta \in \kappa^{<\lambda}}\) is a \emph{strongly indiscernible tree over $C$} if 
\[
\text{qftp}_{L_{0}}(\eta_{0}, \ldots, \eta_{n-1}) = \text{qftp}_{L_{0}}(\nu_{0}, \ldots, \nu_{n-1})
\]
implies \(\text{tp}(a_{\eta_{0}}, \ldots, a_{\eta_{n-1}}/C) = \text{tp}(a_{\nu_{0}}, \ldots, a_{\nu_{n-1}}/C)\), for all $n \in \omega$.
\item We say \((a_{\alpha,i})_{\alpha < \kappa, i < \lambda}\) is a \emph{mutually indiscernible array over $C$} if, for all $\alpha < \kappa$, $(a_{\alpha, i})_{i < \lambda}$ is a sequence indiscernible over $C \cup\{a_{\beta,j} : \beta < \kappa, \beta \neq \alpha, j < \lambda\}$.
\end{enumerate}
\end{defn}

\begin{lem} \label{StronglyIndiscTreeProp}
Let $(a_\eta : \eta \in \kappa^{<\lambda})$ be a tree strongly indiscernible over a set of parameters $C$.
\begin{enumerate}
\item All paths have the same type over $C$: for any $\eta, \nu \in \kappa^{\lambda}$, $\tp((a_{\eta | \alpha} : \alpha < \lambda)/C) = \tp((a_{\nu|\alpha} : \alpha < \lambda)/C)$.
\item For any $\eta \perp \nu \in \kappa^{< \lambda}$ and any $\xi$, $\tp(a_\eta, a_\nu/C) = \tp(a_{\xi \frown 0}, a_{\xi \frown 1}/C)$.
\item The tree $(a_{0 \frown \eta} : \eta \in \kappa^{< \lambda})$ is strongly indiscernible over $a_{\emptyset} C$.
\end{enumerate}
\end{lem}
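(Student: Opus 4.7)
The common strategy for all three items is to reduce to a check that two tuples from the index tree $\kappa^{<\lambda}$ have the same quantifier-free $L_0$-type, and then invoke strong indiscernibility over $C$. The essential point is that $L_0 = \{\vartriangleleft, \wedge, <_{lex}\}$ contains no level predicates, so quantifier-free types are insensitive to the depth of nodes.

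For (1), by compactness it suffices to treat finite sub-sequences of the two paths. Given $\alpha_0 < \ldots < \alpha_{n-1} < \lambda$, both $(\eta|\alpha_i)_{i<n}$ and $(\nu|\alpha_i)_{i<n}$ are $\vartriangleleft$-chains on which $\wedge$ computes the $\vartriangleleft$-minimum and $<_{lex}$ agrees with $\vartriangleleft$, so they share a quantifier-free $L_0$-type. Strong indiscernibility over $C$ delivers the claim.

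For (2), we may assume $\eta <_{lex} \nu$ (the other case is symmetric). The quantifier-free $L_0$-type of $(\eta, \nu)$ is determined by the relations among $\eta, \nu, \eta \wedge \nu$: namely, $\eta \wedge \nu$ is a proper common $\vartriangleleft$-predecessor of $\eta$ and $\nu$, which are incomparable, together with $\eta <_{lex} \nu$. The pair $(\xi \frown 0, \xi \frown 1)$ satisfies exactly these relations, since $L_0$ cannot detect that the meet sits immediately below its successors, and strong indiscernibility yields equality of types over $C$.

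For (3), let $(\eta_i)_{i<n}$ and $(\nu_i)_{i<n}$ be tuples from $\kappa^{<\lambda}$ with the same quantifier-free $L_0$-type. Since $\eta \mapsto 0 \frown \eta$ is an $L_0$-embedding, the tuples $(0 \frown \eta_i)_{i<n}$ and $(0 \frown \nu_i)_{i<n}$ also share a quantifier-free $L_0$-type. Adjoining $\emptyset$ on either side contributes only the forced relations $\emptyset \vartriangleleft 0 \frown \eta_i$, $\emptyset \wedge (0 \frown \eta_i) = \emptyset$, and $\emptyset <_{lex} 0 \frown \eta_i$, so the extended tuples still match in quantifier-free $L_0$-type. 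Strong indiscernibility of the original tree over $C$ gives $\tp(a_\emptyset, (a_{0 \frown \eta_i})/C) = \tp(a_\emptyset, (a_{0 \frown \nu_i})/C)$, which is precisely strong indiscernibility of $(a_{0 \frown \eta})_{\eta \in \kappa^{<\lambda}}$ over $a_\emptyset C$. No step presents a genuine obstacle; the only delicate bookkeeping is to confirm that adjoining $\emptyset$ in (3) and the implicit meet $\eta \wedge \nu$ in (2) introduces only relations forced by the $L_0$-structure, which allows the transfer of quantifier-free $L_0$-type equalities across these enrichments.
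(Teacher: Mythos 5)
Your proof is correct and proceeds by essentially the same method as the paper: in each part you reduce to an equality of quantifier-free $L_{0}$-types of index tuples, exploiting that $L_{0}$ has no level predicates, and then invoke strong indiscernibility. In fact your treatment of (2) and (3) is slightly more explicit than the paper's — for (2) you compare the two pairs directly via the induced structure on $\{\eta,\nu,\eta\wedge\nu\}$ rather than chaining through intermediate pairs as the paper does, and for (3) you spell out both the $L_{0}$-embedding $\eta\mapsto 0\frown\eta$ and the harmlessness of adjoining $\emptyset$ (where the paper's phrasing ``provided $\emptyset$ is not enumerated'' glosses over the fact that what really matters is that $\emptyset$ is not in the $\wedge$-closure, which is automatic after passing into the subtree below $\langle 0\rangle$) — but these are refinements of detail, not a different route.
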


\begin{proof}
(1) This follows by strong indiscernibility of the tree as for any $\eta, \nu \in \kappa^{<\lambda}$, $\text{qftp}_{L_0}((\eta|\alpha : \alpha < \lambda)) = \qftp_{L_0}((\nu|\alpha : \alpha < \lambda))$.

(2) Let $\eta \perp \nu \in \kappa^{<\lambda}$ be given, without loss of generality \(\eta <_{lex} \nu\) and let \(\mu = \eta \wedge \nu\).  Then there are \(i < j < \kappa \) so that \(\mu \frown \langle i\rangle \unlhd \eta\) and \(\mu \frown \langle j \rangle \unlhd \nu\).  Then $\qftp_{L_0}(\eta, \nu) = \qftp_{L_0}(\mu \frown \langle i \rangle, \mu \frown \langle j \rangle) = \qftp_{L_0}(\mu \frown 0, \mu \frown 1) = \qftp_{L_0}(\xi \frown 0, \xi \frown 1)$, and we conclude by strong indiscernibility of the tree.

(3) Clear as $\qftp_{L_0}(\bar{\eta}) = \qftp_{L_0}(\bar{\nu})$ implies $\qftp_{L_0} (\bar{\eta}, \emptyset) = \qftp_{L_0} (\bar{\nu}, \emptyset)$, provided $\emptyset$ is not enumerated in neither $\overline{\eta}$ nor $\overline{\nu}$.  
\end{proof}

\begin{lem}\label{s-IndiscTreeProp} 
Let $(a_\eta : \eta \in \kappa^{<\lambda})$ be a tree s-indiscernible over a set of parameters $C$.
\begin{enumerate}
\item All paths have the same type over $C$: for any $\alpha, \nu \in \kappa^{\lambda}$, $\tp((a_{\eta | \alpha})_{\alpha < \lambda}/C) = \tp((a_{\nu|\alpha})_{\alpha < \lambda}/C)$.
\item Suppose $\{\eta_{\alpha} : \alpha < \gamma\} \subseteq \kappa^{<\lambda}$ satisfies $\eta_{\alpha} \perp \eta_{\alpha'}$ whenever $\alpha \neq \alpha'$.  Then the array $(b_{\alpha, \beta})_{\alpha < \gamma, \beta < \kappa}$ defined by 
$$
b_{\alpha, \beta} = a_{\eta_{\alpha} \frown \langle \beta \rangle}
$$
is mutually indiscernible over $C$.  
\end{enumerate}
\end{lem}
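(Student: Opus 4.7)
My plan is to mirror the template of Lemma \ref{StronglyIndiscTreeProp}: in each part I reduce the desired indiscernibility to an equality of quantifier-free $L_{s,\lambda}$-types of tuples of indices in $\kappa^{<\lambda}$, then invoke the s-indiscernibility hypothesis.

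For (1), take $\alpha_0 < \ldots < \alpha_{n-1} < \lambda$ and any path $\eta \in \kappa^\lambda$. The type $\qftp_{L_{s,\lambda}}(\eta|\alpha_0, \ldots, \eta|\alpha_{n-1})$ is completely determined by the level-sequence $(\alpha_i)_{i<n}$: the nodes are pairwise $\unlhd$-comparable with $\eta|\alpha_i \unlhd \eta|\alpha_j$ iff $\alpha_i \leq \alpha_j$; pairwise meets are the shorter initial segments; the lex-order agrees with $\unlhd$ on comparable pairs; and $P_\alpha(\eta|\alpha_i)$ holds iff $\alpha = \alpha_i$. Thus any two paths $\eta, \nu \in \kappa^\lambda$ yield identical $L_{s,\lambda}$-qftp on every finite sub-tuple, and s-indiscernibility gives the required equality of types over $C$.

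For (2), fix $\alpha^* < \gamma$; the goal is to show $(b_{\alpha^*,\beta})_{\beta < \kappa}$ is order-indiscernible over $C \cup \{b_{\alpha,\beta} : \alpha \neq \alpha^*,\, \beta < \kappa\}$. By compactness it suffices, for any finite choice of side indices $(\alpha_j, \gamma_j)_{j < m}$ with $\alpha_j \neq \alpha^*$ and any two strictly increasing $n$-tuples $\beta_0 < \ldots < \beta_{n-1}$ and $\beta'_0 < \ldots < \beta'_{n-1}$ from $\kappa$, to verify that the two tuples of tree indices
\[
\bar\eta \;=\; \bigl((\eta_{\alpha^*} \frown \langle \beta_i\rangle)_{i<n},\, (\eta_{\alpha_j} \frown \langle \gamma_j\rangle)_{j<m}\bigr)
\]
and the analogous $\bar\eta'$ obtained by replacing each $\beta_i$ with $\beta'_i$ share the same $L_{s,\lambda}$-qftp; s-indiscernibility then transports this to equality of types of the corresponding $a$-tuples over $C$.

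The qftp comparison splits into three pieces. Inside the first block, the nodes $\eta_{\alpha^*} \frown \langle \beta_i\rangle$ are pairwise $\perp$ with common meet $\eta_{\alpha^*}$, all lie on level $|\eta_{\alpha^*}|+1$, and their lex-order matches that of the $\beta_i$'s, so both strictly increasing enumerations realize identical internal structure. The second block is literally unchanged. For the cross-comparisons, $\eta_{\alpha^*} \perp \eta_{\alpha_j}$ forces $(\eta_{\alpha^*} \frown \langle\beta_i\rangle) \wedge (\eta_{\alpha_j} \frown \langle \gamma_j\rangle) = \eta_{\alpha^*} \wedge \eta_{\alpha_j}$, and the lex comparison between the two reduces to that of $\eta_{\alpha^*}$ and $\eta_{\alpha_j}$, both independent of $\beta_i$ and $\gamma_j$. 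I do not anticipate a serious obstacle; the only real delicacy is the careful book-keeping with the $P_\alpha$-predicates and the lex order when the $\eta_{\alpha_j}$'s sit at varying heights.
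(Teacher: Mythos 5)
Your proof is correct and follows essentially the same route as the paper: both parts reduce to checking that the relevant index tuples in $\kappa^{<\lambda}$ have equal quantifier-free $L_{s,\lambda}$-types (noting the levels, meets, and lex-order are invariant under the substitutions in question), and then invoke $s$-indiscernibility. Your write-up is more explicit about the case analysis than the paper, which simply states the key qftp equality, but the underlying argument is the same.
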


\begin{proof}
(1) This follows by $s$-indiscernibility of the tree as for any $\eta, \nu \in \kappa^{<\lambda}$, $\text{qftp}_{L_s}((\eta|\alpha : \alpha < \lambda)) = \qftp_{L_s}((\nu|\alpha : \alpha < \lambda))$.

(2) Fix $\alpha < \gamma$ and let $A = \{a_{\eta_{\alpha' \frown \langle \beta \rangle}} : \alpha \neq \alpha' < \gamma, \beta < \kappa\}\cup C$.  As the elements of $\{\eta_{\alpha} : \alpha < \gamma\}$ are pairwise incomparable, it is easy to check that for any $\beta_{0} < \ldots < \beta_{n-1} < \kappa$ and $\beta'_{0} < \ldots < \beta'_{n-1} < \kappa$, 
$$
\text{qftp}_{L_{s}}(a_{\eta_{\alpha} \frown \langle \beta_{0} \rangle}, \ldots, a_{\eta_{\alpha} \frown \langle \beta_{n-1} \rangle }/A) = \text{qftp}_{L_{s}}(a_{\eta_{\alpha} \frown \langle \beta'_{0} \rangle}, \ldots, a_{\eta_{\alpha} \frown \langle \beta'_{n-1} \rangle }/A), 
$$
which proves (2).
\end{proof}

Now we note that \(s\)-indiscernible and strongly indiscernible trees exist.  

\begin{defn}
Suppose \(I\) is an \(L'\)-structure, where $L'$ is some language. We say that \(I\)-indexed indiscernibles have the \emph{modeling property} if, given any \((a_{i} : i \in I)\) from $\M$, there is an \(I\)-indexed indiscernible \((b_{i} : i \in I)\) in $\M$ \emph{locally based} on the \((a_{i})\):  given any finite set of formulas \(\Delta\) from \(L\) and a finite tuple \((t_{0}, \ldots, t_{n-1})\) from \(I\), there is a tuple \((s_{0}, \ldots, s_{n-1})\) from \(I\) so that 
\[
\qftp_{L'} (t_{0}, \ldots, t_{n-1}) =\qftp_{L'}(s_{0}, \ldots , s_{n-1})
\]
and also 
\[
\text{tp}_{\Delta}(b_{t_{0}}, \ldots, b_{t_{n-1}}) = \text{tp}_{\Delta}(a_{s_{0}}, \ldots, a_{s_{n-1}}).
\]
\end{defn}

\begin{fact}\cite{TakeuchiTsuboi, Scow, KimKimScow}\label{modeling}
Let \(I_{0}\) denote the \(L_{0}\)-structure \((\omega^{<\omega}, \unlhd, <_{lex}, \wedge)\) and \(I_{s}\) be the \(L_{s,\omega}\)-structure \((\omega^{<\omega}, \unlhd, <_{lex}, \wedge, (P_{\alpha})_{\alpha < \omega})\) with all symbols being given their intended interpretations and each \(P_{\alpha}\) naming the elements of the tree at level \(\alpha\).  Then strongly indiscernible trees (\(I_{0}\)-indexed indiscernibles) and \(s\)-indiscernible trees (\(I_{s}\)-indexed indiscernibles) have the \emph{modeling property}.  
\end{fact}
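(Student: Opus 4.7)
The plan is to treat both statements uniformly through the general Ramsey-theoretic framework for indexed indiscernibles developed by Scow: an $L'$-structure $I$ (satisfying mild age-closure properties) admits the modeling property for $I$-indexed indiscernibles if and only if the class of finite $L'$-structures embeddable in $I$ has the Ramsey property with respect to $L'$-embeddings. So the problem reduces to verifying a Ramsey property for two classes of finite trees: finite substructures of $I_0$ in the language $L_0 = \{\unlhd, <_{lex}, \wedge\}$ and their $L_{s,\omega}$-expansions by level predicates.

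First, I would carry out the standard compactness reduction. Fix $(a_\eta)_{\eta \in \omega^{<\omega}}$. By a routine compactness-plus-diagonalization argument, it suffices to show: for every finite $\Delta \subseteq L$, every $n \in \omega$, and every finite $L_0$- (resp.\ $L_{s,\omega}$-) qftype $p$ in $n$ variables realized in $\omega^{<\omega}$, there is a subcopy of $\omega^{<\omega}$ on which the coloring of realizations of $p$ by the $\Delta$-type of the corresponding $a$-tuple is constant. Assembling such monochromatic subcopies over all $\Delta$, $n$, and $p$ produces a consistent partial type whose realizations give the desired $(b_\eta)$, locally based on $(a_\eta)$ by construction.

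Second, I would invoke Milliken's tree theorem to supply these monochromatic subcopies. Milliken's theorem says that for any finite coloring of the set of strong subtrees of $\omega^{<\omega}$ of a fixed isomorphism type, there is a strong subtree isomorphic to $\omega^{<\omega}$ all of whose strong subtrees of the given type receive a single color. The crux is that an $L_0$-embedding of a finite tree into $\omega^{<\omega}$ is precisely a strong subtree embedding in Milliken's sense, so the coloring by $\Delta$-type of the corresponding $a$-tuple is exactly of the form Milliken treats. For the $L_{s,\omega}$ case one uses the level-preserving strengthening (a standard variant, obtainable from Halpern--L\"auchli), where one restricts to subtrees whose nodes occupy prescribed levels; this delivers Ramsey for $L_{s,\omega}$-embeddings in the same way.

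The main obstacle, and where I would expect to spend the most care, is aligning the combinatorial notion of strong subtree (and its level-preserving refinement) with the model-theoretic notion of an $L_0$- or $L_{s,\omega}$-embedding of a finite tree, so that Milliken's theorem yields Ramsey for exactly the class of substructures required by Scow's criterion --- in particular, one must verify that the age of $I_0$ (resp.\ $I_s$) is closed under the relevant substructure operation and that $L_0$-isomorphism of finite subsets of $\omega^{<\omega}$ coincides with isomorphism of strong subtrees. Once this dictionary is in place, both parts of the fact follow simultaneously from the compactness argument above.
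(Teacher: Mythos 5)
This statement is not proved in the paper; it is cited to Takeuchi--Tsuboi, Scow, and Kim--Kim--Scow, so there is no internal proof to compare against. At the level of strategy your outline—invoke Scow's dictionary to reduce the modeling property to a Ramsey property for the age of the index structure, reduce further by compactness to finite colorings, and then discharge the Ramsey property with Milliken's strong tree theorem (and a level-controlled Halpern--L\"auchli variant for $L_{s,\omega}$)—is exactly the route taken in the cited literature, especially Kim--Kim--Scow, so you are consistent with the paper's sources.

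Where the sketch overstates its case is the sentence asserting that an $L_0$-embedding of a finite tree into $\omega^{<\omega}$ \emph{is} a strong subtree embedding in Milliken's sense. This is not true: a finite $L_0$-substructure of $\omega^{<\omega}$ is just a meet-closed finite set with its induced $\unlhd$, $<_{lex}$, and $\wedge$; it need not be level-homogeneous, and its non-leaf nodes need not carry a full array of immediate successors, both of which are baked into Milliken's notion of strong subtree. For instance $\{\emptyset, \langle 0\rangle, \langle 1,1\rangle\}$ is a perfectly good meet-closed $L_0$-substructure that is not a strong subtree. You do correctly flag the alignment between the two notions as the place where most of the care is needed, but the resolution is not an identification: one has to show that every $L_0$- (respectively $L_{s,\omega}$-) copy of a given finite tree extends to a strong subtree of a fixed shape, lift the coloring to strong subtrees of that shape, apply Milliken there, and then argue that constancy on strong subtrees of the resulting infinite strong subtree forces constancy of the original coloring on a full copy of the index structure. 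One also needs the standard reduction from $\omega$-branching to suitably large finite branching so as to be in the hypotheses of Milliken's theorem. This straightening-and-lifting step is the real content of the tree Ramsey argument, and a complete proof must carry it out rather than assert the identification.
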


In the arguments below, we will often argue by induction where at each stage it is necessary to modify a tree of tuples in a way that maintains the indiscernibility of the tree.  A convenient way of organizing these arguments is to make a catalogue of operations on indiscernible trees and prove that these operations preserve the relevant indiscernibility.  

\begin{defn}
Fix \(k \geq 1\).  
 \begin{enumerate}
  \item (widening) The \emph{k-fold widening of} \((a_{\eta})_{\eta \in \omega^{<\omega}}\) \emph{at level} \(n\) is defined to be the tree \((a'_{\eta})_{\eta \in \omega^{<\omega}}\) where 
\[
 a'_{\eta} = \left\{
\begin{matrix}
 a_{\eta} & \text{ if } l(\eta) < n \\
(a_{\nu \frown (ki) \frown \xi}, \ldots, a_{\nu \frown (ki + (k-1))\frown \xi}) & \text{ if } \eta = \nu \frown i \frown \xi \\ & \text{ where } \nu \in \omega^{n-1}, i \in \omega, \xi \in \omega^{<\omega}. \\
\end{matrix}
\right.
\]
\item (stretching) The \emph{k-fold stretch of }\((a_{\eta})_{\eta \in \omega^{<\omega}}\)\emph{ at level }\(n\) is defined to be the tree \((a''_{\eta})_{\eta \in \omega^{<\omega}}\) where 
\[
 a''_{\eta} = \left\{ 
\begin{matrix}
 a_{\eta} & \text{ if } l(\eta) < n \\
(a_{\eta}, a_{\eta \frown 0}, \ldots, a_{\eta \frown 0^{k-1}}) & \text{ if } l(\eta) = n \\
a_{\nu \frown 0^{k-1} \frown \xi} & \text{ if } \eta = \nu \frown \xi \text{ for } \nu \in \omega^{n}, \xi \neq \emptyset
\end{matrix}
\right.
\]
\item (fattening) Given a tree \((a_{\eta})_{\eta \in 2^{<\kappa}}\), define the \(k\)\emph{-fold fattening of }\((a_{\eta})_{\eta \in 2^{<\kappa}}\) to be the tree \((a^{(k)}_{\eta})_{\eta \in 2^{<\kappa}}\) by induction as follows: for each \(\eta \in 2^{<\kappa}\) let \(a^{(0)}_{\eta} = a_{\eta}\).  If \((a^{(n)}_{\eta})_{\eta \in 2^{<\kappa}}\) has been defined, for each \(\eta \in 2^{<\kappa}\), let \(a^{(n+1)}_{\eta} = (a^{(n)}_{0 \frown \eta}, a^{(n)}_{1 \frown \eta})\).  Let \(C_{k} = \{a_{\eta} : \eta \in 2^{< k}\}\), the \emph{stump below} \(k\).  Set \(C_{0} = \emptyset\).  
\item (restricting)  Given the tree \((a_{\eta})_{\eta \in \lambda^{<\kappa}}\) and \(W \subseteq \kappa\), we define the \emph{restriction of }\((a_{\eta})_{\eta \in \lambda^{<\kappa}}\)\emph{ to }\(W\) to be the collection of tuples 
\[
\{a_{\eta} : l(\eta) \in W \text{ and if } \beta \not\in W, \text{ then } \eta(\beta) = 0\}.
\] 
If the order type of \(W\) is \(\alpha\), the restriction of \((a_{\eta})_{\eta \in \lambda^{<\kappa}}\) may be naturally identified with \((a_{\eta})_{\eta \in \lambda^{<\alpha}}\).  
\item (elongating)  Given \(\eta \in \kappa^{<\omega}\), with \(l(\eta) = n\), define \(\tilde{\eta} \in \kappa^{<\omega}\) to be the tuple with length \(k(l(\eta) - 1) + 1\) defined by 
\[
\tilde{\eta}(i) = \left\{
\begin{matrix}
\eta(i/k) & \text{ if } k|i \\
0 & \text{ otherwise}
\end{matrix}
\right.
\]
Then define the \(k\)\emph{-fold elongation} of \((a_{\eta})_{\eta \in \kappa^{<\omega}}\) to be the tree \((b_{\eta})_{\eta \in \kappa^{<\omega}}\) where 
\[
b_{\eta} = (a_{\tilde{\eta}}, a_{\tilde{\eta} \frown 0}, \ldots, a_{\tilde{\eta} \frown 0^{k-1}}).
\]
 \end{enumerate}
\end{defn}
\vspace{.25in}

\includegraphics[trim = 25mm 0mm 0mm 0mm, clip, scale=.43]{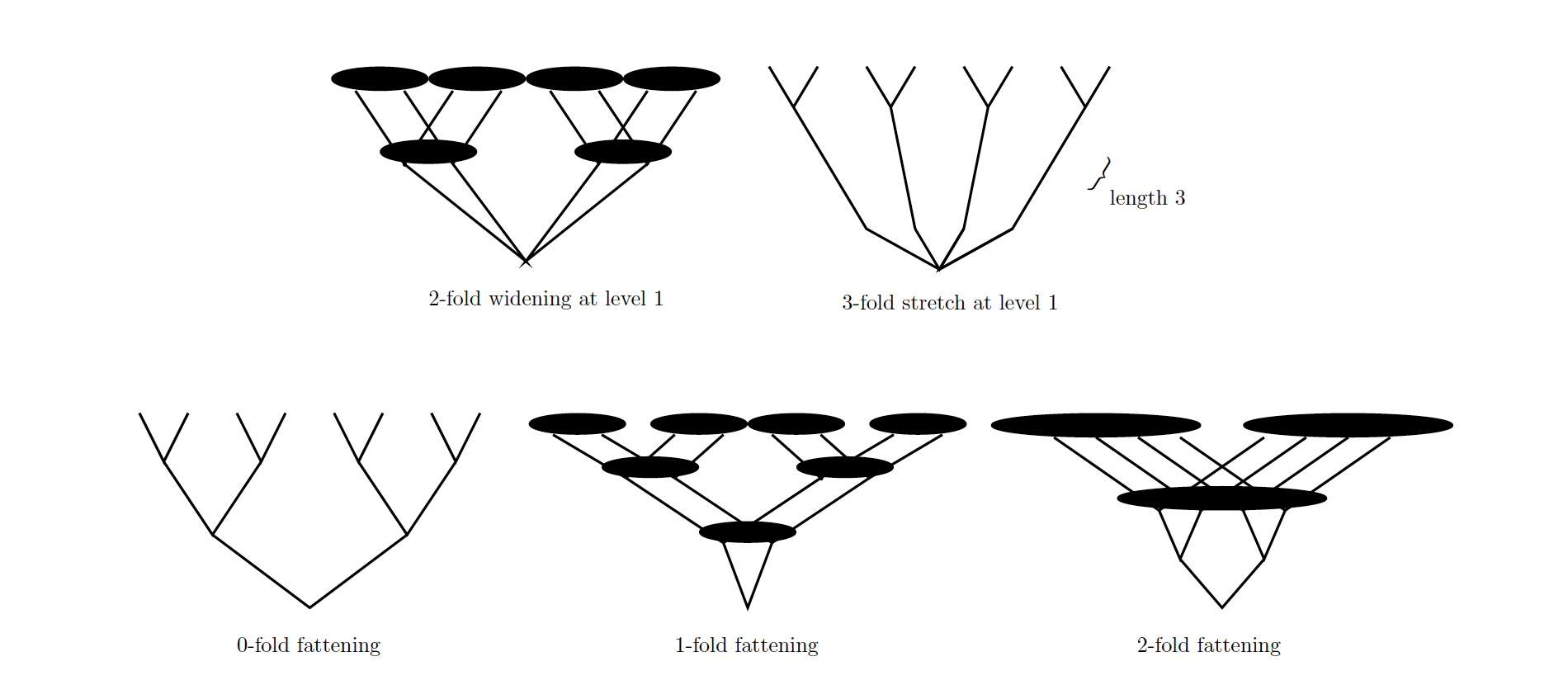} \hspace{2in}

\begin{prop}\label{preservation}
\begin{enumerate}
\item \(s\)-indiscernibility is preserved under widening, stretching, fattening, restriction, and elongating.  
\item Strong indiscernibility is preserved under restriction, fattening, and elongating.  Moreover, if \((a_{\eta})_{\eta \in 2^{<\omega}}\) is strongly indiscernible, then the \(k\)-fold fattening \((a^{(k)})_{\eta \in 2^{<\omega}}\) is strongly indiscernible over \(C_{k}\).  
\end{enumerate}
\end{prop}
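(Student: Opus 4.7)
The plan is to treat all five operations uniformly by observing that, in each case, an entry of the new tree at index $\eta$ is a tuple of the form $(a_{\sigma_1(\eta)}, \ldots, a_{\sigma_k(\eta)})$ for explicit index maps $\sigma_j$ read off from the definitions. So to prove that a given operation preserves $s$-indiscernibility (resp.\ strong indiscernibility), it suffices to verify the following index-level claim: whenever $(\eta_0, \ldots, \eta_{n-1})$ and $(\nu_0, \ldots, \nu_{n-1})$ have the same quantifier-free type in $L_{s,\omega}$ (resp.\ $L_0$) in the new index, the enlarged tuples $(\sigma_j(\eta_i))_{i,j}$ and $(\sigma_j(\nu_i))_{i,j}$ also have the same quantifier-free type in the original index. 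Once this is established, indiscernibility of the original tree immediately gives equality of the types of the corresponding data tuples from the new tree.

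For widening, stretching, and restriction, the relevant index maps are either level-preserving or shift levels in a way determined by the quantifier-free data, and within a level the transformation $i \mapsto ki + j$ (in widening) is strictly order-preserving and meet-preserving on common prefixes. Hence $\qftp$ in $L_{s,\omega}$ is preserved. Note that widening and stretching can change branching factors or insert nodes at specific levels, so the level predicates $P_\alpha$ are needed to recognize which nodes belong to the transformed part of the tree; without them, neither operation can be expected to preserve strong indiscernibility, explaining their omission from item (2).

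For fattening, one can unfold the inductive definition to $a^{(k)}_\eta = (a_{\tau \frown \eta})_{\tau \in 2^k}$, so that given a tuple $(\eta_0, \ldots, \eta_{m-1})$ in $2^{<\omega}$ the relevant enlarged indices in the original tree are $\{\tau \frown \eta_i : i < m,\ \tau \in 2^k\}$ together with the constants $\{\sigma : \sigma \in 2^{<k}\}$ corresponding to $C_k$. For two indices $\tau_1 \frown \eta_i$ and $\tau_2 \frown \eta_j$, if $\tau_1 \neq \tau_2$ then the meet is $\tau_1 \wedge \tau_2 \in 2^{<k}$ and all $L_0$-relations depend only on $\tau_1, \tau_2$, while if $\tau_1 = \tau_2$ the comparisons reduce to those between $\eta_i$ and $\eta_j$; relations between $\tau \frown \eta_i$ and a constant $\sigma \in 2^{<k}$ likewise depend only on $\tau$ and $\sigma$. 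It follows that the $L_0$-$\qftp$ of the enlarged tuple is completely determined by the $L_0$-$\qftp$ of $(\eta_0, \ldots, \eta_{m-1})$, yielding strong indiscernibility of the fattening over $C_k$. Elongating is analogous: the map $\eta \mapsto \tilde{\eta}$ is an $L_0$-embedding because inserting $0$s at fixed positions preserves prefixes, meets, and lexicographic order, and the nodes $\tilde{\eta} \frown 0^j$ form a chain whose $L_0$-structure is determined by that of $\tilde{\eta}$. Restriction is an $L_0$-embedding of index sets, giving preservation of strong indiscernibility immediately.

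The main obstacle is the careful bookkeeping in the fattening argument and its \emph{moreover} clause, where one has to coordinate the combinatorial reindexing with the accumulation of the stump $C_k$ as parameters and verify that no subtle meet or lex comparison escapes the case analysis. Once the index-level $\qftp$ analysis is executed cleanly, the remaining operations reduce to straightforward mechanical checks along the same lines.
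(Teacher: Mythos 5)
Your reduction to an index-level claim about quantifier-free types---that each operation is given by explicit index maps $\sigma_j$, so it suffices to show $\qftp$-equality of the enlarged index tuples follows from $\qftp$-equality of the originals---is exactly the paper's strategy (see the lemmas in Section \ref{sec: appendix}), and your observation explaining why widening and stretching appear only in item (1) is correct: those operations produce tuples whose arity varies with the level, which only the $P_\alpha$ predicates can detect. Your treatment of fattening and restriction is accurate. However, one stated justification is wrong as written: the map $\eta \mapsto \tilde\eta$ used in elongation does \emph{not} preserve meets. For incomparable $\eta,\nu$ with $\mu = \eta \wedge \nu$, a direct computation gives $\tilde\eta \wedge \tilde\nu = \tilde\mu \frown 0^{k-1}$, not $\tilde\mu$; so $\eta \mapsto \tilde\eta$ is not an $L_0$-embedding. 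The argument survives precisely because the elongated tuple at $\mu$ carries the entire chain $(a_{\tilde\mu\frown 0^j})_{j<k}$, hence the new meet does appear among the enlarged indices and the $\wedge$-closures line up---but this needs to be said, and it is exactly the bookkeeping the paper performs. An analogous subtlety occurs in widening that your phrase ``meet-preserving on common prefixes'' glosses over: if $\nu \frown(ki+j)\frown\xi$ and $\nu\frown(ki+j')\frown\xi'$ have $j\neq j'$, their meet drops to $\nu$ even when $\xi \wedge \xi'$ is long, a case the paper handles explicitly in Lemma \ref{widening}. So the approach is the paper's, but a formal write-up would need to correct these two claims and carry out the corresponding case analyses.
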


\begin{proof}
The proofs of these facts can be found in Section \ref{sec: appendix}.  
\end{proof}

\section{Cardinal invariants and tree properties}\label{Invariants}

\begin{defn}
Suppose $T$ is a complete theory and $\varphi(x;y) \in L$ is a formula in the language of $T$.
\begin{enumerate}
\item $\varphi(x;y)$ has the \emph{tree property} ($\TP$) if there is $k < \omega$ and a tree of tuples $(a_{\eta})_{\eta \in \omega^{<\omega}}$ in $\M$ such that
\begin{itemize}
\item for all $\eta \in \omega^{\omega}$, $\{\varphi(x;a_{\eta | \alpha}) : \alpha < \omega\}$ is consistent,
\item for all $\eta \in \omega^{<\omega}$, $\{\varphi(x;a_{\eta \frown \langle i \rangle}) : i < \omega\}$ is $k$-inconsistent.  
\end{itemize}
\item $\varphi(x;y)$ has the \emph{tree property of the first kind} ($\TP_1$) if there is a tree of tuples $(a_{\eta})_{\eta \in \omega^{<\omega}}$ in $\M$ such that
\begin{itemize}
\item for all $\eta \in \omega^{\omega}$, $\{\varphi(x;a_{\eta | \alpha}) : \alpha < \omega\}$ is consistent,
\item for all $\eta \perp \nu$ in $\omega^{<\omega}$, $\{\varphi(x;a_{\eta}),\varphi(x;a_{\nu})\}$ is inconsistent. 
\end{itemize}
\item $\varphi(x;y)$ has the \emph{tree property of the second kind} ($\TP_{2}$) if there is a $k < \omega$ and an array $(a_{\alpha,i})_{\alpha < \omega, i < \omega}$ in $\M$ such that 
\begin{itemize}
\item for all functions $f: \omega \to \omega$, $\{\varphi(x;a_{\alpha, f(\alpha)}) : \alpha < \omega\}$ is consistent,
\item for all $\alpha$, $\{\varphi(x;a_{\alpha, i}) : i < \omega\}$ is $k$-inconsistent.  
\end{itemize}
\item $T$ has one of the above properties if some formula does modulo $T$.  
\end{enumerate}
\end{defn}

It is easy to see that if a theory has the tree property of the first or second kind, then it also has the tree property.  Remarkably, the converse is also true.

\begin{fact}\cite{ShelahCT} \label{dichotomy}
A complete theory $T$ has $\TP$ if and only if it has $\TP_1$ or $\TP_2$. 
\end{fact}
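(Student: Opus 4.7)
The implication $\TP_1 \lor \TP_2 \Rightarrow \TP$ is immediate. A tree witnessing $\TP_1$ is already a $\TP$-witness with $k = 2$, since children of a common parent are pairwise incomparable. A $\TP_2$-array $(a_{\alpha, i})_{\alpha, i < \omega}$ can be encoded as a $\TP$-tree by setting $b_\eta = a_{l(\eta)-1,\, \eta(l(\eta)-1)}$ (for $\eta = \emptyset$ choose any dummy value): branches through $\omega^\omega$ correspond to selectors $\omega \to \omega$, preserving path-consistency, and sibling sets recover the rows of the array with their $k$-inconsistency.

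For the forward direction, I would begin with $\varphi(x;y)$ witnessing $\TP$ via a tree $(a_\eta)_{\eta \in \omega^{<\omega}}$ with $k$-inconsistent sibling sets. Applying Fact \ref{modeling}, extract a strongly indiscernible tree $(a'_\eta)_{\eta \in \omega^{<\omega}}$ locally based on the original; both path-consistency and $k$-inconsistency of sibling sets are finitary conditions, so the new tree still witnesses $\TP$ via $\varphi$ with the same $k$. The argument then splits on the behavior of a single sibling pair at level $1$: either $\{\varphi(x; a'_{\langle 0\rangle}), \varphi(x; a'_{\langle 1\rangle})\}$ is $2$-inconsistent, or it is consistent.

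In the $2$-inconsistent case, Lemma \ref{StronglyIndiscTreeProp}(2) yields $\TP_1$ directly: for every pair $\eta \perp \nu$ in $\omega^{<\omega}$, the pair $(a'_\eta, a'_\nu)$ has the same type over $\emptyset$ as $(a'_{\langle 0 \rangle}, a'_{\langle 1 \rangle})$, so $\{\varphi(x; a'_\eta), \varphi(x; a'_\nu)\}$ is $2$-inconsistent; combined with the preserved path-consistency this is exactly $\TP_1$ with the same formula $\varphi$.

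The consistent-siblings case, aiming for $\TP_2$, is the main obstacle. A naive attempt to use Lemma \ref{s-IndiscTreeProp}(2) with row-parents $\eta_\alpha = \langle \alpha \rangle$ produces a mutually indiscernible array $b_{\alpha,\beta} = a'_{\langle \alpha, \beta\rangle}$ whose rows are $k$-inconsistent, but by strong indiscernibility any selector-path $(b_{\alpha, f(\alpha)})$ shares an $L_0$-type with a single row and therefore inherits $k$-inconsistency, so this fails to give $\TP_2$. The correct plan is an inductive construction in which the row-parents $\eta_\alpha$ are chosen so that for every selector $f$ the nodes $\eta_\alpha \frown \langle f(\alpha) \rangle$ trace back to a branch of the original tree: concretely, one stages the $\eta_\alpha$'s to descend along a common path, uses the operations of Proposition \ref{preservation} to re-homogenize after each stage, and iteratively amalgamates the pairwise sibling-consistency into consistency of arbitrary finite selectors via repeated application of the modeling property. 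The delicate point, and the main difficulty, is that at each stage the new row must be ``transversal'' to the previously chosen rows in a way that preserves both the $k$-inconsistency on that row and the consistency of all selector-paths built so far.
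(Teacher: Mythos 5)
The forward direction breaks at the very first step, not just in Case~2. You apply Fact~\ref{modeling} to replace the $\TP$-witness by a \emph{strongly} indiscernible tree and claim that ``$k$-inconsistency of sibling sets is a finitary condition, so the new tree still witnesses $\TP$.'' This is not justified by the modeling property. The $L_{0}$-modeling property only guarantees that finite tuples in the new tree realize types that occur on tuples with the same \emph{$L_{0}$-quantifier-free type} in the old tree; but ``being a set of siblings'' (children of a common node) is not expressible in $L_{0}=\{\vartriangleleft,\wedge,<_{lex}\}$ --- it is only an $L_{s}$-invariant, since one needs the level predicates $P_{\alpha}$ to distinguish siblings from an arbitrary antichain with a common pairwise meet. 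Thus a sibling set in the new tree is locally based on some antichain with a common meet in the old tree, and such antichains need not be $k$-inconsistent (consider a $\TP_2$-only theory such as $T^{*}_{feq}$: the $\TP$-tree built from a $\TP_2$-array has $k$-inconsistent siblings but consistent cross-level antichains). This is exactly why the paper uses $s$-indiscernible trees for $\TP$/cdt-patterns (Lemma~\ref{s-witness}) and reserves strong indiscernibility for $L_{0}$-invariant configurations like $\TP_1$, $\SOP_2$, and weak $k$-$\TP_1$ (Lemma~\ref{StronglyIndiscSOP2}). In fact, if a formula did witness $\TP$ via a strongly indiscernible tree, then by Lemma~\ref{StronglyIndiscTreeProp} every $k$-element antichain with common pairwise meet would already be inconsistent and the formula would witness weak $k$-$\TP_1$, hence $\TP_1$; so your ``Case~2'' cannot arise for a strongly indiscernible $\TP$-tree, and the effect of the invalid first step is precisely to collapse the dichotomy.

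Beyond this, even granting a strongly indiscernible $\TP$-tree, your Case~2 (consistent sibling pair $\Rightarrow \TP_2$) is not an argument but a speculative sketch, as you yourself note: the modeling property does not perform amalgamation and there is no concrete mechanism offered to pass from pairwise consistency of incomparables to consistency of arbitrary finite selectors. The paper's own route to the quantitative version (Propositions~\ref{prop: finding sct,k pattern}, \ref{prop: finding cdt,2 pattern} and Lemma~\ref{lem: cdt,2 gives sct}) starts from an $s$-indiscernible cdt-pattern and uses a different dichotomy --- whether $\kappa_{\inp}^{n}$ is bounded --- together with the elongation trick $\nu\mapsto\nu^{*}=(\nu(0),0,\nu(1),0,\ldots)$, which produces pairwise incomparable nodes whose immediate parent levels branch into mutually indiscernible rows (Lemma~\ref{s-IndiscTreeProp}(2)); a bound on $\kappa_{\inp}$ then forces those antichains to be inconsistent, yielding an $(\sct,k)$-pattern and eventually $\TP_1$. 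That structure --- dichotomize on $\kappa_{\inp}$, do the hard work toward the $\TP_1$ side using $s$-indiscernibility --- is essentially the reverse of your plan, which dichotomizes on sibling consistency and leaves the $\TP_2$ side as the difficult one.
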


The above theorem was first proven in different language, before any of the three properties were actually defined.  The purpose of this section is to prove a refinement of this theorem, by studying the relationship between approximations to the tree property and those to the tree property of the first or second kind.  In order to do so, however, it will be necessary to return to the vocabulary in which Fact \ref{dichotomy} was initially formulated.  

\begin{defn}
The following notions were introduced in \cite{ShelahCT}.
\begin{enumerate}
\item A \emph{$\cdt$-pattern of depth} \(\kappa\) is a sequence of formulas \(\varphi_{i}(x;y_{i})\) (\(i < \kappa, i \text{ successor}\)) and numbers \(n_{i} < \omega\), and a tree of tuples \((a_{\eta})_{\eta \in \omega^{<\kappa}}\) for which 
\begin{enumerate}
\item \(p_{\eta} = \{\varphi_{i}(x;a_{\eta | i}) : i \text{ successor }, i < \kappa\}\) is consistent for \(\eta \in \omega^{\kappa}\), 
\item \(\{\varphi_{i} (x;a_{\eta \frown \langle \alpha \rangle}) : \alpha < \omega , i = l(\eta) + 1\}\) is \(n_{i}\)-inconsistent.
\end{enumerate}
A cdt-pattern with $n_{i} \leq n$ for all $i < \kappa$, is called a $($cdt$,n)$-pattern.  
\item An \emph{$\inp$-pattern of depth} \(\kappa\) is a sequence of formulas \(\varphi_{i}(x;y_{i})\) \((i < \kappa)\), sequences \((a_{i,\alpha}: \alpha < \omega)\), and numbers \(n_{i} <\omega\) such that 
\begin{enumerate}
\item for any \(\eta \in \omega^{\kappa}\), \(\{ \varphi_{i}(x;a_{i,\eta(i)}) : i < \kappa\}\) is consistent,
\item for any \(i < \kappa\), \(\{\varphi_{i}(x;a_{i,\alpha}) : \alpha < \omega\}\) is \(n_{i}\)-inconsistent.  
\end{enumerate}
\item An \emph{$\sct$-pattern of depth} \(\kappa\) is a sequence of formulas \(\varphi_{i}(x;y_{i})\) \((i < \kappa)\) and a tree of tuples \((a_{\eta})_{\eta \in \omega^{<\kappa}}\) such that 
\begin{enumerate}
\item for every \(\eta \in \omega^{\kappa}\), \(\{\varphi_{\alpha}(x;a_{\eta | \alpha}) : 0 < \alpha < \kappa, \alpha \text{ successor}\}\) is consistent,
\item If \(\eta \in \omega^{\alpha}\), \(\nu \in \omega^{\beta}\), \(\alpha, \beta\) are successors, and \(\nu \perp \eta\) then the formulas $\{\varphi_{\alpha}(x;a_{\eta}), \varphi_{\beta}(x;a_{\nu})\}$ are inconsistent.  
\end{enumerate}
If instead of (b), we have:  for any pairwise incomparable $(\eta_{i} : i < k)$, $\{\varphi_{l(\eta_{i})}(x;a_{\eta_{i}}) : i < k\}$ is inconsistent, then we call this a $(\text{sct},k)$-pattern.  
\item For \(X \in \{\text{cdt}, \text{sct}, \text{inp}\}\), we define \(\kappa_{X}^{n}(T)\) to be the first cardinal \(\kappa\) so that there is {\bf no} \(X\)-pattern of depth \(\kappa\) in \(n\) free variables, and $\infty$ if no such $\kappa$ exists.  We define \(\kappa_{X}(T) = \sup_{n \in \omega} \{\kappa_{X}^{n}\}\).
\end{enumerate}
\end{defn}

\begin{rem}
We note that the notion of a $(\text{cdt},n)$-pattern \emph{strengthens} that of a cdt-pattern by imposing a uniform finite bound on the size of the inconsistency at each level, while the notion of an $(\text{sct},n)$-pattern \emph{weakens} that of an sct-pattern by only requiring any $n$ incomparable elements to be inconsistent rather than any $2$.  One can regard an $(\text{sct},n)$-pattern as an approximation to a witness to $n$-TP$_1$ (see Definition \ref{weak} below).  
\end{rem}

\begin{obs}\label{basicobs}
Fix a complete theory $T$. 
\begin{enumerate}
\item $\kappa^{n}_{\text{sct}}(T) \geq n$, $\kappa^{n}_{\text{inp}}(T) \geq n$ and $\kappa^{n}_{\text{cdt}}(T) \geq n$ for all $n$.
\item 
\begin{enumerate}
\item $\kappa_{\text{cdt}}(T) = \infty$ if and only if $\kappa_{\text{cdt}}(T) > |T|^{+}$ if and only if $T$ has $\TP$.
\item $\kappa_{\text{sct}}(T) = \infty$ if and only if $\kappa_{\text{sct}}(T) > |T|^{+}$ and only if $T$ has $\TP_1$.
\item $\kappa_{\text{inp}}(T) = \infty$ if and only if $\kappa_{\text{inp}}(T) > |T|^{+}$ if and only if $T$ has $\TP_2$.  
\end{enumerate}
\item $\max\{ \kappa_{\text{sct}}^{n}(T), \kappa_{\text{inp}}^{n}(T)\} \leq \kappa^{n}_{\text{cdt}}(T)$.  
\end{enumerate}
\end{obs}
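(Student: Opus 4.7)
For part (1), the plan is to exhibit trivial patterns of depth $n$ in $n$ free variables built from equality formulas with pairwise distinct parameters. For the cdt and inp cases, the formulas $\varphi_{i}(x;y)=(x_{i}=y)$ applied to a tree or array of pairwise distinct elements satisfy all conditions: along any path or transversal the system $\{x_{i}=y_{i}\}$ is consistent, while at each level the common coordinate $x_{i}$ forces $2$-inconsistency. For the sct case the additional inconsistency requirement across distinct successor levels rules out this simple choice, so instead I would use the cumulative formula $\varphi_{i}(x;y_{(1)},\ldots,y_{(i)})=\bigwedge_{j\le i}(x_{j}=y_{(j)})$ with tree parameters $a_{\eta}=\eta$ (viewed as a tuple of length $l(\eta)$); each path then gives the consistent system $x_{j}=\eta(j-1)$, while any $\eta\perp\nu$ differ at some coordinate $k<\min(l(\eta),l(\nu))$, rendering the two formulas inconsistent.

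For part (2), each of the three equivalences splits into an easy direction and a pigeonhole direction. If $T$ has the tree property of the relevant kind, witnessed by a single formula $\varphi$, then copying $\varphi$ on the witnessing configuration at each level produces $X$-patterns of arbitrary depth, giving $\kappa_{X}(T)=\infty$. For the converse, suppose $\kappa_{X}(T)>|T|^{+}$ and fix an $X$-pattern of depth $|T|^{+}$. Since at most $|T|$ formulas and countably many inconsistency bounds can appear, pigeonhole isolates a single formula $\varphi$ (and bound $k$ for cdt/inp) used on a cofinal set of levels $\{\alpha_{n}:n<\omega\}$. For the cdt and sct cases I would finish by embedding $\omega^{<\omega}$ into $\omega^{<|T|^{+}}$ via $\sigma\in\omega^{n}\mapsto\eta_{\sigma}\in\omega^{\alpha_{n}}$, with $\eta_{\sigma}(\alpha_{k})=\sigma(k)$ for $k<n$ and $\eta_{\sigma}(\beta)=0$ otherwise; this embedding preserves both $\vartriangleleft$ and incomparability, so the restricted tree together with the single formula $\varphi$ witnesses $\TP$ (resp.\ $\TP_{1}$). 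The inp case is simpler: one directly picks $\omega$ of the $|T|^{+}$ rows sharing a common formula, witnessing $\TP_{2}$.

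For part (3), both inequalities are immediate from the definitions. An sct-pattern is already a cdt-pattern with $n_{i}=2$ uniformly, since any two siblings $\eta\frown\langle i\rangle$ and $\eta\frown\langle j\rangle$ with $i\neq j$ are incomparable in $\omega^{<\kappa}$. For the inp side, an inp-pattern with formulas $\psi_{i}$ and sequences $(a_{i,\alpha})_{\alpha<\omega}$ converts to a cdt-pattern by setting $\varphi_{i+1}(x;y):=\psi_{i}(x;y)$ and $a_{\eta}:=a_{l(\eta)-1,\eta(l(\eta)-1)}$: paths in the new tree are transversals of the array, and at each successor level the children of any node are exactly $(a_{i,\alpha})_{\alpha<\omega}$, inheriting $n_{i}$-inconsistency.

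The main obstacle to be careful about is the embedding argument in part (2)(b): verifying that the simultaneous pigeonhole selection of formula, bound, and cofinal levels yields an embedding of $\omega^{<\omega}$ into the original tree that preserves both tree order and incomparability, so that path-consistency and the pairwise inconsistency required by sct both descend correctly to the pruned $\omega^{<\omega}$-tree with only the single pigeonholed formula.
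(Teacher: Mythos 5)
Your plans for parts (1) and (3) are fine and match the paper's (terse) argument: for (1), equality patterns with the cumulative conjunction trick handles the sct requirement of inconsistency across levels, which the paper leaves implicit; for (3), an sct-pattern is literally a cdt-pattern with $n_i = 2$, and your conversion of an inp-pattern to a cdt-pattern by $b_\eta = a_{l(\eta)-1,\eta(l(\eta)-1)}$ is the same as the paper's (modulo a harmless reindexing). The pigeonhole framework for (2) is also correct.

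However, there is a genuine gap in your treatment of part (2)(a), and you have in fact mis-located the obstacle. Your embedding $\sigma \mapsto \eta_\sigma$, defined by $\eta_\sigma(\alpha_k) = \sigma(k)$ for $k < l(\sigma)$ and $0$ elsewhere, sends the siblings $\sigma \frown \langle j \rangle$ and $\sigma \frown \langle j' \rangle$ (for $\sigma \in \omega^n$) to the nodes $\eta_\sigma \frown \langle j \rangle \frown 0^{\alpha_{n+1}-\alpha_n - 1}$ and $\eta_\sigma \frown \langle j' \rangle \frown 0^{\alpha_{n+1}-\alpha_n - 1}$ in $\omega^{<|T|^+}$. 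These have length $\alpha_{n+1}$ but diverge at position $\alpha_n$, not at the last position $\alpha_{n+1}-1$; they are \emph{cousins}, not siblings, in the original tree (their meet has length $\alpha_n$, not $\alpha_{n+1}-1$). A cdt-pattern only guarantees $n_i$-inconsistency among \emph{siblings} at level $i$, and says nothing about incomparable nodes at a common level whose meet sits lower down. So with your embedding the required $k$-inconsistency of $\{\varphi(x;a_{\eta_{\sigma\frown\langle j\rangle}}) : j<\omega\}$ does not follow, and the extracted tree is not a TP witness. (Your concluding paragraph says the delicate case is $\TP_1$, but there your embedding is fine precisely because an sct-pattern gives inconsistency for \emph{all} incomparable pairs at successor levels, so only $\unlhd$ and $\perp$ need to be preserved.)

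The fix is to shift the offsets so that new-tree siblings become genuine siblings in the old tree: having pigeonholed $\omega$ many \emph{successor} levels $\gamma_0 < \gamma_1 < \cdots$ on which $\varphi_{\gamma_n} = \varphi$ and $n_{\gamma_n} = k$, define $h(\emptyset) = 0^{\gamma_0}$ and recursively $h(\sigma\frown\langle j\rangle) = h(\sigma) \frown 0^{\gamma_{n+1}-1-\gamma_n} \frown \langle j \rangle$ for $\sigma \in \omega^n$. Then $l(h(\sigma)) = \gamma_{l(\sigma)}$, the map preserves $\unlhd$ and incomparability, chains lie along paths of the original pattern, and crucially $h(\sigma\frown\langle j\rangle)$ for varying $j$ are exactly the children of a single node $h(\sigma)\frown 0^{\gamma_{n+1}-1-\gamma_n}$ of length $\gamma_{n+1}-1$; the cdt condition at level $\gamma_{n+1}$ then gives $k$-inconsistency of these instances of $\varphi$, completing the TP witness.
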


\begin{proof}
(1) follows from the fact that ``$=$" is in the language.  

(2)  As each case is entirely similar, we'll sketch the argument for (a) only.  If $\kappa_{\text{cdt}}\left(T\right)>|T|^{+}$, then in the pattern witnessing it we may assume that $\varphi_{i}\left(x,y_{i}\right)=\varphi\left(x,y\right)$ and $k_{i}=k$, because $\left|T\right|\geq \aleph_{0}$.  This is a witness to $\TP$. And then using compactness we can find a pattern witnessing that $\kappa_{\text{cdt}}^{n}(T)>\kappa$ for any cardinal $\kappa$.  

(3)  If \(\varphi_{i}(x;y_{i})\) \((i < \kappa)\), \((a_{i,\alpha}: \alpha < \omega)\), \((n_{i})_{i <\omega}\) form an inp-pattern of depth $\kappa$, obtain a cdt-pattern of depth $\kappa$ with respect to the same formulas by defining $(b_{\eta})_{\eta \in \omega^{<\kappa}}$ by $b_{\eta} = a_{l(\eta), \eta(l(\eta)-1)}$.
\end{proof}

\begin{lem}\label{s-witness}
(1)  If there is an sct-pattern (cdt-pattern) of depth $\kappa$ modulo $T$, then there is an sct-pattern (cdt-pattern) $\varphi_{\alpha}(x;y_{\alpha})$, $(a_{\eta})_{\eta \in \omega^{<\kappa}}$ in the same number of free variables so that $(a_{\eta})_{\eta \in \omega^{<\kappa}}$ is an $s$-indiscernible tree. 

(2)  If there is an inp-pattern of depth $\kappa$ modulo $T$, then there is an inp-pattern $\varphi_{\alpha}(x;y_{\alpha})$ $(\alpha < \kappa)$, $(k_{\alpha})_{\alpha < \kappa}$, $(a_{\alpha, i})_{\alpha < \kappa, i < \omega}$ in the same number of free variables so that $(a_{\alpha, i})_{\alpha < \kappa, i < \omega}$ is a mutually indiscernible array.  
\end{lem}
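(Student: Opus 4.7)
The plan is to reduce both parts to the modeling property (Fact \ref{modeling}) via a compactness argument.

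For part (1), treat the cdt-case (sct-case is entirely analogous). Expand the language by fresh constants $c_\eta$ for $\eta \in \omega^{<\kappa}$ and let $T^*$ consist of $T$ together with: (a) $\phi(c_{\bar\eta}) \leftrightarrow \phi(c_{\bar\nu})$ for each $L$-formula $\phi$ and all finite tuples $\bar\eta, \bar\nu$ from $\omega^{<\kappa}$ of equal $L_{s,\kappa}$-quantifier-free type; (b) $\exists x \bigwedge_{i \in F}\varphi_i(x; c_{\eta \mid i})$ for each finite set $F$ of successor ordinals $< \kappa$ and each $\eta \in \omega^{<\kappa}$ with $l(\eta) \geq \max F$; and (c) $\neg \exists x \bigwedge_{j \in F'} \varphi_{l(\eta)+1}(x; c_{\eta \frown \langle j \rangle})$ for each $\eta \in \omega^{<\kappa}$ and each $F' \subseteq \omega$ of size $n_{l(\eta)+1}$. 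Any model of $T^*$ realizes the desired $s$-indiscernible cdt-pattern in the same free variables.

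To establish consistency of $T^*$ by compactness, fix a finite fragment $T_0$. It mentions finitely many levels, contained in some finite $L_0 = \{\alpha_0 < \ldots < \alpha_{m-1}\} \subseteq \kappa$. Choose an injection $\iota : \omega^{<\omega} \to \omega^{<\kappa}$ that preserves $\vartriangleleft$, $\wedge$, $<_{lex}$, transfers quantifier-free $L_{s,\omega}$-types to quantifier-free $L_{s,\kappa}$-types under the identification of the level predicate $P_i$ with $P_{\alpha_i}$, and whose image contains the $\eta$'s appearing in $T_0$. Apply Fact \ref{modeling} to the tree $(a_{\iota(\tau)})_{\tau \in \omega^{<\omega}}$ to obtain an $s$-indiscernible tree $(b_\tau)_{\tau \in \omega^{<\omega}}$ locally based on it; interpret each $c_{\iota(\tau)}$ appearing in $T_0$ as $b_\tau$. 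The instances of (a) then follow from the $s$-indiscernibility of $(b_\tau)$, and each instance of (b) or (c) is an existential (resp.\ negated existential) with bounded scope, witnessed (resp.\ refuted) in the original pattern along a path or sibling-configuration of matching quantifier-free type, hence transferred to $b$ through local basedness.

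Part (2) proceeds along the same lines, with the theory $T^*$ now asserting mutual indiscernibility of an array $(c_{\alpha,i})_{\alpha < \kappa, i < \omega}$ together with the inp-pattern consistency and row-inconsistency conditions. Finite satisfiability reduces to extracting, from the original inp-pattern restricted to the finitely many rows mentioned in the finite fragment, a mutually indiscernible sub-array. This is standard: after stretching each relevant row to length $\lambda$ sufficiently large by compactness, one extracts an indiscernible sequence in each row over the parameters coming from all the other rows, iterated row by row and using that at each stage replacing a row by an indiscernible sequence locally based on it preserves the pattern conditions. The main obstacle in both parts is this compactness bookkeeping for arbitrary $\kappa$; once the finite fragment is localized to a finite tree or array, the $s$- or mutual-indiscernibility extraction is immediate from Fact \ref{modeling}, and the finitary nature of the pattern conditions ensures their survival.
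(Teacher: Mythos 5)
Your proof is correct and follows essentially the same route as the paper, which for part (1) simply says ``By compactness and Fact \ref{modeling}'' and for part (2) cites \cite[Lemma 2.2]{MR3129735}; you have merely expanded the compactness bookkeeping (the constant symbols $c_\eta$, the reduction of a finite fragment to an $\omega^{<\omega}$-indexed subtree via $\iota$, and the row-by-row extraction for the array case) that the paper leaves implicit.
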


\begin{proof}
(1)  By compactness and Fact \ref{modeling}.  

(2) This is Lemma 2.2 of \cite{MR3129735}.  
\end{proof}

Now we fix a complete theory \(T\) and for \(X \in \{\text{cdt}, \text{sct}, \text{inp}\}\), we write \(\kappa_{X}\) for \(\kappa_{X}(T)\).

\begin{prop} \label{prop: finding sct,k pattern}
Assume that $\kappa_{\cdt}^{n}\geq\aleph_{0}$. Then either $\kappa_{\inp}^{n}\geq\aleph_{0}$
or $\kappa_{\sct,k}^{n}\geq\aleph_{0}$ for some $k\in\omega$ (i.e. there are $(\kappa_{\sct}, k)$-patterns in $n$ variables of arbitrary finite depth).  In fact, if $\kappa^{n}_{\inp} < \aleph_{0}$, then one can take $k = \kappa^{n}_{\inp}$.  
\end{prop}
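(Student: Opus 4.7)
The plan is to deduce the full statement from the ``in fact'' clause, since $\kappa^n_{\inp} \geq \aleph_0$ leaves nothing to prove. So assume $\kappa^n_{\inp} = k < \aleph_0$, fix a target depth $m$, and apply Lemma \ref{s-witness}(1) to obtain an $s$-indiscernible cdt-pattern $(a_\eta)_{\eta \in \omega^{<2m}}$ in $n$ free variables, with formulas $\varphi_l(x;y_l)$ and inconsistency numbers $n_l$ at each level $l$.

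The engine of the proof is the following observation: for any antichain $(\eta_\alpha)_{\alpha < k}$ in the tree, the set $\{\varphi_{l(\eta_\alpha) + 1}(x; a_{\eta_\alpha \frown \langle 0 \rangle}) : \alpha < k\}$ must be inconsistent. Indeed, Lemma \ref{s-IndiscTreeProp}(2) produces a mutually indiscernible array $b_{\alpha, j} := a_{\eta_\alpha \frown \langle j \rangle}$ whose rows are $n_{l(\eta_\alpha)+1}$-inconsistent for $\varphi_{l(\eta_\alpha)+1}$; if the $0$-column were consistent, mutual indiscernibility would propagate consistency to every column, producing an inp-pattern of depth $k$ in $n$ variables and contradicting the choice of $k$.

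To package this into an $(\sct,k)$-pattern of depth $m$, I define a tree embedding $f : \omega^{<m} \hookrightarrow \omega^{<2m}$ by interleaving zeros, $f(\langle i_0, \ldots, i_{l-1}\rangle) := \langle i_0, 0, i_1, 0, \ldots, i_{l-1}, 0 \rangle$, and set $c_\eta := a_{f(\eta)}$ and $\psi_l := \varphi_{2l}$. Path-consistency of $(c_\eta)_{\eta \in \omega^{<m}}$ is immediate from the identity $f(\eta)|_{2l} = f(\eta|_l)$ together with path-consistency of the cdt-pattern along the old path $f(\eta) \in \omega^{2m}$. For the antichain condition, note that $f(\xi)$ always ends in $0$, so writing $g(\xi) := f(\xi)|_{2l(\xi)-1}$ one has $f(\xi) = g(\xi) \frown \langle 0 \rangle$; applying the key observation to $(g(\xi_\alpha))_{\alpha<k}$ then forces $\{\psi_{l(\xi_\alpha)}(x; c_{\xi_\alpha}) : \alpha < k\}$ to be inconsistent---\emph{provided} $(g(\xi_\alpha))_{\alpha<k}$ is itself an antichain in the old tree.

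The main obstacle, and the reason for interleaving zeros rather than using a naive embedding like $\eta \mapsto \eta \frown \langle 0 \rangle$, is exactly this proviso. Arbitrary antichains need not have pairwise incomparable parents (sibling or nested-parent configurations obstruct the direct approach), so the key observation cannot be invoked without care. The interleaving is designed so that the first coordinate on which two distinct $f(\xi_\alpha)$ and $f(\xi_\beta)$ disagree is an even (``data'') position $2q$ with $q < \min(l(\xi_\alpha), l(\xi_\beta))$---padding positions carry only zeros and contribute no disagreement---and this disagreement survives the truncation $f \to g$. Hence $\xi_\alpha \perp \xi_\beta$ forces $g(\xi_\alpha) \perp g(\xi_\beta)$, completing the argument. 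Since this produces an $(\sct,k)$-pattern of depth $m$ from any cdt-pattern of depth $\geq 2m$ and $m$ was arbitrary, $\kappa^n_{\sct,k}(T) \geq \aleph_0$.
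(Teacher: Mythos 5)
Your proof is correct and is essentially the same argument as the paper's: you use the same interleaved-zeros embedding $\omega^{<m}\hookrightarrow\omega^{<2m}$ (the paper's $\nu\mapsto\nu^{*}$ is exactly your $f$), truncate the last $0$ to get pairwise incomparable parents, invoke Lemma \ref{s-IndiscTreeProp}(2) to get a mutually indiscernible array, and derive a depth-$k$ inp-pattern from a hypothetical consistent antichain, contradicting $\kappa_{\inp}^{n}=k$. The only difference is expository: you isolate the incomparability-of-parents verification (the ``proviso'') more explicitly than the paper does.
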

\begin{proof}
If $\kappa_{\inp}^{n}\geq\aleph_{0}$ does not hold, then in fact
we have $\kappa_{\inp}^{n}\leq k$ for some $k\in\omega$. 

Fix an arbitrary $m\in\omega$, then by assumption and Lemma \ref{s-witness}
we can find $\left(a_{\eta}:\eta\in\omega^{<2m}\right),\left(\varphi_{i}\left(x,y_{i}\right):i<2m\right),\left(k_{i}:i<2m\right)$
an $s$-indiscernible $\cdt$-pattern with $\left|x\right|=n$, i.e.:
\begin{enumerate}
\item $\left(a_{\eta}:\eta\in\omega^{<2m}\right)$ is an $s$-indiscernible
tree,
\item $\left\{ \varphi_{i}\left(x,a_{\eta\restriction i}\right):i<2m\right\} $
is consistent for every $\eta\in\omega^{2m}$,
\item $\left\{ \varphi_{i}\left(x,a_{\eta \frown \langle j\rangle}\right):j\in\omega\right\} $ is
$k_{i}$-inconsistent for every $i<2m-1$ and $\eta\in\omega^{i}$.
\end{enumerate}
For $l<m$ and $\nu\in\omega^{l}$ we define $\nu^{*}=\left(\nu\left(0\right),0,\nu\left(1\right),0,\ldots,\nu\left(l-1\right),0\right)\in\omega^{<2m}$.
Let $\left\{ \nu_{0},\ldots,\nu_{k-1}\right\} \subseteq\omega^{<m}$
be pairwise $\trianglelefteq$-incomparable, and let $l_{i}=l(\nu_{i}^{*})$.

\textbf{Claim.} $\left\{ \varphi_{l_{i}}\left(x,a_{\nu_{i}^{*}}\right):i<k\right\} $
is inconsistent.

Proof. By definition of $\nu_{i}^{*}$ and assumption on $\nu_{i}$'s
it follows that for any $i,i'<k$ the elements $\nu_{i}^{*}\restriction\left(l_{i}-1\right)$
and $\nu_{i'}^{*}\restriction\left(l_{i'}-1\right)$ are incomparable.
Then by Lemma \ref{s-IndiscTreeProp}(2) we
see that the sequences $\bar{a}_{i}=\left(a_{\nu_{i}^{*}\restriction\left(l_{i}-1\right)\widehat{\,\,}\langle j\rangle}:j\in\omega\right)$
are mutually indiscernible. But if $\left\{ \varphi_{l_{i}}\left(x,a_{\nu_{i}^{*}}\right):i<k\right\} $
was consistent, this would give us an $\inp$-pattern of depth $k$,
contrary to the assumption (as $\left\{ \varphi_{l_{i}}\left(x,a_{\nu_{i}^{*}\restriction\left(l_{i}-1\right)\widehat{\,\,}\langle j \rangle}\right):j\in\omega\right\} $
is $k_{l_{i}}$-inconsistent for every $i$).

Now using the claim it is easy to see that $\left\{ \varphi_{2l(\eta)}\left(x,a_{\eta^{*}}\right):\eta\in\omega^{<m}\right\} $
is an $(\sct,k)$-pattern of depth $m$. As $m$ was arbitrary, we conclude
that $\kappa_{\sct,k}^{n}\geq\aleph_{0}$.\end{proof}

\begin{prop} \label{prop: finding cdt,2 pattern}
Let $k<\omega$ be fixed. Assume that for any $n<\omega$ we have, in some fixed number of variables, an $\left(\sct,k\right)$-pattern of depth $n$. Then there are, in the same number of variables, $\left(\cdt,2\right)$-patterns of arbitrary finite depth.  
\end{prop}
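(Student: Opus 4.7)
The plan is to transform an $s$-indiscernible $(\sct,k)$-pattern of sufficient depth (with a single formula) into a $(\cdt,2)$-pattern of the desired depth, by combining the tree-manipulation operations of Definition 2.5 with $s$-indiscernibility.

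First I would normalize the given data.  By the modeling property (Fact \ref{modeling}) together with the compactness argument of Lemma \ref{s-witness}, adapted to $(\sct,k)$-patterns (the $k$-incomparability condition of the tree is preserved in passage to an $L_{s,\omega}$-indiscernible tree, since $L_{s,\omega}$-isomorphisms send antichains to antichains), we may assume each given $(\sct,k)$-pattern is $s$-indiscernible.  A standard Ramsey/pigeonhole argument then permits fixing a single formula $\varphi(x;y)$ used at every level.

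Given target depth $m$, I would start from an $s$-indiscernible $(\sct,k)$-pattern $(a_\eta)_{\eta \in \omega^{<N}}$ with $\varphi$ and $N$ large (of order $(k-1)m$).  The new tree $(b_\nu)_{\nu \in \omega^{<m}}$ is constructed by applying the $(k-1)$-fold elongation from Definition 2.5(5), possibly combined with a widening (Definition 2.5(1)) to supply the horizontal width needed for 2-inconsistency.  Each $b_\nu$ is then a tuple of old $a$-nodes whose underlying indices form a controlled union of path-prefixes in the old tree, and the new formula is the conjunction $\psi(x;\bar z) = \bigwedge_i \varphi(x;z_i)$.  By Proposition \ref{preservation}, $s$-indiscernibility is preserved throughout.

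Verification then splits in two.  Path consistency will follow because, for each $\nu \in \omega^m$, the underlying $a$-indices along $\bigcup_l b_{\nu|l}$ either lie on a single old path (in the pure elongation case) or on a small number of parallel paths whose union is realizable via sct-path consistency combined with $s$-indiscernibility.  For $2$-inconsistency of new siblings $\nu\frown i$, $\nu\frown i'$, the combined tuple $b_{\nu\frown i}\cup b_{\nu\frown i'}$ must be shown to contain at least $k$ pairwise incomparable old $a$-nodes, whence the $(\sct,k)$-property forces inconsistency of the associated pair of $\psi$-formulas.  The main technical obstacle is balancing these two constraints: a single elongation supplies path consistency but only a $2$-antichain between sibling tuples, whereas a widening supplies a full $k$-antichain but fractures path consistency across $k-1$ parallel paths.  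The precise construction therefore must combine the two so that the $a$-index configuration along any new path has the same $L_{s,\omega}$-type as a consistent single-path configuration in the old tree, while the combined sibling tuples exhibit the required $k$-antichain.
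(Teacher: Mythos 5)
There is a genuine gap. You correctly identify the two relevant tree operations (elongation for path-consistency, widening for producing $k$-antichains among siblings) and the tension between them, but your plan to resolve that tension cannot work as stated. You ask for a single construction in which ``the $a$-index configuration along any new path has the same $L_{s,\omega}$-type as a consistent single-path configuration in the old tree, while the combined sibling tuples exhibit the required $k$-antichain.'' These two demands are incompatible when imposed simultaneously: if each $b_\nu$ is to be a chain in the old tree (so that new paths look like old chains and inherit consistency), then the union of two sibling chains $b_{\nu\frown i}\cup b_{\nu\frown i'}$ contains at most a $2$-antichain, never a $k$-antichain for $k>2$. So the $(\sct,k)$-hypothesis does not fire, and 2-inconsistency of siblings fails. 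Conversely, if you widen to get $k$-antichains among siblings, the new paths correspond to unions of several parallel old paths, and nothing in the $(\sct,k)$-hypothesis guarantees that such a union is consistent.

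What is missing is the dichotomy that drives the paper's argument. The paper looks at blocks of $m$ consecutive levels of a depth-$m^2$ $s$-indiscernible $(\sct,k)$-pattern and asks whether the ``two parallel paths'' type $\Gamma_i(x)$ (two siblings together with a common $0$-path below each) is consistent. If some $\Gamma_i$ is consistent, the widening route is safe on that block: consistency of the new paths comes from $\Gamma_i$ plus $s$-indiscernibility, and the inconsistency degree drops from $k$ to roughly $k/2$; one then iterates until it reaches $2$. If every $\Gamma_i$ is inconsistent, the elongation route succeeds directly: the $m$-fold elongation produces tuples whose sibling pair realizes exactly the configuration underlying some $\Gamma_i$, and its inconsistency is $2$-inconsistency of the new pattern. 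Your proposal has no mechanism to detect which case holds, no iteration to reduce the inconsistency degree, and assumes a single ``balanced'' construction where the paper shows a case-split (and an induction on $k$ via repeated halving) is necessary.

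A smaller point: the reduction to a single formula $\varphi$ is unnecessary here (both $\cdt$- and $\sct$-patterns allow level-dependent formulas), and the appeal to ``Ramsey/pigeonhole'' to get one would need more care since the tree is not a linear order. The paper simply carries the sequence $(\varphi_i)$ along; you should do the same rather than add an extraneous reduction step.
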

\begin{proof}
Let $m\in\omega$ be arbitrary, and let $\left(a_{\eta}:\eta\in\omega^{<m\times m}\right),\left(\varphi_{i}\left(x,y_{i}\right):i<m\times m\right)$
be an $s$-indiscernible $(\sct,k)$-pattern - in particular this is a $\cdt$-pattern such that for $i < m \times m$, $\{\varphi_{i}(x;a_{\eta}) : l(\eta) = i\}$ is $k$-inconsistent.  

For $i<m$, consider 
$$
\Gamma_{i}\left(x\right)=\bigwedge_{l<m}\left(\varphi_{i\times m+l}\left(x,a_{0^{i \times m}\frown 0 \frown 0^{l-1}}\right)\land\varphi_{i \times m+l}\left(x,a_{0^{i \times m}\frown1\frown0^{l-1}}\right)\right).
$$
\textbf{Case 1.} $\Gamma_{i}\left(x\right)$ is consistent for some
$i<m$.

Obtain an $s$-indiscernible tree, using Lemma \ref{preservation}(1), by first taking the $2$-fold widening of $(a_{\eta})_{\eta \in \omega^{m \times m}}$ at level $i \times m+1$, then taking the restriction to $\{i \times m+l : l < m\}$.  Let $(\psi_{l} : l < m)$ be chosen so that 
$$
\psi_{l}\left(x,b_{0^{l}}\right)=\varphi_{i \times m+l}\left(x,a_{0^{i \times m} \frown 0 \frown 0^{l-1}}\right)\land\varphi_{i \times m+l}\left(x,a_{0^{i \times m} \frown 1 \frown 0^{l-1}}\right). 
$$
Then $\left(b_{\eta}:\eta\in\omega^{<m}\right),\left(\psi_{l}:l<m\right)$ is a $\cdt$-pattern of depth $m$ such that, for all $l < m$, $\{\psi_{l}(x;b_{\eta}) : l(\eta) = l\}$ is $\lfloor \frac{k}{2} \rfloor$-inconsistent.  

\textbf{Case 2.} $\Gamma_{i}\left(x\right)$ is inconsistent for every
$i<m$.

Using Lemma \ref{preservation}(1), obtain an $s$-indiscernible tree $(b_{\eta})_{\eta \in \omega^{<m}}$ by taking the $m$-fold elongation of $(a_{\eta})_{\eta \in \omega^{< m \times m}}$.  Let $(\psi_{l} : l < m)$ be chosen so that 
$$
\psi_{l}(x;b_{0^{l}}) = \bigwedge_{r < m} \varphi_{l \times m+r}(x;a_{0^{l \times m} \frown 0^{r}}).
$$
Then $(b_{\eta})_{\eta \in \omega^{<m}}$, $(\psi_{l} : l < m)$ is an $(\text{cdt},2)$-pattern.  

Repeating several times if necessary we conclude. \end{proof}


For $\kappa\leq\omega$, finding an $\sct$-pattern of depth $\kappa$
is equivalent to finding a $\left(\cdt,2\right)$-pattern of depth
$\kappa$.
\begin{lem} \label{lem: cdt,2 gives sct}
Let $\kappa\leq\omega$, and let $\left(a_{\eta}:\eta\in\omega^{<\kappa}\right),\left(\varphi_{i}\left(x,y_{i}\right):i<\kappa\right)$
be a $\left(\cdt,2\right)$-pattern (i.e. for every $\eta\in\omega^{<\kappa}$
the set $\left\{ \varphi_{l\left(\eta\right)+1}\left(x,a_{\eta\hat{\,}j}\right):j\in\omega\right\} $
is $2$-inconsistent). For $\eta\in\omega^{<\kappa}$ define $b_{\eta}=a_{\eta\restriction0}a_{\eta\restriction1}\ldots a_{\eta\restriction\left(l\left(\eta\right)-1\right)}a_{\eta}$
and $\psi_{i}\left(x;y_{i,0},\ldots,y_{i,i-1}\right)=\bigwedge_{j<i}\varphi_{j}\left(x,y_{j}\right)$.
Then $\left(b_{\eta}:\eta\in\omega^{<\kappa}\right),\left(\psi_{i}\left(x,\bar{y}_{i}\right):i<\kappa\right)$
is an $\sct$-pattern.\end{lem}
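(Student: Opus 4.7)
The plan is to verify the two defining clauses of an sct-pattern directly from the construction, so that the lemma reduces to unwinding definitions once one pins down the indexing. The guiding observation is that, for every $\eta \in \omega^{<\kappa}$, the formula $\psi_{l(\eta)+1}(x; b_\eta)$ is literally the conjunction $\bigwedge_{j \leq l(\eta)} \varphi_j(x; a_{\eta|j})$ of all formulas of the original $(\cdt, 2)$-pattern picked up along the path from the root down to $\eta$ (with the indexing convention that $\varphi_0$ is taken to be trivial or absent, so only successor indices contribute non-trivially).

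For the branch-consistency clause, fix any branch $\eta \in \omega^{\kappa}$. For each successor $\alpha < \kappa$, $\psi_\alpha(x; b_{\eta|\alpha})$ is a finite conjunction of instances $\varphi_j(x; a_{\eta|j})$ with $j < \alpha$. Hence the whole set $\{\psi_\alpha(x; b_{\eta|\alpha}) : 0 < \alpha < \kappa,\ \alpha \text{ successor}\}$ is implied by $\{\varphi_j(x; a_{\eta|j}) : 0 < j < \kappa,\ j \text{ successor}\}$, which is consistent by the branch-consistency of the original cdt-pattern. This gives clause (a) of the sct-pattern.

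For the incomparable-inconsistency clause, take $\eta \perp \nu$ in $\omega^{<\kappa}$ with $l(\eta), l(\nu)$ successors, let $\mu = \eta \wedge \nu$, and set $m = l(\mu)$. Since $\eta \perp \nu$, the values $\eta(m)$ and $\nu(m)$ exist and differ, so $\eta|(m+1) = \mu \frown \langle \eta(m)\rangle$ and $\nu|(m+1) = \mu \frown \langle \nu(m)\rangle$ are two distinct successors of $\mu$. By the 2-inconsistency hypothesis,
\[
\varphi_{m+1}(x; a_{\eta|(m+1)}) \wedge \varphi_{m+1}(x; a_{\nu|(m+1)})
\]
is inconsistent. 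But this conjunction is implied by $\psi_{l(\eta)+1}(x; b_\eta) \wedge \psi_{l(\nu)+1}(x; b_\nu)$, because by construction $b_\eta$ contains $a_{\eta|(m+1)}$ as a component (since $m+1 \leq l(\eta)$) and $\psi_{l(\eta)+1}$ asserts $\varphi_{m+1}$ on that component, and symmetrically for $\nu$. Hence the pair is inconsistent, verifying clause (b).

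There is no real obstacle in this argument beyond bookkeeping: one has only to keep straight (i) which component of $b_\eta$ corresponds to which initial segment $\eta|j$, and (ii) that $\psi_{l(\eta)+1}$ is precisely the conjunction of all the $\varphi_j$'s read off from $b_\eta$. Once that is set up, both sct-clauses follow immediately from the cdt-pattern data, and in particular no indiscernibility or combinatorial refinement of the tree is needed.
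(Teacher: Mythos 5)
Your proof is correct and follows essentially the same route as the paper's: both verify the two sct-pattern clauses directly by observing that $\psi$ applied to $b_\eta$ is the conjunction of the $\varphi_j(x;a_{\eta|j})$ along the path to $\eta$, so branch-consistency is inherited, and for incomparable $\eta\perp\nu$ the two $\psi$-instances each imply a $\varphi_{m+1}$-instance at a distinct immediate successor of $\mu=\eta\wedge\nu$, which the 2-inconsistency hypothesis makes jointly inconsistent. (Your indexing via $\psi_{l(\eta)+1}$ is a harmless reindexing of what the paper writes as $\psi_{l(\eta)}$; both sides have a small off-by-one in matching the components of $b_\eta$ to the parameter slots, but the argument is unaffected.)
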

\begin{proof}
If $\eta\in\omega^{n}$ for $n<\kappa$, then the set $\left\{ \psi_{i}\left(x,b_{\eta\restriction i}\right):i<n\right\} $
contains only conjunctions of formulas from $\left\{ \varphi_{i}\left(x,a_{\eta\restriction i}\right):i<n\right\} $
which is consistent by assumption. On the other hand if $\eta_{1},\eta_{2}\in\omega^{<\kappa}$
are incomparable, let $\eta=\eta_{1}\land\eta_{2}$. Then $\psi_{l\left(\eta_{1}\right)}\left(x,b_{\eta_{1}}\right)$
implies $\varphi_{l\left(\eta\right)+1}\left(x,a_{\eta\hat{\,}\eta_{1}\left(l\left(\eta\right)+1\right)}\right)$
and $\psi_{l\left(\eta_{2}\right)}\left(x,b_{\eta_{2}}\right)$ implies
$\varphi_{l\left(\eta\right)+1}\left(x,a_{\eta\hat{\:}\eta_{2}\left(l\left(\eta\right)+1\right)}\right)$,
and these two implied formulas are inconsistent by assumption.\end{proof}


Combining Propositions \ref{prop: finding sct,k pattern} and \ref{prop: finding cdt,2 pattern} with Lemma \ref{lem: cdt,2 gives sct}, we have:
\begin{prop} \label{prop: aleph_0 case}
If $\kappa_{\cdt}^{n}\geq\aleph_{0}$, then either $\kappa_{\inp}^{n}\geq\aleph_{0}$
or $\kappa_{\sct}^{n}\geq\aleph_{0}$.\end{prop}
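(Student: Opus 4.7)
The statement is essentially a corollary of the preceding three results, so my plan is to assemble them in the order the paper suggests and verify that the variable counts line up throughout.

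The argument is by contrapositive on $\kappa^{n}_{\inp}$. I assume $\kappa^{n}_{\cdt} \geq \aleph_{0}$ and $\kappa^{n}_{\inp} < \aleph_{0}$, and aim to conclude $\kappa^{n}_{\sct} \geq \aleph_{0}$. First, I invoke Proposition \ref{prop: finding sct,k pattern} directly: since $\kappa^{n}_{\cdt}\geq\aleph_{0}$ but $\kappa^{n}_{\inp} < \aleph_{0}$, the ``either/or'' in its conclusion forces the second alternative, so taking $k = \kappa^{n}_{\inp}$ we have $(\sct, k)$-patterns in $n$ variables of arbitrary finite depth.

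Next I feed this hypothesis into Proposition \ref{prop: finding cdt,2 pattern}, which then yields $(\cdt,2)$-patterns in $n$ variables of arbitrary finite depth. Finally, for each fixed $m < \omega$, applying Lemma \ref{lem: cdt,2 gives sct} with $\kappa = m$ to the depth-$m$ $(\cdt,2)$-pattern produces an $\sct$-pattern of depth $m$ in $n$ variables (the passage $b_{\eta} = a_{\eta \restriction 0} \cdots a_{\eta}$ groups the $a_{\eta \restriction i}$'s into the $y$-variables but leaves $x$, and hence $n$, untouched). Since $m$ was arbitrary, there exist $\sct$-patterns in $n$ variables of every finite depth, which is exactly the assertion $\kappa^{n}_{\sct} \geq \aleph_{0}$.

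The only point requiring attention — which I would spell out briefly rather than leave implicit — is the bookkeeping of the number of $x$-variables across the three invocations: Proposition \ref{prop: finding sct,k pattern} is stated for a fixed $n$; Proposition \ref{prop: finding cdt,2 pattern} preserves the number of variables explicitly; and Lemma \ref{lem: cdt,2 gives sct} alters only the parameter variables $y_{i}$ in passing to the $\psi_{i}$'s. So there is no genuine obstacle here: the work has already been done in the preceding propositions, and this statement just records the final combination.
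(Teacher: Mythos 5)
Your proposal is correct and is precisely the paper's argument: the paper itself states Proposition \ref{prop: aleph_0 case} as an immediate consequence of chaining Propositions \ref{prop: finding sct,k pattern} and \ref{prop: finding cdt,2 pattern} with Lemma \ref{lem: cdt,2 gives sct}, exactly as you do (via the contrapositive in $\kappa^n_{\inp}$). Your remark about tracking the number of $x$-variables is the right thing to be careful about, and your accounting of it is accurate.
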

\begin{rem}
Inspecting the proof, we actually get the following bound:  $\kappa^n_{\sct} \geq (\frac{\kappa^n_{\cdt}}{2})^{\frac{1}{\kappa^n_{\inp}}}$.
\end{rem}

The next proposition is an analog of Proposition \ref{prop: finding cdt,2 pattern} for $\inp$-patterns. It is not used in this paper, but we include it for reference.

\begin{prop}
Let $k <\omega$ be fixed. Assume that for any $n< \omega$ we have, in some fixed number of free variables, an $\inp$-pattern of depth $n$ such that each row is $k$-inconsistent.
Then there are, in the same number of variables, $\inp$-patterns of arbitrary finite depths in which every row is $2$-inconsistent.\end{prop}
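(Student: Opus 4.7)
The plan is to mimic the dichotomy in the proof of Proposition \ref{prop: finding cdt,2 pattern}, replacing the tree operations there by analogous operations on a mutually indiscernible array. I would proceed by induction on $k$; the case $k \leq 2$ is trivial, and for $k \geq 3$ it suffices, by the inductive hypothesis, to produce for each $n < \omega$ an $\inp$-pattern of depth $n$ with $\lceil k/2 \rceil$-inconsistent rows in the prescribed number of free variables.

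Fix $n$; apply the hypothesis at depth $n^{2}$ and use Lemma \ref{s-witness}(2) to extract a mutually indiscernible $\inp$-pattern $(a_{i,\alpha})_{i<n^{2},\, \alpha<\omega}$ with formulas $(\varphi_{i}(x;y_{i}))_{i<n^{2}}$ and $k$-inconsistent rows. For each $i < n$ I would set
\[
\Gamma_{i}(x) \;=\; \bigwedge_{l<n} \bigl( \varphi_{in+l}(x;a_{in+l,0}) \wedge \varphi_{in+l}(x;a_{in+l,1}) \bigr)
\]
and split on whether some $\Gamma_{i}$ is consistent (Case 1) or all $\Gamma_{i}$ are inconsistent (Case 2).

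In Case 1, fix such $i$, chunk row $in+l$ into consecutive pairs $b_{l,j} = (a_{in+l,2j}, a_{in+l,2j+1})$, and let $\psi_{l}(x;y,y') = \varphi_{in+l}(x;y) \wedge \varphi_{in+l}(x;y')$; mutual indiscernibility (applied row by row) makes $\{\psi_{l}(x;b_{l,\eta(l)}) : l < n\}$ equisatisfiable with $\Gamma_{i}$ for every $\eta : n \to \omega$, while any $\lceil k/2 \rceil$ instances of $\psi_{l}$ along a row unpack to $2\lceil k/2 \rceil \geq k$ distinct $\varphi_{in+l}$-instances and so are inconsistent. In Case 2, collapse the $i$-th block of rows into one new row via $\tilde b_{i,j} = (a_{in,j}, \ldots, a_{in+n-1,j})$ and $\tilde\psi_{i}(x;y_{0},\ldots,y_{n-1}) = \bigwedge_{l<n} \varphi_{in+l}(x;y_{l})$; for $j \neq j'$, mutual indiscernibility (swapping each row's pair one at a time) renders $\tilde\psi_{i}(x;\tilde b_{i,j}) \wedge \tilde\psi_{i}(x;\tilde b_{i,j'})$ equisatisfiable with $\Gamma_{i}$ and hence inconsistent, so each new row is directly $2$-inconsistent; consistency of $\{\tilde\psi_{i}(x;\tilde b_{i,\eta(i)}) : i < n\}$ reduces, after unpacking, to the diagonal choice $\eta'(in+l) = \eta(i)$ in the original $\inp$-pattern. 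Either case thus yields a depth-$n$ $\inp$-pattern with $\lceil k/2 \rceil$-inconsistent rows, closing the induction.

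The main subtlety is verifying that mutual indiscernibility of the original array genuinely controls the (in)consistency of the auxiliary $x$-formulas after rearrangement, so that the status of $\Gamma_{i}$ dictates the behavior of the new pattern. The key choice is to pack an entire block of $n$ rows into $\Gamma_{i}$; this is exactly what makes Case 2 collapse all the way to $2$-inconsistency rather than only to $\lceil k/2 \rceil$-inconsistency, in parallel with the elongation trick used for $\cdt$-patterns in Proposition \ref{prop: finding cdt,2 pattern}.
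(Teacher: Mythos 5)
Your proposal is correct and follows essentially the same route as the paper: identical depth-$n^{2}$ set-up, the same formulas $\Gamma_{i}$, the same Case 1/Case 2 split (pairing within one block versus collapsing a whole block), and the same observation that Case 1 yields $\lceil k/2\rceil$-inconsistency while Case 2 yields $2$-inconsistency outright. The only cosmetic difference is that you package the iteration as an explicit induction on $k$, whereas the paper simply says ``repeating the argument several times if necessary we conclude.''
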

\begin{proof}
Let $m\in\omega$ be arbitrary, and let $\left(a_{i,j}\right)_{i<m\times m,j\in\omega},\left(\varphi_{i}\left(x,y_{i}\right)\right)_{i<m\times m}$
be an $\inp$-pattern with mutually indiscernible rows such that every
row is $k$-inconsistent. For $i<m$, consider $\Gamma_{i}\left(x\right)=\bigwedge_{i\times m\leq l<\left(i+1\right)\times m}\left(\varphi_{l}\left(x,a_{l,0}\right)\land\varphi_{l}\left(x,a_{l,1}\right)\right)$.

\textbf{Case 1.} $ $ $\Gamma_{i}\left(x\right)$ is consistent for
some $i<m$.

Then for $l<m$ we take $\psi_{l}\left(x,b_{l,0}\right)=\varphi_{i\times m+l}\left(x,a_{i\times m+l,0}\right)\land\varphi_{i\times m+l}\left(x,a_{i\times m+l,1}\right)$
and $b_{l,j}=a_{i\times m+l,2j}a_{i\times m+l,2j+1}$.

\textbf{Case 2.} $\Gamma_{i}\left(x\right)$ is inconsistent for every
$i<m$.

Then for $l<m$ we take $\psi_{l}\left(x,b_{l,0}\right)=\bigwedge_{r<m}\varphi_{l\times m+r}\left(x,a_{l\times m+r,0}\right)$
and $b_{l,j}=\left(a_{l\times m+r,j}:r<m\right)$.

It is easy to see that in each of the cases $\left(b_{i,j}\right)_{i<m,j<\omega},\left(\psi_{i}\left(x,y_{i}\right)\right)_{i<m}$
is an $\inp$-pattern of depth $m$, and moreover it is $\max\left\{ 2,\left\lceil \frac{k}{2}\right\rceil \right\} $-inconsistent
($\left\lceil \frac{k}{2}\right\rceil $-inconsistent in the first
case and $2$-inconsistent in the second case). As $m$ was arbitrary,
this shows that there are $\inp$-pattern of arbitrarily large finite
depth with $\max\left\{ 2,\left\lceil \frac{k}{2}\right\rceil \right\} $-inconsistent rows. Repeating the argument several times if necessary we conclude.\end{proof}

Now we consider the case of countably infinite patterns.

\begin{prop} \label{aleph_1 case}
\(\kappa^n_{cdt} \geq \aleph_{1}\) implies \(\kappa^n_{sct} \geq \aleph_{1}\). 
\end{prop}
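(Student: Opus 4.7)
The plan is to adapt the chain of reductions underlying Proposition \ref{prop: aleph_0 case}---namely Propositions \ref{prop: finding sct,k pattern} and \ref{prop: finding cdt,2 pattern} together with Lemma \ref{lem: cdt,2 gives sct}---to depth $\omega_1$, relying on the cardinal arithmetic $\omega_1 \cdot 2 = \omega_1 \cdot \omega = \omega_1$. First, by Lemma \ref{s-witness}(1) (and a routine compactness extension of the modeling property to depth $\omega_1$), fix an $s$-indiscernible $\cdt$-pattern $(a_\eta)_{\eta \in \omega^{<\omega_1}}, (\varphi_i)_{i < \omega_1}, (k_i)_{i < \omega_1}$ in $n$ variables. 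I would then split on whether $\kappa^n_{\inp}$ is finite or infinite.

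If $\kappa^n_{\inp} \leq k$ is finite, I would run the proof of Proposition \ref{prop: finding sct,k pattern} verbatim with $\omega_1$ replacing the finite depth parameter $m$: the interleaving $\nu \mapsto \nu^{*}$ still maps $\omega^{<\omega_1}$ to itself, and the claim therein (using Lemma \ref{s-IndiscTreeProp}(2) together with the hypothesis $\kappa^n_{\inp} \leq k$ to exclude $\inp$-patterns of depth $k+1$) yields an $(\sct, k+1)$-pattern of depth $\omega_1$. Iterating the depth-$\omega_1$ analog of Proposition \ref{prop: finding cdt,2 pattern} $O(\log k)$ times reduces this to a $(\cdt, 2)$-pattern of depth $\omega_1$; Lemma \ref{lem: cdt,2 gives sct}, whose statement and proof extend verbatim from $\kappa \leq \omega$ to $\kappa = \omega_1$, then produces the desired $\sct$-pattern of depth $\omega_1$. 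Equivalently, the quantitative bound $\kappa^n_{\sct} \geq (\kappa^n_{\cdt}/2)^{1/\kappa^n_{\inp}}$ from the remark after Proposition \ref{prop: aleph_0 case} already delivers $\kappa^n_{\sct} \geq \aleph_1$ in this case.

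If instead $\kappa^n_{\inp} = \infty$, so $T$ has $\TP_2$ in $n$ variables, I would invoke the $\inp$-analog of Proposition \ref{prop: finding cdt,2 pattern} (the subsequent proposition in this section) to pass to $\inp$-patterns of arbitrary finite depth whose rows are $2$-inconsistent, then use Lemma \ref{s-witness}(2) and compactness to extend to an $\inp$-pattern $(a_{i,j})_{i < \omega_1, j < \omega}, (\varphi_i)_{i < \omega_1}$ of depth $\omega_1$ with mutually indiscernible, $2$-inconsistent rows. Defining $b_\eta = (a_{i, \eta(i)})_{i < l(\eta)}$ for $\eta \in \omega^{<\omega_1}$ and $\psi_\alpha(x; b_\eta) = \bigwedge_{i < \alpha} \varphi_i(x; y_i)$ gives an $\sct$-pattern of depth $\omega_1$: paths are consistent by column-consistency of the $\inp$-pattern, and any two incomparable $\eta, \nu$ differ at some coordinate $i$ where the $2$-inconsistency of row $i$ renders the conjunction $\psi_{l(\eta)}(x; b_\eta) \land \psi_{l(\nu)}(x; b_\nu)$ inconsistent.

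The principal obstacle is the first case: verifying that the iterative constructions of Propositions \ref{prop: finding sct,k pattern} and \ref{prop: finding cdt,2 pattern}, and of Lemma \ref{lem: cdt,2 gives sct}, transfer cleanly from arbitrary finite depth to depth $\omega_1$. Proposition \ref{preservation}(1) takes care of preservation of $s$-indiscernibility under the operations used, and the closure properties of $\omega_1$ under finite ordinal arithmetic ensure that the iteration terminates after finitely many rounds.
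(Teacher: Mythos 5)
There is a fundamental gap here, and it starts with a misreading of the hypothesis and conclusion. By definition, $\kappa^n_{\cdt}(T) \geq \aleph_1$ means exactly that there is a $\cdt$-pattern of depth $\omega$ (the cardinals below $\aleph_1$ are the finite ones and $\aleph_0$), not one of depth $\omega_1$; similarly, the target $\kappa^n_{\sct}(T) \geq \aleph_1$ requires producing a single $\sct$-pattern of depth $\omega$, not $\omega_1$. So you cannot ``fix an $s$-indiscernible $\cdt$-pattern $(a_\eta)_{\eta \in \omega^{<\omega_1}}$'' --- the hypothesis simply does not hand you such an object, and indeed if $\kappa^n_{\cdt} = \aleph_1$ exactly then no such pattern exists.

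Even after correcting the depths to $\omega$, the reduction chain you propose does not transfer. Propositions \ref{prop: finding sct,k pattern} and \ref{prop: finding cdt,2 pattern} operate on \emph{ordinal} tree depth, not cardinality: the interleaving of Proposition \ref{prop: finding sct,k pattern} consumes a tree of depth $2m$ to produce one of depth $m$, and the block construction of Proposition \ref{prop: finding cdt,2 pattern} consumes depth $m\times m$ to produce depth $m$. The cancellations $\omega_1 \cdot 2 = \omega_1$ and $\omega_1 \cdot \omega_1 = \omega_1$ you invoke are \emph{cardinal} identities; the corresponding ordinal identities fail at $\omega$ (indeed $\omega \cdot 2 > \omega$ and $\omega \cdot \omega > \omega$), and the proofs manipulate ordinal levels of a concrete tree, not cardinalities. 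In particular, the restriction to levels $\{im+l : l<m\}$ in Case 1 of Proposition \ref{prop: finding cdt,2 pattern} requires depth at least $(i+1)\omega$ when $m$ is replaced by $\omega$, which is strictly more than $\omega$. Consequently, the chain yields only $\sct$-patterns of \emph{arbitrary finite} depth --- i.e.\ $\kappa^n_{\sct} \geq \aleph_0$ --- not a single pattern of depth $\omega$. The ``quantitative bound'' in the remark after Proposition \ref{prop: aleph_0 case} has the same limitation: it is a statement about finite depths and does not glue to a depth-$\omega$ pattern, because the formulas $\varphi_i$ produced by the reductions change from one finite approximation to the next, and compactness requires a fixed formula at each level. (Your second case also misses the intermediate possibility $\aleph_0 \leq \kappa^n_{\inp} < \infty$, and relies on the $\inp$-analogue of Proposition \ref{prop: finding cdt,2 pattern}, which likewise only yields finite depths.)

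What the paper actually does is quite different and avoids these reductions entirely: starting from a \emph{single} $s$-indiscernible $\cdt$-pattern of depth $\omega$, it performs an inductive, level-by-level surgery. At stage $n$ it keeps levels $\leq n$ fixed while applying stretching and widening (which preserve $s$-indiscernibility and do not destroy earlier levels) to force pairwise inconsistency among siblings at level $n+1$; condition (4) in the construction ensures the finitely-many early levels stabilize, so the diagonal limit $(a^\infty_\eta)$, $(\varphi^\infty_n)$ is a well-defined $\sct$-pattern of depth $\omega$. This is the idea you would need, and it is not recoverable by iterating the finite-depth propositions.
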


\begin{proof}
Suppose \((\varphi_{i} : i < \omega)\), \((a_{\eta})_{\eta \in \omega^{<\omega}}\) is a cdt-pattern.  By replacing \(a_{\eta}\) with \(b_{\eta}  = (a_{\emptyset}, a_{\eta | 1}, \ldots, a_{\eta | l(\eta) -1}, a_{\eta})\) and \(\varphi_{i}(x;a_{\eta})\) by 
\[
\psi_{i}(x;b_{\eta}) := \bigwedge_{j \leq i} \varphi_{j}(x;a_{\eta | j }),
\]
if necessary, we may assume that if \(\nu \vartriangleleft \eta\), then
\[
\models (\forall x)[\varphi_{l(\eta)}(x;a_{\eta}) \to \varphi_{l(\nu)}(x;a_{\nu})].
\]
Then by replacing \((a_{\eta})_{\eta \in \omega^{<\omega}}\) by an \(s\)-indiscernible tree locally based on it, we may moreover assume the \((a_{\eta})_{\eta \in \omega^{<\omega}}\) are \(s\)-indiscernible by Fact \ref{modeling}.  

By induction, we will construct cdt-patterns \((\varphi_{i}^{n} : i < \omega)\), \((a^{n}_{\eta})_{\eta \in \omega^{<\omega}}\) so that 
\begin{enumerate}
\item \((a^{n}_{\eta})_{\eta \in \omega^{<\omega}}\) is \(s\)-indiscernible.
\item For all \(\eta \in \omega^{<n}\) and \(i < j\),
\[
\{\varphi^{n}_{l(\eta)+1}(x;a^{n}_{\eta \frown \langle i \rangle}), \varphi^{n}_{l(\eta)+1}(x;a^{n}_{\eta \frown \langle j \rangle})\}
\]
is inconsistent.
\item If \(\nu \vartriangleleft \eta\), then 
\[
\models (\forall x)[\varphi^{n}_{l(\eta)}(x;a^{n}_{\eta}) \to \varphi^{n}_{l(\nu)}(x;a^{n}_{\nu})].  
\]
\item For all \(\eta\), if \(n,n' \geq l(\eta)\), then \(a_{\eta}^{n} = a^{n'}_{\eta}\). For all \(m \leq m'\), \(\varphi^{m'}_{m} = \varphi^{m}_{m}\).  
\end{enumerate}
For the base case, let \(\varphi^{0}_{i} = \varphi_{i}\) for all \(i\) and \(a_{\eta}^{0} = a_{\eta}\) for all \(\eta\).  (1) is satisfied by assumption, (2) is vacuous, and (3) follows from the initial remarks above.  Now suppose we have constructed \((\varphi^{n}_{i} : i < \omega)\) and \((a^{n}_{\eta})_{\eta \in \omega^{<\omega}}\).  By definition of a cdt-pattern, there is a least \(k\geq 1\) so that 
\[
\bigcup_{i < 2^{k}} \{\varphi^{n}_{n+1+j}(x;a^{n}_{0^{n} \frown \langle i \rangle \frown 0^{j}}) : j < \omega \} 
\]
is inconsistent.  By compactness, there is \(N\) so that 
\begin{equation}\label{labelled}
 \bigcup_{i < 2^{k}} \{\varphi^{n}_{n+1+j}(x;a_{0^{n} \frown \langle i \rangle \frown 0^{j}}) : j < N\} 
\end{equation}
is inconsistent.  Let \((b_{\eta})_{\eta \in \omega^{<\omega}}\) be the \(N\)-fold stretch of \((a^{n})_{\eta \in \omega^{<\omega}}\) at level \(n\).  Let \((\psi_{i}(x;z_{i}): i < \omega)\) be defined as follows: for \(i \leq n\), \(z_{i} = y_{i}\) and \(\psi_{i}(x;z_{i}) = \varphi_{i}(x;y_{i})\).  Let \(z_{n+1} = (y_{n+1}, y_{n+2}, \ldots , y_{n+N})\) and 
\[
\psi_{n+1}(x;z_{n+1}) = \bigwedge_{j<N} \varphi^{n}_{n+1+j}(y;y_{n+1+j}).
\]
Finally, for \(i > n+1\), let \(z_{i} = y_{i+N-1}\) and \(\psi_{i}(x;z_{i}) = \varphi_{i+N-1}(x;y_{i+N-1})\).  By Lemma \ref{stretch}, \((b_{\eta})_{\eta \in \omega^{<\omega}}\) is an \(s\)-indiscernible tree and, by construction, \((\psi_{i}(x;z_{i}) : i < \omega)\), \((b_{\eta})_{\eta \in \omega^{<\omega}}\) is a cdt-pattern.  Moreover, this cdt-pattern satisfies
\begin{enumerate}
\setcounter{enumi}{4}
\item \(\{\psi_{n+1}(x;b_{0^{n} \frown \langle i \rangle}) : i < 2^{k}\}\) is inconsistent and 
\item \(\{\psi_{n+1+j}(x;b_{0^{n} \frown \langle i \rangle \frown 0^{j}}) : i < 2^{k-1}, j < \omega \} \cup \{\psi_{l}(x;b_{0^{l}}) : l < \omega \}\) is consistent.  
\end{enumerate} 
Condition (5) follows by the inconsistency \(\eqref{labelled}\) and the definition of \(\psi_{n+1}\).  To see \((6)\), we note that by the minimality of \(k\), 
\[
\{\psi_{n+1+j}(x;b_{0^{n} \frown \langle i \rangle \frown 0^{j}}) : i < 2^{k-1}, j < \omega \}
\]
is consistent.  By \((3)\) above and the definition of the \(\psi_{m}\), this establishes \((6)\).  

Let \((c_{\eta})_{\eta \in \omega^{<\omega}}\) be the \(2^{k-1}\)-fold widening of \((b_{\eta})_{\eta \in \omega^{<\omega}}\) at level \(n+1\).  Let \((\chi_{i}(x;w_{i}) : i < \omega)\) be defined as follows:  if \(i < n+1\), let \(w_{i} = z_{i}\) and \(\chi_{i}(x;w_{i}) = \psi_{i}(x;z_{i})\).  If \(i \geq n+1\), let \(w_{i} = (z_{i}^{0}, \ldots, z^{2^{k-1}-1}_{i})\)  a tuple of variables consisting of \(2^{k-1}\) copies of \(z_{i}\).  Then put 
\[
\chi_{i}(x;w_{i}) = \bigwedge_{j < 2^{k-1}} \psi_{i}(x;z_{i}^{j}).
\]
By Lemma \ref{widening}, \((c_{\eta})_{\eta \in \omega^{<\omega}}\) is \(s\)-indiscernible and, by construction, \((\chi_{i}(x;w_{i}) : i < \omega)\), \((c_{\eta})_{\eta \in \omega^{<\omega}}\) is a cdt-pattern and, moreover, if \(i \neq j\) 
\[
\{\chi_{n+1}(x;c_{0^{n} \frown \langle i \rangle}) , \chi_{n+1}(x;c_{0^{n} \frown \langle j \rangle})\}
\]
is inconsistent.  For all \(m < \omega\) and \(\eta \in \omega^{<\omega}\), define \(\varphi^{n+1}_{m} = \xi_{m}\) and \(a^{n+1}_{\eta} = c_{\eta}\).  We have satisfied requirements (1)-(3) and since our construction did not modify the formulas and parameters with level at most \(n\), the construction never injures requirement (4).  

Finally, define a cdt-pattern \((\varphi^{\infty}_{n} : n < \omega)\), \((a^{\infty}_{\eta})_{\eta \in \omega^{<\omega}}\) by \(\varphi^{\infty}_{n} = \varphi^{n}_{n}\) and \(a^{\infty}_{\eta} = a^{l(\eta)}_{\eta}\).  Our construction gives
\begin{enumerate}
\setcounter{enumi}{6}
\item \((a^{\infty}_{\eta})_{\eta \in \omega^{<\omega}}\) is \(s\)-indiscernible.
\item If \(\eta \in \omega^{\omega}\), \(\{\varphi^{\infty}(x;a^{\infty}_{\eta | n}) : n < \omega \}\) is consistent.
\item If \(\nu \vartriangleleft \eta\), then $\models (\forall x)[\varphi^{\infty}_{l(\eta)}(x;a^{\infty}_{\eta}) \to \varphi^{\infty}_{l(\nu)}(x;a^{\infty}_{\nu})].$
\item For all \(n\), and \(i \neq j\) $\{\varphi^{\infty}_{n+1}(x;a^{\infty}_{0^{n} \frown \langle i \rangle}) , \varphi^{\infty}_{n+1}(x;a^{\infty}_{0^{n} \frown \langle j \rangle}) \}$ is inconsistent.  
\end{enumerate}
By \(s\)-indiscernibility, (9) and (10) imply that if \(\eta \perp \nu\), then
\[
\{\varphi^{\infty}_{l(\eta)}(x;a^{\infty}_{\eta}), \varphi^{\infty}_{l(\nu)}(x;a^{\infty}_{\nu})\}
\]
is inconsistent.  This shows \((\varphi^{\infty}_{n} : n < \omega )\) and \((a^{\infty}_{\eta})_{\eta \in \omega^{<\omega}}\) form an \(sct\)-pattern.  We have thus shown \(\kappa^n_{sct} \geq \aleph_{1}\).
\end{proof}

We obtain the main theorem of this section.
\begin{thm}
If \(T\) is countable, then $\kappa_{\text{cdt}}(T) = \kappa_{\sct}(T) + \kappa_{\inp}(T)$.  Moreover, $\kappa^{n}_{\cdt}(T) = \kappa^{n}_{\sct}(T) + \kappa^{n}_{\inp}(T)$, provided $\kappa^{n}_{\text{cdt}}(T)$ is infinite.  
\end{thm}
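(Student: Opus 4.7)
The strategy is to prove the per-$n$ identity in the moreover clause and then extract the main equation by taking suprema. The easy inequality $\kappa^n_{\sct}(T) + \kappa^n_{\inp}(T) \leq \kappa^n_{\cdt}(T)$ follows from Observation~\ref{basicobs}(3) together with the fact that cardinal addition collapses to the maximum when at least one summand is infinite, so the work is in the reverse direction. Since $T$ is countable, the per-$n$ version of Observation~\ref{basicobs}(2) (the same compactness-and-pigeonhole argument, restricted to formulas in $n$ free variables) confines any infinite value of $\kappa^n_{\cdt}(T)$ to $\aleph_0$, $\aleph_1$, or $\infty$.

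I treat each of these three subcases in turn. For $\kappa^n_{\cdt}(T) = \aleph_0$, Proposition~\ref{prop: aleph_0 case} yields $\kappa^n_{\sct}(T) \geq \aleph_0$ or $\kappa^n_{\inp}(T) \geq \aleph_0$. For $\kappa^n_{\cdt}(T) = \aleph_1$, Proposition~\ref{aleph_1 case} gives $\kappa^n_{\sct}(T) \geq \aleph_1$, which Observation~\ref{basicobs}(3) pins at $\aleph_1$. For $\kappa^n_{\cdt}(T) = \infty$, I would re-enter the pipeline of Propositions~\ref{prop: finding sct,k pattern}, \ref{prop: finding cdt,2 pattern} and Lemma~\ref{lem: cdt,2 gives sct} starting from a single formula $\varphi(x;y)$ with $|x|=n$ having $\TP$, which exists by the per-$n$ form of Observation~\ref{basicobs}(2)(a). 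Each step in that pipeline preserves the sort of $x$ and produces patterns whose formulas are Boolean combinations of instances of $\varphi$, so we obtain either an $\inp$-pattern or an $\sct$-pattern of arbitrary finite depth in $n$ variables with constituent formulas drawn from a fixed countable set. A final pigeonhole over this countable set, followed by compactness, extracts a single formula in $n$ variables witnessing $\TP_1$ or $\TP_2$, and so $\kappa^n_{\sct}(T) = \infty$ or $\kappa^n_{\inp}(T) = \infty$.

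The main statement follows by taking suprema. By Observation~\ref{basicobs}(1), each of $\kappa_{\cdt}, \kappa_{\sct}, \kappa_{\inp}$ is at least $\aleph_0$, so the sum on the right equals the maximum. When $\kappa_{\cdt}(T) = \aleph_0$, the trivial bound $\max(\kappa_{\sct}(T),\kappa_{\inp}(T)) \geq \aleph_0$ suffices. When $\kappa_{\cdt}(T) \in \{\aleph_1, \infty\}$, a countable supremum of values lying in $\{\text{finite}, \aleph_0, \aleph_1\}$ cannot reach $\aleph_1$ and cannot equal $\infty$ unless some term is already $\infty$; hence some $n$ realizes $\kappa^n_{\cdt}(T) = \kappa_{\cdt}(T)$ as an infinite value, the moreover applies at that $n$, and the equality transfers to the global invariants. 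The main obstacle is the $\infty$ subcase of the moreover, since the per-depth propositions of this section only produce depth-$\omega$ patterns out of the box, and the promotion to $\TP_1$ or $\TP_2$ witnesses in a single formula genuinely needs the extra countability-plus-pigeonhole step.
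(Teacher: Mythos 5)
Your overall architecture matches the paper's: reduce to the per-$n$ \emph{moreover} clause, bound the possible infinite values of $\kappa^{n}_{\cdt}(T)$ to $\{\aleph_{0},\aleph_{1},\infty\}$ via the per-$n$ form of Observation~\ref{basicobs}(2), handle $\aleph_{0}$ by Proposition~\ref{prop: aleph_0 case}, handle $\aleph_{1}$ by Proposition~\ref{aleph_1 case}, and recover the global identity by taking suprema (using that $\aleph_{1}$ is regular so the sup of countably many values $\leq\aleph_{1}$ can only hit $\aleph_{1}$ or $\infty$ if some term already does). The divergence is in the $\infty$ subcase. The paper simply cites Shelah's dichotomy (Fact~\ref{dichotomy}) here, implicitly in the per-formula form in which Shelah proved it, whereas you propose to re-derive the conclusion from the finite-depth pipeline of Propositions~\ref{prop: finding sct,k pattern}, \ref{prop: finding cdt,2 pattern} and Lemma~\ref{lem: cdt,2 gives sct} followed by a pigeonhole-plus-compactness step, and that step is where there is a genuine gap.

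The pipeline does not produce what you would need to pigeonhole. In the $\inp$ branch, Proposition~\ref{prop: finding sct,k pattern} never constructs $\inp$-patterns --- it merely invokes the hypothesis ``$\kappa^{n}_{\inp}\geq\aleph_{0}$'' as one horn of its dichotomy --- so there is no controlled countable stock of formulas to pigeonhole over, and $\kappa^{n}_{\inp}\geq\aleph_{0}$ for a countable theory is compatible with $\kappa^{n}_{\inp}=\aleph_{1}\neq\infty$. In the $\sct$ branch, the level-$i$ formula of the patterns produced is a conjunction of a number of instances of $\varphi$ that grows with the ambient depth: in Lemma~\ref{lem: cdt,2 gives sct} the formula $\psi_{i}(x;\bar{y}_{i})$ has $i$ many parameter blocks, and in Case~2 of Proposition~\ref{prop: finding cdt,2 pattern} the formula is a conjunction of $m$ instances of $\varphi$ where $m$ is the target depth. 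These formulas are pairwise distinct, so compactness yields at best an $\sct$-pattern of depth $\omega$ with all-distinct level-formulas, and that gives only $\kappa^{n}_{\sct}\geq\aleph_{1}$, not $=\infty$. To pigeonhole down to a single formula you would need a pattern of depth strictly greater than $|T|=\aleph_{0}$ whose level-formulas are drawn from a countable set, which is exactly the content (and the difficulty) of Shelah's dichotomy; the finite-depth constructions of this section are not designed to produce it. If you want to avoid citing Fact~\ref{dichotomy}, you need to supply the full per-formula version of Shelah's argument, not the $\aleph_{0}$-depth pipeline.
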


\begin{proof}
By Observation \ref{basicobs}, $\kappa^{n}_{\text{cdt}}(T) \geq n$ for any $T$ and $\kappa_{\text{cdt}}(T) > |T|^{+}$ if and only if $\kappa_{\text{cdt}}(T) = \infty$. It follows that, for countable theories, the possible values of $\kappa_{\cdt}(T)$, and the only possible infinite values of \(\kappa^{n}_{\text{cdt}}(T)\), are $\aleph_{0}$, $\aleph_{1}$, and \(\infty\).  The case of \(\aleph_{0}\) is treated in Proposition \ref{prop: aleph_0 case}, \(\aleph_{1}\) is handled by Proposition \ref{aleph_1 case}, and for \(\infty\) the result follows from Shelah's theorem (Fact \ref{dichotomy}).
\end{proof}

\section{$\TP_1$ and weak $k-\TP_1$}\label{TP1}

Say that a subset \(\{\eta_{i} : i < k\} \subseteq \omega^{<\omega}\) is a collection of \emph{distant siblings} if given \(i \neq i'\), \(j \neq j'\), all of which are \(<k\), \(\eta_{i} \wedge \eta_{i'} = \eta_{j} \wedge \eta_{j'}\).  

\begin{defn}\label{weak}
Fix \(k \geq 2\).  
\begin{enumerate}
\item The formula \(\varphi(x;y)\) has $\SOP_2$ if there is a collection of tuples \((a_{\eta})_{\eta \in 2^{<\omega}}\) satisfying the following.
\begin{enumerate}
\item For all \(\eta \in 2^{\omega}\), \(\{\varphi(x;a_{\eta | \alpha}) : \alpha < \omega\}\) is consistent.
\item   If \(\eta,\nu \in 2^{<\omega}\) and \(\eta \perp \nu\), then \(\{\varphi(x;a_{\eta}), \varphi(x;a_{\nu})\}\) is inconsistent.  
\end{enumerate}
\item The formula \(\varphi(x;y)\) has \emph{weak k-TP}\(_{1}\) if there is a collection of tuples \((a_{\eta})_{\eta \in \omega^{<\omega}}\) satisfying the following.
\begin{enumerate}
\item For all \(\eta \in \omega^{\omega}\), \(\{\varphi(x;a_{\eta | \alpha}) : \alpha < \omega\}\) is consistent.
\item If \(\{\eta_{i} : i < k\} \subseteq \omega^{<\omega}\) is a collection of distinct distant siblings, then \(\{\varphi(x;a_{\eta_{i}}) : i < k\}\) is inconsistent.  
\end{enumerate}
\item The formula \(\varphi(x;y)\) has \emph{k-TP}\(_{1}\) if there is  a collection of tuples \((a_{\eta})_{\eta \in \omega^{<\omega}}\) satisfying the following.
\begin{enumerate}
\item For all \(\eta \in \omega^{\omega}\), \(\{\varphi(x;a_{\eta | \alpha}) : \alpha < \omega\}\) is consistent.
\item If \(\{\eta_{i} : i < k\} \subseteq \omega^{<\omega}\) is a collection of distinct pairwise incomparable nodes, then \(\{\varphi(x;a_{\eta_{i}}) : i < k\}\) is inconsistent.  
\end{enumerate}
\item The theory \(T\) has either of the above properties if some formula does.  
\end{enumerate}
\end{defn}

We remark that $\TP_1$ is equivalent to $\SOP_2$ in a strong way:

\begin{fact} \label{SOP2iffTP1}
If a theory has $\TP_1$ witnessed by a formula \(\varphi\), then the theory also has $\SOP_2$ witnessed by the same formula, and vice versa.
\end{fact}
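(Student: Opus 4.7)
The plan is to prove both implications directly, by tree-coding arguments, with no need to invoke any indiscernibility reductions. For $\TP_1 \Rightarrow \SOP_2$, I would simply restrict a $\TP_1$-witness $(a_\eta)_{\eta \in \omega^{<\omega}}$ for $\varphi$ to the binary subtree $2^{<\omega} \subseteq \omega^{<\omega}$: every infinite branch of $2^{<\omega}$ is also an infinite branch of $\omega^{<\omega}$, and incomparability in $2^{<\omega}$ coincides with incomparability in $\omega^{<\omega}$ on binary nodes, so both conditions defining $\SOP_2$ for $\varphi$ transfer immediately.

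For the converse $\SOP_2 \Rightarrow \TP_1$, the idea is to embed $\omega^{<\omega}$ into $2^{<\omega}$ via the standard unary-style coding $f:\omega^{<\omega} \to 2^{<\omega}$ given by
\[
f(\langle n_0, \ldots, n_{k-1}\rangle) = 0^{n_0} \frown 1 \frown 0^{n_1} \frown 1 \frown \cdots \frown 0^{n_{k-1}} \frown 1,
\]
and to set $b_\nu := a_{f(\nu)}$, where $(a_\eta)_{\eta \in 2^{<\omega}}$ is the given $\SOP_2$-witness for $\varphi$. The key properties of $f$ to verify are: (i) if $\nu \vartriangleleft \nu'$ in $\omega^{<\omega}$ then $f(\nu) \vartriangleleft f(\nu')$ in $2^{<\omega}$, and (ii) if $\nu \perp \nu'$ in $\omega^{<\omega}$ then $f(\nu) \perp f(\nu')$ in $2^{<\omega}$. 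Property (ii) follows by writing $\mu = \nu \wedge \nu'$, $\nu = \mu \frown \langle m \rangle \frown \xi$, and $\nu' = \mu \frown \langle m' \rangle \frown \xi'$ with $m < m'$: then $f(\nu)$ carries a $1$ at position $|f(\mu)| + m$, while $f(\nu')$ carries a $0$ at that same position.

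Given (i) and (ii), the $\TP_1$ conditions for $(b_\nu)_{\nu \in \omega^{<\omega}}$ follow directly: for a branch $\eta \in \omega^\omega$, the tuples $f(\eta|\alpha)$ are all initial segments of the single branch $\rho = 0^{\eta(0)} \frown 1 \frown 0^{\eta(1)} \frown 1 \frown \cdots \in 2^\omega$, so $\{\varphi(x;b_{\eta|\alpha}) : \alpha < \omega\}$ is a subset of the consistent path condition along $\rho$; and if $\nu \perp \nu'$ in $\omega^{<\omega}$ then $f(\nu) \perp f(\nu')$ in $2^{<\omega}$, so $\SOP_2$-inconsistency applied to the images yields $\{\varphi(x;b_\nu), \varphi(x;b_{\nu'})\}$ inconsistent. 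I do not foresee a genuine obstacle: the entire content is the observation that the $0^n 1$-coding preserves comparability and incomparability, and the verification of that fact is straightforward.
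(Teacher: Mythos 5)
Your argument is correct and essentially the same as the paper's (which cites Adler): both directions hinge on a unary coding $\omega^{<\omega}\hookrightarrow 2^{<\omega}$ preserving comparability and incomparability. The paper's coding appends $1^i\frown 0$ where yours appends $0^{n}\frown 1$; these are interchangeable, and your careful verification of properties (i) and (ii) simply fills in what the paper dismisses as ``straightforward to check,'' while the restriction to $2^{<\omega}$ is the paper's ``obvious'' converse.
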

We recall the argument from \cite{Adler}.  Suppose \(\varphi(x;y)\) witnesses $\SOP_2$ with respect to the tree of parameters \((b_{\eta})_{\eta \in 2^{<\omega}}\).  Define a map \(h: \omega^{<\omega} \to 2^{<\omega}\) recursively by \(h(\emptyset) = \emptyset\) and \(h(\beta \frown \langle i \rangle) = h(\beta) \frown 1^{i} \frown 0\), where \(1^{i}\) denotes the all \(1\)'s sequence of length \(i\).  It is straightforward to check that \(\varphi(x;y)\) witnesses $\TP_1$ with respect to the parameters \((b_{h(\eta)})_{\eta \in \omega^{<\omega}}\). The converse is obvious. Although $\SOP_2$ and $\TP_1$ are equivalent, it will be important for us to notationally distinguish them, as various combinatorial constructions are simplified by a judicious choice of the index set.  

In \cite{KimKimNTP1}, Kim and Kim show that $k$-TP$_1$ is equivalent to $\TP_1$ for all $k\geq 2$, but the questions of whether weak $k$-TP$_1$ is equivalent to $\TP_1$ was left unresolved.  Using strongly indiscernible trees, we settle this, as well as show that $\TP_1$ is always witnessed by a formula in a single free variable. 

\subsection{Finding and manipulating indiscernible witnesses}

\begin{lem} \label{StronglyIndiscSOP2}

 \begin{enumerate}
  \item If \(T\) has weak k-TP\(_{1}\) witnessed by \(\varphi(x;y)\) then there is a strongly indiscernible tree \((a_{\eta})_{\eta \in \omega^{<\omega}}\) witnessing this.
  \item If \(\varphi(x;y)\) has $\TP_1$ then there is a strongly indiscernible tree witnessing this.  
\item If $\varphi(x,y)$ has $\SOP_2$, then there is a strongly indiscernible tree \((a_{\eta})_{\eta \in 2^{< \omega}}\) witnessing this.  
 \end{enumerate}
\end{lem}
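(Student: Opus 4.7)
The plan is to use the modeling property from Fact~\ref{modeling} to extract strongly indiscernible tree witnesses in each case, exploiting the fact that the combinatorial conditions appearing in the definitions of weak $k$-TP$_{1}$, TP$_{1}$, and SOP$_{2}$ (being a chain, being pairwise incomparable, being distant siblings) are all expressible by quantifier-free $L_{0}$-formulas in the index tuples.

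For (1), suppose $\varphi(x;y)$ witnesses weak $k$-TP$_{1}$ with the tree $(a_{\eta})_{\eta \in \omega^{<\omega}}$. Apply Fact~\ref{modeling} to obtain a strongly indiscernible tree $(a'_{\eta})_{\eta \in \omega^{<\omega}}$ locally based on $(a_{\eta})$. To verify path-consistency, fix $\eta \in \omega^{\omega}$ and $n < \omega$: the tuple $(\eta|0, \ldots, \eta|(n-1))$ is a chain in $L_{0}$, so the modeling property with $\Delta = \{\varphi\}$ produces a chain $(\nu_{0}, \ldots, \nu_{n-1})$ in $\omega^{<\omega}$ along which the $\varphi$-type agrees; extending $(\nu_{i})$ to any path and using consistency in the original witness gives consistency of $\{\varphi(x;a'_{\eta|i}) : i < n\}$, hence of the whole path by compactness. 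For inconsistency on $k$ distant siblings $\{\eta_{i} : i < k\}$, observe that being distant siblings is expressible by the $L_{0}$-equations $\eta_{i} \wedge \eta_{i'} = \eta_{j} \wedge \eta_{j'}$; the modeling property, with $\Delta = \{\varphi\}$, yields distant siblings $\{\nu_{i} : i < k\}$ in the original tree whose $\varphi$-type matches, and inconsistency transfers.

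Part (2) is identical, with ``pairwise incomparable'' in place of ``distant siblings'': pairwise incomparability is likewise quantifier-free in $L_{0}$, e.g.\ via $\eta_{i} \wedge \eta_{j} \neq \eta_{i}$ and $\eta_{i} \wedge \eta_{j} \neq \eta_{j}$ for $i \neq j$.

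For (3), since Fact~\ref{modeling} is stated for $\omega^{<\omega}$-indexed trees, I would not apply modeling directly on $2^{<\omega}$. Instead, use Fact~\ref{SOP2iffTP1} to read the SOP$_{2}$ witness as a TP$_{1}$ witness (by composing with the map $h$), apply (2) to obtain a strongly indiscernible tree $(c_{\eta})_{\eta \in \omega^{<\omega}}$ witnessing TP$_{1}$, and then restrict to $2^{<\omega} \subseteq \omega^{<\omega}$. The restriction is strongly indiscernible as a $2^{<\omega}$-indexed tree because the $L_{0}$-qftp of any tuple from $2^{<\omega}$ coincides with its $L_{0}$-qftp computed in the ambient $\omega^{<\omega}$; the SOP$_{2}$ requirements along paths and on incomparable pairs in $2^{<\omega}$ are inherited directly from the TP$_{1}$ conditions on the ambient tree. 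The only delicate point is the straightforward bookkeeping that each combinatorial condition we want to preserve is quantifier-free in $L_{0}$, which is immediate by inspection.
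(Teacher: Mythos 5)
Your proposal takes essentially the same route as the paper in all three parts: (1) apply the modeling property and check that the two combinatorial conditions are expressible by quantifier-free $L_0$-formulas in the index tuples; (2) reduce to (1); (3) pass through TP$_1$ via Fact~\ref{SOP2iffTP1}, apply (2), then observe that $2^{<\omega}$ is an $L_0$-substructure of $\omega^{<\omega}$ and restrict. Part (3) in particular mirrors the paper exactly, including the observation that modeling cannot be applied directly on $2^{<\omega}$.

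There is, however, one imprecision in the transfer step for parts (1) and (2). You invoke the modeling property with $\Delta = \{\varphi\}$, but the $\Delta$-type of a tuple of \emph{parameters} $(b_{\nu_0}, \ldots, b_{\nu_{n-1}})$ for $\Delta = \{\varphi(x;y)\}$ carries no information about whether $\{\varphi(x;b_{\nu_i}) : i < n\}$ is consistent: that fact is encoded by the auxiliary formula $\psi(y_0,\ldots,y_{n-1}) = (\exists x)\bigwedge_{i<n}\varphi(x;y_i)$, which is not an instance of $\varphi$ in the free variables at hand. The paper therefore takes $\Delta$ to contain $\psi$ (and similarly $\neg\psi$ for the inconsistency side). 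As written, preserving only the $\varphi$-type lets the combinatorics ($L_0$-qftp) transfer but does not license the conclusion that consistency along paths, or inconsistency on distant siblings or incomparable tuples, is inherited. The fix is trivial --- enlarge $\Delta$ to include the relevant existential formulas --- but as stated the step does not produce the claimed conclusion.
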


\begin{proof}
(1)  This was observed in \cite{TakeuchiTsuboi}, but we sketch a proof here for completeness.  Let \((b_{\eta})_{\eta \in \omega^{<\omega}}\) be a tree of tuples with respect to which \(\varphi(x;y)\) witnesses weak $k$-TP\(_{1}\).  Let \((a_{\eta})_{\eta \in \omega^{<\omega}}\) be locally based on the tree \((b_{\eta})_{\eta \in \omega^{<\omega}}\).  Suppose \(\eta_{0}, \ldots, \eta_{n-1} \in \omega^{<\omega}\) lie along a path and let \(\psi(y_{0}, \ldots, y_{n-1})\) denote the formula \((\exists x)\bigwedge_{i < n} \varphi(x;y_{i})\).  Then there are \(\nu_{0}, \ldots, \nu_{n-1} \in \omega^{<\omega}\) so that 
\[
\text{qftp}_{L_{0}}(\eta_{0}, \ldots, \eta_{n-1}) = \text{qftp}_{L_{0}}(\nu_{0}, \ldots, \nu_{n-1})
\]
and 
\[
\text{tp}_{\psi}(a_{\eta_{0}}, \ldots, a_{\eta_{n-1}}) = \text{tp}_{\psi}(b_{\nu_{0}}, \ldots, b_{\nu_{n-1}}).
\]
The first equality implies that \(\nu_{0}, \ldots, \nu_{n-1}\) all lie along a path so \(\{\varphi(x;b_{\nu_{i}}) : i < n\}\) is consistent.  By the second equality, \(\{\varphi(x;a_{\eta_{i}}) : i < n\}\) is consistent.  By compactness, this shows that all paths are consistent.  Showing that any $k$ distinct distant siblings remain inconsistent is similar.  So \(\varphi(x;y)\) witnesses weak $k$-TP\(_{1}\) with respect to the tree \((a_{\eta})_{\eta \in \omega^{<\omega}}\).  

(2)  This follows from (1) as weak \(2\)-TP\(_{1}\) and $\TP_1$ are the same.  

(3)  By Fact \ref{SOP2iffTP1}, $\varphi(x,y)$ has $\TP_1$. Now by (2), we may find a strongly indiscernible tree \((a_{\eta})_{\eta \in \omega^{<\omega}}\) such that \(\varphi\) witnesses $\TP_1$ with respect to \((a_{\eta})_{\eta \in \omega^{<\omega}}\). Making the identification \(2^{<\omega} = \{\eta \in \omega^{<\omega} : \eta(k) \in \{0,1\} \mbox{ for all } k < l(\eta)\}\), it is easy to see that \((2^{<\omega}, \vartriangleleft, <_{lex}, \wedge)\) is an \(L_{0}\)-substructure of \((\omega^{<\omega}, \unlhd, <_{lex}, \wedge)\) since \(2^{<\omega}\) is closed under the \(\wedge\)-function and all the symbols in \(L_{0}\) acquire their natural interpretation on \(2^{<\omega}\) via restriction from \(\omega^{<\omega}\).  It follows that if \(\eta_{0}, \ldots, \eta_{n-1}\) and \(\nu_{0}, \ldots, \nu_{n-1}\) are two sequences from \(2^{<\omega}\) with 
\[
\text{qftp}_{L_{0}}(\eta_{0}, \ldots, \eta_{n-1}) = \text{qftp}_{L_{0}}(\nu_{0}, \ldots, \nu_{n-1})
\]
in \(2^{<\omega}\), then this equality also holds in \(\omega^{<\omega}\) and hence
\[
\text{tp}(a_{\eta_{0}}, \ldots, a_{\eta_{n-1}}) = \text{tp}(a_{\nu_{0}}, \ldots, a_{\nu_{n-1}}),
\]
so \((a_{\eta})_{\eta \in 2^{<\omega}}\) is strongly indiscernible.  Moreover, paths in \(2^{\omega}\) are paths also in \(\omega^{\omega}\) and incomparables in \(2^{<\omega}\) remain incomparable when considered as elements in \(\omega^{<\omega}\) so it is clear that \(\varphi(x;y)\) will witness $\SOP_2$ with respect to \((a_{\eta})_{\eta \in 2^{<\omega}}\).  
\end{proof}

\begin{rem}
We \emph{aren't} making the (ostensibly) stronger claim that if \(\varphi(x;y)\) witnesses $\SOP_2$ with respect to the tree \((b_{\eta})_{\eta \in 2^{<\omega}}\) then there is a strongly indiscernible tree \((a_{\eta})_{\eta \in 2^{<\omega}}\) \emph{based} on it --- the proof of the existence of a strongly indiscernible tree witness involved going through $\TP_1$ and then restricting.  
\end{rem}

\begin{lem}\label{collision}
 \begin{enumerate}
  \item  If \((a_{\eta})_{\eta \in \omega^{<\omega}}\) is a strongly indiscernible tree and \(\varphi(x;y)\) is a formula so that for some \(\eta \in \omega^{\omega}\), \(\{\varphi(x;a_{\eta | n}) : n < \omega\}\) is consistent and for some \(\xi \in \omega^{< \omega}\), \(\{\varphi(x;a_{\xi \frown 0}), \varphi(x; a_{\xi \frown 1})\}\) is inconsistent, then \(T\) has $\TP_1$.
\item If \((a_{\eta})_{\eta \in 2^{<\omega}}\) is a strongly indiscernible tree and \(\varphi(x;y)\) is a formula so that for some \(\eta \in 2^{\omega}\), \(\{\varphi(x;a_{\eta | n}) : n < \omega\}\) is consistent and for some \(\eta \in 2^{< \omega}\), \(\{\varphi(x;a_{\eta \frown 0}), \varphi(x; a_{\eta \frown 1})\}\) is inconsistent, then \(T\) has $\SOP_2$.
\end{enumerate}
\end{lem}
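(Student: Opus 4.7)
The strategy is to apply Lemma \ref{StronglyIndiscTreeProp} directly: the strongly indiscernible tree in each hypothesis is already a candidate witness for $\TP_1$ (respectively $\SOP_2$), and the two parts of that lemma transport the single given instance of consistency and the single given instance of sibling-inconsistency across the whole tree.

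For part (1), I would verify the two defining clauses of $\TP_1$ for $\varphi(x;y)$ against $(a_\eta)_{\eta \in \omega^{<\omega}}$. The path-consistency clause follows from Lemma \ref{StronglyIndiscTreeProp}(1): since all paths have the same type over $\emptyset$, the consistency of $\{\varphi(x;a_{\eta|n}):n<\omega\}$ for one $\eta$ forces consistency for every $\nu \in \omega^{\omega}$. The incomparability-inconsistency clause follows from Lemma \ref{StronglyIndiscTreeProp}(2): any two incomparable $\eta \perp \nu$ in $\omega^{<\omega}$ realize the same $2$-type as $(a_{\xi \frown 0}, a_{\xi \frown 1})$, so the given inconsistency of $\{\varphi(x;a_{\xi \frown 0}),\varphi(x;a_{\xi \frown 1})\}$ propagates to every incomparable pair. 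Together these give a $\TP_1$-witness with respect to $\varphi$.

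Part (2) runs by exactly the same argument, with $2^{<\omega}$ in place of $\omega^{<\omega}$; the statements and proofs of Lemma \ref{StronglyIndiscTreeProp}(1) and (2) go through verbatim for $\kappa=2$ (part (2) only needs two distinct immediate successors $i<j<\kappa$, which exist as $0<1<2$). Once both clauses are verified over $2^{<\omega}$, the tree $(a_\eta)_{\eta \in 2^{<\omega}}$ witnesses $\SOP_2$ directly.

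I do not anticipate any real obstacle: the combinatorial content has already been packaged inside Lemma \ref{StronglyIndiscTreeProp}, and this lemma is essentially a translation of that content into the languages of $\TP_1$ and $\SOP_2$. The only point to flag is that strong indiscernibility ``over $C$'' plays no role in verifying the two tree-property clauses, since the clauses are statements about consistency of finite sets of $\varphi$-instances in $\M$ alone.
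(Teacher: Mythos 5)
Your proposal matches the paper's proof exactly: the paper simply cites Lemma \ref{StronglyIndiscTreeProp}, parts (1) and (2), as you do, using (1) to propagate path-consistency and (2) to propagate inconsistency of incomparable pairs. The argument is correct and your verification of the two defining clauses in each case is precisely the unpacking the paper leaves implicit.
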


\begin{proof}
Both parts are immediate by Lemma \ref{StronglyIndiscTreeProp}, (1) and (2).
\end{proof}

\begin{lem}\label{pathcollapse}{(Path Collapse)}
Suppose \(\kappa\) is an infinite cardinal, \((a_{\eta})_{\eta \in 2^{<\kappa}}\) is a tree strongly indiscernible over a set of parameters \(C\) and, moreover, \((a_{0^{\alpha}}: 0 < \alpha < \omega)\) is indiscernible over \(cC\).  Let 
\[
p(y; \overline{z}) = \tp(c;(a_{0 \frown 0^{\gamma}} : \gamma < \kappa)/C).
\]
Then if 
\[
p(y;(a_{0 \frown 0^{\gamma}})_{\gamma < \kappa}) \cup p(y;(a_{1 \frown 0^{\gamma}})_{\gamma < \kappa})
\]
is not consistent, then \(T\) has $\SOP_2$, witnessed by a formula with free variables \(y\).    
\end{lem}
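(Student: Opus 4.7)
My plan is to extract a finite witness of the joint inconsistency via compactness, and then use it to construct a $2^{<\omega}$-indexed tree of tuples over which a formula in the single variable $y$ witnesses $\SOP_2$.

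By compactness applied to the inconsistency of $p(y;(a_{0 \frown 0^\gamma})_{\gamma<\kappa}) \cup p(y;(a_{1 \frown 0^\gamma})_{\gamma<\kappa})$, I first fix a formula $\psi(y;z_{\gamma_0},\ldots,z_{\gamma_{n-1}}) \in p$, with $\gamma_0<\gamma_1<\cdots<\gamma_{n-1}<\kappa$, such that $\psi(y;(a_{0 \frown 0^{\gamma_i}})_i) \wedge \psi(y;(a_{1 \frown 0^{\gamma_i}})_i)$ is inconsistent. Using strong indiscernibility of $(a_\eta)_{\eta \in 2^{<\kappa}}$ over $C$ together with the assumed indiscernibility of $(a_{0^\alpha})_{0<\alpha<\omega}$ over $cC$, I may arrange, by an order-preserving automorphism of $\M$ over $C$ that permutes the all-zeros path (thereby preserving its indiscernibility over $cC$), that each $\gamma_i$ is a positive integer less than $\omega$.

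I then define a candidate tree $(b_\eta)_{\eta \in 2^{<\omega}}$ by $b_\eta = (a_{\eta \frown 0^{\gamma_i}})_{i<n}$, viewing $\eta$ as an element of $2^{<\kappa}$ via the natural inclusion $2^{<\omega} \hookrightarrow 2^{<\kappa}$, and verify that $\psi$ witnesses $\SOP_2$ with respect to it. For incompatibility, fix $\eta \perp \nu$ in $2^{<\omega}$. The $L_0$-quantifier-free type of the tuple of indices $(\eta \frown 0^{\gamma_i}, \nu \frown 0^{\gamma_i})_i$ matches that of $(0 \frown 0^{\gamma_i}, 1 \frown 0^{\gamma_i})_i$ --- both present two $\vartriangleleft$-chains of length $n$, pairwise incomparable across the chains, with all cross-meets equal to a single common element strictly below every tuple entry --- so by strong indiscernibility over $C$ the pair $(b_\eta, b_\nu)$ has the same type over $C$ as the canonical incomparable pair, and the hypothesized inconsistency transfers to yield $\psi(y; b_\eta) \wedge \psi(y; b_\nu)$ inconsistent.

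For consistency along paths, the all-zeros path $\eta = 0^\omega$ is immediate: $b_{0^m} = (a_{0^{m+\gamma_i}})_{i<n}$ lies on the all-zeros path at strictly positive indices, and the indiscernibility of $(a_{0^\alpha})_{0<\alpha<\omega}$ over $cC$ ensures that $c$ simultaneously realizes each $\psi(y; b_{0^m})$. The main obstacle is treating an arbitrary path $\eta \in 2^\omega$, since the off-path entries $a_{(\eta|m) \frown 0^{\gamma_i}}$ appearing in $b_{\eta|m}$ are not directly controlled by Lemma \ref{StronglyIndiscTreeProp}(1) alone. My strategy is to first replace $(a_\eta)$ by a suitable $k$-fold elongation or fattening (which preserves strong indiscernibility by Proposition \ref{preservation}), chosen large enough that along every path of $2^{<\omega}$ the relevant tuples factor through a single chain in the enlarged tree; once this is arranged, Lemma \ref{StronglyIndiscTreeProp}(1) gives an automorphism of $\M$ fixing $C$ carrying the all-zeros path to the $\eta$-path, and applying this automorphism to $c$ produces the required realizer of $\{\psi(y; b_{\eta|m}) : m < \omega\}$.
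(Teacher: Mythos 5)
Your overall scaffolding is the paper's: extract a finite $\psi$ by compactness, build a new $2^{<\omega}$-indexed tree, and invoke Lemma \ref{collision}.  But the middle step has a genuine gap.  The tree $b_{\eta} = (a_{\eta \frown 0^{\gamma_i}})_{i<n}$ is \emph{not} strongly indiscernible, so Lemma \ref{StronglyIndiscTreeProp}(1) does not apply to move the realizer $c$ from the all-zeros path to an arbitrary path.  A concrete obstruction: take $n=2$, $\gamma_0 = 0$, $\gamma_1 = 1$.  Then $b_{0} = (a_{0}, a_{00})$ and $b_{00} = (a_{00}, a_{000})$ share a coordinate, whereas $b_{0}$ and $b_{000} = (a_{000}, a_{0000})$ do not; yet $(0,00)$ and $(0,000)$ have the same $\qftp_{L_{0}}$.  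So $\tp(b_0 b_{00}/C) \ne \tp(b_0 b_{000}/C)$.  The blocks $(b_{\eta|m})_{m}$ along a path overlap, and the amount of overlap varies with the gaps in the path — strong indiscernibility fails for any choice of $\gamma_i$ (consecutive or not) with your definition.

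You sense this and gesture at an elongation/fattening, but the plan isn't carried out and the description ("factor through a single chain") doesn't pin down what the operation buys.  The paper's resolution is sharper in two ways.  First, it reduces \emph{via indiscernibility} to \emph{consecutive} indices starting at $0$, i.e., to the inconsistency of
\[
\psi(y;a_{0},\ldots,a_{0\frown 0^{n-1}}) \;\wedge\; \psi(y;a_{1},\ldots,a_{1\frown 0^{n-1}});
\]
this uses both strong indiscernibility of the tree over $C$ (to transfer the inconsistency) and the hypothesized indiscernibility of $(a_{0^{\alpha}})_{0<\alpha<\omega}$ over $cC$ (to keep $c$ as a realizer of the shifted formula).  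Second, with the indices now exactly $0,\ldots,n-1$, the desired witness tree $b_{\eta}$ is \emph{precisely} the $n$-fold elongation of $(a_{\eta})$: $b_{\eta}$ runs through the disjoint block $a_{\tilde\eta},\ldots,a_{\tilde\eta\frown 0^{n-1}}$ where $\tilde\eta$ interleaves $\eta$ with zeros.  Proposition \ref{preservation} then hands you strong indiscernibility for free, the path $(b_{0^{\alpha}})_{\alpha\ge 1}$ consists of disjoint consecutive blocks of the $cC$-indiscernible zeros path (so $c$ realizes every $\psi(y;b_{0^{\alpha}})$), and $\psi(y;b_{0})\wedge\psi(y;b_{1})$ is the inconsistency you started with.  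Without the reduction to consecutive indices, there is no elongation factor $k$ after which $b_{\eta} = (a_{\eta\frown 0^{\gamma_i}})_{i<n}$ becomes one of the catalogued operations, so you would have to re-prove preservation of strong indiscernibility from scratch for your ad hoc construction.  A secondary issue: your "order-preserving automorphism of $\M$ over $C$ that permutes the all-zeros path" will move $c$, and you have no control over indiscernibility of the path over $\sigma(c)C$; the reduction should instead be done by comparing types directly, as above.

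Your argument for pairwise inconsistency (the $\qftp_{L_{0}}$ matching for incomparable pairs) is correct, and once strong indiscernibility of the witness tree is in place, only the sibling pair $\{b_{0},b_{1}\}$ actually needs to be checked (Lemma \ref{collision}).
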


\begin{proof}
We may add \(C\) to the language, so assume \(C = \emptyset\).  With \(p\) defined as above, suppose 
\[
p(y;(a_{0 \frown 0^{\gamma}} : \gamma < \kappa)) \cup p(y;(a_{1 \frown 0^{\gamma}} : \gamma < \kappa))
\]
is inconsistent.  Then by indiscernibility and compactness, there is a formula \(\psi \) and $n < \omega$ so that 
\[
\{ \psi(y;a_{0}, \ldots, a_{0 \frown 0^{n-1}}) \} \cup \{ \psi(y;a_{1}, a_{10}, \ldots, a_{1 \frown 0^{n-1}}) \}
\]
is inconsistent.  Let \((b_{\eta})_{\eta \in 2^{<\kappa}}\) denote the \(n\)-fold elongation of \((a_{\eta})_{\eta \in 2^{<\kappa}}\).  By Lemma \ref{preservation}, $(b_\eta : \eta \in 2^{< \kappa})$ is strongly indiscernible. Since \(c \models \{\psi(y;b_{0^{\alpha}}) : \alpha < \kappa\}\) and $\psi(y;b_0) \land \psi(y;b_1)$ is inconsistent (by strong indiscernibility), by Lemma \ref{collision}, \(\psi\) witnesses $\SOP_2$.
\end{proof}

\begin{rem}\label{noemptyset}
It is significant that the type $p$ does \emph{not} contain $a_{\emptyset}$ as a parameter.  As $b_{0}$ and $b_{1}$ are incomparable and $\psi(x;b_{0})$ and $\psi(x;b_{1})$ are inconsistent, we can conclude that $\psi(x;b_{\eta})$ and $\psi(x;b_{\nu})$ are inconsistent for all incomparable $\eta, \nu$ by strong indiscernibility.  But, for example, strong indiscernibility does not guarantee $b_{0\frown 0}b_{0\frown 1}$ has the same type as $b_{0}b_{1}$ over $a_{\emptyset}$ as $0 \wedge 1 = \emptyset$ while $ 0^{n-1} \frown 0 \wedge 0^{n-1} \frown 1 = 0^{n-1}$.  
\end{rem}

We now give two applications of the path-collapse lemma.  

\subsection{Weak $k-\TP_1$}

\begin{thm}
Given \(k \geq 2\), \(T\) has weak \(k\)-TP\(_{1}\) if and only if \(T\) has $\TP_1$.  
\end{thm}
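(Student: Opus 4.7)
The forward implication is immediate: if $\varphi(x;y)$ witnesses $\TP_1$, then any $k$ distant siblings are pairwise incomparable and hence pairwise $\varphi$-inconsistent, giving the required $k$-inconsistency.

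For the converse, suppose $\varphi(x;y)$ witnesses weak $k$-$\TP_1$; by Lemma \ref{StronglyIndiscSOP2}(1), fix a strongly indiscernible tree witness $(a_\eta)_{\eta \in \omega^{<\omega}}$. Argue by contradiction: assume $T$ has no $\TP_1$, equivalently (Fact \ref{SOP2iffTP1}) no $\SOP_2$. The plan is to show by induction on $m \geq 0$ that
\[
\Sigma_m := \bigcup_{i < 2^m} \{\varphi(x;a_{i \frown 0^n}) : n < \omega\}
\]
is consistent. Once $2^m \geq k$, specialising the formulas to $n = 0$ yields a realization of $\{\varphi(x;a_i) : i < 2^m\}$, which contradicts weak $k$-$\TP_1$ since $a_0, \ldots, a_{2^m - 1}$ form $2^m \geq k$ distant siblings with common meet $a_\emptyset$.

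The base $m = 0$ is the consistency of a single path. For the inductive step I will form the $2^m$-fold widening $(a'_\eta)$ of $(a_\eta)$ at level $1$ (which is $s$-indiscernible by Proposition \ref{preservation}(1)), extract via Fact \ref{modeling} a strongly indiscernible tree $(b_\eta)$ locally based on $(a'_\eta)$, and set
\[
\psi_{2^m}(x;z_0, \ldots, z_{2^m-1}) := \bigwedge_{r < 2^m} \varphi(x;z_r).
\]
The consistency of $\Sigma_m$ translates exactly into consistency of the $\psi_{2^m}$-path along $0^\omega$ in $(a'_\eta)$, and thence by local basing into the $(b_\eta)$-tree. After a further extraction arranging path-indiscernibility over a realization, I will apply the Path Collapse Lemma (Lemma \ref{pathcollapse}) to the binary restriction of $(b_\eta)$ with the formula $\psi_{2^m}$: since $T$ has no $\SOP_2$, the lemma yields consistency of the 2-branch amalgamation of $\psi_{2^m}$. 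Pulling this back through local basing to $(a'_\eta)$ and unpacking the widening, the two super-branches encode $2 \cdot 2^m = 2^{m+1}$ branches of $(a_\eta)$, so what I obtain is precisely the consistency of $\Sigma_{m+1}$.

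The hard part will be the bookkeeping of indiscernibility through these successive extractions: each passage from $s$-indiscernible to strongly indiscernible via Fact \ref{modeling}, each invocation of Path Collapse (which requires a specific path to be indiscernible over an already-chosen realization), and each transfer through local basing must be arranged so as not to disturb the consistency obtained at earlier stages. I expect to manage this by starting inside a sufficiently tall strongly indiscernible tree obtained by compactness and by making Ramsey-style refinements at each round of the induction.
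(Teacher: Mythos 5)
Your forward direction is fine, and your overall strategy for the converse — use the Path Collapse Lemma to inflate the number of consistent "distant-sibling" paths until you contradict weak $k$-TP$_1$ — is in the right spirit. However, the induction step as you describe it has a genuine gap, precisely at the point you yourself flag as "the hard part."

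The problem is the step "pulling this back through local basing to $(a'_\eta)$ and unpacking the widening." Local basing (Fact \ref{modeling}) only matches quantifier-free $L_0$-types, and $L_0$ has no level predicates. So when you take a finite fragment of the two-branch amalgamation in $(b_\eta)$, local basing hands you nodes $\nu_0,\dots,\nu_{l-1}$ in $(a'_\eta)$ that form two branches meeting \emph{somewhere}, but you cannot force the meet to be at the root of $(a'_\eta)$. If the meet is at level $j\ge 1$, then unpacking the $2^m$-fold widening gives you $2^{m+1}$ nodes of the original tree $(a_\eta)$ which are \emph{not} distant siblings (some pairs meet above $\emptyset$), so you have not obtained consistency of $\Sigma_{m+1}$. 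And since $(a'_\eta)$ is only $s$-indiscernible — not strongly indiscernible; the widening deliberately destroys strong indiscernibility by introducing level-dependence — you cannot use indiscernibility to slide the meeting point down to the root. This gap cannot be patched by "Ramsey-style refinements," because the difficulty is not about indiscernibility over a realization but about the one-way nature of local basing.

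The paper's proof avoids all of this by not iterating, not widening, and not re-extracting. It takes a single strongly indiscernible witness $(a_\eta)$ for weak $k$-TP$_1$, fixes once and for all the \emph{maximal} $n$ (necessarily $1\le n\le k-1$) such that $\{\varphi(x;a_{\langle i\rangle\frown 0^\alpha}) : i<n, \alpha<\omega\}$ is consistent, and then shifts the tree by $n-1$ in the first coordinate. The shift is an $L_0$-embedding, so the shifted tree is still strongly indiscernible, and it is strongly indiscernible over $C=\{a_{\langle i\rangle\frown 0^\alpha}: i<n-1\}$ because the shifted subtree is "distant" from $C$. A single application of Path Collapse over $C$ then shows that the amalgamation of the $(n+1)$st path is inconsistent, which by maximality forces $\SOP_2$. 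This replaces your doubling induction (and the widening/re-extraction loop) with one application of Path Collapse using the maximal $n$, and it stays inside the original strongly indiscernible tree throughout, which is exactly what sidesteps the meet-level problem. If you want to rescue your proof, you should abandon the doubling and the re-extraction and instead work with the maximal $n$ and a shift, as the paper does.
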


\begin{proof}
We will show that if \(T\) has weak \(k\)-TP\(_{1}\), then \(T\) has $\SOP_2$.  Let \(\varphi(x;y)\) witness weak \(k\)-TP\(_{1}\) with respect to the strongly indiscernible tree \((a_{\eta})_{\eta \in \omega^{<\omega}}\).  Let \(n\) be maximal so that 
\[
\{\varphi(x;a_{\langle i \rangle \frown 0^{\alpha}}) : i < n, \alpha < \omega\}
\]
is consistent.  By definition of weak \(k\)-TP\(_{1}\), \(n\) is at least 1 and at most \(k-1\).  Let \(C = \{a_{\langle i \rangle \frown 0^{\alpha}} : i < n - 1, \alpha < \omega\}\) (and put \(C = \emptyset\) in the case that \(n = 1\)).  Given \(\eta \in \omega^{<\omega}\), let \(\hat{\eta}\) be defined by 
\[
\hat{\eta}(i)  = \left\{
\begin{matrix}
\eta(i) + n-1 & \text{ if } i = 0 \\
\eta(i) & \text{ otherwise},
\end{matrix}
\right.
\]
for all \(i < l(\eta)\).  The tree \((b_{\eta})_{\eta \in \omega^{<\omega}}\) defined by \(b_{\eta} = a_{\hat{\eta}}\) is strongly indiscernible over \(C\).  By choice of \(n\), 
\[
\{\varphi(x;a_{\langle i \rangle \frown 0^{\alpha}}) : i < n, \alpha < \omega\}
\]
is consistent, so let \(c\) realize it.   By compactness, Ramsey, and automorphism, we may assume \((b_{0^{\alpha}} : 0 < \alpha < \omega)\) (i.e. \((a_{\langle n-1 \rangle \frown 0^{\alpha}} : \alpha < \omega))\) is indiscernible over \(c\).  Letting the type \(p\) be defined by
\[
p(y; \overline{z}) = \text{tp}(c;(b_{0 \frown 0^{\alpha}} : \alpha < \alpha)/C),
\]
and unravelling definitions, we see that the type
\[
p(y;(b_{0 \frown 0^{\alpha}} : \alpha < \omega)) \cup p(y;(b_{1 \frown 0^{\alpha}} : \alpha < \omega))
\]
implies \(\{\varphi(x;a_{\langle i \rangle \frown 0^{\alpha}}) : i < n+1, \alpha < \omega\}\) and is therefore inconsistent by the choice of \(n\).  By path-collapse, we've shown that \(T\) has $\SOP_2$, completing one direction.  The other direction is obvious.  
\end{proof}

\subsection{Reducing to one variable}

\begin{prop}
Suppose \(T\) witnesses $\SOP_2$ via \(\varphi(x,y;z)\).  Then there is a formula \(\varphi_{0}(x;v)\) with free variables \(x\) and parameter variables \(v\), or a formula \(\varphi_{1}(y;w)\) with free variables \(y\) and parameter variables \(w\) so that one of \(\varphi_{0}\) and \(\varphi_{1}\) witness $\SOP_2$.
\end{prop}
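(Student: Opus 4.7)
The plan is to apply the Path Collapse Lemma (Lemma \ref{pathcollapse}) twice --- first with $x$ in the role of the free variable, and if that fails, then with $y$. By Lemma \ref{StronglyIndiscSOP2}(3), fix a strongly indiscernible tree $(a_\eta)_{\eta \in 2^{<\omega}}$ witnessing $\SOP_2$ via $\varphi(x,y;z)$, and choose $(c,d)$ realizing the leftmost path $\{\varphi(x,y;a_{0^\alpha}) : \alpha < \omega\}$. Via a Ramsey argument together with Fact \ref{modeling}, arrange that $(a_{0^\alpha})_{0 < \alpha < \omega}$ is indiscernible over $cd$ while preserving both the strong indiscernibility of the tree and the $\SOP_2$ witness. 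Set $p(x;\bar z) = \tp(c/(a_{0\frown 0^\gamma})_{\gamma<\omega})$.

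If $p(x;(a_{0\frown 0^\gamma})) \cup p(x;(a_{1\frown 0^\gamma}))$ is inconsistent, Lemma \ref{pathcollapse} immediately yields the desired $\varphi_0(x;v)$. Otherwise, pick $c^*$ realizing this union; then $c^* \equiv c$ over $(a_{0\frown 0^\gamma})_\gamma$, so some $d^*$ satisfies $(c^*,d^*) \equiv (c,d)$ over $(a_{0\frown 0^\gamma})_\gamma$, whence $\models\varphi(c^*,d^*;a_{0\frown 0^\gamma})$ for every $\gamma$. The key observation is now that the type $\{\varphi(c^*,y;a_{0\frown 0^\gamma}) : \gamma<\omega\} \cup \{\varphi(c^*,y;a_{1\frown 0^\gamma}) : \gamma<\omega\}$ is \emph{inconsistent}: any realization would satisfy $\varphi(c^*,y;a_0) \wedge \varphi(c^*,y;a_1)$, contradicting the inconsistency of $\{\varphi(x,y;a_0),\varphi(x,y;a_1)\}$ supplied by the $\SOP_2$ witness.

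To feed this into Path Collapse on the $y$-side, re-extract a strongly indiscernible tree $(a'_\eta)_{\eta \in 2^{<\omega}}$ over $c^*$ from $(a_\eta)$, working in $L \cup \{c^*\}$ via Fact \ref{modeling}. By local basing, $\varphi(x,y;-)$ continues to witness $\SOP_2$ on $(a'_\eta)$. Performing the extraction within a subregion of $(a_\eta)$ where the formula $\exists y\,\varphi(c^*,y;z)$ holds at every node --- such as the union of the $a_0$- and $a_1$-subtrees, where $d^*$ together with an automorphic copy $d^{**}$ supply witnesses --- and then applying a further Ramsey step, one secures $d'$ with $\models\varphi(c^*,d';a'_{0\frown 0^\gamma})$ for all $\gamma$ and with $(a'_{0\frown 0^\gamma})_{0<\gamma<\omega}$ indiscernible over $c^*d'$. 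Setting $q(y;\bar z) = \tp(d'/c^*,(a'_{0\frown 0^\gamma})_\gamma)$, the same incompatibility argument shows that $q(y;(a'_{0\frown 0^\gamma})) \cup q(y;(a'_{1\frown 0^\gamma}))$ is inconsistent, since a realization would force $\varphi(c^*,y;a'_0) \wedge \varphi(c^*,y;a'_1)$. Lemma \ref{pathcollapse} applied over the base $C = \{c^*\}$ with $y$ as free variable then produces $\varphi_1(y;w)$ witnessing $\SOP_2$.

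The main obstacle is the second extraction. The $\SOP_2$-conditions for $\varphi(x,y;-)$ transfer to $(a'_\eta)$ without trouble by local basing, but arranging the witness $d'$ for consistency of the left subpath under $\varphi(c^*,y;-)$ is subtle: modeling over $c^*$ is controlled only by $L_0$-quantifier-free type, while the formula $\exists y\,\varphi(c^*,y;z)$ may fail on certain original nodes (for instance at $a_\emptyset$, where $c^*$ need not agree with $c$). Hence one must identify a subregion of the original tree on which this formula holds uniformly --- so that every matching chain supplied by local basing lies in a region where a common witness exists --- before carrying out the re-extraction.
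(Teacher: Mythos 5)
Your first branch is correct: if the $x$-pasting is inconsistent, path-collapse produces $\varphi_0(x;v)$ with $\SOP_2$. The gap is in the second branch, and it is not a technical wrinkle but a fundamental obstruction. Path-collapse for the $y$-side requires a strongly indiscernible tree $(a'_\eta)$ over $c^*$ together with a $d'$ realizing the leftmost $y$-path $\{\varphi(c^*,y;a'_{0\frown 0^\gamma}) : \gamma < \omega\}$. If this could always be arranged, your key observation would show the $y$-pasting inconsistent and path-collapse would yield $\SOP_2$ in $y$. But in a theory where $\varphi(x,y;z)$ witnesses $\SOP_2$ and no $y$-formula does (which is exactly the case the proposition says must be handled by $\varphi_0$), the consistency of the single $x$-pasting can still occur --- so the second extraction must fail, or else it would contradict ``no $y$-formula has $\SOP_2$.'' The mechanism of failure is the one you point at: modeling over $c^*$ transfers $\Delta$-types only along $L_0$-quantifier-free types, and the matching chain in the original tree may lie in a region (for instance $a_{01}, a_{010}, \ldots$) over which $\tp(c^*/\cdot)$ is uncontrolled, so $\exists y\bigwedge_i\varphi(c^*,y;a'_{0^{\alpha_i}})$ need not hold. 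Restricting to the $a_0$- and $a_1$-subtrees is insufficient for the same reason: you control $c^*$ only along the two paths $(a_{0\frown 0^\gamma})_\gamma$ and $(a_{1\frown 0^\gamma})_\gamma$, not throughout those subtrees.

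A single level of consistency cannot assemble a full $\SOP_2$-tree in the $x$-variable; one needs consistency of the pastings at every level simultaneously, which a one-shot case split cannot deliver. The paper's proof achieves this by arguing in the contrapositive: assume no $y$-formula has $\SOP_2$, so every application of path-collapse to a $y$-type must produce consistency, and use this $\omega$ many times --- one per level --- to build by induction a sequence $(c_n)$ realizing the pasting of all subpaths of length $\leq n$. The offset maps $h$, $h_n$ and the fattening operation exist precisely to keep the relevant parameters indiscernible over the accumulated stump $C_n$ at each stage, mirroring the care you took with $a_\emptyset$ at level one but iterated to all levels. Compactness then gives a uniform $c$, and $\varphi(x;y,z)$ witnesses $\SOP_2$ via the tree $(c,b_\eta)_{\eta \in 2^{<\omega}}$.
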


\begin{proof}
Let \(\varphi(x,y;z)\) witness $\SOP_2$ with respect to the strongly indiscernible tree \((a_{\eta})_{\eta \in 2^{<\omega}}\).  The first path is consistent and it is an indiscernible sequence so it follows that there is some \((c,c_{0}) \models \{\varphi(x,y;a_{0^{\alpha}}) : \alpha < \omega\}\) and such that moreover \((a_{0^{\alpha}} : \alpha < \omega)\) is indiscernible over \(c_{0}\) (by Ramsey, automorphism, and compactness).

Define the function \(h: 2^{<\omega} \to 2^{<\omega}\) recursively by \(h(\emptyset) = \emptyset\) and \(h(\eta \frown \langle i\rangle) = h(\eta) \frown 0 \frown \langle i \rangle\).  Define the tree \((b_{\eta})_{\eta \in 2^{<\omega}}\) by \(b_{\eta} = a_{h(\eta)}\).  It is proved in Lemma \ref{thehfunction}(1) that $(b_{\eta})_{\eta \in 2^{<\omega}}$ is a strongly indiscernible tree.  For each \(n\), define a map \(h_{n}: 2^{<\omega} \to 2^{<\omega}\) by
\[
h_{n}(\eta) =
\left\{
\begin{matrix}
h(\eta) & \text{ if } l(\eta) \leq n \\
h(\nu) \frown \xi & \text{ if } \eta = \nu \frown \xi, l(\nu) = n.  
\end{matrix}
\right.
\]
By Lemma \ref{thehfunction}(2), the tree \((d_{n,\eta})_{\eta \in 2^{<\omega}}\) defined by \(d_{n,\eta} = a_{h_{n}(\eta)}\) is strongly indiscernible as well.  Moreover, as paths in \((b_{\eta})_{\eta \in 2^{<\omega}}\) and \((d_{n,\eta})_{\eta \in 2^{<\omega}}\) are contained in paths in \((a_{\eta})_{\eta \in 2^{<\omega}}\) and incomparable elements in these trees correspond to incomparable elements in \((a_{\eta})_{\eta \in 2^{<\omega}}\), \(\varphi\) witnesses $\SOP_2$ with respect to these trees of parameters as well.  

Assume that no formula in the variable \(y\) has $\SOP_2$.  By induction, we will choose \(c_{n}\) so that 
\begin{equation}
   \{\varphi(x,c_{n};d_{n,\eta |m}) : m < n\} \cup \{\varphi (x,c_{n};d_{n,\eta \frown 0^{\alpha}}) : \alpha < \omega\}  \tag{*}
  \end{equation}
  is consistent for every \(\eta \in 2^{\leq n}\).  
  
For this, consider \((d^{(n)}_{n,\eta})_{\eta \in 2^{<\omega}}\), the \(n\)th-fattening of \((d_{n,\eta})\), and let \(C_{n} = (d_{n,\eta} : \eta \in 2^{<n})\).  By induction we show:

\textbf{Claim.}  There is \(c_{n+1}\) such that \(\left((d_{n+1,0^{\alpha}}^{(n+1)}) : \alpha < \omega\right)\) is indiscernible over \(c_{n+1}C_{n}\) and
\[
c_{n}\left(d_{n,0 \frown 0 \frown 0^{\alpha}}^{(n)}\right) \equiv_{d_{n,\emptyset}^{(n)}C_{n}} c_{n+1} \left(d_{n,0 \frown 0 \frown 0^{\alpha}}^{(n)}\right) \equiv_{d_{n,\emptyset}^{(n)}C_{n}} c_{n+1} \left(d_{n,0 \frown 1 \frown 0^{\alpha}}^{(n)} \right).
\]
Note that \(d^{(n)}_{n,\emptyset}C_{n} = C_{n+1}\).  

\emph{Proof:}  The base case is above.  Let $$p_{n}(y, \overline{z}) = \text{tp}\left(c_{n}, (d^{(n)}_{n,0 \frown 0 \frown 0^{\alpha}} : \alpha < \omega)/(d_{n,\emptyset})^{(n)}C_{n}\right).$$ By the path-collapse lemma, 
\[
p_{n}\left(y, \left((d^{(n)}_{n,0\frown 0 \frown 0^{\alpha}}) : \alpha < \omega\right)\right) \cup p_{n}\left(y, \left((d^{(n)}_{n,0\frown 1 \frown 0^{\alpha}}) : \alpha < \omega\right)\right)
\]
is consistent.  Let \(c_{n+1}\) realize it.  Moreover, as $$\left(d_{n,0 \frown 0 \frown 0^{\alpha}}^{(n)}, d_{n,0 \frown 1 \frown 0^{\alpha}}^{(n)}\right)_{\alpha < \omega} = \left(d_{n+1,0^{\alpha}}^{(n+1)}\right)_{\alpha < \omega}$$ is an indiscernible sequence, by Ramsey, automorphism, and compactness we may assume that it is indiscernible over \(c_{n+1}C_{n}\).  This shows (*).

By the definition of the trees \((d_{n,\eta})_{\eta \in 2^{<\omega}}\), we have shown that 
$$ \{\varphi(x,c_{n};b_{\eta |m}) : m < n\} \cup \{\varphi (x,c_{n};b_{\eta \frown 0^{\alpha}}) : \alpha < \omega\}$$  is consistent for each \(n\) and \(\eta \in 2^{\leq n}\).  
  By compactness, we can find one \(c\) which works for all possible paths in \(2^{\omega}\) simultaneously, giving a tree \((c, b_{\eta})_{\eta \in 2^{<\omega}}\) witnessing $\SOP_2$ for \(\varphi(x;y,z)\).
\end{proof}

\begin{rem}
The necessity of defining the trees $(b_{\eta})_{\eta \in 2^{<\omega}}$ and $(d_{n,\eta})_{\eta \in 2^{<\omega}}$ via $h$ and $h_{n}$, respectively, stems from a technical obstacle in applying the path-collapse lemma:  starting with the tree $(a_{\eta})_{\eta \in 2^{<\omega}}$, we cannot apply the path collapse lemma directly to the type 
$$
q(y;(a_{0^{\alpha}} : \alpha < \omega)) = \text{tp}(c_{0}/(a_{0^{\alpha}} : \alpha < \omega)),
$$
as this type has $a_{\emptyset}$ as a parameter (see Remark \ref{noemptyset} above).  This is corrected by the offset functions $h$ and $h_{n}$, allowing us to apply the path-collapse lemma `higher' in the tree, where the parameters of interest are indiscernible over what we have constructed so far.    
\end{rem}

\begin{cor}\label{1var}
 \begin{enumerate}
\item \(T\) has $\SOP_2$ if and only if there is some formula in a single free variable witnessing this
  \item \(T\) has $\TP_1$ if and only if there is some formula in a single free variable witnessing this
 \end{enumerate}
\end{cor}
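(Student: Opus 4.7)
The plan is to deduce the corollary by iterating the preceding proposition to peel off one free variable at a time. For part (1), suppose $T$ has $\SOP_2$ witnessed by some formula $\varphi(x_1, \ldots, x_n; z)$. I would induct on $n$. The base case $n = 1$ is immediate. For the inductive step, group the free variables as $x = x_1$ and $y = (x_2, \ldots, x_n)$, so that $\varphi$ has the form $\varphi(x, y; z)$ to which the preceding proposition applies. The proposition then yields either a formula $\varphi_0(x_1; v)$ in the single free variable $x_1$ witnessing $\SOP_2$, in which case we are done, or a formula $\varphi_1(y; w) = \varphi_1(x_2, \ldots, x_n; w)$ in $n-1$ free variables witnessing $\SOP_2$, to which the induction hypothesis applies.

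For part (2), combine (1) with Fact \ref{SOP2iffTP1}: if $T$ has $\TP_1$, then some formula $\varphi(\bar{x}; z)$ witnesses $\TP_1$, hence also $\SOP_2$ (via the same formula, by Fact \ref{SOP2iffTP1}). By (1), there is a formula $\psi(x; v)$ in a single free variable witnessing $\SOP_2$, and applying Fact \ref{SOP2iffTP1} again, $\psi$ also witnesses $\TP_1$. The converse direction of both (1) and (2) is trivial, since a formula in one free variable witnessing $\SOP_2$ (respectively $\TP_1$) shows that $T$ has the property.

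I do not anticipate any real obstacle: the preceding proposition has already done the hard work of arranging the path-collapse argument so that either the $x$-side or the $y$-side of the partition retains the $\SOP_2$ configuration. The only subtlety is notational bookkeeping: when the proposition returns the formula $\varphi_1(y; w)$, the parameter variable $w$ may be a longer tuple than the original $z$ (it absorbs some of what used to be on the free side), but this does not matter for the induction, which is driven purely by the number of free object variables.
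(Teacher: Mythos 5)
Your proof is correct and is precisely the intended argument: the paper states the corollary without explicit proof because it follows by iterating the preceding proposition exactly as you describe, peeling off one object variable per step and noting that the parameter variables growing at each stage is harmless. The reduction of part (2) to part (1) via Fact~\ref{SOP2iffTP1} is also the intended route.
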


At this point it is natural to ask if $\kappa_{\sct}^{1}=\kappa_{\sct}^{n}$
holds for arbitrary $n$, at least for countable theories. Corollary \ref{1var}
resolves the case of $\infty$, and we remark that the case of $\aleph_{1}$
follows from a standard argument in simplicity theory.
\begin{prop}
\label{prop: trans kappa cdt}Any theory satisfies $\kappa_{\cdt}^{1}=\kappa_{\cdt}^{n}$,
for all $n\in\omega$.\end{prop}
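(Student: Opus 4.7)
The inequality $\kappa_{\cdt}^{1} \leq \kappa_{\cdt}^{n}$ is immediate via padding: given any cdt-pattern $(\varphi_i(x;y_i))_{i<\kappa}$, $(a_\eta)_{\eta \in \omega^{<\kappa}}$, $(n_i)_{i<\kappa}$ in a single free variable, I would recast each $\varphi_i(x;y_i)$ as the formula in $n$ free variables $\varphi'_i(x_1,\ldots,x_n;y_i) := \varphi_i(x_1;y_i)$, treating $x_2,\ldots,x_n$ as dummies. Both the path consistency and the $n_i$-inconsistency of successors are preserved unchanged, so this yields a cdt-pattern of the same depth in $n$ free variables.

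For the reverse inequality $\kappa_{\cdt}^{n} \leq \kappa_{\cdt}^{1}$, my plan is to code the $n$-tuple of free variables as a single element of an imaginary sort and exploit the invariance of $\kappa_{\cdt}$ under passage to $T^{eq}$. Given a cdt-pattern $(\varphi_i(x_1,\ldots,x_n;y_i))_{i<\kappa}$, $(a_\eta)_{\eta \in \omega^{<\kappa}}$, $(n_i)_{i<\kappa}$ of depth $\kappa$ in $n$ free variables, I would pass to $\M^{eq}$ and work on the imaginary sort $S = \M^n$ (the $n$-th Cartesian power viewed as a sort via the trivial equivalence relation). For each $i$, the natural translation $\widetilde{\varphi}_i(z;y_i) := \varphi_i((z)_1,\ldots,(z)_n;y_i)$, where $(z)_j$ denotes the $j$-th projection, produces a single-variable formula on $S$. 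Keeping the same tree of parameters $(a_\eta)$ and the same bounds $(n_i)$, the data $(\widetilde{\varphi}_i)_{i<\kappa}$, $(a_\eta)$, $(n_i)$ then constitute a cdt-pattern of depth $\kappa$ in one free variable with variable on sort $S$: witnesses $\bar{b} \in \M^n$ for $\varphi_i(\bar{x};\ldots)$ correspond bijectively to witnesses $b \in S$ for $\widetilde{\varphi}_i(z;\ldots)$, so path consistency and $n_i$-inconsistency transfer verbatim.

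Under the standard convention in force throughout the paper that the cardinal invariants are computed in the full monster $\M^{eq}$ (equivalently, ``one free variable'' is allowed to be of any sort in $T^{eq}$), the above construction yields $\kappa_{\cdt}^{n}(T) \leq \kappa_{\cdt}^{1}(T^{eq}) = \kappa_{\cdt}^{1}(T)$, and combined with the trivial direction this gives the claimed equality. There is no substantial combinatorial obstacle in the argument; the only point to verify is that the $n$-tuple-to-imaginary coding preserves the cdt-pattern structure, which is immediate by construction. (This is in contrast to the analogous questions for $\kappa_{\sct}$ mentioned in the preceding discussion, where the relevant incomparability condition interacts less trivially with the coding and genuine combinatorial arguments---such as those furnished by strongly indiscernible trees in Corollary \ref{1var}---are required.)
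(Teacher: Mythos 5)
The padding direction $\kappa_{\cdt}^{1}\leq\kappa_{\cdt}^{n}$ is fine, but the main direction has a circularity that you have smuggled past yourself under the phrase ``standard convention.''

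Your argument reduces to the claim $\kappa_{\cdt}^{1}(T^{eq})=\kappa_{\cdt}^{1}(T)$, which you assert is a convention in force throughout the paper. It is not: the paper fixes $\M\models T$ with no $\M^{eq}$ anywhere, and the invariants $\kappa^{n}_{X}(T)$ are defined via formulas whose free variables lie in the home sort. Moreover, the identity $\kappa_{\cdt}^{1}(T^{eq})=\kappa_{\cdt}^{1}(T)$ is not a convention but a nontrivial theorem, and in this context it is essentially equivalent to the proposition being proved. The inequality $\kappa_{\cdt}^{1}(T)\leq\kappa_{\cdt}^{1}(T^{eq})$ is trivial; to go the other way you must take a cdt-pattern whose free variable ranges over an imaginary sort $S=\M^{n}$ (or $\M^{n}/E$) and replace it by one in a single home-sort variable, which is exactly the reduction from $n$ variables to $1$ that you are trying to establish. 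So the scheme $\kappa_{\cdt}^{n}(T)\leq\kappa_{\cdt}^{1}(T^{eq})\overset{?}{=}\kappa_{\cdt}^{1}(T)$ proves nothing. (A further symptom that something is off: if ``one free variable'' silently meant ``one free variable of any $T^{eq}$-sort,'' then the analogous one-variable reductions for $\kappa_{\inp}$ and $\kappa_{\sct}$ discussed in the paper would be equally trivial, whereas the paper treats them as genuine open questions and proves them by hard combinatorics.)

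The paper's proof is genuinely different and substantive. It uses the equivalence (cited to Casanovas) between $\kappa_{\cdt}^{n}\leq\kappa$ and local character of dividing for $n$-types with bound $\kappa$, and then chains one variable at a time: given $a_{1}\ldots a_{n}a_{n+1}$ and $A$, the inductive hypothesis yields $a_{1}\ldots a_{n}\ind_{A_{0}}A$ with $|A_{0}|<\kappa$, the one-variable hypothesis yields $a_{n+1}\ind_{A_{1}a_{1}\ldots a_{n}}Aa_{1}\ldots a_{n}$ with $|A_{1}|<\kappa$, and left transitivity together with right base monotonicity of dividing assembles these into $a_{1}\ldots a_{n}a_{n+1}\ind_{A_{0}A_{1}}A$. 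That transitivity step is the real content you would need to replace the unjustified appeal to $T^{eq}$.
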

\begin{proof}
The following are equivalent (see e.g. \cite[Proposition 3.8]{casanovas2011simple}).
\begin{enumerate}
\item $\kappa_{\cdt}^{n}\leq\kappa$.
\item For any type $p\left(x\right)\in S_{n}\left(A\right)$, there is some
$A_{0}\subseteq A$ such that $\left|A_{0}\right|<\kappa$ and $p$
does not divide over $A_{0}$.
\end{enumerate}
Clearly $\kappa_{\cdt}^{n}\geq\kappa_{\cdt}^{1}$. Assume now that
$\kappa_{\cdt}^{1}\leq\kappa$ for some $\kappa$. We show by induction
that (2) above holds for all $n$ with respect to $\kappa$. Given
$a_{1}\ldots a_{n}a_{n+1}$ and $A$, it follows by the inductive
assumption that $a_{1}\ldots a_{n}\ind_{A_{0}}A$ for some $A_{0}\subseteq A$
with $\left|A_{1}\right|<\kappa$ and $a_{n+1}\ind_{A_{1}a_{1}\ldots a_{n}}Aa_{1}\ldots a_{n}$
for some $A_{1}\subseteq A$ with $\left|A_{1}\right|<\kappa$. Combined
this implies (by left transitivity and right base monotonicity of
dividing in arbitrary theories, see e.g. \cite[Section 2]{chernikov2012forking})
that $a_{1}\ldots a_{n}a_{n+1}\ind_{A_{0}A_{1}}A$ and $\left|A_{0}\cup A_{1}\right|<\kappa$.\end{proof}
\begin{cor}
If $\kappa_{\sct}^{n}\geq\aleph_{1}$ then $\kappa_{\sct}^{1}\geq\aleph_{1}$.\end{cor}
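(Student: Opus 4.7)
The plan is to chain together three results already established in the paper. First, by the general inequality in Observation \ref{basicobs}(3), namely $\kappa^{n}_{\sct}(T) \leq \kappa^{n}_{\cdt}(T)$, the hypothesis $\kappa_{\sct}^{n} \geq \aleph_{1}$ immediately upgrades to $\kappa_{\cdt}^{n} \geq \aleph_{1}$.

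Next, I would invoke Proposition \ref{prop: trans kappa cdt}, which asserts $\kappa_{\cdt}^{1} = \kappa_{\cdt}^{n}$ for every $n \in \omega$. This step is the heart of the transfer to one variable: it lets us replace an $n$-variable $\cdt$-pattern of depth $\geq \aleph_{1}$ by a $1$-variable $\cdt$-pattern of the same depth, so we obtain $\kappa_{\cdt}^{1} \geq \aleph_{1}$.

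Finally, I would apply Proposition \ref{aleph_1 case} in the case $n = 1$, which says $\kappa_{\cdt}^{1} \geq \aleph_{1}$ implies $\kappa_{\sct}^{1} \geq \aleph_{1}$. Composing these three implications gives the conclusion $\kappa_{\sct}^{1} \geq \aleph_{1}$, and there is no real obstacle: the technical work has already been done in the earlier proofs, so the corollary is essentially just a one-line assembly.
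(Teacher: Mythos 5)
Your proposal is correct and is essentially identical to the paper's own argument: the paper also combines Observation \ref{basicobs}(3) to pass from $\kappa^n_{\sct}$ to $\kappa^n_{\cdt}$, Proposition \ref{prop: trans kappa cdt} to reduce to a single variable, and Proposition \ref{aleph_1 case} (with $n=1$) to return to $\kappa^1_{\sct}$. The only difference is cosmetic: the paper reasons backward (``it suffices to show $\kappa^1_{\cdt}\geq\aleph_1$'') while you chain the implications forward.
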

\begin{proof}
By Proposition \ref{aleph_1 case}, it is enough to show that $\kappa_{\cdt}^{1}\geq\aleph_{1}$,
which follows by assumption and Proposition \ref{prop: trans kappa cdt}.
\end{proof}
The case of $\aleph_{0}$ appears to involve more complicated combinatorics
and we leave it for future work.

\section{Independence and amalgamation in $\NSOP_1$ theories}

We recall the definition of $\SOP_1$ from \cite{ShUs:844}:

\begin{defn}
 A formula \(\varphi(x;y)\) exemplifies $\SOP_1$ if and only if there are \((a_{\eta})_{\eta \in 2^{<\omega}}\) so that 
\begin{itemize}
 \item For all \(\eta \in 2^{\omega}\), \(\{\varphi(x;a_{\eta|n}) : n < \omega\}\) is consistent,
\item If \(\eta \frown 0 \trianglelefteq \nu \in 2^{<\omega}\), then \(\{\varphi(x;a_{\eta \frown 1}), \varphi(x;a_{\nu})\}\) is inconsistent.
\end{itemize}
\end{defn}

Given an array \((c_{i,j})_{i < \omega, j < 2}\), write \(\overline{c}_{i} = (c_{i,0}, c_{i,1})\) and \(\overline{c}_{<i}\) for \((\overline{c}_{j})_{j < i}\).  

\begin{lem}\label{automorphisms}
Suppose \((c_{i,j})_{i < \omega, j < 2}\) is an array and \(\varphi(x;y)\) is a formula over \(C\) with 
\begin{enumerate}
\item For all \(i < \omega\), \(c_{i,0} \equiv_{C \overline{c}_{<i}} c_{i,1}\);
\item \(\{\varphi(x;c_{i,0}) : i < \omega\}\) is consistent; 
\item \(j \leq i \implies \{\varphi(x;c_{i,0}), \varphi(x;c_{j,1})\}\) is inconsistent,
\end{enumerate}
then \(T\) is $\SOP_1$.  
\end{lem}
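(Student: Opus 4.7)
The plan is to build the $\SOP_1$ tree $(a_\eta)_{\eta \in 2^{<\omega}}$ explicitly from the array, using the automorphisms $\sigma_i \in \mathrm{Aut}(\M / C\overline{c}_{<i})$ with $\sigma_i(c_{i,0}) = c_{i,1}$ supplied by condition (1). Along the leftmost spine I will set $a_{0^n} = c_{n,0}$, at each right sibling $a_{0^k \frown 1} = c_{k+1, 1}$, and I will propagate the right subtrees by applying the appropriate $\sigma_i$'s. Unwinding the recursion, for $\eta \in 2^{<\omega}$ with $l(\eta) \geq 1$, letting $\epsilon(\eta) = \eta(l(\eta)-1)$ and $q_1 < \cdots < q_s$ be the positions in $\{0, \ldots, l(\eta) - 2\}$ where $\eta$ equals $1$, I define
\[
a_\eta \;=\; \sigma_{q_1 + 1} \circ \sigma_{q_2 + 1} \circ \cdots \circ \sigma_{q_s + 1}\bigl(c_{l(\eta),\, \epsilon(\eta)}\bigr),
\]
with $a_\emptyset := c_{0,0}$. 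Equivalently, each ``right turn'' at level $k$ initiates a subtree that is the image under $\sigma_{k+1}$ of the tree built by the same recipe from the shifted array $(c_{k+1+i, j})_{i,j}$, which still satisfies (1)--(3).

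For consistency along a path $\eta \in 2^\omega$, I will fix $n$ and let $\Pi_n$ denote the composition defining $a_{\eta|n}$. For $k \leq n$, the composition $\Pi_k$ defining $a_{\eta|k}$ is then the outer prefix of $\Pi_n$ collecting only those factors with subscript $\leq k - 1$. Since each $\sigma_r$ fixes $\overline{c}_{<r}$ pointwise, a short calculation yields $\Pi_n^{-1} \Pi_k(c_{k,\epsilon_k}) = c_{k, 0}$ for every $k \leq n$: the only factor that could move $c_{k, \epsilon_k}$ has subscript $r = k$, which appears among the ``extra'' inner factors of $\Pi_n$ exactly when $\epsilon_k = 1$, and then $\sigma_k^{-1}$ swaps $c_{k,1}$ back to $c_{k,0}$. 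Substituting $y = \Pi_n^{-1}(x)$, the set $\{\varphi(x; a_{\eta|k}) : 0 \leq k \leq n\}$ becomes $\{\varphi(y; c_{k,0}) : 0 \leq k \leq n\}$, which is consistent by (2); compactness in $n$ then finishes the path.

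For the inconsistency condition, I take $\eta \frown 0 \trianglelefteq \nu$ with $l(\eta) = k$ and $l(\nu) = m \geq k+1$. The compositions $\Pi^{(\eta \frown 1)}$ and $\Pi^{(\nu)}$ agree on the outer part indexed by the one-positions of $\eta$ strictly below $k$; because $\nu(k) = 0$, all additional factors of $\Pi^{(\nu)}$ have subscripts $\geq k+2$ and hence fix $c_{k+1, 1}$. After cancellation, the pair $\{\varphi(x; a_{\eta \frown 1}), \varphi(x; a_\nu)\}$ is inconsistent iff $\{\varphi(y; c_{k+1, 1}), \varphi(y; c_{m, \epsilon(\nu)})\}$ is: the case $\epsilon(\nu) = 0$ is (3) directly, and if $\epsilon(\nu) = 1$ then necessarily $m \geq k+2$, so one further application of (1) at index $m$ yields an automorphism fixing $c_{k+1,1}$ and sending $c_{m,1}$ to $c_{m,0}$, again reducing to (3). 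The main difficulty will be precisely this bookkeeping of how compositions of $\sigma_i$'s interact; it works out because each $\sigma_r$ fixes all array elements of index strictly less than $r$, so the only nontrivial action on any particular $c_\ell$ comes from $\sigma_r$'s with $r \leq \ell$, and the compositions cancel cleanly down to the relevant instance of (2) or (3).
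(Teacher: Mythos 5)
Your proof is correct and rests on the same underlying idea as the paper's: build the $\SOP_1$ tree by composing the automorphisms $\sigma_i \in \operatorname{Aut}(\M/C\overline{c}_{<i})$ supplied by condition (1), and then reduce consistency along a path to (2) and the required inconsistencies to (3) by conjugating with those automorphisms. The paper packages this as an inductive construction of an auxiliary tree of \emph{pairs} $(l_\eta, r_\eta)$, maintaining explicit type-invariants on the subtrees $T_n$; you instead write a closed-form expression for each $a_\eta$ (so no auxiliary $r_\eta$'s or intermediate invariants are needed) and verify the two $\SOP_1$ conditions by direct bookkeeping with the compositions $\Pi_n$, which is a cleaner presentation of essentially the same argument.
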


\begin{proof}
For each $n$, define a subtree \(T_{n}\) of $2^{<\omega}$ by
$$
T_{n} = \{\eta \frown 0^{\alpha}: \eta \in 2^{\leq n}, \alpha < \omega\} \cup \{\eta \frown 0^{\alpha} \frown 1: \eta \in 2^{\leq n}, \alpha < \omega\}.
$$
Let $P(T_{n}) \subseteq 2^{\omega}$ be the set of infinite branches of $T_{n}$.  Namely, 
$$
P(T_{n}) = \{\eta \frown 0^{\omega}: \eta \in 2^{\leq n}\}.
$$
As a first step, by induction on $n$ we build an ascending sequence of trees $(l_{\eta},r_{\eta})_{\eta \in T_{n}}$, so that:
\begin{enumerate}
\item if $\eta \in P(T_{n})$, $(l_{\eta | \alpha},r_{\eta | \alpha})_{\alpha < \omega} \equiv_{C} (c_{\alpha, 0},c_{\alpha,1})_{\alpha < \omega}$,
\item if $\eta \frown 0 \in T_{n}$ then $r_{\eta \frown 0} = l_{\eta \frown 1}$,
\item if $\eta \in 2^{\leq n}$ then $(l_{\eta \frown 0}, r_{\eta \frown 0}) \equiv_{C l_{\unlhd \eta} r_{\unlhd \eta}} (l_{\eta \frown 1},r_{\eta \frown 1})$.  
\end{enumerate}
For the $n = 0$ case, define $l_{0^{\alpha}}  = c_{\alpha,0}$, $r_{0^{\alpha}} =  c_{\alpha, 1}$ and $l_{0^{\alpha} \frown 1} = r_{0^{\alpha} \frown 0}$ for all $\alpha < \omega$.  For each $\alpha < \omega$, we can choose $\sigma_{\alpha} \in \text{Aut}(\mathbb{M}/C \overline{c}_{< \alpha})$ such that $\sigma_{\alpha}(c_{\alpha, 0}) = c_{\alpha, 1}$.  Let $r_{0^{\alpha}\frown 1} = \sigma_{\alpha+ 1}(c_{\alpha + 1,1}) = \sigma_{\alpha + 1}(r_{0^{\alpha} \frown 0})$.  This defines $(l_{\eta},r_{\eta})_{\eta \in T_{0}}$ satisfying (1)-(3).  

Now by induction suppose $(l_{\eta}, r_{\eta})_{\eta \in T_{n}}$ has been defined.  Suppose $\eta \in P(T_{n+1}) \setminus P(T_{n})$.  Then there is $\nu \in 2^{\leq n}$ so that $\eta = \nu \frown 1 \frown 0^{\omega}$.  Then $\nu \frown 1 \in T_{n}$ and, by induction, 
$$
(l_{\nu \frown 0}, r_{\nu \frown 0}) \equiv_{C l_{\unlhd \nu} r_{\unlhd \nu}} (l_{\nu \frown 1},r_{\nu \frown 1})
$$
and $r_{\nu \frown 0} = l_{\nu \frown 1}$.  Choose an automorphism $\sigma \in \text{Aut}(\mathbb{M}/C l_{\unlhd \nu} r_{\unlhd \nu})$ such that $\sigma (l_{\nu \frown 0}, r_{\nu \frown 0}) = l_{\nu \frown 1}, r_{\nu \frown 1}$.  Then define 
$$(l_{\nu \frown 1 \frown 0^{\alpha}}, r_{\nu \frown 1 \frown 0^{\alpha}}) = \sigma (l_{\nu \frown 0 \frown 0^{\alpha}},r_{\nu \frown 0 \frown 0^{\alpha}}) \textrm{ and }$$
$$(l_{\nu \frown 1 \frown 0^{\alpha} \frown 1}, r_{\nu \frown 1 \frown 0^{\alpha} \frown 1}) = \sigma (l_{\nu \frown 0 \frown 0^{\alpha} \frown 1},r_{\nu \frown 0 \frown 0^{\alpha} \frown 1}) $$ 
for all $\alpha < \omega$.  This completes the construction of $(l_{\eta}, r_{\eta})_{\eta \in T_{n+1}}$, properties (1)--(3) are satisfied because of the inductive assumption.  We obtain $(l_{\eta}, r_{\eta})_{\eta \in 2^{<\omega}}$ as the union over all $n$ of $(l_{\eta},r_{\eta})_{\eta \in T_{n}}$.  

Now we check that with respect to the parameters $(l_{\eta})_{\eta \in 2^{<\omega}}$, $\varphi$ witnesses SOP$_{1}$.  Fix any path $\eta \in 2^{\omega}$, we have to check that $\{\varphi(x;l_{\eta | \alpha}) : \alpha < \omega\}$ is consistent.  But given any $n$, $l_{\unlhd (\eta | n)} \subset T_{n}$ and by (1), $l_{\unlhd (\eta | n)} \equiv_{C} (c_{\alpha, 0})_{\alpha \leq n}$ hence $\{\varphi(x;l_{\eta | \alpha}) : \alpha \leq n\}$ is consistent, as $\{\varphi(x;c_{\alpha,0}) : \alpha \leq n\}$ is consistent, by hypothesis.  Then $\{\varphi(x;l_{\eta | \alpha}) : \alpha < \omega\}$ is consistent by compactness.  

Now fix $\eta \perp \nu \in 2^{<\omega}$ so that $(\eta \wedge \nu) \frown 0 \unlhd \eta$ and $(\eta \wedge \nu) \frown 1 = \nu$.  We must check $\{\varphi(x;l_{\eta}), \psi(x;l_{\nu})\}$ is inconsistent.  As $\nu = (\eta \wedge \nu) \frown 1$, we know that $l_{\nu} = l_{(\eta \wedge \nu) \frown 1} = r_{(\eta \wedge \nu) \frown 0}$ by (2).  Let $\xi = (\eta \wedge \nu) \frown 0$. Then $\xi \unlhd \eta$ and $l_{\nu} = r_{\xi}$ so it suffices to show $\{\varphi(x;l_{\eta}), \varphi(x;r_{\xi})\}$ is inconsistent.  Let $n = l(\eta)$ and $m = l(\xi)$.  Then $m \leq n$ and by (1), we have $(l_{\eta}, r_{\xi}) \equiv_{C} (c_{n,0},c_{m,1})$.  By hypothesis, this implies $\{\varphi(x;l_{\eta}), \varphi(x;r_{\xi})\}$ is inconsistent, so we finish.  
\end{proof}

\begin{defn}
Suppose $\ind$ is an $\text{Aut}(\mathbb{M})$-invariant ternary relation on small subsets of $\mathbb{M}$.  
\begin{enumerate}
\item We say $\ind$ satisfies \emph{weak independent amalgamation over models} if, given $M \models T$, $b_{0}c_{0} \equiv_{M} b_{1}c_{1}$ satisfying $b_{i} \ind_{M} c_{i}$ for $i = 0,1$ and $c_{0} \ind_{M} c_{1}$, there is $b$ satisfying $bc_{0} \equiv_{M} bc_{1} \equiv_{M} b_{0}c_{0}$.  
\item We say $\ind$ satisfies \emph{independent amalgamation over models} if, given $M \models T$, $b_{0}\equiv_{M} b_{1}$ satisfying $b_{i} \ind_{M} c_{i}$ for $i = 0,1$ and $c_{0} \ind_{M} c_{1}$, there is $b$ satisfying $bc_{0} \equiv_{M} b_{0}c_{0}$ and $bc_{1} \equiv_{M} b_{1}c_{1}$.  
\item We say $\ind$ satisfies \emph{stationarity} \emph{over models} if: given $M \models T$, if $b_{0} \equiv_{M} b_{1}$ and $b_{0}\ind_{M}c,b_{1}\ind_{M}c$ then $b_{0}\equiv_{Mc}b_{1}$.
\end{enumerate}
\end{defn}

\begin{defn}
Suppose $A,B,C$ are small subsets of the monster $\mathbb{M}$.  
\begin{enumerate}
\item We say $A \ind^{i}_{C} B$ if and only if $\text{tp}(A/BC)$ can be extended to a global type Lascar-invariant over $C$.  We denote its dual by $\ind^{ci}$ - i.e. $A \ind_{C}^{i} B$ holds if and only if $B \ind_{C}^{ci} A$.  
\item We say $A \ind^{u}_{C} B$ if and only if $\text{tp}(A/BC)$ is finitely satisfiable in $C$.  We denote its dual by $\ind^{h}$ - i.e. $A \ind_{C}^{h} B$ if and only if $B \ind_{C}^{u} A$.  
\end{enumerate}
\end{defn}

Suppose $q(x)$ and $r(y)$ are global $M$-invariant types.  Recall that the product $q(x) \otimes r(y) \in S_{xy}(\mathbb{M})$ is defined by $q(x) \otimes r(y) = \text{tp}(ab/\mathbb{M})$ where $b \models r$ and $a \models q|_{\mathbb{M}b}$.  

\begin{prop}\label{weakindepamalg}
Fix a model \(M \models T\).  Suppose \(c_{1} \ind^{i}_{M} c_{0}\), \(c_{j} \ind^{i}_{M} b_{j}\) for \(j =0,1\) and \(b_{0}c_{0} \equiv_{M} b_{1}c_{1}\), but there is no \(b\) such that \(bc_{0} \equiv_{M} bc_{1} \equiv_{M} b_{0}c_{0}\).  Then $T$ has $\SOP_1$.    
\end{prop}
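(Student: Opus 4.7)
The plan is to apply Lemma~\ref{automorphisms}. Since no $b$ satisfies $bc_0 \equiv_M bc_1 \equiv_M b_0c_0$ and $b_0c_0 \equiv_M b_1c_1$, the partial type $\tp(b_0/Mc_0)$ together with the transport of $\tp(b_1/Mc_1)$ across an automorphism sending $c_1$ to $c_0$ is jointly inconsistent as a single-variable type; by compactness and taking conjunctions, one extracts a formula $\varphi(x;y) \in L(M)$ with $\varphi(x;c_0) \in \tp(b_0/Mc_0)$ and $\{\varphi(x;c_0), \varphi(x;c_1)\}$ inconsistent.

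From $c_0 \ind^i_M b_0$ fix a global $M$-invariant $t$ extending $\tp(c_0/Mb_0)$, and from $c_1 \ind^i_M c_0$ fix a global $M$-invariant $s$ extending $\tp(c_1/Mc_0)$. The $M$-invariance of $s$ gives the key property: for any $a \equiv_M c_0$, any realization of $s|_{Ma}$ pairs with $a$ to yield a pair of $M$-type $(c_0,c_1)$. Let $(a_i)_{i<\omega}$ be a Morley sequence of $t$ over $Mb_0$ with $a_0 = c_0$. Since $a_i \equiv_{Mb_0} c_0$ for every $i$, the element $b_0$ realizes $\{\varphi(x;a_i) : i<\omega\}$, so setting $c_{i,0} := a_i$ secures condition (2) of Lemma~\ref{automorphisms}. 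For each $j$ let $e_j$ realize $s|_{Ma_{\geq j}}$ and set $c_{j,1} := e_j$. Then for every $k \geq j$, $e_j \models s|_{Ma_k}$, whence $(a_k,e_j) \equiv_M (c_0,c_1)$ and so $\{\varphi(x;a_k), \varphi(x;e_j)\}$ is inconsistent, securing condition (3).

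The remaining condition (1), $a_i \equiv_{M\bar c_{<i}} e_i$, is not automatic because $a_i$ is built from $t$ while $e_i$ is built from $s$, so their restrictions to $M\bar c_{<i}$ need not agree. I would resolve this by running the construction over a much longer index $\kappa$ and then extracting an $M$-indiscernible sequence of pairs $(\bar c^i)_{i<\omega}$ via an Erd\H{o}s--Rado argument. Local basing preserves both (2) and (3) as formula-level statements, while the $M$-indiscernibility of the pair sequence together with $\tp(c_0/M) = \tp(c_1/M)$ forces (1). Applying Lemma~\ref{automorphisms} to the resulting array and the formula $\varphi$ then yields that $T$ has $\SOP_1$.

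The main obstacle is precisely reconciling (1) with the asymmetric conditions (2) and (3): the two coordinates of each pair play structurally different roles, and either an indiscernibility-extraction step or a symmetrization of the driving invariant type is needed. A possibly cleaner alternative is to invoke the extension property of $\ind^i$ to arrange $c_0 \ind^i_M b_0c_1$ at the outset, allowing a single global $M$-invariant two-variable extension of $\tp(c_0c_1/M)$ to drive the construction symmetrically via its Morley sequence, with all three conditions following directly from the product structure.
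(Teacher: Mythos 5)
Your overall strategy matches the paper's: extract a formula $\varphi(x;y) \in \tp(b_0c_0/M)$ with $\{\varphi(x;c_0),\varphi(x;c_1)\}$ inconsistent, build an array from Morley sequences of invariant types, and invoke Lemma~\ref{automorphisms}. You also correctly identify that condition (1) of that lemma, the type-equality $c_{i,0}\equiv_{M\bar c_{<i}}c_{i,1}$, is the crux. The gap is in your proposed repair.

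The claim that $M$-indiscernibility of the pair sequence together with $\tp(c_0/M)=\tp(c_1/M)$ forces (1) is false. Indiscernibility of $(\bar c_i)_{i<\omega}$ constrains the type of increasing tuples of pairs over $M$, but says nothing about whether the two coordinates of $\bar c_i$ look alike from the point of view of $\bar c_{<i}$. For instance, in $(\mathbb{Q},<)$ take $\bar c_i=(i,-i-1)$: the pair sequence is $\emptyset$-indiscernible, both coordinates of each pair realize the unique $1$-type over $\emptyset$, yet $c_{1,0}=1$ lies above both coordinates of $\bar c_0$ while $c_{1,1}=-2$ lies below both. So Erd\H{o}s--Rado extraction cannot close the gap; the type equality (1) has to be engineered into the construction itself, and your two coordinates (built from the separate invariant types $t$ and $s$, with the $e_j$'s unconstrained relative to one another) carry no such link.

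The paper's construction differs in two ways. First, the sequence of pairs $(c_1^i,c_0^i)_{i\geq 1}$ is a Morley sequence in the \emph{single} product type $(q\otimes r)|_{Mb_0c_0c_1}$, where $r$ is global $M$-invariant with $c_0\models r|_{Mb_0}$ and $q$ is global $M$-invariant with $c_1\models q|_{Mc_0}$; this is close to the ``cleaner alternative'' you float at the end, but note the product is taken over $Mb_0c_0c_1$, not just $M$, so the original pair $(c_1,c_0)$ sits as the zeroth element of the sequence. Second --- and this is the step you are missing --- the type equality that $M$-invariance delivers goes the ``wrong way'': because $(c_0^1c_1^1)\cdots(c_0^Nc_1^N)\ind^i_M c_0c_1$, $M$-invariance of a global extension together with $c_0\equiv_M c_1$ gives $c_0\equiv_{Mc_0^1c_1^1\cdots c_0^Nc_1^N}c_1$, i.e.\ type-equality of the two coordinates of a pair over the \emph{later} pairs in the sequence. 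By indiscernibility this holds for every pair, and then the paper reverses the order of the pair sequence by compactness, turning ``later'' into ``earlier'' and thereby producing exactly condition (1) of Lemma~\ref{automorphisms}. Without this invariance argument and the subsequent order reversal, condition (1) does not follow, and your claim that ``all three conditions follow directly from the product structure'' is too optimistic.
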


\begin{proof}
Let \(p(x;y) = \text{tp}(b_{0}c_{0}/M)\).  Our assumption entails that \(p(x;c_{0}) \cup p(x;c_{1})\) is inconsistent.  By compactness, there is some $\varphi(x;y) \in p(x;y)$ so that $\{\varphi(x;c_{0}), \varphi(x;c_{1})\}$ is inconsistent.  Fix a global $M$-invariant type $r$ so that $c_{0} \models r|_{M_{b_{0}}}$ and a global $M$-invariant type $q$ so that $c_{1} \models q|_{M_{c_{0}}}$.  Then $c_{1}c_{0} \models (q \otimes r)|_{M}$.  Let $(c_{1}^{i},c_{0}^{i})_{1 \leq i < \omega}$ be a Morley sequence in $(q \otimes r)|_{Mb_{0}c_{0}c_{1}}$ and put $(c^{0}_{1},c^{0}_{0}) = (c_{1},c_{0})$.   

First, we note that $b_{0} \models \{\varphi(x;c_{0}^{i}) : i < \omega\}$ so \emph{a fortiori} $\{\varphi(x;c_{0}^{i}) :  i< \omega\}$ is consistent.  Secondly, for any $N < \omega$, we have
$$
(c^{1}_{0}c^{1}_{1})\ldots (c^{N}_{0}c^{N}_{1}) \ind^{i}_{M} c_{0}c_{1}
$$ 
so by $M$-invariance and the fact that $c_{0} \equiv_{M} c_{1}$, we know that
$$
c_{0} \equiv_{Mc_{0}^{1}c_{1}^{1}\ldots c_{0}^{N}c_{1}^{N}} c_{1}
$$
Next, as \(c^{1}_{1} \models q|_{Mc_{0}c_{1}}\), we have \(c^{1}_{1} \equiv_{Mc_{0}} c_{1}\) and therefore $\{\varphi(x;c_{0}),\varphi(x;c_{1}^{1})\}$ is inconsistent.  
As \((c_{1}^{i},c_{0}^{i})_{i < \omega}\) is an \(M\)-indiscernible sequence, we've shown the following.
\begin{enumerate}
\item If \(X \subseteq \omega\) and $j < k$ for all $k \in X$, then $\{\varphi(x;c_{0}^{k}) : k \in X\} \cup \{\varphi(x;c_{i}^{j})\}$ is consistent for $i = 0,1$.  
\item If \(X \subseteq \omega\) and $j < k$ for all $k \in X$, then, writing \(\overline{c}_{X}\) for an enumeration of \(\{c_{0}^{k}c_{1}^{k} : k \in X\}\), we have \(c^{j}_{0} \equiv_{M \overline{c}_{X}} c^{j}_{1}\).  
\item If $j \leq k$ then $\{\varphi(x;c_{0}^{j}), \varphi(x;c_{1}^{k})\}$ is inconsistent.  
\end{enumerate}
Now by compactness (reversing the ordering on the sequence of pairs), we can find an array \((d_{i,j})_{i < \omega, j < 2}\) such that  the following holds.
\begin{enumerate}
\item For all \(i < \omega\), \(d_{i,0} \equiv_{M \overline{d}_{<i}} d_{i,1}\);
\item \(\{\varphi(x;d_{i,0}) : i < \omega\}\) is consistent; 
\item \(j \leq i \implies \{\varphi(x;d_{i,0}), \varphi(x;d_{j,1})\}\) is inconsistent.
\end{enumerate}
By Lemma \ref{automorphisms}, this implies $T$ has $\SOP_1$.  
\end{proof}

The following argument is an elaboration on \cite[Proposition 6.20]{MR3129735}, which, in turn, was an elaboration on an argument of Kim \cite[Proposition 2.6]{KimInThere}.

\begin{prop}\label{amalgfail}
Assume \(\varphi(x;y)\) witnesses $\SOP_1$.  Then there are \(M\), \(c_{0}, c_{1}, b_{0},b_{1}\) so that \(c_{0} \ind^{u}_{M} c_{1}\), \(c_{0} \ind_{M}^{u} b_{0}\), \(c_{1} \ind_{M}^{u} b_{1}\), \(b_{0}c_{0} \equiv_{M} b_{1}c_{1}\) and \(\models \varphi(b_{0},c_{0}) \wedge \varphi(b_{1}, c_{1})\) but \(\varphi(x;c_{0}) \wedge \varphi(x;c_{1})\) is inconsistent.  
\end{prop}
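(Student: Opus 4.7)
I would extract a strongly indiscernible $\SOP_1$-witness, upgrade the ambient parameters to a model $M$ over which the configuration is coheir, and use an $M$-automorphism to align the two sides.

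Via Fact \ref{modeling} and compactness, first pass to a strongly indiscernible tree $(a_\eta)_{\eta \in 2^{<\omega}}$ witnessing $\SOP_1$ via $\varphi$. The $\SOP_1$ condition applied with $\eta$ and $\nu = \eta \frown 0$ gives that $\{\varphi(x;a_{\eta \frown 0}), \varphi(x;a_{\eta \frown 1})\}$ is inconsistent for every $\eta$. In particular the siblings $c_0 := a_0$ and $c_1 := a_1$ satisfy this inconsistency and, by strong indiscernibility, are conjugate over $a_\emptyset$. Next, by a standard coheir-extraction combined with the modeling property, choose a model $M \supseteq a_\emptyset$ over which $(a_\eta)_\eta$ remains strongly indiscernible and such that $\tp((a_\eta)_\eta / M)$ is finitely satisfiable in $M$. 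Pick $b_0$ realizing the left path $\{\varphi(x;a_{0 \frown 0^n}) : n < \omega\}$ and arrange --- by taking the coheir extension of $\tp(a_0/M)$ and realizing the appropriate side --- that $\tp(a_0/Mb_0)$ is also finitely satisfiable in $M$. Let $\sigma \in \mathrm{Aut}(\mathbb{M}/M)$ send the left path $(a_{0 \frown 0^n})_n$ to the right path $(a_{1 \frown 0^n})_n$; its existence is guaranteed by Lemma \ref{StronglyIndiscTreeProp}(1) applied over $M$. Set $b_1 := \sigma(b_0)$.

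With these choices, the formula conditions $\varphi(b_0, c_0) \wedge \varphi(b_1, c_1)$ and the inconsistency of $\varphi(x;c_0) \wedge \varphi(x;c_1)$ hold by construction, the type equality $b_0 c_0 \equiv_M b_1 c_1$ is witnessed by $\sigma$, and the three finite-satisfiability conditions follow: $c_0 \ind^u_M c_1$ from the coheir of the tree over $M$, $c_0 \ind^u_M b_0$ from the choice of $b_0$, and $c_1 \ind^u_M b_1$ by applying $\sigma \in \mathrm{Aut}(\mathbb{M}/M)$ to the previous relation.

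The main obstacle is the careful arrangement of the coheir extensions: simultaneously getting strong indiscernibility of $(a_\eta)_\eta$ over $M$ and the correct \emph{direction} of finite satisfiability, namely $\tp(a_0/Mb_0)$ finitely satisfiable in $M$ rather than the opposite. This requires choosing the coheir extensions in a specific order --- tree coheir over $M$, then $a_0$ chosen as coheir over $M b_0$ --- and verifying that the $\SOP_1$ and strong-indiscernibility conditions, being type-definable, survive these manipulations. The argument is analogous in spirit to \cite[Proposition 6.20]{MR3129735} for $\mathrm{NTP}_2$ and to Kim's classical arguments in simple theory.
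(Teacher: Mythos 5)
Your proposal has a fundamental problem at the very first step, and a second genuine gap that you acknowledge but do not resolve.

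\textbf{The strongly indiscernible tree step fails.} The $\SOP_1$ inconsistency condition --- $\{\varphi(x;a_{\eta \frown 1}), \varphi(x;a_{\nu})\}$ inconsistent whenever $\eta \frown 0 \unlhd \nu$ --- is \emph{not} an $L_0$-quantifier-free-type condition on pairs of indices, because $L_0$ has no level predicates and hence cannot distinguish, say, the pair $(\langle 0 \rangle, \langle 1 \rangle)$ from the pair $(\langle 0,1 \rangle, \langle 1,0 \rangle)$. These have the same $L_0$-qftp (incomparable, meet is a proper ancestor, same lex order), yet $\SOP_1$ asserts inconsistency for the first pair but says nothing about the second. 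Consequently, when you extract a strongly indiscernible tree via the modeling property (Fact \ref{modeling}), the local basing may match your pair $(\langle 0 \rangle, \langle 1 \rangle)$ to an index pair where $\SOP_1$ gives you no inconsistency, and the required inconsistency can simply fail to transfer. Put differently: a strongly indiscernible tree on which paths are consistent and some sibling pair is inconsistent is already an $\SOP_2$-witness (this is precisely Lemma \ref{collision}(2)), so your first step silently upgrades the hypothesis from $\SOP_1$ to $\SOP_2$. Since $\SOP_1$ is a priori strictly weaker, this is not allowed. The paper sidesteps this entirely by working with the raw $\SOP_1$ tree (stretched by compactness to $2^{<\kappa}$ with $\kappa$ large, e.g.\ $\kappa \geq 2^{|T|}$) and never appealing to strong indiscernibility in this proof.

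\textbf{The coheir directions are not actually established.} You correctly flag the issue --- you need $\tp(a_0/Mb_0)$ finitely satisfiable in $M$ and $\tp(a_0/Ma_1)$ finitely satisfiable in $M$, not merely $\tp(a_0a_1/M)$ finitely satisfiable or $\tp$ of the whole tree finitely satisfiable --- but the remedy you sketch (``tree coheir over $M$, then $a_0$ as coheir over $Mb_0$'') is not a construction: realizing $\tp(a_0/M)$ by a coheir over $Mb_0$ would change $a_0$ and may destroy the relation $\models \varphi(b_0,a_0)$ and the compatibility with $a_1$. The paper's construction handles both directions uniformly and without further cases: it builds a sequence $(\eta_i,\nu_i)_{i<\omega}$ inside the large tree via a pigeonhole on $\kappa$ (points (1)--(3) in the proof), passes to a $b$-indiscernible sequence $(a_{\eta_i},a_{\nu_i})_{i\leq\omega+1}$ with $b$ realizing all $\varphi(x;a_{\eta_i})$ by Ramsey and compactness, and then sets $M = \Sk(a_{\eta_i},a_{\nu_i})_{i<\omega}$ in a Skolemized expansion. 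The tail elements $a_{\eta_{\omega+1}}, a_{\nu_\omega}$ and the realization $b$ then satisfy all the finite-satisfiability requirements over $M$ essentially for free, because $M$ is the Skolem hull of an initial segment of a $b$-indiscernible sequence --- any formula over $Mb$ holding of $a_{\eta_{\omega+1}}$ already holds of some $a_{\eta_i}$ with $i < \omega$. Your proposal does not supply anything playing the role of the Skolemization or the tail-of-an-indiscernible-sequence device, and without it the coheir directions remain unjustified.
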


\begin{proof}
 Suppose \(T\) has $\SOP_1$ witnessed by \(\varphi\).  By compactness, we may assume that we have a tree of tuples \((a_{\eta})_{\eta \in 2^{< \kappa}}\) for \(\kappa\) large enough ($\geq 2^{|T|}$ suffices) so that
\begin{itemize}
 \item For all \(\eta \in 2^{\kappa}\), \(\{\varphi(x;a_{\eta | \alpha}) : \alpha < \kappa\}\) is consistent 
\item \(\eta \frown 0 \underline{\vartriangleleft} \nu \in 2^{<\kappa}\), then \(\{\varphi(x;a_{\eta \frown 1}), \varphi(x;a_{\nu})\}\) is inconsistent.
\end{itemize}
Fix a Skolemization \(T^{\Sk}\) of \(T\) and in what follows, we'll work modulo this expanded theory.  We will construct a sequence \((\eta_{i}, \nu_{i})_{i < \omega}\) of elements of \(2^{<\kappa}\) satisfying the following.
\begin{enumerate}
 \item \(a_{\nu_{i}}, a_{\eta_{i}}\) have the same type over \(a_{\eta_{<i}}, a_{\nu_{<i}}\)
\item If \(i < j\) then \(\eta_{i} \vartriangleleft \eta_{j}\) and \(\eta_{i} \vartriangleleft \nu_{j}\).  
\item \( (\eta_{i} \wedge \nu_{i}) \frown 0 \underline{\vartriangleleft} \eta_{i}\) and \((\eta_{i} \wedge \nu_{i}) \frown 1 = \nu_{i}\).
\end{enumerate}
Given \(n\), suppose \((\eta_{i}, \nu_{i} : i < n)\) have been chosen satisfying (1)-(3).  Consider the sequence \((a_{\eta_{n-1} \frown 0^{\alpha} \frown 1} : \alpha < \kappa)\).  As \(\kappa\) is large enough, there are \(\alpha < \beta < \kappa\) so that \(a_{\eta_{n-1}\frown 0^{\alpha} \frown 1}, a_{\eta_{n-1} \frown 0^{\beta} \frown 1}\) have the same type over \((a_{\eta_{<n}}, a_{\nu_{<n}})\).  Let \(\nu_{n} = \eta_{n-1}\frown 0^{\alpha} \frown 1\) and \(\eta_{n} = \eta_{n-1} \frown 0^{\beta} \frown 1\).  Now (1) and (2) are clearly satisfied, and, as \(\alpha < \beta\), \((\eta_{n} \wedge \nu_{n}) = \eta_{n-1}\frown 0^{\alpha}\) so (3) follows.  This completes the construction.

Now we claim that \((a_{\eta_{i}}, a_{\nu_{i}})_{i < \omega}\) satisfies:
\begin{enumerate}
\setcounter{enumi}{3}
 \item \(\{\varphi(x;a_{\eta_{i}}) : i < \omega\}\) is consistent,
\item \(a_{\nu_{i}},a_{\eta_{i}}\) have the same type over \(a_{\nu_{<i}},a_{\eta_{<i}}\),
\item \(\{\varphi(x;a_{\nu_{i}}), \varphi(x;a_{\nu_{j}})\}\) is inconsistent for \(i \neq j\).
\end{enumerate}
Here (5) is immediate from our choice of the sequence and we get (4) since \(i < j\) implies \(\eta_{i} \vartriangleleft \eta_{j}\) and paths are consistent.  To see (6), notice that if \(i < j\) then as \(\eta_{i} \vartriangleleft \nu_{j}\) and \(\eta_{i} \perp \nu_{i}\), we have \((\nu_{i} \wedge \nu_{j}) = (\eta_{i} \wedge \nu_{i})\) and hence \((\nu_{i} \wedge \nu_{j}) \frown 0 \trianglelefteq \nu_{j}\) and \(\nu_{i} = (\nu_{i} \wedge \nu_{j}) \frown 1\) from which (6) follows, using $\SOP_1$.  

By compactness and Ramsey,  we can find \(b\) and \((a_{\eta_{i}}, a_{\nu_{i}})_{i \leq \omega+1}\) indiscernible over \(b\), satisfying (4)-(6), and such that \(b \models \{\varphi(x;a_{\eta_{i}}) : i \leq \omega+1\}\).  Let \(M = \Sk(a_{\eta_{i}},a_{\nu_{i}})_{i < \omega}\).  Then we have \(a_{\eta_{\omega+1}} \ind^{u}_{M} b\) and \(a_{\nu_{\omega}} \ind_{M}^{u} a_{\eta_{\omega+1}}\) by indiscernibility.  As \(a_{\nu_{\omega}}, a_{\eta_{\omega}}\) start an \(M\)-indiscernible sequence, there is \(\sigma \in \text{Aut}(\mathbb{M}/M)\) sending \(a_{\eta_{\omega}} \mapsto a_{\nu_{\omega}}\).  Let \(b' = \sigma(b)\).  Then \(b' \equiv_{M} b\), \(a_{\nu_{\omega}} \ind_{M}^{u} b'\) (as \(a_{\eta_{\omega}} \ind_{M}^{u} b\) by indiscernibility) and \(\models \varphi(b'; a_{\nu_{\omega}})\).  But \(\{\varphi(x;a_{\eta_{\omega+1}}), \varphi(x;a_{\nu_{\omega}})\}\) is inconsistent by (5) and (6).  As \(\varphi\) is an \(L\)-formula, \(M\) is, in particular, an \(L\)-model and \(\ind^{u}\) in the sense of \(T^{\Sk}\) implies \(\ind^{u}\) in the sense of \(T\).  
\end{proof} 

\begin{thm}
The following are equivalent.
\begin{enumerate}
\item $\ind^{ci}$ satisfies weak independent amalgamation:  given any $M \models T$, $b_{0}c_{0} \equiv_{M} b_{1}c_{1}$ so that $c_{1} \ind^{i}_{M} c_{0}$ and $c_{j} \ind^{i}_{M} b_{j}$ for $j = 0,1$, there is $b$ so that $bc_{0} \equiv_{M} bc_{1} \equiv_{M} b_{0}c_{0}$.
\item $\ind^{h}$ satisfies weak independent amalgamation:  given any $M \models T$, $b_{0}c_{0} \equiv_{M} b_{1}c_{1}$ so that $c_{1} \ind^{u}_{M} c_{0}$ and $c_{j} \ind^{u}_{M} b_{j}$ for $j = 0,1$, there is $b$ so that $bc_{0} \equiv_{M} bc_{1} \equiv_{M} b_{0}c_{0}$.
\item $T$ is NSOP$_{1}$.  
\end{enumerate}
\end{thm}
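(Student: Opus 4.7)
The plan is to close a cycle $(3) \Rightarrow (1) \Rightarrow (2) \Rightarrow (3)$, with Propositions \ref{weakindepamalg} and \ref{amalgfail} doing all the substantive work at the two ends; the middle step is a free consequence of the fact that finite satisfiability implies invariance.

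For $(3) \Rightarrow (1)$, I would simply read Proposition \ref{weakindepamalg} contrapositively. Unwinding $\ind^{ci}$ as the dual of $\ind^{i}$, the hypotheses of (1) — namely $c_{0} \ind^{ci}_{M} c_{1}$ and $b_{j} \ind^{ci}_{M} c_{j}$ — translate into $c_{1} \ind^{i}_{M} c_{0}$ and $c_{j} \ind^{i}_{M} b_{j}$, which are precisely the hypotheses of Proposition \ref{weakindepamalg}. Under $\NSOP_{1}$, that proposition prohibits the failure of amalgamation, so (1) holds.

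For $(1) \Rightarrow (2)$, I would invoke the standard fact that any global extension of a type finitely satisfiable in $M$ is automatically Lascar-invariant over $M$; hence $\ind^{u}$ refines $\ind^{i}$, and dually $\ind^{h}$ refines $\ind^{ci}$. Thus the hypotheses of (2) are strictly stronger than those of (1), while the conclusions coincide verbatim, so (1) immediately delivers (2).

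For $(2) \Rightarrow (3)$, I would argue the contrapositive using Proposition \ref{amalgfail}. Assuming $\SOP_{1}$ and applying that proposition to a witnessing formula $\varphi$, I obtain $M, c_{0}, c_{1}, b_{0}, b_{1}$ with the appropriate $\ind^{u}$-independences, with $b_{0} c_{0} \equiv_{M} b_{1} c_{1}$ and $\models \varphi(b_{j}; c_{j})$, but with $\{\varphi(x; c_{0}), \varphi(x; c_{1})\}$ inconsistent. After possibly relabelling $(c_{0}, b_{0}) \leftrightarrow (c_{1}, b_{1})$ — permissible since $b_{0} c_{0} \equiv_{M} b_{1} c_{1}$ — I may arrange $c_{1} \ind^{u}_{M} c_{0}$, i.e.\ $c_{0} \ind^{h}_{M} c_{1}$, matching the hypothesis of (2). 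Any candidate amalgam $b$ with $b c_{0} \equiv_{M} b c_{1} \equiv_{M} b_{0} c_{0}$ would satisfy both $\varphi(x; c_{0})$ (transferred from $b_{0}$ via $b c_{0} \equiv_{M} b_{0} c_{0}$) and $\varphi(x; c_{1})$ (transferred from $b_{1}$ via $b c_{1} \equiv_{M} b_{1} c_{1}$), contradicting inconsistency. Hence (2) fails. The main friction in the argument is purely notational: aligning $\ind^{ci}/\ind^{h}$ with the dual $\ind^{i}/\ind^{u}$ used inside the two propositions, and handling the asymmetry of finite satisfiability by exchanging the pairs — all the genuine combinatorial content is already packed into Propositions \ref{weakindepamalg} and \ref{amalgfail}.
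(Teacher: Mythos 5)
Your proof is correct and follows essentially the same cycle $(3) \Rightarrow (1) \Rightarrow (2) \Rightarrow (3)$ as the paper, with Propositions \ref{weakindepamalg} and \ref{amalgfail} carrying the content and $(1) \Rightarrow (2)$ following because finite satisfiability over $M$ implies Lascar-invariance over $M$. The only added care in your write-up is making the relabelling in $(2) \Rightarrow (3)$ explicit, which is a minor detail the paper leaves implicit.
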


\begin{proof}
(1)$\implies$(2) is clear.

(2)$\implies$(3) is Proposition \ref{amalgfail}.

(3)$\implies$(1) is Proposition \ref{weakindepamalg}.
\end{proof}

\begin{prop}\label{criterion}
Assume there is an \(\text{Aut}(\mathbb{M})\)-invariant independence relation \(\ind\) on small subsets of the monster \(\mathbb{M} \models T\) such that it satisfies the following properties, for an arbitrary \(M \models T\) and arbitrary tuples from $\mathbb{M}$.
\begin{enumerate}
\item Strong finite character:  if \(a \nind_{M} b\), then there is a formula \(\varphi(x,b,m) \in \text{tp}(a/bM)\) such that for any \(a' \models \varphi(x,b,m)\), \(a' \nind_{M} b\).
\item Existence over models:  \(M \models T\) implies \(a \ind_{M} M\) for any \(a\).
\item Monotonicity: \(aa' \ind_{M} bb'\) \(\implies\) \(a \ind_{M} b\).
\item Symmetry: \(a \ind_{M} b \iff b \ind_{M} a\).
\item Independent amalgamation: \(c_{0} \ind_{M} c_{1}\), \(b_{0} \ind_{M} c_{0}\), \(b_{1} \ind_{M} c_{1}\), \(b_{0} \equiv_{M} b_{1}\) implies there exists \(b\) with \(b \equiv_{c_{0}M} b_{0}\), \(b \equiv_{c_{1}M} b_{1}\).  
\end{enumerate}
Then \(T\) is NSOP\(_{1}\).  
\end{prop}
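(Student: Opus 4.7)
My plan is to prove the proposition by deriving NSOP$_1$ through the characterization established in the preceding theorem, namely that $T$ is NSOP$_1$ iff $\ind^h$ satisfies weak independent amalgamation over models. Concretely, I will verify this condition by reducing the $\ind^u$-hypotheses in its statement to $\ind$-hypotheses and then applying the independent amalgamation property (5) of the given abstract relation.

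The key lemma I would prove first is: under (1)--(4), whenever $\text{tp}(a/Mb)$ is finitely satisfiable in $M$, we have $a \ind_M b$. The proof is a short contrapositive. If $a \nind_M b$, strong finite character supplies a formula $\varphi(x,b,m) \in \text{tp}(a/Mb)$ all of whose realizations are $\nind_M$-dependent with $b$. By finite satisfiability, there is $a' \in M$ with $\models \varphi(a',b,m)$, so $a' \nind_M b$. On the other hand, existence over models gives $b \ind_M M$, and since $a' \in M$, monotonicity yields $b \ind_M a'$, so by symmetry $a' \ind_M b$, a contradiction.

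Granted this lemma, I would verify weak independent amalgamation for $\ind^h$ directly. Suppose $M \models T$, $b_0 c_0 \equiv_M b_1 c_1$, $c_1 \ind^u_M c_0$ and $c_j \ind^u_M b_j$ for $j = 0,1$. Applying the lemma to each of $\text{tp}(c_1/Mc_0)$, $\text{tp}(c_0/Mb_0)$, and $\text{tp}(c_1/Mb_1)$ being finitely satisfiable in $M$, and then using symmetry of $\ind$, we obtain $c_0 \ind_M c_1$, $b_0 \ind_M c_0$, and $b_1 \ind_M c_1$. Since $b_0 \equiv_M b_1$ follows from the type equality $b_0 c_0 \equiv_M b_1 c_1$, independent amalgamation (5) produces $b$ with $b \equiv_{Mc_0} b_0$ and $b \equiv_{Mc_1} b_1$. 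Then $bc_0 \equiv_M b_0 c_0$ and $bc_1 \equiv_M b_1 c_1 \equiv_M b_0 c_0$, which is exactly the witness required for weak independent amalgamation of $\ind^h$. By the preceding theorem, $T$ is NSOP$_1$.

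I do not anticipate a major obstacle: the equivalence supplied by the theorem does essentially all of the conceptual work, and the only genuine content left is the observation that strong finite character, together with existence, monotonicity and symmetry, forces the abstract $\ind$ to be coarser than finite satisfiability. One minor technical point to keep an eye on is to make sure the lemma is applied with the correct direction of $\ind^u$ in each of the three hypotheses; symmetry of $\ind$ (rather than of $\ind^u$, which fails in general) is what makes this reduction go through cleanly.
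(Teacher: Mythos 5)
Your proof is correct and takes essentially the same approach as the paper: the key lemma (finite satisfiability in $M$ forces $\ind$-independence, via strong finite character, existence, monotonicity, and symmetry) is identical to the Claim in the paper's proof. The only cosmetic difference is that you route through the Theorem's condition of weak independent amalgamation for $\ind^{h}$, whereas the paper applies the Claim directly to the configuration produced by Proposition \ref{amalgfail}; these are the same argument.
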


\proof{
\textbf{Claim}  Let \(M \models T\), then \(a \ind^{u}_{M} b \implies a \ind_{M} b\).  

\emph{Proof of claim.}  If \(a \nind_{M} b\) then by strong finite character, there is some \(\varphi(x;m,b) \in \text{tp}(a/Mb)\) so that \(a' \nind_{M} b\) for any \(a'\) with \(\models \varphi(a';m,b)\).  However, as \(a \ind_{M}^{u} b\), it follows that there is some \(a' \in M\) such that \(\models \varphi(a';m,b)\).  Then \(b \nind_{M} a'\) by symmetry and \(b \nind_{M} M\) by monotonicity, contradicting existence.  

Now assume towards contradiction that \(T\) has $\SOP_1$, and let \(M, c_{0}, c_{1}, b_{0},b_{1}, \varphi(x;y)\) as given in Proposition \ref{amalgfail}.  By the claim and symmetry of \(\ind\) we have \(c_{0} \ind_{M} c_{1}\), \(b_{0} \ind_{M} c_{0}, b_{1} \ind_{M} c_{1}\).  As \(\ind\) satisfies independent amalgamation over models, there is some \(b \ind_{M} c_{0}c_{1}\), \(b \equiv_{c_{0}M} b_{0}\), \(b \equiv_{c_{1}M} b_{1}\).  This contradicts the inconsistency of \(\{\varphi(x;c_{0}), \varphi(x;c_{1})\}\).  
}

\begin{rem}

\begin{enumerate}
\item We don't require the local character here, as it would then give simplicity
according to the theorem of Kim and Pillay \cite{KimPillay}.
\item We do require \emph{strong} finite character, which is not required
in Adler's definition of mock stability and mock simplicity (see \cite[the discussion after Definition 12]{AdlerMock}).
Indeed, there are mock stable examples arbitrarily high in the $\SOP_{n}$
hierarchy.
\end{enumerate}
\end{rem}

\section{Examples of $\NSOP_1$ theories}

\subsection{Vector spaces with a generic bilinear form}

Let $L$ denote the language with two sorts $V$ and $K$ containing the language of abelian groups for variables from $V$, the language of rings for variables from $K$, a function $\cdot : K \times V \to V$, and a function $[\text{ }]: V \times V \to K$.  $T_{\infty}$ is the model companion of the $L$-theory asserting that $K$ is a field, $V$ is a $K$-vector space of infinite dimension with the action of $K$ given by $\cdot$, and $[\text{ }]$ is a non-degenerate bilinear form on $V$.  If $(K,V) \models T_{\infty}$ then $K$ is an algebraically closed field.  

The theory $T_{\infty}$ was introduced by Nicolas Granger in \cite{Granger}, who observed that its completions are not simple, but nonetheless have a notion of independence called $\Gamma$-non-forking satisfying essentially all properties of forking in stable theories, except local character.  

\begin{defn}
We are using the notation from \cite[Notation 9.2.4]{Granger}. Let \(M = (V, \tilde{K})\) be a sufficiently saturated model of \(T_{\infty}\).  Let \(A \subseteq B \subset M\) and \(c \in M\) with \(c\) a singleton.  Let \(c \ind^{\Gamma}_{A} B\) be the assertion that \(K_{Ac} \ind_{K_{A}} K_{B}\) in the sense of non-forking independence for algebraically closed fields and one of the following holds:
\begin{enumerate}
\item \(c \in \tilde{K}\)
\item \(c \in \langle A \rangle\)
\item \(c \not\in \langle B \rangle\) and \([c,B]\) is \(\Phi\)-independent over \(A\), 
\end{enumerate}
where `\([c,B]\) is \(\Phi\)-independent over A' means that whenever \(\{b_{0}, \ldots, b_{n-1}\}\) is a linearly independent set in \(B_{V} \cap (V \setminus \langle A \rangle)\) then the set \(\{[c,b_{0}], \ldots, [c,b_{n-1}]\}\) is algebraically independent over the field \(K_{B}(K_{Ac})\).  

By induction, for \(c = (c_{0}, \ldots, c_{m})\) define \(c \ind_{A}^{\Gamma} B\) by 
\[
c \ind_{A}^{\Gamma} B \iff (c_{0}, \ldots, c_{m-1}) \ind^{\Gamma}_{A} B \text{ and } c_{m} \ind^{\Gamma}_{Ac_{0}\ldots c_{m-1}} B c_{0}\ldots c_{m-1}.
\]
\end{defn}

\begin{fact}\cite[Theorem 12.2.2]{Granger}
Let $M = (V,K) \models T_{\infty}$.  Then the relation on subsets of $M$ given by $\Gamma$-non-forking is automorphism invariant, symmetric, and transitive.  Moreover, it satisfies extension, finite character, and stationarity over a model.  
\end{fact}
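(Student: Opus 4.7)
The plan is to reduce all six properties to the case of a single element $c$, using the inductive definition of $\ind^\Gamma$ for tuples, and then verify each property by case analysis on whether $c \in \tilde K$, $c \in \langle A \rangle$, or $c \notin \langle B \rangle$. The backbone of the argument will be that the first clause $K_{Ac} \ind_{K_A} K_B$ is non-forking independence in $\ACF$, which already enjoys all of the required properties on the field side; the real content is therefore in managing the $\Phi$-independence condition in case (3).

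First I would fix notation for the canonical data attached to a vector $c \notin \langle B\rangle$: an extension $(b_i)_{i < n}$ of a linear basis of $\langle A \rangle_V \cap B_V$ to one of $B_V$, and the associated tuple $([c,b_i])_{i<n}$ in $\tilde K$. Automorphism invariance and finite character are then essentially formal: linear spans, non-degenerate bilinearity, and algebraic (in)dependence over a field are preserved by $L$-automorphisms, and any witness to dependence is finite, so $\Phi$-dependence over $B$ is witnessed over a finite $B_0 \subseteq B$. For transitivity and monotonicity of $\ind^\Gamma$, I would combine the corresponding properties of $\ind$ in $\ACF$ with the observation that $\Phi$-independence is transitive in the parameter set, since algebraic independence of $([c,b_i])_i$ over a tower $K_A \subseteq K_{AB} \subseteq K_{ABD}$ transfers in the expected way provided the linear bases are chosen compatibly across layers.

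The hard part will be symmetry in case (3), where one must show that the asymmetric-looking condition ``$([c,b_0],\ldots,[c,b_{n-1}])$ is algebraically independent over $K_B(K_{Ac})$'' is in fact symmetric in $c$ and $B$. The key is to interpret the matrix $M_{ij} = [c_i, b_j]$, for a chosen basis $(c_i)$ of $\langle Ac \rangle_V \setminus \langle A \rangle_V$ and a chosen basis $(b_j)$ of $\langle AB \rangle_V \setminus \langle A\rangle_V$, as the Gram matrix of the form restricted modulo $\langle A \rangle$. Non-degeneracy and bilinearity make rank and algebraic-independence statements about rows and columns of $M$ symmetric once one quotients by the field-theoretic dependencies already controlled by the $\ACF$-clause. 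Extension follows by choosing realisations of $[c,-]$ as a generic linear functional over $\langle B \rangle_V$ in $K_B$-generic position, using quantifier elimination in $T_{\infty}$ together with extension in $\ACF$; stationarity over a model $M$ is then a counting-of-types argument, since a complete type $\tp(c/M)$ together with the choice of a generic $\Phi$-independent assignment of the values $[c,b]$ on a basis of $V$ over $\langle M \rangle$ fully determines $\tp(c/MB)$ whenever $c \ind^\Gamma_M B$, and the same recipe applied to $c_1, c_2$ with $c_1 \equiv_M c_2$ produces the same type over $MB$.

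I would organise the write-up as a short reduction lemma (single-element case suffices), followed by six short arguments, each opening with ``if $c \in \tilde K$ or $c \in \langle A \rangle$ the claim is immediate from $\ACF$ or trivial, so assume case (3)''. The main technical obstacle, as noted, is the symmetric reading of the Gram matrix in case (3); everything else is a direct transfer from the stable theory of $\ACF$ via the decomposition built into the definition of $\ind^\Gamma$.
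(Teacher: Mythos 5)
Note first that the paper gives no proof of this statement: it is a black-box citation to Theorem~12.2.2 of Granger's thesis, so there is no internal argument to compare your sketch against. Your proposal is therefore being judged as an attempted re-derivation of Granger's theorem, not as a reconstruction of something in the present paper.

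The decomposition you propose (singleton case, then the three clauses of the definition, with the $\ACF$ clause carrying the ``easy'' load and the $\Phi$-independence clause in case (3) carrying the real content) is the right skeleton, and the Gram-matrix picture is the right intuition for why symmetry should be true. But there is a genuine gap at exactly that point. You assert that ``non-degeneracy and bilinearity make rank and algebraic-independence statements about rows and columns of $M$ symmetric once one quotients by the field-theoretic dependencies.'' The $\Phi$-independence condition, however, is not a rank statement about the matrix $M_{ij} = [c_i, b_j]$ over the field; it is the assertion that the set of entries $\{[c,b_0],\ldots,[c,b_{n-1}]\}$ is \emph{algebraically independent} over the field $K_B(K_{Ac})$, i.e.\ a transcendence-degree statement about the field extension generated by those entries. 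Rank over a field and transcendence degree over a base field are not interchangeable, and non-degeneracy of the form on all of $V$ says nothing about genericity of the finitely many entries $[c_i,b_j]$ over the relevant base. The symmetry of $\ind^\Gamma$ comes down to a careful bookkeeping of transcendence degrees of the field generated by the Gram entries over the two asymmetric-looking bases $K_B(K_{Ac})$ and $K_{Ac}(K_B)$, combined with the $\ACF$-clause to control the purely field-theoretic part; this is the technical core of Granger's argument and cannot be waved away with ``non-degeneracy makes it symmetric.''

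There is a second, more structural issue with your reduction lemma. The relation $\ind^\Gamma$ for tuples is defined by induction with a \emph{shifting base}: $c_m$ is required independent over $A c_0 \ldots c_{m-1}$, not over $A$. Consequently, proving, say, symmetry for tuples from symmetry for singletons is not a formal unwinding; it already presupposes transitivity (to merge the bases) and monotonicity of the singleton relation. So the order in which you establish the six properties matters, and a ``reduction lemma followed by six independent short arguments'' is not a viable organisation. Finally, your stationarity argument (``a complete type $\tp(c/M)$ together with a generic assignment of $[c,b]$ determines $\tp(c/MB)$'') silently relies on quantifier elimination for $T_\infty$ to translate that algebraic data into a complete type; that input should be named explicitly, as it is not free.
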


\begin{lem}
If \(c\) is a tuple and \(A, B\) are small sets with \(c \nind^{\Gamma}_{A} B\), then there is a formula \(\varphi(x;a,b) \in \text{tp}(c/AB)\) so that
\[
\models \varphi(c';a,b) \implies c' \nind^{\Gamma}_{A} B.
\]
\end{lem}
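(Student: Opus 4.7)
The plan is to prove the lemma by induction on the length of the tuple $c$, with the main work concentrated in the case of a singleton. For the induction step, decompose $c = c^{*}c_{m}$; by the recursive clause in Granger's definition, $c \nind^{\Gamma}_{A} B$ forces either $c^{*} \nind^{\Gamma}_{A} B$ or $c_{m} \nind^{\Gamma}_{Ac^{*}} Bc^{*}$. In the first situation the inductive hypothesis yields a formula in $\tp(c^{*}/AB)$ and we trivially extend to the remaining variable; in the second, the singleton case produces some $\psi(x_{m}; \bar{a}, c^{*}, \bar{b}) \in \tp(c_{m}/Ac^{*}B)$, and we obtain the desired formula in $\tp(c/AB)$ by promoting the coordinates of $c^{*}$ to free variables $x^{*}$. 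Any realization $(c'^{*}, c'_{m})$ then satisfies $c'_{m} \nind^{\Gamma}_{Ac'^{*}} Bc'^{*}$, and hence $(c'^{*}, c'_{m}) \nind^{\Gamma}_{A} B$ by the recursive clause.

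It therefore suffices to treat a singleton $c$. Here $c \nind^{\Gamma}_{A} B$ means either (a) the ACF condition $K_{Ac} \ind_{K_{A}} K_{B}$ fails, or (b) the ACF condition holds but none of Granger's alternatives (1)--(3) apply. In case (a), writing the offending element of $K_{Ac}$ as an $L$-term $P(c, \bar{a})$, strong finite character for forking in the $\omega$-stable theory $\ACF$ yields a polynomial formula (derived from the minimal polynomial of $P(c, \bar{a})$ over $K_{B}$) whose realizations continue to force division over $K_{A}$; this pulls back to an $L$-formula in $\tp(c/AB)$ of the required form.

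In case (b), alternative (1) is excluded by the sort of $x$. If $c \in \langle B \rangle \setminus \langle A \rangle$, the membership $c \in \langle B \rangle$ is expressed by an explicit linear equation with parameters from $B$, whose only realization is $c$ itself, and this realization trivially preserves $\nind^{\Gamma}$. Otherwise $c \notin \langle B \rangle$, $c \notin \langle A \rangle$, and $[c, B]$ fails $\Phi$-independence over $A$: there exist $b_{0}, \ldots, b_{n-1} \in B_{V}$ linearly independent over $\langle A \rangle_{V}$ together with a nonzero polynomial $Q$ with $\mathbb{Z}$-coefficients satisfying $Q([c, b_{0}], \ldots, [c, b_{n-1}];\, \bar{z}, \bar{P}(c, \bar{a})) = 0$, where $\bar{z}$ lists finitely many elements of $K_{B}$ and $\bar{P}$ consists of $L$-terms realizing the relevant elements of $K_{Ac}$. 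Translating this identity gives a candidate $L$-formula with parameters from $AB$ lying in $\tp(c/AB)$.

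The main obstacle is ensuring that every realization $c'$ of the candidate formula actually lands in case (a) or case (b) of the definition: we must guarantee both that the relation $Q$ remains nontrivial over $K_{B}(K_{Ac'})$ and that parasitic realizations with $c' \in \langle A \rangle$ are ruled out, since otherwise alternative (2) would rescue $\Gamma$-independence. I expect to accomplish this in two steps: first, by selecting $Q$ with a distinguished leading monomial whose coefficient is a specific element of $K_{B}$ named in the formula, which enforces nontriviality for every realization; and second, by conjoining supplementary algebraic constraints (for instance prescribing specific form values $[x, b]$ for auxiliary $b \in B_{V}$ lying outside $\langle A \rangle_{V}$) that are satisfied by $c$ but incompatible with $x \in \langle A \rangle$ in the saturated model.
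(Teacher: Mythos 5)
Your proposal takes essentially the same route as the paper's proof: unfold Granger's recursive definition to isolate a single offending coordinate, case-split on the three ways a singleton fails $\Gamma$-independence, and build an explicit algebraic formula capturing the failure with the earlier coordinates of $c$ promoted to free variables. The case analysis you give matches the paper's exactly, and you are right that only the $\Phi$-independence case requires real work. Two remarks on where the organization differs and where the work lies.

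First, the paper does not run an induction on tuple length followed by a separate ``singleton lemma''; instead it picks $k$ maximal such that $(c_0,\ldots,c_{k-1}) \ind^{\Gamma}_A B$, so that $c_k$ is the first offending coordinate, and then builds the formula $\chi$ directly in the variables $y=(y_0,\ldots,y_k)$, with $y_0,\ldots,y_{k-1}$ already free. This matters for the step in your plan where you say ``the singleton case produces some $\psi(x_m;\bar a,c^*,\bar b)$ and we obtain the desired formula by promoting the coordinates of $c^*$ to free variables.'' Taken literally, that promotion is not justified by the singleton lemma as a black box: the singleton lemma only guarantees $\models\psi(c_m';\bar a, c^*,\bar b)\Rightarrow c_m'\nind^{\Gamma}_{Ac^*}Bc^*$ for the \emph{fixed} $c^*$, not for arbitrary $c'^*$. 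What saves the argument is that the singleton construction is uniform in $c^*$, but establishing that requires opening up the construction --- which is exactly what the paper does by writing $\chi$ with $y_0,\ldots,y_{k-1}$ free from the start. Your phrasing should be adjusted to make clear that you are re-running the singleton construction with parameters displayed as variables rather than citing the singleton statement.

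Second, you correctly flag the ``parasitic realization'' obstacle. The paper addresses the nontriviality of the polynomial relation by explicitly including in $\chi$ the assertion that $p(x_0,\ldots,x_{l-1};a,b,y)$ is a nonzero polynomial, and it includes the linear-independence clause on $\{y_{i_0},\ldots,y_{i_{m-1}}\}\cup\{d_m,\ldots,d_{l-1}\}$; both are needed to ensure that any realization genuinely witnesses failure of $\Phi$-independence, and these are exactly the kinds of ``supplementary algebraic constraints'' you propose. Your additional worry about realizations with $c'\in\langle A\rangle$ is legitimate; the paper dispatches the endgame with ``it is easy to check $c'\nind^{\Gamma}_A B$'' rather than spelling this out, so your proposal is, if anything, more careful on this point, though your fixes (distinguished leading monomial, conjoined form-value constraints) are sketched rather than carried out. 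The approach is sound; the remaining work is to actually write down the fortified formula and verify that every realization lands in the failure set.
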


\begin{proof}
Suppose \(c = (c_{0}, \ldots, c_{n-1})\) a tuple and \(c \nind^{\Gamma}_{A} B\).  Let \(k\) be maximal so that \((c_{0}, \ldots, c_{k-1}) \ind_{A}^{\Gamma} B\).  It follows that \(c_{k} \nind^{\Gamma}_{Ac_{0}\ldots c_{k-1}} Bc_{0}\ldots c_{k-1}\), so one of the following possibilities occurs:
\begin{enumerate}
\item \(K_{Ac_{0}\ldots c_{k}} \nind^{\ACF}_{K_{Ac_{0}\ldots c_{k-1}}} K_{Bc_{0}\ldots c_{k-1}}\)
\item \(c_{k} \in \langle Bc_{0}\ldots c_{k-1} \rangle \setminus \langle A c_{0} \ldots c_{k-1} \rangle\)
\item There is a linearly independent set \(\{d_{0}, \ldots, d_{l-1}\}\) from \((Bc_{0}\ldots c_{k-1})_{V} \cap (V \setminus \langle A c_{0} \ldots c_{k-1} \rangle)\) so that \(\{[c_{k},d_{0}], \ldots, [c_{k},d_{l-1}]\}\) is not algebraically independent over \(K_{Bc_{0}\ldots c_{k-1}}(K_{Ac_{0}\ldots c_{k}})\).  
\end{enumerate}
The existence of the desired formula requires an argument only in case (3).  In this case, there is a nonzero polynomial \(p(x_{0}, \ldots,x_{l-1};a,b,c_{0},\ldots,c_{k})\) with coefficients in \(K_{Bc_{0}\ldots c_{k-1}}(K_{Ac_{0}\ldots c_{k}})\) so that \(p([c_{k}, d_{0}], \ldots, [c_{k},d_{l-1}];a,b,c_{0},\ldots,c_{k}) = 0\).  By reindexing the \(d_{j}\), we may assume that there is \(m \leq l\) so that \(d_{j} = c_{i_{j}}\) for \(j < m\) and \(d_{j} \in B\) for \(j \geq m\).  Let \(d = (d_{m}, \ldots, d_{l-1})\).  Writing \(y = (y_{0}, \ldots, y_{k})\), let \(\chi(y;a,b,d)\) be the formula which asserts the following:
\begin{enumerate}
\item the polynomial \(p(x_{0},\ldots,x_{l-1};a,b,y)\) is a nonzero polynomial;
\item the set $\{y_{i_{0}}, \ldots, y_{i_{m-1}}\} \cup \{d_{m},\ldots, d_{l-1}\}$ is linearly independent;
\item \(p([y_{k},y_{i_{0}}],\ldots, [y_{k}, y_{i_{m-1}}],[y_{k},d_{m}],\ldots,[y_{k},d_{l-1}];a,b,y) = 0\)
\end{enumerate} 
Then \(\chi(y;a,b,d) \in \text{tp}(c/B)\) and if \(\models \chi(c';a,b,d)\) then it is easy to check \(c' \nind^{\Gamma}_{A} B\).  
\end{proof}

\begin{cor}
The two-sorted theory \(T_{\infty}\) of infinite dimensional vector spaces over algebraically closed fields with a generic bilinear form is NSOP\(_{1}\).  
\end{cor}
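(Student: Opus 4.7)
The plan is to deduce this directly from Proposition \ref{criterion} applied to the relation $\ind^{\Gamma}$ of $\Gamma$-non-forking introduced by Granger. Thus I will verify the five hypotheses of Proposition \ref{criterion}: strong finite character, existence over models, monotonicity, symmetry, and independent amalgamation.

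Symmetry is given directly by Granger's theorem (the cited Fact), and strong finite character is precisely the content of the lemma immediately preceding this corollary. Monotonicity is a routine consequence of the inductive definition $c \ind^{\Gamma}_{A} B \iff (c_0, \ldots, c_{m-1}) \ind^{\Gamma}_{A} B \wedge c_m \ind^{\Gamma}_{A c_0 \ldots c_{m-1}} B c_0 \ldots c_{m-1}$ together with the elementary observation that shrinking $B$ can only make the ACF-condition and the $\Phi$-independence condition easier to satisfy.

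Existence over models, i.e.\ $a \ind^{\Gamma}_{M} M$ for any $a$ and any $M \models T_{\infty}$, I will check by induction on the length of the tuple $a$. For a singleton $a$, the ACF-condition $K_{Ma} \ind^{\ACF}_{K_M} K_M$ holds trivially, and of the three clauses it suffices to verify $\Phi$-independence of $[a, M]$ over $M$: any linearly independent tuple from $M_V \cap (V \setminus \langle M \rangle)$ is empty, so the condition is vacuous. The inductive step is then immediate from the definition.

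The last hypothesis, independent amalgamation over models, I will derive from symmetry, monotonicity, extension, and stationarity (all contained in Granger's Fact) by the standard argument: given $c_0 \ind^{\Gamma}_{M} c_1$, $b_0 \ind^{\Gamma}_{M} c_0$, $b_1 \ind^{\Gamma}_{M} c_1$, and $b_0 \equiv_M b_1$, extend $\tp(b_0/M)$ to a type over $M c_0 c_1$ that is $\Gamma$-independent from $c_0 c_1$ over $M$, and let $b$ realize this extension. By monotonicity $b \ind^{\Gamma}_M c_0$ and $b \ind^{\Gamma}_M c_1$, and by stationarity applied over $M$ with parameter $c_0$ (respectively $c_1$) we obtain $b \equiv_{M c_0} b_0$ and $b \equiv_{M c_1} b_1$, as required. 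The potential stumbling block here is that the version of stationarity supplied by Granger is formulated for full (Lascar) strong types and for singletons, so the minor technicality I expect to deal with is unpacking that tuples can be handled inductively via the recursive definition of $\ind^{\Gamma}$; once this is in place, Proposition \ref{criterion} immediately yields that $T_{\infty}$ is $\NSOP_1$.
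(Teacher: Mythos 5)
Your proposal is correct and takes the same route as the paper: the paper gives no explicit proof text for this corollary precisely because it follows immediately by feeding Granger's Fact (for symmetry, monotonicity, existence/extension, and stationarity over a model, the last two of which give independent amalgamation) together with the immediately preceding lemma (strong finite character) into Proposition \ref{criterion}. Your fleshing-out of the derivation of independent amalgamation from extension plus stationarity, and your verification of existence and monotonicity directly from Granger's recursive definition of $\ind^{\Gamma}$, is exactly the implicit content being relied upon.
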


\subsection{$\omega$-free PAC fields of characteristic zero}

\begin{defn}
A field $F$ is called \emph{pseudo-algebraically closed} if every absolutely irreducible variety defined over $F$ has an $F$-rational point.  A field $F$ is called $\omega$\emph{-free} if it has a countable elementary substructure $F_{0}$ with $\mathcal{G}(F_{0}) \cong \hat{\mathbb{F}}_{\omega}$, the free profinite group on countably many generators.  
\end{defn}

In \cite{ZoePAC2}, Chatzidakis showed that a PAC field has a simple theory if and only if it has finitely many degree $n$ extensions for all $n$ so an $\omega$-free PAC field will not be simple.  Nonetheless, she showed that an $\omega$-free PAC field comes equipped with a notion of independence which is well-behaved.

\begin{fact}\cite{ZoeFree, ZoeOmegaFreeNotes}
Suppose $F$ is a sufficiently saturated $\omega$-free PAC field of characteristic zero.  Given $A = \text{acl}(A)$, $B = \text{acl}(B)$, $C = \text{acl}(C)$ with $C \subseteq A,B \subseteq F$, write $A \ind^{I}_{C} B$ to indicate that $A\ind^{\ACF}_C B$ and $A^{\text{alg}}B^{\text{alg}} \cap \text{acl}(AB) = AB$.  Extend this to non-algebraically closed sets by stipulating $a \ind^{I}_{D} b$ holds if and only if $\text{acl}(aD) \ind^{I}_{\text{acl}(D)} \text{acl}(bD)$.  Then $\ind^{I}$ satisfies existence over models, monotonicity, symmetry, and independent amalgamation over models.  
\end{fact}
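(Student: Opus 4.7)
The plan is to verify each of the four listed properties separately, leaning heavily on the Galois-theoretic description of types in $\omega$-free PAC fields of characteristic zero. The background input I would assume is the standard theorem that in a sufficiently saturated $\omega$-free PAC field $F$ of characteristic zero, the type of a tuple over an algebraically closed subfield $E$ (in the field-theoretic sense, with $E \subseteq F$) is determined by the isomorphism type of the Galois-theoretic structure associated to it --- roughly, the field extension it generates together with the induced continuous action of the absolute Galois group $\mathcal{G}(E)$ on its algebraic closure inside $F^{\text{alg}}$. In particular, since $F$ is $\omega$-free the absolute Galois group $\mathcal{G}(F)$ is isomorphic to $\hat{\mathbb{F}}_\omega$, which is projective, and this projectivity is what will eventually make amalgamation succeed.

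Existence over models and monotonicity are formal. For existence, if $M \models T$ and $a$ is any tuple, then setting $A = \text{acl}(aM)$ and $B = M$, we have $A \ind^{\ACF}_M M$ trivially and the intersection condition $A^{\text{alg}} \cdot M^{\text{alg}} \cap \text{acl}(AM) = A$ degenerates since $M \subseteq A$. Monotonicity is inherited from monotonicity of $\ind^{\ACF}$ combined with the observation that if $\text{acl}(aa'M)^{\text{alg}} \cdot \text{acl}(bb'M)^{\text{alg}} \cap \text{acl}(aa'bb'M) = \text{acl}(aa'M) \cdot \text{acl}(bb'M)$, then the analogous identity for the subtuples follows by taking algebraic closures inside. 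Symmetry is immediate: $\ind^{\ACF}$ is symmetric and the intersection condition $A^{\text{alg}} B^{\text{alg}} \cap \text{acl}(AB) = AB$ is manifestly symmetric in $A$ and $B$.

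The real content is independent amalgamation over models. Given $M \models T$, $a_0 \equiv_M a_1$, $a_i \ind^I_M c_i$ for $i = 0,1$, and $c_0 \ind^I_M c_1$, I would build the realization in two steps. First, perform the amalgamation purely at the level of algebraically closed fields: the compositum $\text{acl}(Mc_0 a_0) \cdot \text{acl}(Mc_1 a_1)$ can be arranged (inside some ambient algebraic closure) to be $\ind^{\ACF}$ over $M$, and standard ACF amalgamation produces a field extension $E$ carrying compatible copies of the two sides. Second --- the step where the PAC and $\omega$-free hypotheses bite --- one must show that the resulting Galois-theoretic data can be realized inside $F$. Concretely, one assembles a continuous $\mathcal{G}(M)$-equivariant structure on the algebraic closure of $E$ by amalgamating the two given Galois actions over $\mathcal{G}(\text{acl}(M))$, and then uses projectivity of $\mathcal{G}(F) \cong \hat{\mathbb{F}}_\omega$ to lift a surjection onto this amalgamated action. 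PAC-ness then converts this Galois-theoretic realization into an actual tuple $a$ in $F$ with the required type.

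The main obstacle is exactly this second step in the amalgamation. The ACF-level amalgamation is classical, and assembling the compatible Galois data is direct from the independent amalgamation of profinite group actions; the nontrivial input is that the amalgamated action actually embeds into $\mathcal{G}(F)$-equivariant data on $F^{\text{alg}}$. This is where $\omega$-freeness is essential: it is precisely the freeness of $\mathcal{G}(F)$ on countably many generators (hence its projectivity) that guarantees any relevant finite or profinite quotient can be realized, so that the abstract amalgam descends to a genuine subfield of $F$. Once this is in place, the PAC property turns the existence of an $F^{\text{alg}}$-point satisfying the relevant equations into an $F$-rational point realizing the amalgamated type.
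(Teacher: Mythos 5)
The paper does not prove this statement: it is presented as a \emph{Fact} and cited directly from Chatzidakis's papers \cite{ZoeFree, ZoeOmegaFreeNotes}. The only thing the paper proves in this vicinity is \emph{strong finite character} of $\ind^{I}$ (Lemma \ref{lem: strong finite char PAC}), which is explicitly flagged as the one missing ingredient and whose proof was communicated to the authors by Chatzidakis. So there is no ``paper's own proof'' to compare your attempt against; what you have written is a sketch of the arguments that appear in the cited references.

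As a sketch it is broadly in the right spirit, but two of the steps are underjustified in a way that would not survive a careful write-up. For monotonicity, the claim that the intersection condition for $aa'$, $bb'$ implies the one for $a$, $b$ ``by taking algebraic closures inside'' is not a proof: from $\text{acl}(aa'M)^{\text{alg}}\,\text{acl}(bb'M)^{\text{alg}} \cap \text{acl}(aa'bb'M) = \text{acl}(aa'M)\,\text{acl}(bb'M)$ one only gets that $\text{acl}(aM)^{\text{alg}}\,\text{acl}(bM)^{\text{alg}} \cap \text{acl}(abM)$ is contained in the big compositum $\text{acl}(aa'M)\,\text{acl}(bb'M)$, and whittling that down to $\text{acl}(aM)\,\text{acl}(bM)$ genuinely needs the linear disjointness coming from the $\ind^{\ACF}$ hypothesis, not mere containment. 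For independent amalgamation, your outline (build the ACF amalgam, then realize the amalgamated Galois data in $\mathcal{G}(F)\cong\hat{\mathbb{F}}_\omega$ by projectivity, then use PAC-ness to descend to an actual $F$-point) is the standard strategy, but as written it elides the precise formulation of the Embedding Lemma being invoked and the verification that the resulting realization witnesses $\ind^{I}$ over both sides. These gaps are exactly why the paper cites the Fact instead of reproving it.
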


It remains to check that $\ind^{I}$ satisfies strong finite character. The proof of it was pointed out to us by Zo\'e Chatzidakis, whom we would like to thank.

\begin{lem} \label{lem: strong finite char PAC}
Suppose $F$ is a sufficiently saturated $\omega$-free PAC field of characteristic zero.  If $a,b,c$ are tuples from $F$ and $a \ind^{I}_{c} b$ then there is a formula $\varphi(x;b,c) \in \text{tp}(a/bc)$ so that if $F \models \varphi(a';b,c)$ then $a' \nind^{I}_{c} b$.  
\end{lem}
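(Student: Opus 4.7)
The plan is to unpack $a \nind^{I}_{c} b$ into concrete algebraic data and reverse-engineer a formula encoding it. Unraveling the definition, $a \nind^{I}_{c} b$ is equivalent to $\acl(ac) \nind^{I}_{\acl(c)} \acl(bc)$, which fails in one of two ways: either (i) algebraic dependence in $\ACF$, i.e., $\acl(ac) \nind^{\ACF}_{\acl(c)} \acl(bc)$; or (ii) the compositum condition $\acl(ac)^{\alg} \cdot \acl(bc)^{\alg} \cap \acl(abc) = \acl(ac)\cdot\acl(bc)$ fails, so there exists $d \in \acl(ac)^{\alg} \cdot \acl(bc)^{\alg} \cap \acl(abc)$ with $d \notin \acl(ac) \cdot \acl(bc)$. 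I will handle these two cases separately and combine the resulting formulas.

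For case (i), I invoke the well-known fact that non-forking in $\ACF$ has strong finite character --- this is immediate from stability of $\ACF$ and the fact that non-forking there is controlled by algebraic rank in a definable way. The resulting field-theoretic formula $\psi(x;b,c) \in \tp(a/bc)$ witnesses ACF-dependence, and any realization $a'$ of $\psi$ automatically has $\acl(a'c) \nind^{\ACF}_{\acl(c)} \acl(bc)$, hence $a' \nind^{I}_{c} b$.

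For case (ii), I fix a witnessing $d$ along with an explicit presentation $d = \sum_{i=1}^{n} \alpha_{i}\beta_{i}$, where each $\alpha_{i}$ is algebraic over $\mathbb{Q}(ac)$ with minimal polynomial $p_{i}(X; a, c)$, each $\beta_{i}$ is algebraic over $\mathbb{Q}(bc)$ with minimal polynomial $q_{i}(X; b, c)$, and $d$ is algebraic over $\mathbb{Q}(abc)$ with minimal polynomial $r(X; a, b, c)$. The existence of such $\alpha_{i}, \beta_{i}, d$ is captured by an existential formula $\chi(x;b,c) \in \tp(a/bc)$. To force the witness $d'$ produced for a realization $a'$ of $\chi$ to lie outside $\acl(a'c)\cdot\acl(bc)$, I strengthen $\chi$ using the first-order description of $\acl$ in $\omega$-free PAC fields of characteristic zero: since such fields are perfect, $\acl(A) = A^{\alg}\cap F$, so "$d \in \acl(A)$" is the first-order condition "$d$ is algebraic over $\mathbb{Q}(A)$"; the compositum $\acl(ac)\cdot\acl(bc)$ thus becomes a definable subfield of the larger compositum $\acl(ac)^{\alg}\cdot\acl(bc)^{\alg}$, and $d \notin \acl(ac)\cdot\acl(bc)$ becomes a condition on the factorization pattern of $r$ over the finite extension $\mathbb{Q}(abc,\alpha_{1},\ldots,\alpha_{n},\beta_{1},\ldots,\beta_{n})$ --- concretely, the degree of $d$ over the compositum of $\mathbb{Q}(ac)(\alpha_{i})$ and $\mathbb{Q}(bc)(\beta_{i})$ exceeds a specific numerical threshold.

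The main obstacle is the last point in case (ii): turning "$d' \notin \acl(a'c)\cdot\acl(bc)$" into a first-order condition. The resolution rests on the rigid Galois structure of $\omega$-free PAC fields of characteristic zero --- in particular the freeness of the absolute Galois group on countably many generators --- which renders the relevant degree and factorization data first-order detectable by bounding the finite extensions involved. Once this degree/factorization obstruction is added as a conjunct to $\chi$, the combined formula $\varphi(x;b,c) \in \tp(a/bc)$ forces any realization $a'$ to yield a witness $d'$ demonstrating the failure of the compositum condition, thereby establishing $a' \nind^{I}_{c} b$.
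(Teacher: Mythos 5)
Your case (i) is correct and matches the paper: when $a \nind^{\ACF}_c b$ strong finite character follows from stability of ACF, so one may reduce to $a \ind^{\ACF}_c b$. The issue is case (ii), and in particular the step you yourself flag as ``the main obstacle.''

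Your decomposition of the failure is to pick a witness $d \in F$ with $d \in \acl(ac)^{\alg}\acl(bc)^{\alg}\cap\acl(abc)$, $d \notin \acl(ac)\acl(bc)$. This leads directly to the problem that ``$d \notin \acl(ac)\acl(bc)$'' is a conjunction over all finite sub-extensions (it is a $\bigwedge$-condition, not a formula), and the fix you propose does not work. Concretely: the $\alpha_i$ and $\beta_i$ you fix generate finite extensions $K = \mathbb{Q}(ac)(\alpha_1,\dots,\alpha_n)$ and $L = \mathbb{Q}(bc)(\beta_1,\dots,\beta_n)$, and by construction $d = \sum \alpha_i\beta_i$ already lies in $KL$, so $d$ has degree $1$ over the compositum --- there is no ``degree threshold'' on $d$ over $KL$ that can encode $d \notin \acl(ac)\acl(bc)$. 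The appeal to $\omega$-freeness to ``render the degree and factorization data first-order detectable by bounding the finite extensions involved'' is not a proof step; no bound is exhibited, and the relative algebraic closures $\acl(ac)$, $\acl(bc)$ are genuinely non-definable subfields, so their compositum is not a definable set.

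The paper avoids this by working with a different witness for the failure. Rather than an element $d$ of $F$, it takes $\alpha \in \langle ca\rangle^{\alg}$ and $\beta \in \langle cb\rangle^{\alg}$, both \emph{outside} $F$, with $F(\alpha) = F(\beta)$ and $\beta \notin F\langle c\rangle^{\alg}$, and $F(\beta)/F$ Galois. This recasts the non-independence as a coincidence of finite Galois extensions of $F$, which is first-order expressible via the standard interpretation of finite extensions: the key clause of $\varphi(t,b,c)$ asserts that every root $z$ of $Q(Z,y,b,c)$ generates an extension $F(z)$ containing a root of $S(X,w,t,c)$. A second essential ingredient that your sketch omits entirely is the control of conjugates: some conjugates of $\beta$ over $\langle cb\rangle$ may fall into $F\langle c\rangle^{\alg}$, and the paper must introduce auxiliary elements $b' \in \acl(cb)$ and $a' \in \acl(ca)$ (with $\langle cbb'\rangle$ closed under $\operatorname{Aut}(\acl(cb)/\langle cb\rangle)$) and a formula $\theta$ isolating $\tp(b'/bc)$, before the irreducibility and Galois conditions on $P,Q,R,S$ can be assembled into $\varphi$. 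Without some replacement for that mechanism, the formula you would write down does not force non-independence for arbitrary realizations $a'$.

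So the gap is real: the step from the failure of the compositum condition to a first-order formula is precisely the content of the lemma, and your proposal does not supply it; it substitutes a genuinely different (and less tractable) presentation of the witness and then gestures at the hard part rather than proving it.
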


\begin{proof}
If $a \nind^{\ACF}_{c} b$, then the existence of such a formula is clear, so we may assume $a \ind_{c}^{\ACF} b$.  As $a \nind^{I}_{c} b$, there are $\beta \in \langle cb \rangle^{\text{alg}}$, $\alpha \in \langle c a \rangle^{\text{alg}}$ not in $F$ such that $F(\alpha) = F(\beta)$ and $\beta \notin F\langle c \rangle^{\text{alg}}$.  We choose them so that $F(\beta)$ is Galois over $F$ (always possible since $F\cap
\langle ca \rangle^{\alg} \langle cb \rangle^{\alg}$ is Galois over $(F\cap \langle ca \rangle ^{\alg})(F\cap \langle cb \rangle^{\alg})
= \acl(ca)\acl(cb)$).  

Some of the conjugates of $\beta$ over $\langle cb \rangle$ might lie in $F \langle c\rangle^{\alg}$ and this will be witnessed by elements of $\text{acl}(cb) = F \cap \langle cb \rangle^{\text{alg}}$.  We choose an element $b'$ of $\acl(cb)$ such that $\langle cbb' \rangle$ contains $\langle c b \beta \rangle \cap F$ and $\langle c b  b' \rangle$ is closed under $\text{Aut}(\text{acl}(cb) / \langle cb \rangle)$.  Let the formula $\theta(y;b,c)$ isolate $\text{tp}(b'/bc)$.  

Let $P(Y,b,c)$ be a minimal polynomial of $b'$ over $\langle bc \rangle$, and let $Q(Z,Y,b,c)$ be such that $Q(Z,b',b,c)$ is a minimal polynomial of $\beta$ over $\langle c bb' \rangle$.  

~

\textbf{Claim.}  If $\models \theta(b_{1},b,c)$, then $P(b_{1},b,c) = 0$, $Q(Z,b_{1},b,c)$ is irreducible of degree $[\langle c b \beta \rangle : \langle cbb' \rangle ]$ and a solution of $Q$ defines a Galois extension, which is not contained in $F\langle c \rangle^{\text{alg}}$.

The first two assertions of the claim are immediate.  For the last one, assume that  $(b_1,b_2)$
satisfies $P(b_1,b,c)=0 \land Q(b_2,b_1,b,c)=0$, and that $Q(Z,b_1,b,c)$ is
irreducible and defines a Galois extension of the right degree (all this
is expressible in $\tp_F(b'/bc)$), but that $b_2\in F \langle c \rangle^{\alg}$. Then there is a
formula in $\tp_F(b_1/cb)$ which will say that such a $b_2$ exists, and is therefore not in
$\tp_F(b'/bc)$.

~

Similarly let $a' \in \acl(ac)$ be such that $\langle c a \alpha \rangle \cap F = \langle c aa'  \rangle$ and let $R(W,T,c)$ be such that $R(W,a,c)$ is a minimal polynomial of $a'$ over $\langle ca \rangle$ and let $S(X,W,T,c)$ be such that $S(X,a',a,c)$ is a minimal polynomial of $\alpha$ over $\langle c a a' \rangle$.  

The formula $\varphi(t,b,c)$ is a conjunction of the following assertions:
\begin{itemize}
\item $\exists y \theta(y,b,c)$,
\item $R(W,t,c)$ is not the trivial polynomial,
\item $(\exists w) R(w,t,c) = 0$ and $S(X,w,t,c)$ is irreducible over $F$ of degree $[\langle c a \alpha \rangle : \langle caa' \rangle ]$,
\item $(\forall z)[ Q(z,y,b,c) = 0 \to \text{``}F(z) \text{ contains a root of }S(X,w,t,c) = 0\text{''}$.
\end{itemize}
These statements are first-order using standard facts on interpretability of
finite algebraic extensions of a field in a field and definability of irreducibility (see e.g. \cite{van1984bounds}).

Assume now that $d$ satisfies $\varphi(t,b,c)$.  Let $y = b_{1}$ and $w = d_{1} \in F$ be as guaranteed to exist by $\varphi$, and let $b_{2}$ be a root of $Q(Z,b_{1},b,c) = 0$; then $F(b_{2})$ is a proper Galois extension of $F$ of degree $[\langle  cb \beta \rangle : \langle c b b'  \rangle ]$ which is not contained in $F \langle c  \rangle^{\text{alg}}$.  

Because $d$ satisfies $\varphi$, if $d_{2}$ satisfies $S(X,d_{1},d,c) = 0$, then $F(d_{2}) = F(b_{2})$.  As $F(b_{2}) \not\subseteq F \langle c \rangle^{\text{alg}}$, we necessarily have $d \not\in \langle c \rangle^{\text{alg}}$ and, therefore, either $d \nind^{\text{ACF}}_{c} b$ or, otherwise, $\langle cd \rangle^{\text{alg}} \langle cb \rangle^{\text{alg}} \cap F \neq \text{acl}(cd) \text{acl}(cb)$.  This shows $d \nind^{I}_{c} b$.  
\end{proof}

\begin{cor}
The theory of \(\omega\)-free PAC fields of characteristic 0 is $\NSOP_1$.  
\end{cor}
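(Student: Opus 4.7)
The plan is to apply the abstract criterion \textbf{Proposition \ref{criterion}} to the relation $\ind^{I}$ on an $\omega$-free PAC field of characteristic zero. That proposition reduces $\NSOP_1$ to the existence of an $\text{Aut}(\M)$-invariant ternary relation satisfying five properties: strong finite character, existence over models, monotonicity, symmetry, and independent amalgamation over models.

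First I would quote the Fact attributed to Chatzidakis, which packages four of the five required properties for $\ind^{I}$ in one shot: existence over models, monotonicity, symmetry, and independent amalgamation over models. These are exactly items (2)--(5) of Proposition \ref{criterion}. Automorphism invariance of $\ind^{I}$ is immediate from its definition in terms of algebraic closure and $\ind^{\text{ACF}}$.

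Next, the remaining hypothesis (1) of Proposition \ref{criterion}, strong finite character, is precisely the content of Lemma \ref{lem: strong finite char PAC}: if $a \nind^{I}_{c} b$, then there is a formula $\varphi(x;b,c) \in \tp(a/bc)$ witnessing this in the sense that every realization $a'$ of $\varphi(x;b,c)$ satisfies $a' \nind^{I}_{c} b$. Combining the Fact with this Lemma, all five hypotheses of Proposition \ref{criterion} are met for $\ind^{I}$ in a sufficiently saturated $\omega$-free PAC field of characteristic zero.

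Applying Proposition \ref{criterion} therefore yields that the theory is $\NSOP_{1}$. There is essentially no obstacle at this stage; all the work was done in establishing the lemma on strong finite character and in invoking Chatzidakis's structural results on $\ind^{I}$. The proof is one line of bookkeeping once those ingredients are in place.
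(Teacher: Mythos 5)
Your proposal is correct and follows exactly the same route as the paper: cite Chatzidakis's structural facts for $\ind^{I}$ (giving existence, monotonicity, symmetry, independent amalgamation over models), add the paper's Lemma \ref{lem: strong finite char PAC} for strong finite character, and then apply Proposition \ref{criterion}. This is precisely how the corollary is obtained in the paper.
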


\subsection{Examples via Parametrization} \label{sec: parametrization}

In this subsection, we show how to construct NSOP$_1$ theories from simple ones.  We start with a simple theory $T$ obtained as the theory of a Fra\"iss\'e limit satisfying the strong amalgamation property and, by analogy with the theory of parametrized equivalence relations $T^{*}_{\text{feq}}$, form the parametrization of this structure.  We show that the resulting theories are NSOP$_1$ by proving an independence theorem for a natural independence notion associated to these theories.  The construction we perform here was studied by Baudisch \cite{Baudisch} in the context of arbitrary model complete theories eliminating $\exists^{\infty}$. We expect that our results hold in this greater generality as well, but our setting already encompasses many interesting examples and simplifies the study of amalgamation.  

We begin by recalling some facts from Fra\"iss\'e theory.  

\begin{defn}(SAP)
Suppose \(\mathbb{K}\) is a class of finite structures.  We say \(\mathbb{K}\) has the Strong Amalgamation Property (SAP) if given \(A,B,C \in \mathbb{K}\) and embeddings \(e: A \to B\) and \(f: A \to C\) there is a \(D \in \mathbb{K}\) and embeddings \(g: B \to D\) and \(h: C \to D\) so that the following diagram commutes:
\[
\xymatrix{
 & B \ar[rd]^{g}  &  \\
A \ar[ru]^{e} \ar[rd]^{f}
 & & D \\
& C \ar[ru]^{h} & }
\]
and, moreover, \((\text{im}g) \cap (\text{im}h) = \text{im} ge\) (and hence \(=\text{im}hf\), as well).  
\end{defn}

The following is a useful criterion for SAP:

\begin{fact}\cite{Hodg}
Suppose \(\mathbb{K}\) is the age of a countable ultrahomogeneous structure \(M\).  Then the following are equivalent:
\begin{enumerate}
\item \(\mathbb{K}\) has the strong amalgamation property.
\item \(M\) has no algebraicity.
\end{enumerate}
\end{fact}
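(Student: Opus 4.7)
My plan is to prove the two directions by exploiting the interplay between amalgamation in the age $\mathbb{K} = \text{age}(M)$ and realization of types in the ultrahomogeneous limit $M$. Throughout, I use that since $M$ is ultrahomogeneous, the type of a finite tuple over a finite $A \subseteq M$ is determined by its quantifier-free type, and equivalently, complete types over $A$ coincide with orbits under $\text{Aut}(M/A)$. In particular, ``$M$ has no algebraicity'' means $\text{acl}(A) = A$ for every finite $A \subseteq M$, i.e.\ every $b \in M \setminus A$ has infinite orbit under $\text{Aut}(M/A)$, so $\text{tp}(b/A)$ has infinitely many realizations in $M$.

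For $(1) \Rightarrow (2)$, suppose towards contradiction that some $a \in M$ lies in $\text{acl}(A) \setminus A$ for a finite $A \subset M$, so $\text{tp}(a/A)$ has exactly $k$ realizations in $M$ for some $1 \leq k < \omega$. Let $B = \langle A \cup \{a\} \rangle \in \mathbb{K}$. Iterating SAP, I can amalgamate $k+1$ isomorphic copies $B_0, \ldots, B_k$ of $B$ over $A$ so that $B_i \cap B_j = A$ whenever $i \neq j$: amalgamate $B_0$ and $B_1$ over $A$ via SAP to get $D_1$ in which the two copies meet only in $A$; then amalgamate $D_1$ with a fresh copy of $B$ over $A$ (again via SAP applied to $A \hookrightarrow D_1$ and $A \hookrightarrow B$) to obtain $D_2$; iterate. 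The resulting $D_k \in \mathbb{K} = \text{age}(M)$ embeds into $M$ with $A$ fixed, and this embedding gives $k+1$ distinct realizations of $\text{tp}(a/A)$, a contradiction.

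For $(2) \Rightarrow (1)$, fix $A, B, C \in \mathbb{K}$ with embeddings $e: A \to B$ and $f: A \to C$. By the ordinary amalgamation property (which already holds for $\mathbb{K}$ as a Fra\"iss\'e class), together with the universality of $M$, I can embed $B$ into $M$ via some $g$ and regard $A' := g(e(A))$ as a common base. The task is to produce an embedding $h: C \to M$ with $h \circ f = g \circ e$ and $g(B) \cap h(C) = A'$. Enumerate $C \setminus f(A) = \{c_1, \ldots, c_n\}$ and choose images $d_1, \ldots, d_n \in M$ by induction. Suppose $d_1, \ldots, d_{i-1}$ have been chosen so that their quantifier-free type over $A'$ matches $\text{qftp}(c_1, \ldots, c_{i-1}/f(A))$ and $d_j \notin g(B) \setminus A'$ for $j < i$. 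By ordinary AP applied to $A \cup \{c_1, \ldots, c_{i-1}\} \hookrightarrow A \cup \{c_1, \ldots, c_i\}$ and $A \cup \{c_1, \ldots, c_{i-1}\} \hookrightarrow M$ (via $A' \cup \{d_1, \ldots, d_{i-1}\}$), at least one realization of the target type exists in $M$; since $c_i \notin f(A) \cup \{c_1, \ldots, c_{i-1}\}$, that realization lies outside $A' \cup \{d_1, \ldots, d_{i-1}\}$, and hence by no algebraicity has infinitely many $\text{Aut}(M / A' d_1 \cdots d_{i-1})$-conjugates in $M$. Since $g(B)$ is finite, I can select $d_i$ in this orbit avoiding $g(B) \setminus A'$. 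After $n$ steps, ultrahomogeneity upgrades $f(A) \cup \{c_1, \ldots, c_n\} \mapsto A' \cup \{d_1, \ldots, d_n\}$ to an embedding $h: C \to M$, and by construction $g(B) \cap h(C) = A'$.

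The main subtlety is in $(2) \Rightarrow (1)$, where at each inductive step I need that the relevant finite-tuple type really has infinitely many realizations in $M$. This is exactly where ``no algebraicity'' is used, interpreted as $\text{acl}(X) = X$ for every \emph{finite} $X$ (not just for $X = \emptyset$); ultrahomogeneity then converts this statement about $\text{acl}$ into a statement about the size of orbits, and hence about the number of type-realizations. The iterative construction in $(1) \Rightarrow (2)$ is routine once one checks that SAP lets one add each new disjoint copy of $B$ without disturbing the disjointness achieved so far, which is immediate from the disjointness clause in the definition of SAP.
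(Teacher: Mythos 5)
The paper records this as a cited fact from Hodges with no proof supplied, so there is no in-paper argument to compare against; what follows is an assessment of your proof on its own merits. Both directions are sound and follow the standard argument. For $(1)\Rightarrow(2)$, the iterated use of SAP to produce $k+1$ copies of $B$ over $A$ meeting pairwise only in $A$, followed by embedding the resulting finite structure into $M$ over $A$, is exactly right. For $(2)\Rightarrow(1)$, the inductive construction of $d_1,\dots,d_n$ realizing the right quantifier-free type while dodging the finitely many points of $g(B)\setminus A'$ correctly isolates where non-algebraicity is used. Two small points worth tightening: the step where you say ``by ordinary AP \dots at least one realization exists in $M$'' is really the extension property of the Fra\"iss\'e limit (any finite $X\subseteq M$ contained in a member $Y$ of $\mathbb{K}$ extends to an embedding $Y\to M$ over $X$), not AP applied directly; and at the end you should take $D$ to be the (finite, since the setting is relational) substructure of $M$ generated by $g(B)\cup h(C)$, since SAP requires an amalgam in $\mathbb{K}$, not in $M$. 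Neither affects the correctness of the argument in the relational setting used by the paper.
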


Let \(\mathbb{K}\) denote a Fra\"iss\'e class in a finite relational language \(L = \langle R_{i} : i < k\rangle\) where each relation symbol \(R_{i}\) has arity \(n_{i}\).  Let $T$ the complete $L$-theory of the Fra\"iss\'e limit of $\mathbb{K}$.  We'll define a new language \(L_{\pfc}\) which contains two sorts \(P\) and \(O\).  For each \(i < k\), there is an \((n_{i}+1)\)-ary relation symbol \(R^{i}_{x}\) where \(x\) is a variable of sort \(P\) and the suppressed \(n_{i}\) variables belong to the sort \(O\).  

Given an \(L_{\pfc}\)-structure \(M\), it is convenient to write \(M = (A,B)\) where \(O(M) = A\) and \(P(M) = B\).  We will refer to elements named by $O$ as \emph{objects} and elements named by $P$ as \emph{parameters}.  Given \(b \in B\), we define the \emph{L-structure associated to b in M}, denoted \(A_{b}\), to be the \(L\)-structure interpreted in \(M\) with domain \(A\) and each relation symbol \(R_{i}\) interpreted by \(R^{i}_{b}(A)\).  If $b \in B$ and $C \subseteq A$, write $\langle C \rangle_{b}$ to denote the $L$-substructure of $A_{b}$ generated by $C$ (as we assume the language is relational, this will have $C$ as its domain).  

We describe a class of finite structures \(\mathbb{K}_{\pfc}\) to be the class defined in the following way.  Let 
\[
\mathbb{K}_{\pfc} = \left\{M = (A,B) \in \text{Mod}(L_{\pfc}) : |M| < \aleph_{0}, (\forall b \in B)(\exists D \in \mathbb{K}) \left(A_{b} \cong D\right) \right\}
\]

From now on, we'll assume \(\mathbb{K}\) also satisfies SAP.  

\begin{lem}\label{SAP}
\(\mathbb{K}_{\pfc}\) is a Fra\"iss\'e class satisfying the Strong Amalgamation Property (SAP).  
\end{lem}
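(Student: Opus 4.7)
The plan is to verify in turn that $\mathbb{K}_{\pfc}$ is closed under isomorphism, contains only countably many isomorphism classes of finite structures, has the hereditary property (HP), and has SAP (the last of which entails AP, and via amalgamation over the empty structure, JEP). The first two are immediate, since $L_{\pfc}$ is finite relational and $\mathbb{K}$ itself is countable up to isomorphism; HP follows from HP of $\mathbb{K}$, since whenever $(A', B') \subseteq (A, B) \in \mathbb{K}_{\pfc}$ and $b \in B'$, the structure $A'_{b} = A_{b}\restriction A'$ lies in $\mathbb{K}$.

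The heart of the argument is SAP. Given embeddings $M_{0} \hookrightarrow M_{1}$ and $M_{0} \hookrightarrow M_{2}$ in $\mathbb{K}_{\pfc}$, I would begin by renaming to assume that $M_{0} = (A_{0}, B_{0})$ is a common substructure of $M_{1} = (A_{1}, B_{1})$ and $M_{2} = (A_{2}, B_{2})$ with $M_{1} \cap M_{2} = M_{0}$ in each sort. Setting $A = A_{1} \cup A_{2}$ and $B = B_{1} \cup B_{2}$, the task is to equip $(A, B)$ with an $L_{\pfc}$-structure $N$ extending both $M_{i}$ so that, for every $b \in B$, the associated $L$-structure $A_{b}$ lies in $\mathbb{K}$. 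I would do this parameter by parameter: for $b \in B_{0}$, apply SAP of $\mathbb{K}$ to $A_{1,b}$ and $A_{2,b}$ over $A_{0,b}$ to obtain an $L$-structure in $\mathbb{K}$ on $A_{1} \cup_{A_{0}} A_{2} = A$; for $b \in B_{i} \setminus B_{0}$ (with $i \in \{1, 2\}$), first extend $A_{i,b}\restriction A_{0} \in \mathbb{K}$ to some $D_{b} \in \mathbb{K}$ with underlying set $A_{3-i}$, and then amalgamate $A_{i,b}$ with $D_{b}$ over $A_{0}$ via SAP of $\mathbb{K}$ to land in a $\mathbb{K}$-structure on $A$. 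Since both amalgamations are strong, no new identifications occur in either sort, and $N = (A, B) \in \mathbb{K}_{\pfc}$ is a strong amalgam of $M_{1}$ and $M_{2}$ over $M_{0}$.

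The step I expect to be the main (though still routine) obstacle is the construction of the auxiliary $D_{b}$: one must check that any prescribed $\mathbb{K}$-structure on $A_{0}$ extends to a $\mathbb{K}$-structure on the larger finite set $A_{3-i}$. This follows from the fact that $\mathbb{K}$ is the age of its ultrahomogeneous Fra\"iss\'e limit $M$, which by the quoted fact has no algebraicity since $\mathbb{K}$ has SAP: embed $A_{i,b}\restriction A_{0}$ into $M$, choose any $|A_{3-i} \setminus A_{0}|$ additional points in $M$, and transport the induced substructure back to $A_{3-i}$ along a bijection extending the chosen embedding of $A_{0}$. With this in hand, the remaining bookkeeping reduces to verifying that restricting $N$ to $A_{i}$ in both sorts indeed recovers $M_{i}$, which is automatic by construction.
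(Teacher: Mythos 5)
Your proposal is correct and follows essentially the same route as the paper: after reducing to the case where the two structures $M_1, M_2$ literally intersect in $M_0$ in both sorts, you define the $L$-structure $A_b$ on the amalgamated object set $A$ parameter by parameter, invoking SAP of $\mathbb{K}$ for each $b \in B_0$ and realizing each resulting abstract amalgam on the common set $A$, then filling in the remaining parameters by an extension argument. The only cosmetic difference is your handling of $b \in B_i \setminus B_0$: you extend the restriction $A_{i,b}\restriction A_0$ to a structure on $A_{3-i}$ and then amalgamate over $A_0$, whereas the paper simply extends $A_{i,b}$ (on all of $A_i$) to an arbitrary $\mathbb{K}$-structure on $A$ directly; both use the same extension-to-a-larger-finite-set fact, and both are correct.
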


\begin{proof}
HP is clear and, as we allow the empty structure to be a model in \(\mathbb{K}_{\pfc}\), JEP follows from SAP.  So we show SAP.  

First, we may assume that 3 models in the amalgamation diagram have the same set of parameters.  Suppose \((A,D)\), \((B,E)\) and \((C,F)\) are in \(\mathbb{K}_{\pfc}\) and we have embeddings
\[
\xymatrix{
 & (C,F)   \\
(A,D) \ar[ru]^{i} \ar[rd]^{j}
 &  \\
&(B,E)  }
\]
By moving \(F\) and \(E\) over \(D\) if necessary, we may assume that \(i\) and \(j\) are just the inclusion maps on parameters and that \(F \cap E = D\).  By SAP in \(\mathbb{K}\), for each \(d \in D\), there are embeddings \(f_{d},g_{d}\) and \(G_{d} \in \mathbb{K}\) so that the following diagram commutes,
\[
\xymatrix{
 & C_{d} \ar[rd]^{f_{d}}  &  \\
A_{d} \ar[ru]^{i} \ar[rd]^{j}
 & & G_{d} \\
& B_{d} \ar[ru]^{g_{d}} & }
\]
where \(i\) and \(j\) are the induced maps, so that \(f_{d}(C_{d}) \cap g_{d}(B_{b}) = (f_{d} \circ i)(A_{d})\).  Since the language is relational, HP implies that we may take \(G_{d} = f_{d}(C_{d}) \cup g_{d}(D_{d})\).  Moreover, we may choose \(f_{d}\) and \(g_{d}\) so that they are the same functions for all \(d \in D\) on the underlying sets \(C\) and \(B\) respectively.  Call these functions \(f\) and \(g\).  Let \(G\) be the underlying set of \(G_{d}\) for some (all) \(d \in B\).  Now define a structure \((G,E \cup F)\) so that for all \(d \in D = E \cap F\), \(G_{d}\) is as above, if \(a \in E \setminus F\), \(G_{a}\) is some structure in \(\mathbb{K}\) extending \(g(B_{a})\) and, likewise, if \(a \in F \setminus E\), \(G_{a}\) is some structure extending \(f(C_{a})\).  The functions \(f\) and \(g\) extend to embeddings \(f: (C,F) \to (G,E \cup F)\) and \(g: (B,E) \to (G,E \cup F)\) so that \(f\) and \(g\) are both inclusions on parameters.  By construction, it is clear that \(fi = gj\).  Moreover, \(fi(A) = f(C) \cap g(B)\) and \(fi(D) = f(E) \cap g(F)\), which establishes SAP in \(\mathbb{K}_{\pfc}\).  
\end{proof}

As $\mathbb{K}_{\pfc}$ is a Fra\"iss\'e class, there is a unique countable ultrahomogeneous $L_{\pfc}$-structure with age $\mathbb{K}_{\pfc}$.  Let $T_{\pfc}$ denote its theory.  By Fra\"iss\'e theory, this theory eliminates quantifiers and is $\aleph_{0}$-categorical.  

\begin{lem}\label{model}
Suppose \((A,B) \models T_{\pfc}\).  Then, for all \(b \in B\), \(A_{b} \models T\).  
\end{lem}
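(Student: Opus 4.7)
The plan is to verify the two axiom schemes which, together, characterize $T$ as the theory of the Fra\"iss\'e limit of $\mathbb{K}$: (a) the age of $A_b$ is contained in $\mathbb{K}$, and (b) whenever $C \subseteq A_b$ is finite with $\langle C\rangle_b \in \mathbb{K}$ and $D \in \mathbb{K}$ extends $\langle C\rangle_b$ by a single new element, this extension is realized inside $A_b$ over $C$. Since these schemes are first-order, they hold in every model of $T_{\pfc}$'s analogue for $\mathbb{K}$, so once verified they force $A_b \models T$.

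For (a), given any finite $C \subseteq A$, the pair $(C,\{b\})$ is an $L_{\pfc}$-substructure of $(A,B)$, hence belongs to the age of $T_{\pfc}$, which is $\mathbb{K}_{\pfc}$. By the very definition of $\mathbb{K}_{\pfc}$ this says that $\langle C\rangle_b \in \mathbb{K}$. For (b), suppose $C \subseteq A$ is finite with $\langle C\rangle_b \in \mathbb{K}$, and that $D \in \mathbb{K}$ extends $\langle C\rangle_b$ by a single new element $d$. Form the $L_{\pfc}$-structure $M^{+} = (C \cup \{d\}, \{b\})$ with each $R^i_b$ interpreted so that $\langle C \cup \{d\}\rangle_b \cong D$. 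Then $M^{+} \in \mathbb{K}_{\pfc}$ and $M^{+}$ is a one-point extension of $(C,\{b\}) \subseteq (A,B)$ in the $O$-sort. Because $T_{\pfc}$ is the theory of the Fra\"iss\'e limit of $\mathbb{K}_{\pfc}$, it is axiomatized by the one-point extension axioms associated to $\mathbb{K}_{\pfc}$, so there exists $a \in A$ realizing $M^{+}$ over $(C,\{b\})$ inside $(A,B)$; this $a$ lies in $A_b$ and satisfies $\langle C \cup \{a\}\rangle_b \cong_{C} D$, as required. Taking $C = \emptyset$ in (b) also shows that every element of $\mathbb{K}$ embeds into $A_b$, so the age of $A_b$ is in fact all of $\mathbb{K}$.

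The only point that needs a little care is the translation between a one-point extension in the $L_{\pfc}$-sense (adding a single $O$-element while keeping the parameter sort at $\{b\}$) and a one-point extension in the $L$-sense for the derived structure $A_b$; once this is made precise via the construction of $M^{+}$, the lemma reduces to a direct application of the axiomatization of $T_{\pfc}$ as a Fra\"iss\'e limit theory. No appeal to saturation or cardinality of $(A,B)$ is required, since the relevant axioms are $\forall\exists$ and hold in every model.
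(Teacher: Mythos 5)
Your proof is correct, but it takes a different route from the paper's. The paper first observes that ``\(A_b \models T\) for all \(b\)'' is an elementary property of the \(L_{\pfc}\)-structure \((A,B)\), reduces to the unique countable model of \(T_{\pfc}\), and then argues \emph{model-theoretically}: it shows \(A_b\) is ultrahomogeneous (via quantifier elimination and ultrahomogeneity of \((A,B)\)) with age \(\mathbb{K}\), and invokes uniqueness of Fra\"iss\'e limits. You instead work \emph{syntactically}: you verify directly in an arbitrary model \((A,B) \models T_{\pfc}\) that \(A_b\) satisfies the universal ``age \(\subseteq \mathbb{K}\)'' axioms and the \(\forall\exists\) one-point extension axioms for \(\mathbb{K}\), which together axiomatize \(T\) since \(L\) is finite relational. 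The key computation --- that a one-point \(L\)-extension \(D \supseteq \langle C\rangle_b\) in \(\mathbb{K}\) lifts to a one-point \(L_{\pfc}\)-extension \(M^{+} = (C \cup \{d\}, \{b\})\) in \(\mathbb{K}_{\pfc}\), realized in \((A,B)\) by the extension axioms of \(T_{\pfc}\) --- is clean and correct. Your approach makes explicit what the paper glosses over (why ``\(A_b \models T\)'' is first-order) and avoids the detour through the countable model; the paper's approach is shorter but leans on \(\aleph_0\)-categoricity and ultrahomogeneity. Both are fine here.
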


\begin{proof}
Since the property that for all \(b \in B\), \(A_{b} \models T\) is an elementary property, it suffices to check this when \((A,B)\) is the unique countable model of \(T_{\pfc}\).  If \(d,e \in A_{b}\) satisfy \(\text{tp}_{L}(d) = \text{tp}_{L}(e)\) then, by quantifier-elimination, it is easy to check \(\text{tp}_{L_{\pfc}}(b,d) = \text{tp}_{L_{\pfc}}(b,e)\) and ultrahomogeneity of \((A,B)\) implies there is an \(L_{\pfc}\)-automorphism of \((A,B)\) fixing \(b\) and taking \(d\) to \(e\).  The induced \(L\)-automorphism of \(A_{b}\) witnesses that \(A_{b}\) is ultrahomogeneous.  By Fra\"iss\'e theory there is up to isomorphism a unique countable ultrahomogeneous \(L\)-structure with age \(\mathbb{K}\) so \(A_{b}\) is isomorphic to a model of \(T\), so \(A_{b} \models T\).  
\end{proof}

Suppose \(\mathbb{M} = (\mathbb{A}, \mathbb{B})\) is a monster model of \(T_{\pfc}\).  Given a formula \(\varphi \in L\) and a parameter \(p \in \mathbb{B}\), define \(\varphi_{p} \in L_{\pfc}\) to be the formula obtained by replacing each occurrence of \(R_{i}\) by \(R^{i}_{p}\) and giving the objects their eponymous interpretations in \(\mathbb{A}_{p}\) -- formally, this defines \(\varphi_{p}\) for atomic \(\varphi\) and then the full definition follows by induction on the complexity of the formulas.  If \(C \subseteq \mathbb{A}\) is a set of objects and \(q\) is an \(L\)-type over \(C\) (considered as a subset of \(\mathbb{A}_{p}\)), we define the type \(q_{p}\) by 
\[
q_{p} = \{ \varphi_{p} : \varphi \in q\}.
\]
\begin{lem}\label{pasting1}
Suppose \(\{p_{i} : i < \alpha\} \subseteq \mathbb{B}\) is a collection of distinct parameters and \(q^{i} : i < \alpha)\) is a sequence of non-algebraic \(L\)-types over \(C \subseteq \mathbb{A}\) (possibly with repetition), where \(q^{i}\) is considered as a type in \(\mathbb{A}_{p_{i}}\).  Then the \(L_{\pfc}\)-type \(\bigcup_{i < \alpha} q^{i}_{p_{i}}\) is consistent.     
\end{lem}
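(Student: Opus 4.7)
The plan is to combine compactness with the Fra\"iss\'e-theoretic structure of $T_{\pfc}$: reduce to realizing arbitrary finite fragments of $\bigcup_{i < \alpha} q^i_{p_i}$, build a suitable one-point extension living in $\mathbb{K}_{\pfc}$, and then extract a realization inside $\mathbb{M}$ by ultrahomogeneity. By compactness it suffices to fix finitely many (distinct, by hypothesis) parameters $p_0, \ldots, p_{n-1}$ and, for each $i < n$, a single $L$-formula $\psi^i(x)$ over some finite $C' \subseteq C$ with $\psi^i \in q^i$, and then find $a' \in \mathbb{A}$ satisfying $\psi^i_{p_i}(a')$ for every $i < n$. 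Quantifier elimination in $T_{\pfc}$ lets us take each formula in the finite fragment to be of this form.

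Next I would pick witnesses fiber-by-fiber. By Lemma \ref{model}, each $\mathbb{A}_{p_i}$ is a model of $T$, and non-algebraicity of $q^i$ over $C$ ensures that $\psi^i(x)$ has infinitely many solutions in $\mathbb{A}_{p_i}$, so I can choose $a_i \in \mathbb{A}_{p_i} \setminus C'$ realizing $\psi^i$. I then assemble a finite $L_{\pfc}$-structure $(D, B')$ with $B' = \{p_0, \ldots, p_{n-1}\}$ and $D = C' \cup \{a\}$ for a fresh object $a$: the $L_{\pfc}$-structure on $(C', B')$ is inherited from $\mathbb{M}$, and for each $i < n$ the $p_i$-fiber of $(D, B')$ is declared so that $a \mapsto a_i$ together with the identity on $C'$ is an $L$-isomorphism $\langle C' \cup \{a\}\rangle_{p_i} \to \langle C' \cup \{a_i\}\rangle_{p_i}$ inside $\mathbb{A}_{p_i}$. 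Every fiber $D_{p_i}$ is then a finite $L$-substructure of $\mathbb{A}_{p_i} \models T$, hence an element of $\mathbb{K}$, and so $(D, B') \in \mathbb{K}_{\pfc}$.

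Finally, $(D, B')$ is a one-point extension of the finite substructure $(C', B')$ of $\mathbb{M}$ inside the age of $\mathbb{M}$, so by saturation of $\mathbb{M}$ together with ultrahomogeneity of the Fra\"iss\'e limit there exists $a' \in \mathbb{A}$ with $(C' \cup \{a'\}, B') \cong (D, B')$ over $(C', B')$; this $a'$ realizes every $\psi^i_{p_i}$, closing the compactness argument. The main obstacle I anticipate is the assembly step in the middle: one must be sure that independently chosen $a_i \in \mathbb{A}_{p_i}$ across distinct $i$ can be coherently declared to be a single object $a$ in a common $L_{\pfc}$-structure. This is exactly where the distinctness of the $p_i$ is essential, because different fibers of $(D, B')$ then interact only through the shared object set $C'$, whose $L_{\pfc}$-structure is fixed by $\mathbb{M}$, so no genuine $\mathbb{K}$-amalgamation (beyond each individual fiber lying in $\mathbb{K}$) is required to fit the fibers together.
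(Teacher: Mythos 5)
Your proposal is correct and follows essentially the same route as the paper: reduce by compactness to finitely many distinct parameters and a finite quantifier-free $L$-fragment, use non-algebraicity to pick a witness $a_i \notin C'$ in each fiber $\mathbb{A}_{p_i}$, glue these into a one-point extension $(C' \cup \{a\}, B')$ of $(C', B') \subseteq \mathbb{M}$ in $\mathbb{K}_{\pfc}$ by declaring each $p_i$-fiber isomorphic to $\langle C' \cup \{a_i\}\rangle_{p_i}$, and then realize that extension in $\mathbb{M}$ via the Fra\"iss\'e extension property. Your closing observation about why the fibers glue coherently (distinct $p_i$ makes the fiber structures independent apart from their common restriction to $C'$, which is fixed by $\mathbb{M}$) is exactly the implicit step the paper's construction of the structure $E$ relies on.
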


\begin{proof}
By compactness, it suffices to consider the case where \(\alpha < \omega\) and when the \(q^{i}\) are all finite types.  Hence, we simply have to show 
\[
\mathbb{M} \models (\exists x) \bigwedge_{i < \alpha}  q^{i}_{p_{i}}(x).  
\]
Moreover, by quantifier-elimination in \(T\), we may assume that each \(q^{i}\) is quantifier-free.  For each \(i < \alpha\), let \(C_{i} \in \text{Age}(\mathbb{A}_{p_{i}})\) the finite substructure generated by the elements of \(C\) mentioned in all of the \(q^{i}\).  So, the underlying set of each \(C_{i}\) is the same, although the interpretations of the relations may differ.  Given any \(i < \alpha\), we know that 
\[
\mathbb{A}_{p_{i}} \models (\exists x) \bigwedge q^{i}_{p_{i}}(x)
\]
so there is \(D_{i} \in \text{Age}(\mathbb{A}_{p_{i}})\) containing a witness \(d_{i}\) to the above existential formula.  By non-algebraicity of each type, we may assume that \(d_{i} \not\in C_{i}\) and, by HP, that \(D_{i} = C \cup \{d_{i}\}\).  

Now define an \(L_{\pfc}\)-structure \(E\) with underlying set of objects \(C \cup \{*\}\) where \(*\) is some new element and its parameters are \(\{p_{i} : i < \alpha\}\), and the relations are interpreted so that for each \(i < \alpha\), the map is the identity on \(C\) and sends \(d_{i} \mapsto *\) is an isomorphism of \(L\)-structures from \(D_{i}\) to \(E_{p_{i}}\).  It is clear that \(E \in \mathbb{K}_{\pfc}\) so there is a copy \(F\) isomorphic over $C \cup \{p_{i} : i < \alpha\}$ to it in \(\text{Age}(\mathbb{M})\).  
Now 
\[
F \models (\exists x) \bigwedge_{i < \alpha}  q^{i}_{p_{i}}(x)
\]
and hence this is satisfied in \(\mathbb{M}\), so we're done.  
\end{proof}

\begin{lem}\label{pasting2}
Suppose \(A,B,C \subseteq \mathbb{A}\) are small sets of objects, \(F \subseteq \mathbb{B}\) is a small set of parameters, \(A \cap B \subseteq C\), and \(b_{0}, b_{1} \in \mathbb{B}\) satisfy \(b_{0} \equiv_{CF} b_{1}\).  Then there is some \(b \in \mathbb{B}\) so that \(b \equiv_{ACF} b_{0}\) and \(b \equiv_{BCF} b_{1}\) (all in \(L_{\pfc}\)).   
\end{lem}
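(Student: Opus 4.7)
The plan is to reduce the amalgamation statement to the consistency of a single type: I will show that the $L_{\pfc}$-type $p(x) := \tp(b_0/ACF) \cup \tp(b_1/BCF)$, in a variable $x$ of sort $P$, is consistent, so that saturation of $\mathbb{M}$ delivers the desired $b$. Since $T_{\pfc}$ eliminates quantifiers, types are determined by their quantifier-free parts, and by compactness it suffices to establish the following finite claim: for every choice of finite $A_0 \subseteq A$, $B_0 \subseteq B$, $C_0 \subseteq C$ (which I may enlarge so that $A_0 \cap B_0 \subseteq C_0$) and $F_0 \subseteq F$, one can find $Y \in \mathbb{K}_{\pfc}$ extending the substructure $X := \langle A_0 \cup B_0 \cup C_0 \cup F_0\rangle$ of $\mathbb{M}$ by one fresh parameter $b$ whose induced action on $A_0 \cup C_0$ matches that of $b_0$ and whose action on $B_0 \cup C_0$ matches that of $b_1$. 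Ultrahomogeneity of $\mathbb{M}$ then transports such a $Y$ back into $\mathbb{M}$ over $X$, producing the finite-stage realizer of $p$.

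For the construction of $Y$, I would amalgamate only the new parameter's action and leave the pre-existing parameters untouched. By Lemma \ref{model}, the two induced structures $\langle A_0 \cup C_0\rangle_{b_0}$ and $\langle B_0 \cup C_0\rangle_{b_1}$ belong to $\mathbb{K}$. The assumption $b_0 \equiv_{CF} b_1$ implies $b_0 \equiv_{C_0} b_1$, and this forces the two structures to coincide on the common restriction $\langle C_0\rangle$. Applying the strong amalgamation property of $\mathbb{K}$, I obtain an $L$-structure $D \in \mathbb{K}$ on $A_0 \cup B_0 \cup C_0$ simultaneously extending both (with no identifications beyond $C_0$, which is harmless since $A_0 \cap B_0 \subseteq C_0$). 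I then define $Y$ to have object universe $A_0 \cup B_0 \cup C_0$ and parameter universe $F_0 \cup \{b\}$, declaring each $f \in F_0$ to act on objects exactly as in $\mathbb{M}$ and declaring $b$ to act via $D$. Every parameter's induced $L$-structure then lies in $\mathbb{K}$, so $Y \in \mathbb{K}_{\pfc}$, and the inclusion $X \hookrightarrow Y$ is an $L_{\pfc}$-embedding by construction.

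The subtlety I expect to be the main obstacle is resisting the natural temptation to amalgamate the two $L_{\pfc}$-structures $\langle A_0 \cup C_0 \cup F_0 \cup \{b_0\}\rangle$ and $\langle B_0 \cup C_0 \cup F_0 \cup \{b_1\}\rangle$ directly via Lemma \ref{SAP}. Such an amalgam would freely choose, for each $f \in F_0$, the action of $f$ on mixed tuples drawn from $A_0 \setminus C_0$ and $B_0 \setminus C_0$, and this free choice would generally conflict with the already-fixed action of $f$ on those very tuples inside $\mathbb{M}$; one could no longer embed the amalgam back into $\mathbb{M}$ over $X$. The point of amalgamating only at the level of $\mathbb{K}$ for $b$'s action, while inheriting the $F_0$-actions verbatim from $\mathbb{M}$, is precisely to avoid this conflict, reducing the whole problem to the single application of SAP for $\mathbb{K}$ between the two $L$-structures determined by $b_0$ and $b_1$.
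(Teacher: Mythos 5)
Your proof is correct and follows essentially the same route as the paper's: both reduce to the case of finite $A,B,C,F$, amalgamate the two $L$-structures $\langle AC\rangle_{b_0}$ and $\langle BC\rangle_{b_1}$ over $\langle C\rangle$ using SAP of $\mathbb{K}$, exploit $A\cap B\subseteq C$ to place the strong amalgam on $A\cup B\cup C$, and then adjoin the amalgam as the action of a fresh parameter while inheriting the $F$-actions from $\mathbb{M}$. Your remark that one must amalgamate at the level of $\mathbb{K}$ for the new parameter's action only, rather than applying Lemma \ref{SAP} for $\mathbb{K}_{\pfc}$ outright, correctly pinpoints the reason the paper's proof is organized the way it is (both proofs also tacitly reduce to $b_0,b_1\notin F$, which is harmless since $b_0\equiv_{CF}b_1$ forces $b_0=b_1$ otherwise).
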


\begin{proof}
Given a set \(D \subseteq \mathbb{A}\) and \(p \in \mathbb{B}\), recall that we write \(\langle D \rangle_{p}\) for the \(L\)-substructure of \(\mathbb{A}_{p}\) with underlying set \(D\).  By compactness, it suffices to prove the lemma when \(A,B,C,\) and \(F\) are finite.  By quantifier-elimination, demanding some \(b \in \mathbb{B}\) so that \(b \equiv_{AC} b_{0}\) and \(b \equiv_{BC} b_{1}\) is equivalent to asking that \(\langle AC \rangle_{b} \cong \langle AC \rangle_{b_{0}}\) and \(\langle BC \rangle_{b} \cong \langle AC \rangle_{b_{1}}\).  Now, as \(b_{0} \equiv_{C} b_{1}\), \(\langle C \rangle_{b_{0}}\) may be identified with \(\langle C \rangle_{b_{1}}\).  We may view \(C, \langle AC \rangle_{b_{0}}\), and \(\langle BC \rangle_{b_{1}}\) as elements of \(\mathbb{K}\).  In \(\mathbb{K}\), we have inclusions \(i: C \to \langle AC \rangle_{b_{0}}\) and \(j: C \to \langle BC \rangle_{b_{1}}\), so by SAP, there are embeddings \(f,g\) and a \(D\in \mathbb{K}\) so that the following diagram commutes
\[
\xymatrix{
 & \langle AC \rangle_{b_{0}}  \ar^{f}[rd] \\
C \ar^{i}[ru] \ar^{j}[rd]
 &  & D\\
& \langle BC \rangle_{b_{1}} \ar^{g}[ru]
 }
\]
where \(f(AC) \cap g(BC) = C\).  By HP, \(D\) may be taken to have \(f(AC) \cup g(BC)\) as its domain.  Since \(A \cap B \subseteq C\), \(D\) is isomorphic over \(C\) to an \(L\)-structure with underlying set \(A \cup B \cup C\), so we may assume that \(f\) and \(g\) are both inclusions.  Let \(b_{*}\) denote some new parameter element outside of \(F\) and define a structure with parameter set \(\{b_{*},b_{0},b_{1}\} \cup F\) and \(A \cup B \cup C\) as its set of objects so that \(\langle ABC \rangle_{b_{*}} \cong D\).  This clearly defines a structure in \(\mathbb{K}_{\pfc}\).  In the substructure with only \(A \cup C\) as the set of objects, there is an automorphism fixing \(F\) taking \(b_{*}\) to \(b_{0}\).  This shows that \(b_{*} \equiv_{ACF} b_{0}\) and a symmetric argument shows \(b_{*} \equiv_{BCF} b_{1}\).  It follows that we can find such a \(b_{*}\) in \(\mathbb{B}\).   
\end{proof}

Towards proving an independence theorem for \(T_{\pfc}\), we will define a notion of independence for parameterized structures.

\begin{defn}{(\(\ind^{\pfc}\))}
\text{ }
\begin{enumerate}
\item Suppose \(p\in \mathbb{B}\) is a parameter.  Suppose \(A,B,C \subseteq \mathbb{A}\).  We define \(\ind^{p}\) by
\[
A \ind^{p}_{C} B \text{ in } \mathbb{M} \iff A \ind_{C} B \text{ in } \mathbb{A}_{p},
\]
where the undecorated \(\ind\) on the right-hand side denotes the usual non-forking independence -- i.e. \(\text{tp}(A/BC)\) does not fork over \(C\).  
\item If \(A,B,C \subseteq \mathbb{A}\) and \(D,E,F \subseteq \mathbb{B}\), we define \(\ind^{\pfc}\) by
\[
A,D \ind^{\pfc}_{C,F} B,E \iff D \cap E \subseteq F, \text{ and for all } p \in F, A \ind_{C}^{p} B.
\]
\end{enumerate}
\end{defn}

\begin{prop}\label{indep}
Assume \(T\) is a simple theory.  Suppose \(A,B \subseteq \mathbb{A}\) are small sets of objects and \(D,E\subseteq \mathbb{B}\) are small sets of parameters and \(M = (C,F)\) is a small model of \(T_{\pfc}\) satisfying
\[
A,D \ind^{\pfc}_{C,F} B,E
\]
Suppose moreover that $a_{0}, a_{1}$ are tuples from  $\mathbb{A}$ and $b_{0}, b_{1}$ are tuples from $\mathbb{B}$ satisfying \(a_{0},b_{0} \ind^{\pfc}_{CF} A,D\), \(a_{1},b_{1} \ind^{\pfc}_{C,F} B,E\) and \(a_{0},b_{0} \equiv_{CF} a_{1},b_{1}\).  Then there are $a$ from $\mathbb{A}$ and $b$ from $\mathbb{B}$ so that \(a,b \equiv_{ACDF} a_{0},b_{0}\) and \(a,b \equiv_{BCEF} a_{1},b_{1}\).  
\end{prop}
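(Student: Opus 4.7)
The plan is to reduce to finite data by compactness and then construct, by amalgamation inside $\mathbb{K}_{\pfc}$, a finite $L_{\pfc}$-structure $H$ extending all the given data together with fresh witnesses $a^{*}$ (objects) and $b^{*}$ (parameters) realizing the desired joint type; finally, embed $H$ into $\mathbb{M}$ via saturation and quantifier elimination. Since $T_{\pfc}$ has quantifier elimination and $L_{\pfc}$ is relational, the $L_{\pfc}$-type of $(a,b)$ over a finite set is determined by, at each parameter $p$ in the set or in $b$, the $L$-isomorphism type of the corresponding parameter-slice on objects, so it suffices to specify these slices.

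I will build $H$ with object universe $A\cup B\cup C\cup a^{*}$ and parameter universe $F\cup(D\setminus F)\cup(E\setminus F)\cup b^{*}$, where $a^{*}$ is a fresh object-tuple standing for both $a_{0}$ and $a_{1}$, and $b^{*}$ is a fresh parameter-tuple carrying one new element per index $i$ with $b_{0}(i)\notin F$, meant to realize both $b_{0}(i)$ and $b_{1}(i)$ simultaneously. This is legitimate: the $\pfc$-independence hypotheses force $b_{0}\cap D\subseteq F$ and $b_{1}\cap E\subseteq F$, so the new parameters are disjoint from $D\cup E$, while $(a_{0},b_{0})\equiv_{CF}(a_{1},b_{1})$ ensures that at indices where $b_{0}(i)\in F$ we already have $b_{0}(i)=b_{1}(i)$.

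The slice $\langle H\rangle_{p}$ is specified case by case. For $p\in F$: by Lemma \ref{model}, $C_{p}\models T$, and simplicity of $T$ puts the classical independence theorem at our disposal inside $\mathbb{A}_{p}$ over the model $C_{p}$; the $\pfc$-independence hypotheses unpack to its premises ($A\ind_{C_{p}}B$, $a_{0}\ind_{C_{p}}A$, $a_{1}\ind_{C_{p}}B$ in $\mathbb{A}_{p}$), and $(a_{0},b_{0})\equiv_{CF}(a_{1},b_{1})$ gives $a_{0}\equiv^{L}_{C_{p}}a_{1}$, producing $a^{(p)}$ in $\mathbb{A}_{p}$ with $a^{(p)}\equiv^{L}_{AC}a_{0}$ and $a^{(p)}\equiv^{L}_{BC}a_{1}$; set $\langle H\rangle_{p}:=\langle a^{(p)}ABC\rangle_{p}$, with $a^{*}\leftrightarrow a^{(p)}$. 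For $p\in D\setminus F$ (resp.\ $E\setminus F$): use SAP in $\mathbb{K}$ to amalgamate $\langle a_{0}AC\rangle_{p}$ (resp.\ $\langle a_{1}BC\rangle_{p}$) with the ambient $\langle ABC\rangle_{p}$ over $\langle AC\rangle_{p}$ (resp.\ $\langle BC\rangle_{p}$). For $p\in b^{*}$ corresponding to $q_{0}\in b_{0}$ and $q_{1}\in b_{1}$: $(a_{0},b_{0})\equiv_{CF}(a_{1},b_{1})$ gives a canonical isomorphism $\langle a_{0}C\rangle_{q_{0}}\cong\langle a_{1}C\rangle_{q_{1}}$ fixing $C$, so SAP amalgamates $\langle a_{0}AC\rangle_{q_{0}}$ and $\langle a_{1}BC\rangle_{q_{1}}$ over this common $\langle a^{*}C\rangle_{p}$, and a second application of SAP glues in $\langle ABC\rangle_{p}$ to complete the slice.

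Each slice lies in $\mathbb{K}$, so $H\in\mathbb{K}_{\pfc}$; saturation of $\mathbb{M}$ together with quantifier elimination of $T_{\pfc}$ then yields an embedding $H\hookrightarrow\mathbb{M}$ fixing the shared substructure on $A\cup B\cup C\cup D\cup E\cup F$, and the images of $a^{*}$ and $b^{*}$ furnish the required $a$ and $b$. The two type equivalences $(a,b)\equiv_{ACDF}(a_{0},b_{0})$ and $(a,b)\equiv_{BCEF}(a_{1},b_{1})$ are verified slice by slice. The main obstacle is the amalgamation at new parameters $p\in b^{*}$: two independently specified structures, one from the $b_{0}$-side carrying the $AC$-type and one from the $b_{1}$-side carrying the $BC$-type, must be joined over their shared $\langle a^{*}C\rangle$-base without forcing collisions among fresh objects on either side. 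This is exactly where strong (rather than merely ordinary) amalgamation in $\mathbb{K}$ is indispensable, a property inherited by $\mathbb{K}_{\pfc}$ via Lemma \ref{SAP}.
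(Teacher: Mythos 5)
Your argument takes essentially the same route as the paper: at each $p\in F$ you invoke the independence theorem for the simple theory $T$ inside $\mathbb{A}_p$, and for the new parameters you invoke SAP in $\mathbb{K}$; the only difference is packaging --- you assemble a single finite $L_{\pfc}$-structure and embed it by quantifier elimination and saturation, whereas the paper runs Lemma~\ref{pasting1} (to produce $a$) and then Lemma~\ref{pasting2} together with the singleton-then-tuple reduction (to produce $b$), which encodes the same slice data. One small inaccuracy worth flagging: for a fresh parameter $p\in b^{*}$ there is no ambient slice $\langle ABC\rangle_{p}$ to ``glue in'' (since $p$ is not yet realized in $\mathbb{M}$); the single strong amalgamation of $\langle a_{0}AC\rangle_{q_{0}}$ with $\langle a_{1}BC\rangle_{q_{1}}$ over $\langle a^{*}C\rangle$ already produces a complete $\mathbb{K}$-structure on $a^{*}ABC$, and the two desired type equivalences impose no constraint on the $A$-versus-$B$ relations at such a $p$, so the ``second application of SAP'' is superfluous, though harmless.
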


\begin{proof}
First, we solve the amalgamation problem for objects.  Without loss of generality, \(D,E,F\) are pairwise disjoint.  By Lemma \ref{model}, we know that for each \(p \in F\), \(C_{p}\) is a model of \(T\).  By definition of \(\ind^{\pfc}\), we know that in \(\mathbb{A}_{p}\), we have \(A \ind^{p}_{C} B\), \(a_{0} \ind^{p}_{C} A\) and \(a_{1} \ind^{p}_{C} B\).  As \(T\) is simple, the independence theorem over a model implies that there is some tuple $a_{p}$ in $ \mathbb{A}_{p}$ such that \(a_{p} \equiv^{L}_{AC} a_{0}\), \(a_{p} \equiv^{L}_{BC} a_{1}\) and \(a_{p} \ind^{p}_{C} AB\).  For each \(p \in F\), let \(q^{p}(x) = \text{tp}_{L}(a_{p}/ABC)\) considered as an \(L\)-type in \(\mathbb{A}_{p}\).  By Lemma \ref{pasting1}, denoting the relativization of \(q^{p}\) to the parametrized language with respect to \(p\) by \(q^{p}_{p}\), we know that the type \(\bigcup_{p \in F} q^{p}_{p}\) is consistent.  Let \(a\) be a realization.  Then $a \equiv_{AC} a_{0}$ and $a \equiv_{BC} a_{1}$ in $\mathbb{A}_{p}$ for all $p \in F$ so $a \equiv_{ACF} a_{0}$ and $a \equiv_{BCF} a_{1}$.

Now we solve the problem for parameters. First assume that $b_0, b_1$ are \emph{singletons} in $\mathbb{B}$. Without loss of generality $b_0, b_1 \notin  F$ (as otherwise they are equal by assumption, and there is nothing to do). By quantifier-elimination, we need some \(b \not\in D \cup E \cup F\) so that \(\langle aAC \rangle_{b} \cong \langle a_{0} AC \rangle_{b_{0}}\) and \(\langle a BC \rangle_{b} \cong \langle a_{1}BC \rangle_{b_{1}}\).  First, find \(b_{2} \equiv_{ACF} b_{0}\) and \(b_{3} \equiv_{BCF} b_{1}\) outside of \(D\cup E \cup F\) so that \(\langle a AC \rangle_{b_{2}} \cong \langle a_{0}AC \rangle_{b_{0}}\) and \(\langle aBC \rangle_{b_{3}} \cong \langle a_{1}BC \rangle_{b_{1}}\).  So \(ab_{2} \equiv_{ACF} a_{0}b_{0}\) and \(ab_{3} \equiv_{BCF} a_{1}b_{1}\).  Now \(b_{2} \equiv_{aCF} b_{3}\) and \(aAC \cap aBC \subseteq aC\), so Lemma \ref{pasting2} applies and we can find a \(b\) so that \(\langle aAC \rangle_{b} \cong \langle aAC \rangle_{b_{2}}\) and \(\langle aBC \rangle_{b} \cong \langle aBC \rangle_{b_{3}}\), and we can take this \(b\) to be outside of \(D \cup E \cup F\).  Now as \(b\not \in D  \cup E \cup F\), we have \(ab \equiv_{ACDF} a_{0}b_{0}\) and \(ab \equiv_{BCEF} a_{1}b_{1}\).

Now let $b_0 = (b_{0,i}: i<k), b_1 = (b_{1,i}: i<k)$ be arbitrary tuples from $\mathbb{B}$. Without loss of generality, all of the elements in $\{b_{t,i} : i < k\}$ are pairwise-distinct, for $t \in \{ 0, 1 \}$. Let $S_t = \{ i < k : b_{t,i} \notin F\}$ for $t\in \{ 0, 1 \}$, note that $S_0 = S_1 = S$ as $b_0 \equiv_F b_1$. 
Repeatedly applying the argument above for singletons, we can find \emph{pairwise distinct}  $b'_i$ for $i \in S$ such that $a, b'_i \equiv_{ACDF} a_0, b_{0,i}$ and $a,b'_i \equiv_{BCEF} a_1, b_{1,i}$ for all $i \in S$. 
Let $b^* = (b^*_i : i <k)$ be defined by taking $b^*_i = b_{0,i} = b_{1,i}$ for all $i \notin S$ and $b^*_i = b'_i$ for all $i \in S$. As there are no relations in the language involving more than one element from the parameter sort except for the equality, it follows that \(a,b^* \equiv_{ACDF} a_{0},b_{0}\) and \(a,b^* \equiv_{BCEF} a_{1},b_{1}\) --- as wanted.

\end{proof}

\begin{thm}
Assume \(T\) is simple.  Then \(\ind^{\pfc}\) is an \(\text{Aut}(\mathbb{M})\)-invariant independence relation on small subsets of the monster \(\mathbb{M} \models T_{\pfc}\) such that it satisfies, for an arbitrary \(M \models T_{\pfc}\): 
\begin{enumerate}
\item strong finite character:  if \(a \nind^{\pfc}_{M} b\), then there is a formula \(\varphi(x,b,m) \in \text{tp}(a/bM)\) such that for any \(a' \models \varphi(x,b,m)\), \(a' \nind^{\pfc}_{M} b\);
\item existence over models:  \(M \models T_{\pfc}\) implies \(a \ind^{\pfc}_{M} M\) for any \(a\);
\item monotonicity: \(aa' \ind^{\pfc}_{M} bb'\) \(\implies\) \(a \ind^{\pfc}_{M} b\);
\item symmetry: \(a \ind^{\pfc}_{M} b \iff b \ind^{\pfc}_{M} a\);
\item independent amalgamation: \(c_{0} \ind^{\pfc}_{M} c_{1}\), \(b_{0} \ind^{\pfc}_{M} c_{0}\), \(b_{1} \ind^{\pfc}_{M} c_{1}\), \(b_{0} \equiv_{M} b_{1}\) implies there exists \(b\) with \(b \equiv_{c_{0}M} b_{0}\), \(b \equiv_{c_{1}M} b_{1}\).  
\end{enumerate}
\end{thm}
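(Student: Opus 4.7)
The plan is to verify each of the five properties by reducing to corresponding facts about non-forking independence in the underlying simple theory $T$, using Lemma \ref{model} (which gives $\mathbb{A}_p \models T$ for every parameter $p$ in a model of $T_{\pfc}$) together with the already-established Propositions \ref{pasting1}, \ref{pasting2}, and \ref{indep}. Invariance of $\ind^{\pfc}$ under $\mathrm{Aut}(\mathbb{M})$ is immediate from its definition, since any automorphism of $\mathbb{M}$ restricts to permutations of $\mathbb{B}$ and, for each $p \in \mathbb{B}$, induces an $L$-isomorphism between the fibers $\mathbb{A}_p$ and $\mathbb{A}_{\sigma(p)}$.

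For existence over models, monotonicity, and symmetry, I would argue as follows. Fix $M = (C,F) \models T_{\pfc}$. For existence, given any tuple $a = (a_{\mathrm{obj}}, a_{\mathrm{par}})$, the condition on parameter intersections is trivial, while for each $p \in F$ we have $a_{\mathrm{obj}} \ind^p_C C$ in $\mathbb{A}_p$ by the analogous fact for simple theories applied to the model $\mathbb{A}_p \models T$ (Lemma \ref{model}). Monotonicity and symmetry then follow fiberwise from the same properties of non-forking in simple theories, together with the observation that both the parameter-intersection condition and the fiberwise non-forking condition are preserved under the relevant set-theoretic operations (monotonicity on either side, and swapping in symmetry).

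For strong finite character, suppose $a \nind^{\pfc}_{M} b$, where I write $D_a, D_b$ for the parameters appearing in $a, b$ respectively. Either $D_a \cap D_b \not\subseteq F$, in which case some parameter element $p$ occurs in both $a$ and $b$ with $p \notin F$, and the formula $x_i = y_j$ (for the appropriate coordinates) is an $L_{\pfc}$-formula in $\mathrm{tp}(a/bM)$ witnessing dependence for any realization; or else there is some $p \in F$ for which $a_{\mathrm{obj}} \nind^p_C b_{\mathrm{obj}}$ in $\mathbb{A}_p$. In the latter case, strong finite character of non-forking in the simple theory $T$ (applied inside $\mathbb{A}_p$) yields an $L$-formula $\psi(x; b_{\mathrm{obj}}, c) \in \mathrm{tp}_L(a_{\mathrm{obj}}/b_{\mathrm{obj}}C)$ (computed in $\mathbb{A}_p$) such that any $L$-realization forks over $C$. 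Relativizing this formula to $p$ gives an $L_{\pfc}$-formula $\psi_p(x; b_{\mathrm{obj}}, c, p) \in \mathrm{tp}(a/bM)$ whose realizations $a'$ satisfy $a'_{\mathrm{obj}} \nind^p_C b_{\mathrm{obj}}$ in $\mathbb{A}_p$, and hence $a' \nind^{\pfc}_M b$. The main subtlety here is just ensuring that the relativization of a forking formula stays a forking formula in the same fiber, but this is immediate because $\mathbb{A}_p$ is recovered from $\mathbb{M}$ by a uniform definable procedure depending only on $p$.

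Finally, independent amalgamation over models is exactly the content of Proposition \ref{indep}: given $M = (C,F) \models T_{\pfc}$ and tuples $a_0, b_0 \equiv_M a_1, b_1$ with the stated independence conditions $a_i, b_i \ind^{\pfc}_M c_i$ and $c_0 \ind^{\pfc}_M c_1$, the conclusion provides the common realization. Thus all five properties hold, and the theorem follows. I expect the only real work to be the careful bookkeeping in strong finite character --- in particular, writing the formula $\psi_p$ correctly when $b$ contains both objects and parameters --- while the remaining clauses are essentially transfers from the simple theory $T$ via the fiberwise definition of $\ind^{\pfc}$.
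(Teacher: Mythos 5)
Your proof is correct and takes essentially the same approach as the paper, which simply declares automorphism invariance and (1)--(4) immediate from the definition of $\ind^{\pfc}$ together with the corresponding properties of non-forking in simple theories, and cites Proposition \ref{indep} for (5). You are supplying exactly the fiberwise bookkeeping that the paper leaves implicit, including the case split on whether the parameter-intersection condition or a fiberwise forking condition fails in the strong-finite-character argument.
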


\begin{proof}
Automorphism invariance and (1)-(4) are immediate from the definition of \(\ind^{\pfc}\), using that \(T\) is simple and hence non-forking independence satisfies all these properties;  (5) was proven in Proposition \ref{indep}.
\end{proof}

\begin{cor}
Suppose \(T\) is a simple theory which is the theory of a Fra\"iss\'e limit of a Fra\"iss\'e class \(\mathbb{K}\) satisfying SAP.  Then \(T_{\pfc}\) is NSOP\(_{1}\).  Moreover, if the \(D\)-rank of \(T\) is \(\geq 2\), then \(T_{\pfc}\) is not simple.  
\end{cor}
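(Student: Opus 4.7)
The plan is to deduce both parts from machinery already in place. For the first assertion, I would apply Proposition \ref{criterion} to the relation $\ind^{\pfc}$: the theorem immediately preceding the corollary verifies that $\ind^{\pfc}$ is $\text{Aut}(\mathbb{M})$-invariant and satisfies strong finite character, existence and monotonicity over models, symmetry, and independent amalgamation over models --- exactly the hypotheses of the criterion. Hence $T_{\pfc}$ is $\NSOP_1$.

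For the second assertion, the strategy is to build a $\TP_{2}$-pattern in $T_{\pfc}$ by multiplexing a dividing configuration from $T$ across distinct parameter elements. Unpacking $D(T) \geq 2$ (in Shelah's sense), I would fix an $L$-formula $\chi(x,y)$, a sequence $(a_{i})_{i < \omega}$ from a model of $T$, and an integer $k < \omega$ such that $\{\chi(x, a_{i}) : i < \omega\}$ is $k$-inconsistent and, crucially, each $\chi(x, a_{i})$ has $D$-rank $\geq 1$ --- in particular is non-algebraic. Next, pick distinct parameters $(p_{\alpha})_{\alpha < \omega} \subseteq \mathbb{B}$. By Lemma \ref{model}, each $\mathbb{A}_{p_{\alpha}}$ is a model of $T$; by saturation of $\mathbb{M}$, I can choose sequences $(b^{\alpha}_{i})_{i < \omega} \subseteq \mathbb{A}_{p_{\alpha}}$ realizing the same configuration inside $\mathbb{A}_{p_{\alpha}}$.

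Now consider the $L_{\pfc}$-formula $\Phi(x; y, z) := \chi_{z}(x, y)$ together with the array $(b^{\alpha}_{i}, p_{\alpha})_{\alpha < \omega,\, i < \omega}$. Each row is $k$-inconsistent in $\mathbb{M}$ since satisfying $\chi_{p_{\alpha}}(x, b^{\alpha}_{i})$ in $\mathbb{M}$ is the same as satisfying $\chi(x, b^{\alpha}_{i})$ inside $\mathbb{A}_{p_{\alpha}}$. For path consistency, given any $f : \omega \to \omega$, the formulas $\chi(x, b^{\alpha}_{f(\alpha)})$ are non-algebraic $L$-types sitting in the distinct $\mathbb{A}_{p_{\alpha}}$, so Lemma \ref{pasting1} applies and delivers a common realization in $\mathbb{M}$ of $\bigcup_{\alpha} \chi_{p_{\alpha}}(x, b^{\alpha}_{f(\alpha)})$. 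Thus $\Phi$ witnesses $\TP_{2}$, so $T_{\pfc}$ is not simple.

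The key delicacy is the non-algebraicity hypothesis in Lemma \ref{pasting1}, and this is exactly where $D \geq 2$ is used rather than $D \geq 1$: the former guarantees that the dividing instances $\chi(x,a_{i})$ can themselves be chosen non-algebraic (since they admit further dividing), while with just $D \geq 1$ they could be algebraic and the pasting step would fail. This is consistent with the intuition that pure equality has $D = 1$ and its parametrization --- a pure object set together with a pure parameter set --- remains simple.
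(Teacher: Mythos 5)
Your proof is correct and follows essentially the same route as the paper: the first assertion is the straightforward application of Proposition \ref{criterion} via $\ind^{\pfc}$, and the second extracts a non-algebraic $k$-inconsistent dividing configuration from $D(T)\geq 2$, spreads it across countably many distinct parameters, and uses Lemma \ref{pasting1} to get path-consistency, producing a $\TP_2$-array. The only cosmetic difference is that the paper realizes the configuration by first building an explicit $L_{\pfc}$-structure on $\omega\sqcup M$ where every parameter induces the \emph{same} $L$-structure on $M$ (so a single sequence $(a_j)$ serves all rows) and then extending to a model of $T_{\pfc}$, whereas you invoke saturation of $\mathbb{M}$ to choose a fresh copy $(b^\alpha_i)$ of the dividing configuration inside each $\mathbb{A}_{p_\alpha}$; both are valid, and your closing observation about why non-algebraicity (hence $D\geq 2$ rather than $D\geq 1$) is needed matches the paper's own remark.
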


\begin{proof}
By Proposition \ref{criterion}, \(T_{\pfc}\) is NSOP\(_{1}\), as \(\ind^{\pfc}\) gives an independence relation satisfying all the hypotheses.  So now we prove that \(T_{\text{pfc}}\) is not simple, under the assumption that the \(D\)-rank of \(T\) is \(\geq 2\).  This assumption implies that there is an \(L\)-formula \(\varphi(x;y)\) and an indiscernible sequence \((a_{i})_{i < \omega}\) so that \(\{\varphi(x;a_{i}) : i < \omega\}\) is \(k\)-inconsistent for some \(k\) and the set defined by \(\varphi(x;a_{i})\) is infinite.  Let \(M \models T\) be some model containing the sequence \((a_{i})_{i < \omega}\).  Construct an \(L_{\text{pfc}}\)-structure \(N\) with domain \(\omega \sqcup M\) and relations interpreted so that \(N \models R_{i}(b) \iff M \models R(b)\) for each tuple \(b \in M\), every \(i < \omega\), and relation symbol \(R\) of \(L\).  Extend \(N\) to \(\tilde{N} \models T_{\text{pfc}}\).  Let \(\psi(x;y,z)\) be the formula \(\varphi_{z}(x;y)\) and define an array \((b_{ij})_{i,j <\omega}\) by \(b_{ij} = (a_{j},i) \in  M \times \omega \subset \tilde{N}^{2}\).  Using Lemma \ref{pasting1}, it is easy to check that for all \(f: \omega \to \omega\), \(\bigcup_{i < \omega} \{\psi(x;b_{if(i)})\}\) is consistent.  Also \(\{\psi(x;b_{ij}) : j < \omega\}\) is \(k\)-inconsistent for all \(i\) so \(\psi\) witnesses $\TP_2$.  
\end{proof}

\begin{rem}For the above argument to work, we used that the formula witnessing dividing was non-algebraic --- this fails in many natural examples (e.g. the random graph).  However, given an \(L\)-structure \(M\), define the \emph{imaginary cover} of \(M\) as follows: let \(L'\) be the language \(L\) together with a new binary relation symbol \(E\) for an equivalence relation, and let \(\tilde{M}\) be the \(L'\)-structure obtained by replacing each element of \(M\) with an infinite \(E\)-class and defining the relations of \(L\) on \(\tilde{M}\) on the corresponding \(E\)-classes.  Now it is easy to check that \(\text{Age}(\tilde{M})\) has SAP, the theory of \(\tilde{M}\) is simple of \(D\)-rank at least 2. \end{rem} 

\begin{cor}
\(T^{*}_{feq}\) is NSOP\(_{1}\).
\end{cor}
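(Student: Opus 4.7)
The plan is to exhibit $T^*_{\text{feq}}$ as an instance of the construction $T_{\pfc}$ from the previous corollary, for a judicious choice of base theory $T$, and then simply verify the hypotheses of that corollary.

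First, I would take $L$ to be the relational language with a single binary relation symbol $E$, and let $\mathbb{K}$ be the class of all finite $L$-structures in which $E$ is interpreted as an equivalence relation. This is a Fraïssé class whose limit is the countable structure consisting of infinitely many infinite $E$-classes; call its theory $T$. Then $T$ is $\omega$-stable (and in particular simple), since it is essentially the theory of an equivalence relation with infinitely many infinite classes. Unwinding the definitions in Section~\ref{sec: parametrization}, the parametrized theory $T_{\pfc}$ built from this $\mathbb{K}$ is precisely the theory whose models consist of a sort $O$ of objects, a sort $P$ of parameters, and an equivalence relation $E^p$ on $O$ for each $p \in P$, generic in the Fraïssé sense --- that is, $T_{\pfc} = T^{*}_{\text{feq}}$.

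Next I would verify that $\mathbb{K}$ has the Strong Amalgamation Property. Given $A, B, C \in \mathbb{K}$ with embeddings $A \hookrightarrow B$ and $A \hookrightarrow C$, identify $A$ with its images and take $D$ to have underlying set $B \cup_{A} C$, with $E^{D}$ defined as the transitive closure of $E^{B} \cup E^{C}$; equivalently, for $b \in B \setminus A$ and $c \in C \setminus A$, one declares $b \mathrel{E^{D}} c$ iff there exists $a \in A$ with $b \mathrel{E^{B}} a \mathrel{E^{C}} c$. One checks routinely that $E^{D}$ is an equivalence relation extending both $E^{B}$ and $E^{C}$, and that no new identifications of elements are made, so SAP holds. (Alternatively, this is immediate from the criterion cited just before Lemma~\ref{SAP}, since the Fraïssé limit has no algebraicity.)

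Having placed $T^*_{\text{feq}}$ within the framework of the preceding corollary and checked its hypotheses --- $T$ is simple, $T$ is the theory of a Fraïssé limit of a class $\mathbb{K}$ with SAP --- the conclusion that $T^*_{\text{feq}}$ is $\NSOP_1$ follows immediately. The only mild subtlety I anticipate is making the identification between the standard presentation of $T^*_{\text{feq}}$ in the literature and our $T_{\pfc}$ precise, which is a matter of matching the relational signatures and observing that the generic amalgamation axioms agree; no new model-theoretic input is needed beyond what has been established in the previous corollary.
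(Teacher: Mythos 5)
Your proposal is correct and follows essentially the same route as the paper: identify $T^{*}_{\text{feq}}$ as $T_{\pfc}$ where $T$ is the (stable, hence simple) theory of an equivalence relation with infinitely many infinite classes, observe that its Fra\"iss\'e class satisfies SAP, and invoke the preceding corollary. The paper deduces SAP directly from the no-algebraicity criterion rather than constructing the amalgam by hand, but this is a cosmetic difference, and your explicit amalgam (transitive closure of $E^B \cup E^C$ on $B \cup_A C$) is also correct.
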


\begin{proof}
The theory \(T\) of an equivalence relation with infinitely many infinite classes is a stable theory, obtained as the Fra\"iss\'e limit of all finite models of the theory of an equivalence relation.  This class has no algebraicity, so it satisfies SAP.  \(T_{\text{pfc}}\) is exactly \(T^{*}_{feq}\), so it is NSOP\(_{1}\).
\end{proof}

This result was claimed in \cite{ShUs:844}, but the proof is apparently incorrect due to an illegitimate use of tree-indiscernibles.  See the footnote on  \cite[p. 22]{Harr} for a discussion.  

\subsection{Theories approximated by simple theories}

In her thesis \cite{Harr}, Gwyneth Harrison-Shermoen considers theories that have a model approximated by a directed system \(\mathcal{H}\) of homogeneous substructures, each of which has a simple theory.  She proves that such theories carry an invariant independence notion \(\ind^{\lim}\) satisfying strong finite character, monotonicity, symmetry, and existence over a model (existence over a model is implied by Claim 3.3.4 in \cite{Harr}).  Finally, she observes that if non-forking independence \(\ind^{f}\) satisfies the independence theorem over algebraically closed sets for each model in \(\mathcal{H}\), then so does \(\ind^{lim}\) for the approximated theory.  Hence, we obtain the following:

\begin{cor}
Suppose \(T\) is a theory approximated, in the sense of Harrison-Shermoen, by a directed system of structures each with a simple theory in which \(\ind^{f}\) satisfies the independence theorem over algebraically closed sets.  Then $T$ is NSOP$_1$.  
\end{cor}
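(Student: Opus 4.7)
The plan is to invoke the NSOP$_1$ criterion of Proposition~\ref{criterion} applied to the independence relation $\ind^{\lim}$ defined by Harrison-Shermoen on the approximated theory $T$. Essentially every hypothesis of the criterion has already been verified in her thesis, so the work is just assembling the pieces and checking that her independence theorem over algebraically closed sets implies the independent amalgamation over models required by Proposition~\ref{criterion}.

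Concretely, I would proceed as follows. First, I would recall the definition of $\ind^{\lim}$ and note that Harrison-Shermoen establishes that it is $\operatorname{Aut}(\mathbb{M})$-invariant and satisfies strong finite character, monotonicity, symmetry, and existence over models (the last being Claim~3.3.4 of \cite{Harr}); this immediately gives hypotheses (1)--(4) of Proposition~\ref{criterion}. Second, for hypothesis (5), I would invoke her transfer principle: since each structure in the approximating directed system $\mathcal{H}$ has a simple theory in which $\ind^{f}$ satisfies the independence theorem over algebraically closed sets, $\ind^{\lim}$ inherits the independence theorem over algebraically closed sets in $T$. Since any model $M \models T$ is in particular algebraically closed as a set, this specializes to independent amalgamation over models, which is hypothesis (5).

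With all five hypotheses in hand, Proposition~\ref{criterion} yields that $T$ is NSOP$_1$, completing the proof. The main obstacle, if any, is really just citational bookkeeping: one has to check that Harrison-Shermoen's statements are formulated in sufficient generality, in particular that her version of strong finite character matches the one in Proposition~\ref{criterion} and that her independence theorem genuinely applies over any model of $T$ (not merely over models in the approximating system). Both points are essentially routine given her framework, and no new combinatorics is needed beyond what has already been developed in Section~5 and in \cite{Harr}.
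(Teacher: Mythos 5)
Your proposal matches the paper's argument exactly: the paper's ``proof'' of this corollary is precisely the preceding paragraph, which cites Harrison-Shermoen's thesis for invariance, strong finite character, monotonicity, symmetry, existence over models, and the transfer of the independence theorem over algebraically closed sets, and then applies Proposition~\ref{criterion}. Your observation that the independence theorem over algebraically closed sets specializes to independent amalgamation over models (since a model is algebraically closed) is the correct and intended bridge to hypothesis~(5).
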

\section{Lemmas on preservation of indiscernibility}\label{sec: appendix}

\begin{lem}
Suppose \(\eta_{0}, \ldots, \eta_{l-1}\), \(\nu_{0}, \ldots, \nu_{l-1}\) are elements of \(\omega^{<\omega}\).  Let \(\overline{\eta}\) and \(\overline{\nu}\) denote enumerations of the \(\wedge\)-closures of \(\{\eta_{i} : i < l\}\) and \(\{\nu_{i} : i < l\}\) respectively.  Then if 
$$
\text{qftp}_{L_{s}}(\eta_{0}, \ldots, \eta_{l-1}) = \text{qftp}_{L_{s}}(\nu_{0}, \ldots, \nu_{l-1}),
$$
then 
\[
\text{qftp}_{L_{s}}(\overline{\eta}) = \text{qftp}_{L_{s}}(\overline{\nu}).  
\]
\end{lem}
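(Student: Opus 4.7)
The plan is based on two elementary observations about the meet operation in a tree, plus careful bookkeeping about enumerations.

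First I would observe that in $\omega^{<\omega}$ the meet operation satisfies the following tree-ultrametric property: for any three nodes $\eta,\nu,\mu$, one has $\eta \wedge \nu \wedge \mu \in \{\eta \wedge \nu, \eta \wedge \mu, \nu \wedge \mu\}$ (specifically, it equals whichever of the three pairwise meets has minimal length, and this meet is $\unlhd$ the other two). By an easy induction on the size of a set $S \subseteq \omega^{<\omega}$, any iterated meet of elements of $S$ is already a pairwise meet. Applied to $S = \{\eta_i : i < l\}$, this shows that the $\wedge$-closure of $\{\eta_i : i < l\}$ is exactly $\{\eta_i \wedge \eta_j : i,j < l\}$ (taking $i=j$ recovers $\eta_i$ itself), and similarly for the $\nu_i$'s.

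Next I would set up compatible enumerations. Since $x_i \wedge x_j = x_{i'} \wedge x_{j'}$ is an atomic $L_{s}$-formula, the hypothesis gives $\eta_i \wedge \eta_j = \eta_{i'} \wedge \eta_{j'}$ if and only if $\nu_i \wedge \nu_j = \nu_{i'} \wedge \nu_{j'}$; hence the equivalence relation on pairs $(i,j) \in l \times l$ induced by ``generates the same node'' is the same for the $\eta$'s and for the $\nu$'s. Pick a transversal of these classes (say, the lexicographically least pair in each class) and use this list of pairs to produce enumerations $\overline{\eta} = (\eta_{i_s} \wedge \eta_{j_s})_{s<m}$ and $\overline{\nu} = (\nu_{i_s} \wedge \nu_{j_s})_{s<m}$ of the two $\wedge$-closures. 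By the same token, the level predicates $P_\alpha$ agree on corresponding elements, since $P_\alpha(x_i \wedge x_j)$ is atomic.

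Finally, I would verify that $\operatorname{qftp}_{L_s}(\overline{\eta}) = \operatorname{qftp}_{L_s}(\overline{\nu})$ by a purely syntactic translation. Every atomic $L_{s}$-formula in the variables indexing $\overline{\eta}$ has the form $\psi(t_0(\bar x), \ldots, t_{k-1}(\bar x))$ where the $t_r$ are terms built from variables using $\wedge$. Substituting the enumeration, each $t_r$ evaluates to an element of the $\wedge$-closure, which by construction equals some $\eta_{i_s(r)} \wedge \eta_{j_s(r)}$; so $\psi(\bar{\eta})$ rewrites as an atomic $L_{s}$-formula $\psi'(\eta_0, \ldots, \eta_{l-1})$, and the same rewriting (with the same pairs) produces $\psi'(\nu_0, \ldots, \nu_{l-1})$ when applied to $\overline{\nu}$. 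The hypothesis then transfers the atomic fact from $\bar\eta$ to $\bar\nu$. There is no real obstacle here; the only mildly delicate point is choosing a single transversal of the pair-equivalence classes that works simultaneously for both sides, and this is legitimate precisely because the equivalence classes on pairs coincide.
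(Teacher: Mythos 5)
Your proof is correct, and since the paper itself just says ``Easy, see Remark 3.2 of \cite{KimKimScow}'', you are genuinely supplying the details rather than paralleling an existing argument. The three observations you make are exactly the right ones: the tree-ultrametric fact (which yields that the $\wedge$-closure is already the set of pairwise meets), the fact that the pair-equivalence $(i,j)\sim(i',j')\Leftrightarrow x_i\wedge x_j = x_{i'}\wedge x_{j'}$ is recorded in the quantifier-free type, and the term-substitution step at the end. One remark on the last step: you first reduce each substituted term $t_r$ to a single pairwise meet $x_{a_r}\wedge x_{b_r}$ before transferring. This works, but it requires one more invocation of the hypothesis (the identity $t_r(\bar x)=x_{a_r}\wedge x_{b_r}$ is itself an atomic formula true of $\bar\eta$, hence of $\bar\nu$) which you leave implicit. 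A slightly cleaner route is to skip the reduction entirely: substitute $z_s\mapsto x_{i_s}\wedge x_{j_s}$ directly into each atomic $\psi(\bar z)$, obtaining an atomic $\psi'(\bar x)$ with $\psi(\bar{\overline{\eta}})\leftrightarrow\psi'(\bar\eta)$ and $\psi(\bar{\overline{\nu}})\leftrightarrow\psi'(\bar\nu)$ holding purely by how terms are evaluated, and then apply the hypothesis to $\psi'$ once. Either way the argument goes through; yours is just one step less economical.
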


\begin{proof}
 Easy.  See Remark 3.2 from \cite{KimKimScow}
\end{proof}

\begin{lem}\label{initseg}
 Let \(\eta_{0}, \ldots, \eta_{l-1}, \nu_{0}, \ldots, \nu_{l-1} \in \omega^{<\omega}\) be such that 
\[
 \text{qftp}_{L_{s}} (\eta_{0}, \ldots, \eta_{l-1}) = \text{qftp}_{L_{s}} (\nu_{0}, \ldots, \nu_{l-1}).
\]
Suppose \(i < l\) and \(\eta \vartriangleleft \eta_{i}\), \(\nu \vartriangleleft \nu_{i}\) with \(l(\eta) = l(\nu)\).  Then, setting \(\eta_{l} = \eta\) and \(\nu_{l} = \nu\), we have 
\[
 \text{qftp}_{L_{s}}(\eta_{0}, \ldots, \eta_{l}) = \text{qftp}_{L_{s}}(\nu_{0}, \ldots, \nu_{l}).  
\]

\end{lem}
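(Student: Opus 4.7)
The plan is to verify directly that every atomic $L_{s}$-relation in the enlarged tuple $(\eta_{0},\ldots,\eta_{l-1},\eta)$ matches the corresponding relation in $(\nu_{0},\ldots,\nu_{l-1},\nu)$. Since the atomic formulas of $L_{s}$ are built from $\vartriangleleft$, $\wedge$, $<_{lex}$ and the level predicates $P_{\alpha}$, and the quantifier-free type of the original tuple is already equal by hypothesis, it suffices to analyse the atomic relations between the new node $\eta$ (resp.\ $\nu$) and each $\eta_{j}$ (resp.\ $\nu_{j}$), plus the reflexive relations for $\eta$.

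The key structural observation I would exploit is that, because $\eta \vartriangleleft \eta_{i}$, the node $\eta$ coincides with the initial segment $\eta_{i}\restriction l(\eta)$, so every $L_{s}$-relation between $\eta$ and $\eta_{j}$ can be computed from the pair $(\eta_{i},\eta_{j})$ together with the single datum $l(\eta)$. Concretely, I would split on two cases. If $l(\eta_{i}\wedge\eta_{j})\geq l(\eta)$, then $\eta$ is an initial segment of $\eta_{i}\wedge\eta_{j}$, hence $\eta\trianglelefteq\eta_{j}$ and $\eta\wedge\eta_{j}=\eta$; if $l(\eta_{i}\wedge\eta_{j})<l(\eta)$, then $\eta_{i}\wedge\eta_{j}$ is a proper initial segment of $\eta$, $\eta\perp\eta_{j}$, and $\eta\wedge\eta_{j}=\eta_{i}\wedge\eta_{j}$. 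In the incomparable case, the lexicographic comparison between $\eta$ and $\eta_{j}$ is determined by the branching at the meet $\eta_{i}\wedge\eta_{j}$, and since $\eta$ extends $\eta_{i}\wedge\eta_{j}$ along $\eta_{i}$, this comparison coincides with the lex-comparison of $\eta_{i}$ versus $\eta_{j}$ at their meet. The level predicate applied to $\eta\wedge\eta_{j}$ is also determined: it is either $l(\eta)$ or $l(\eta_{i}\wedge\eta_{j})$, both of which are already recorded in the hypothesised data.

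The same case analysis applies verbatim on the $\nu$-side with $\nu_{i},\nu_{j},\nu$ in place of $\eta_{i},\eta_{j},\eta$. Since $\mathrm{qftp}_{L_{s}}(\eta_{i},\eta_{j})=\mathrm{qftp}_{L_{s}}(\nu_{i},\nu_{j})$, the value $l(\eta_{i}\wedge\eta_{j})$ equals $l(\nu_{i}\wedge\nu_{j})$ (meets and levels are in the language), the comparison of this quantity with $l(\eta)=l(\nu)$ falls into the same case, and the lex-order is preserved. Combining these observations, the full diagram of atomic relations between $\eta$ and the $\eta_{j}$'s coincides with that between $\nu$ and the $\nu_{j}$'s. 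Together with the equality of the original $L_{s}$-qftype and the trivial reflexive relations, this yields $\mathrm{qftp}_{L_{s}}(\eta_{0},\ldots,\eta_{l})=\mathrm{qftp}_{L_{s}}(\nu_{0},\ldots,\nu_{l})$.

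The only point requiring care is the treatment of the meet function $\wedge$: because $\wedge$ is a function symbol, the quantifier-free type of the enlarged tuple involves atomic formulas built from iterated meets, and one must check that the meet $\eta\wedge\eta_{j}$ either falls back to $\eta$ or to a term $\eta_{i}\wedge\eta_{j}$ already controlled by the original qftype. The case split above is designed precisely to handle this; once it is in place, every atomic formula reduces to data already fixed by the hypothesis.
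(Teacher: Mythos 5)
Your plan is essentially the same argument the paper uses: reduce every new atomic relation involving the appended node to data already fixed by $\mathrm{qftp}_{L_s}(\eta_i,\eta_j)$ together with the single number $l(\eta)=l(\nu)$, and observe that the case split on whether $l(\eta_i\wedge\eta_j)$ exceeds $l(\eta)$ controls everything. Two small remarks. First, your Case B ($l(\eta_i\wedge\eta_j) < l(\eta)$) claims $\eta\perp\eta_j$, but this overlooks the sub-case $\eta_j\vartriangleleft\eta$ (which happens when $\eta_j\unlhd\eta_i$ with $l(\eta_j)<l(\eta)$, forcing $\eta_i\wedge\eta_j=\eta_j$); your conclusions about the meet and the lexicographic direction happen to remain correct in that sub-case, but the stated justification (``branching at the meet'') does not literally apply, so you should split it off explicitly. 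Second, the paper sidesteps the iterated-meet bookkeeping you flag at the end by first passing to $\wedge$-closed enumerations (via the preceding lemma) and observing that the augmented tuples stay $\wedge$-closed; after that only the binary relations $\vartriangleleft$, $<_{lex}$ and the level predicates between tuple entries need checking, which makes the case analysis cleaner than tracking meet terms directly.
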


\begin{proof}
 Without loss of generality, we may take \(\{\eta_{i} : i < l\}\) and \(\{\nu_{i} : i < l\}\) to be \(\wedge\)-closed, by the previous lemma.  Then  \(\{\eta_{i} : i < l+1\}\) and \(\{\nu_{i} : i < l+1\}\) are also \(\wedge\)-closed.  So we need only to check that for any \(j,j' < l+1\)
\begin{enumerate}
 \item \(\eta_{j}  \vartriangleleft \eta_{j'} \iff \nu_{j} \iff \nu_{j}'\)
\item \(\eta_{j} <_{lex} \eta_{j'} \iff \nu_{j} <_{lex} \nu_{j'}\)
\end{enumerate}
We have 3 cases.

\textbf{Case 1:}  \(j,j' < l\).

\((1)\) and \((2)\) follow by assumption.  

\textbf{Case 2:} \(j < l\) and \(j' =l\)
\begin{eqnarray*}
 \eta_{j} \vartriangleleft \eta_{l} &\iff & \eta_{j} \vartriangleleft \eta_{i}  \text{ and } l(\eta_{j}) \leq l(\eta_{l}) \\
&\iff& \eta_{j} \vartriangleleft \eta_{i} \wedge \bigvee_{k < l(\eta_{l})} P_{k}(\eta_{j}) \\
&\iff& \nu_{j} \vartriangleleft \nu_{i} \wedge \bigvee_{k < l(\nu_{l})} P_{k}(\nu_{j}) \\
&\iff& \nu_{j} \vartriangleleft \nu_{l}.\\
 \eta_{j} <_{lex} \eta_{l} &\iff& l(\eta_{j} \wedge \eta_{i}) < l(\eta_{l}) \text{ and } \eta_{j} <_{lex} \eta_{i} \\
&\iff& \left(\bigvee_{k < l(\eta_{l})} P_{k}(\eta_{j} \wedge \eta_{i}) \right) \wedge \eta_{j} <_{lex} \eta_{i} \\
&\iff& \left(\bigvee_{k < l(\nu_{l})} P_{k}(\nu_{j} \wedge \nu_{i}) \right) \wedge \nu_{j} <_{lex} \nu_{i} \\
&\iff& \nu_{l} <_{lex} \nu_{j}.
\end{eqnarray*}
\textbf{Case 3:} \(j = l\) and \(j'<l\)
\begin{eqnarray*}
 \eta_{l} \vartriangleleft \eta_{j} &\iff& \eta_{l} \vartriangleleft (\eta_{i} \wedge \eta_{j}) \\
&\iff& \bigvee_{l(\eta_{l}) < k \leq l(\eta_{i})} P_{k}((\eta_{i} \wedge \eta_{j})) \\
&\iff& \bigvee_{l(\nu_{l}) < k \leq l(\nu_{i})} P_{k}((\nu_{i} \wedge \nu_{j})) \\
&\iff& \nu_{l} \vartriangleleft \nu_{j} \\
 \eta_{l} <_{lex} \eta_{j} &\iff& \left( l(\eta_{j} \wedge \eta_{i}) < l(\eta_{l}) \right) \to \eta_{i} <_{lex} \eta_{j} \\
&\iff& \left( \bigvee_{k < l(\eta_{l})} P_{k}( \eta_{j} \wedge \eta_{i} ) \right) \to \eta_{i} <_{lex} \eta_{j} \\
&\iff& \left( \bigvee_{k < l(\nu_{l})} P_{k}( \nu_{j} \wedge \nu_{i} ) \right) \to \nu_{i} <_{lex} \nu_{j} \\
&\iff& \nu_{l} <_{lex} \nu_{j}.
\end{eqnarray*}

\end{proof}

\begin{lem}\label{widening}
 Let \((a_{\eta})_{\eta \in \omega^{<\omega}}\) be an \(s\)-indiscernible tree. If \((a'_{\eta})_{\eta \in \omega^{<\omega}}\) is the \(k\)-fold widening of \((a_{\eta})_{\eta \in \omega^{<\omega}}\) at level \(n\), then \((a'_{\eta})_{\eta \in \omega^{<\omega}}\) is also \(s\)-indiscernible.  
\end{lem}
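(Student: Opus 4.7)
The plan is to unpack the widening as a coordinate map and reduce $s$-indiscernibility of $(a'_\eta)$ to that of $(a_\eta)$. Concretely, for $\eta \in \omega^{<\omega}$ with $l(\eta) \ge n$, write $\eta = \mu \frown \langle i \rangle \frown \xi$ with $\mu \in \omega^{n-1}$ and define $\eta[j] := \mu \frown \langle ki+j \rangle \frown \xi$ for each $j < k$; for $l(\eta) < n$, set $\eta[0] := \eta$. By the definition of the $k$-fold widening at level $n$, $a'_\eta$ is precisely the concatenation $(a_{\eta[j]})_j$.

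Given tuples $\eta_0, \ldots, \eta_{m-1}$ and $\nu_0, \ldots, \nu_{m-1}$ in $\omega^{<\omega}$ with $\qftp_{L_s}(\bar\eta) = \qftp_{L_s}(\bar\nu)$, I would show that the expanded sequences $(\eta_a[j])$ and $(\nu_a[j])$, indexed identically by pairs $(a, j)$, have equal $L_s$-qftp. Note that equal $L_s$-qftp forces $l(\eta_a) = l(\nu_a)$ for each $a$, so both expansions share the same index set. Granting this, $s$-indiscernibility of $(a_\eta)_{\eta \in \omega^{<\omega}}$ yields equality of the types of the concatenations, which is exactly $\tp(a'_{\eta_0}, \ldots, a'_{\eta_{m-1}}) = \tp(a'_{\nu_0}, \ldots, a'_{\nu_{m-1}})$.

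The claim in turn reduces to checking, for each pair of indices $(a, j)$ and $(b, j')$, that the level of $\eta_a[j]$, the level of $\eta_a[j] \wedge \eta_b[j']$, and the relations $\eta_a[j] \unlhd \eta_b[j']$ and $\eta_a[j] <_{lex} \eta_b[j']$ are determined by the $L_s$-qftp of $(\eta_a, \eta_b)$ together with $(j, j')$. The level is visibly unchanged by widening. For the meet, I would split on whether $l(\eta_a \wedge \eta_b)$ is less than $n-1$, equal to $n-1$, or at least $n$: in the first case the branch-point sits below the widening level and is unaffected; in the second, the digits at position $n-1$ become $ki_a + j$ and $ki_b + j'$ with $i_a \neq i_b$, which lie in disjoint blocks of length $k$, so the meet remains exactly at $n-1$; in the third, $\eta_a$ and $\eta_b$ share a digit $i$ at position $n-1$, and $\eta_a[j]$ and $\eta_b[j']$ agree at that position iff $j = j'$, giving meet at level $l(\eta_a \wedge \eta_b)$ if $j = j'$ and dropping to $n-1$ otherwise. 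The $\unlhd$- and $<_{lex}$-relations fall out of the same case split: for the lex order at level $n-1$ the map $i \mapsto ki + j$ is monotone within disjoint blocks, and when $j \neq j'$ the tie-breaker is simply $j$ vs.\ $j'$.

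The only obstacle is careful bookkeeping across these cases; conceptually the widening is arranged precisely so that the $L_s$-invariants of the expanded tuple are controlled by the original $L_s$-qftp together with the expansion coordinates, which is all that $s$-indiscernibility of the ambient tree requires.
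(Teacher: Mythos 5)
Your proposal is correct and follows essentially the same route as the paper's proof: expand each $\eta$ into its $k$ shifted versions $\eta[j]$, observe that equality of $L_s$-quantifier-free types forces equal levels so the expanded tuples are indexed compatibly, and then verify by a case analysis on the position of the meet relative to level $n-1$ that the $L_s$-invariants of the expanded tuple are determined by the original $L_s$-qftp together with the shift coordinates. The paper organizes the bookkeeping slightly differently — it first closes the tuples under $\wedge$ and initial segment (via its Lemma on initial segments) and then checks atomic relations among elements of the closed tuple, whereas you instead rely on the (true, but left implicit) fact that the $L_s$-qftp of a finite tuple is determined by pairwise levels, meet-levels, $\unlhd$ and $<_{lex}$ relations; if you wanted to tighten the write-up, making that reduction explicit, or just closing under $\wedge$ as the paper does, would remove the only soft spot.
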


\begin{proof}
 Pick \(\eta_{0}, \ldots, \eta_{l-1}\) and \(\nu_{0}, \ldots, \nu_{l-1}\) in \(\omega^{<\omega}\) so that 
\[
 \text{qftp}_{L_{s}} (\eta_{0}, \ldots, \eta_{l-1}) = \text{qftp}_{L_{s}}(\nu_{0}, \ldots, \nu_{l-1}).
\]
By Lemma \ref{initseg}, we may assume that \(\{\eta_{i} : i < l\}\) and \(\{\nu_{i} : i < l\}\) are both \(\wedge\)-closed and closed under initial segment.  Moreover, we may assume that these elements have been enumerated so that for some \(m \leq l\), \(l(\eta_{i}), l(\nu_{i}) < n\) if and only if \(i \geq m\).  So for each \(i < m\), we may write 
\begin{eqnarray*}
 \eta_{i} &=& \mu_{i} \frown \alpha_{i} \frown \xi_{i} \\
\nu_{i} &=& \upsilon_{i} \frown \beta_{i} \frown \rho_{i},
\end{eqnarray*}
where \(\mu_{i}, \upsilon_{i} \in \omega^{n-1}\), \(\alpha_{i}, \beta_{i} \in \omega\), and \(\xi_{i}, \rho_{i} \in \omega^{<\omega}\).  For each \(i < m\), let 
\begin{eqnarray*}
 \overline{\eta}_{i} &=& (\mu_{i} \frown (k\alpha_{i}) \frown \xi_{i}, \mu_{i} \frown (k \alpha_{i} + 1) \frown \xi_{i}, \ldots, \mu_{i} \frown (k\alpha_{i} + k - 1) \frown \xi_{i}) \\
\overline{\nu}_{i} &=& (\upsilon_{i} \frown (k \beta_{i}) \frown \rho_{i}, \upsilon_{i} \frown (k \beta_{i} + 1) \frown \rho_{i}, \ldots, \upsilon_{i} \frown (k \beta_{i} + k -1) \frown \rho_{i}).
\end{eqnarray*}
and for \(m \leq i < l\), let \(\overline{\eta}_{i} = \eta_{i}\), \(\overline{\nu}_{i} = \nu_{i}\).  Now we must show that 
\[
 \text{qftp}_{L_{s}}(\overline{\eta}_{0}, \ldots, \overline{\eta}_{l-1}) = \text{qftp}_{L_{s}}(\overline{\nu}_{0}, \ldots, \overline{\nu}_{l-1}).
\]
It is clear that the sets \(\bigcup_{i <l} \overline{\eta}_{i}\) and \(\bigcup_{i < l} \overline{\nu}_{i}\) are closed under initial segment.  They are also closed under \(\wedge\):  this is obvious for elements of length \(<n\)  and for elements of longer length whose meet has length \(< n\) by our assumptions.  On the other hand if, for some \(i,i' < l\) and \(j,j'<k\), \(l((\overline{\eta}_{i})_{j}), l((\overline{\nu}_{i'})_{j'}) \geq n\) and \(l((\overline{\eta}_{i})_{j} \wedge (\overline{\nu}_{i'})_{j'}) \geq n\), then if \(j = j'\), we have \((\overline{\eta}_{i})_{j} \wedge (\overline{\nu}_{i'})_{j'} = \overline{(\eta_{i} \wedge \eta_{i'})}_{j}\) and if \(j \neq j'\), then 
\((\overline{\eta}_{i})_{j} \wedge (\overline{\nu}_{i'})_{j'}\) is equal to the common initial segment of each element of length \(n-1\).  In the first case, the meet is enumerated in one of the tuples because our initial set of tuples was \(\wedge\)-closed, in the second case because it was taken to be closed under initial segment.  To check equality of the quantifier-free types, we have 3 cases:

\textbf{Case 1:} \(i, i' \geq m\)
Follows by assumption, as for any \(i \geq m\), \(\overline{\eta}_{i} = \eta_{i}\) and \(\overline{\nu}_{i} = \nu_{i}\).  

\textbf{Case 2:} \(i \geq m\), \(i' < m\) and \(j < k\) 
\begin{eqnarray*}
 \overline{\eta}_{i} \vartriangleleft (\overline{\eta}_{i'})_{j} &\iff& \overline{\nu}_{i} \vartriangleleft (\overline{\nu}_{i'})_{j} \\
\overline{\eta}_{i} <_{lex} (\overline{\eta}_{i'})_{j} &\iff& \overline{\nu}_{i} <_{lex} (\overline{\eta}_{i'})_{j} \\
(\overline{\eta}_{i'})_{j} <_{lex} \overline{\eta}_{i} &\iff& (\overline{\nu}_{i'})_{j} <_{lex}  \overline{\nu}_{i}
\end{eqnarray*}
\textbf{Case 3:} \(i,i'<m\) and \(j,j'< k\) 
\begin{eqnarray*}
 (\overline{\eta}_{i})_{j} \vartriangleleft (\overline{\eta}_{i'})_{j'} &\iff& \eta_{i} \vartriangleleft \eta_{i'} \text{ and } j = j' \\
&\iff& \nu_{i} \vartriangleleft \nu_{i'} \text{ and } j = j' \\
&\iff& (\overline{\nu}_{i})_{j} \vartriangleleft (\overline{\nu}_{i'})_{j'} \\
 (\overline{\eta}_{i})_{j} <_{lex} (\overline{\eta}_{i'})_{j'} &\iff& \left( \eta_{i}<_{lex} \eta_{j} \text{ and } (l(\eta_{i} \wedge \eta_{j})<n \text{ or } j = j') \right) \text{ or } \\ & & \left( l(\eta_{i} \wedge \eta_{i'}) \geq n  \text{ and } j < j' \right) \\
&\iff& \left( \nu_{i}<_{lex} \nu_{j} \text{ and } (l(\nu_{i} \wedge \nu_{j})<n \text{ or } j = j') \right) \text{ or } \\ & & \left( l(\nu_{i} \wedge \nu_{i'}) \geq n  \text{ and } j < j' \right) \\
&\iff& (\overline{\nu}_{i})_{j} <_{lex} (\overline{\nu}_{i'})_{j'}.
\end{eqnarray*}
\end{proof}

\begin{lem}\label{stretch}
 Let \((a_{\eta})_{\eta \in \omega^{<\omega}}\) be an \(s\)-indiscernible tree. If \((a''_{\eta})_{\eta \in \omega^{<\omega}}\) is the \(k\)-fold stretch of \((a_{\eta})_{\eta \in \omega^{<\omega}}\) at level \(n\), then \((a''_{\eta})_{\eta \in \omega^{<\omega}}\) is also \(s\)-indiscernible.  
\end{lem}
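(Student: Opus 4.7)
The argument will closely mirror that of Lemma \ref{widening}. Given $\eta_0, \ldots, \eta_{l-1}$ and $\nu_0, \ldots, \nu_{l-1}$ in $\omega^{<\omega}$ with $\text{qftp}_{L_s}(\overline{\eta}) = \text{qftp}_{L_s}(\overline{\nu})$, I would first invoke the $\wedge$-closure lemma at the start of this appendix to assume $\{\eta_i : i<l\}$ and $\{\nu_i : i<l\}$ are $\wedge$-closed, and then use Lemma \ref{initseg} to close them under initial segments as well. Because $L_s$ has the level predicates $P_\alpha$, the $L_s$-qftp agreement forces $l(\eta_i)=l(\nu_i)$ for each $i$, so the indices can be uniformly partitioned into those with $l(\eta_i) < n$, $l(\eta_i) = n$, and $l(\eta_i) > n$.

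The next step is to reduce the problem back to a statement about the original $s$-indiscernible tree. Define a uniform rewriting $\eta \mapsto T(\eta)$ assigning to each $\eta \in \omega^{<\omega}$ a tuple of nodes such that $a''_\eta$ is exactly the corresponding tuple of $a$'s: $T(\eta) = (\eta)$ when $l(\eta) < n$, $T(\eta) = (\eta, \eta \frown 0, \ldots, \eta \frown 0^{k-1})$ when $l(\eta) = n$, and $T(\eta) = (\pi(\eta))$ when $l(\eta) > n$, where $\pi(\eta) = (\eta\restriction n) \frown 0^{k-1} \frown (\eta(n), \ldots, \eta(l(\eta)-1))$. Let $\overline{\mu} = T(\eta_0)\frown\cdots\frown T(\eta_{l-1})$ and $\overline{\mu}' = T(\nu_0)\frown\cdots\frown T(\nu_{l-1})$. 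Since $l(\eta_i) = l(\nu_i)$, the two concatenations have equal length, and we have a canonical coordinatewise correspondence. It therefore suffices to show $\text{qftp}_{L_s}(\overline{\mu}) = \text{qftp}_{L_s}(\overline{\mu}')$, since then $s$-indiscernibility of $(a_\eta)_{\eta \in \omega^{<\omega}}$ gives $\text{tp}(a_{\overline{\mu}}) = \text{tp}(a_{\overline{\mu}'})$ and unpacking yields $\text{tp}(a''_{\overline{\eta}}) = \text{tp}(a''_{\overline{\nu}})$.

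The verification of the $L_s$-qftp equality for $\overline{\mu}$ and $\overline{\mu}'$ is then a direct case analysis, in the style of Lemma \ref{widening}. For any pair of coordinates $\mu_{i_1,j_1}, \mu_{i_2,j_2}$, one checks each of $P_\alpha$, $\vartriangleleft$, $<_{lex}$, and $\wedge$ by splitting into subcases according to whether $l(\eta_{i_1})$ and $l(\eta_{i_2})$ are $<n$, $=n$, or $>n$. The computations for the level predicates are immediate from the definition of $T$; for $\vartriangleleft$, $<_{lex}$, and $\wedge$ the relation on $\mu_{i_1,j_1}, \mu_{i_2,j_2}$ is determined by the corresponding relation on $\eta_{i_1}, \eta_{i_2}$ (which matches between the $\eta$'s and $\nu$'s by hypothesis) together with the auxiliary indices $j_1, j_2$. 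The $\wedge$-closure and initial-segment closure assumptions guarantee that every meet arising in $\overline{\mu}$ or $\overline{\mu}'$ is already enumerated, so no extraneous nodes need to be tracked.

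The main bookkeeping difficulty, which I expect to be the only delicate step, is the interaction between the stretch at level $n$ and meets: when $l(\eta_{i_1}) = n$ and $l(\eta_{i_2}) > n$, the $0$-tail $\eta_{i_1} \frown 0^{j_1}$ (of length $n + j_1 < n+k$) can meet $\pi(\eta_{i_2})$ inside the inserted $0^{k-1}$-block, and similarly meets of two nodes at level $>n$ above $\eta_{i_2}\wedge\eta_{i_2'}=\eta_{i_2}\restriction n'$ for some $n' \geq n$ get lengthened by $k-1$ under $\pi$. In each such subcase one writes out the meet explicitly and observes that the outcome depends only on data available in $\text{qftp}_{L_s}(\overline{\eta})$ — namely, levels, order comparisons and meet lengths of the $\eta_i$'s, together with the (universally prescribed) auxiliary indices $j_1, j_2$. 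Because the rewriting $T$ and the map $\pi$ are defined purely in terms of these level-dependent data, the same computation applied to the $\nu$'s produces a matching outcome, completing the verification and hence the proof.
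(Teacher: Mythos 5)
Your proposal follows essentially the same route as the paper's proof: you define the same node-rewriting map (your $T$ is the paper's $\overline{\eta}$ assignment), invoke $\wedge$-closure, reduce the claim to an $L_s$-qftp equality, and then carry out a case analysis on whether levels are $<n$, $=n$, or $>n$. The only cosmetic difference is that you also close under initial segments via Lemma~\ref{initseg}, whereas the paper takes only $\wedge$-closure for the stretch case (reserving the full initial-segment closure for the widening lemma); your extra step is harmless and, if anything, makes the verification of $\wedge$-closedness of the resulting tuples slightly more transparent.
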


\begin{proof}
Given \(\eta \in \omega^{<\omega}\), let 
\[
\overline{\eta} = \left\{
\begin{matrix}
\eta & \text{ if } l(\eta) < n \\
(\eta,\eta \frown 0, \ldots , \eta \frown 0^{k-1}) & \text{ if } l(\eta) = n \\
\nu \frown 0^{k-1} \frown \xi & \text{ if } \eta = \nu \frown \xi, \text{ with } \nu \in \omega^{n}, \xi \neq \emptyset
\end{matrix}
\right.
\]
Pick \(\eta_{0}, \ldots, \eta_{l-1}, \nu_{0}, \ldots, \nu_{l-1} \in \omega^{<\omega}\) so that 
\[
\text{qftp}_{L_{s}}(\eta_{0}, \ldots, \eta_{l-1}) = \text{qftp}_{L_{s}}(\nu_{0}, \ldots, \nu_{l-1}),
\]
and, without loss of generality, we may suppose \(\{\eta_{i}: i < l\}\) and \(\{\nu_{i} : i < l\}\) are both \(\wedge\)-closed.  We must show that 
\[
\text{qftp}_{L_{s}}(\overline{\eta}_{0}, \ldots, \overline{\eta}_{l-1}) = \text{qftp}_{L_{s}}(\overline{\nu}_{0}, \ldots, \overline{\nu}_{l-1}).
\]
Assume that $\{\overline{\eta}_{i} : i < l\}$ is ordered so that $i < m$ if and only if $l(\eta_{i}) = n$, and similarly for $\{\overline{\nu}_{i} : i < l\}$.  Clearly \(\{\overline{\eta}_{i} : i < l\}\) and \(\{\overline{\nu}_{i} : i < l\}\) are also \(\wedge\)-closed, so we have to check that the two sequences of tuples have the same quantifier type with respect to the relations \(<_{lex}\) and \(\vartriangleleft\).  We'll show this by considering the various cases:

\textbf{Case 1}:  \(i,i' \geq m\).  Then
\begin{eqnarray*}
\overline{\eta}_{i} \vartriangleleft \overline{\eta}_{i'} &\iff& \eta_{i} \vartriangleleft \eta_{i'}
\\ &\iff& \nu_{i} \vartriangleleft \nu_{i'} \\
&\iff& \overline{\nu}_{i} \vartriangleleft \overline{\nu}_{i'} \\
\overline{\eta}_{i} <_{lex} \overline{\eta}_{i} &\iff& \eta_{i} <_{lex} \eta_{i'} \\
 &\iff& \nu_{i} <_{lex} \nu_{i'} \\
 &\iff& \overline{\nu}_{i} <_{lex} \overline{\nu}_{i'}.
\end{eqnarray*}
\textbf{Case 2}:  \(i,i' <m\) and \(j,j' <k\).  Then 
\begin{eqnarray*}
(\overline{\eta}_{i})_{j} \vartriangleleft (\overline{\eta}_{i'})_{j'} &\iff& (\eta_{i} = \eta_{i'}) \wedge (j < j') \\
&\iff & (\nu_{i} = \nu_{i'}) \wedge (j < j') \\
&\iff & (\overline{\nu}_{i})_{j} \vartriangleleft (\overline{\nu})_{j'} \\
(\overline{\eta}_{i})_{j} <_{lex} (\overline{\eta}_{i'})_{j'} &\iff & \eta_{i} <_{lex} \eta_{i'} \vee ( \nu_{i} = \nu_{i'} \wedge j < j') \\
&\iff & \nu_{i} <_{lex} \nu_{i'} \vee (\nu_{i} = \nu_{i'} \wedge j < j') \\
&\iff & (\overline{\nu}_{i})_{j} <_{lex} (\overline{\nu}_{i'})_{j'}.
\end{eqnarray*}

\textbf{Case 3}: \(i < m\), \(i' \geq m\), \(j < k\).  
\begin{eqnarray*}
(\overline{\eta}_{i})_{j} \vartriangleleft \overline{\eta}_{i'}
 &\iff & \eta_{i} \vartriangleleft \eta_{i'} \\
&\iff & \nu_{i} \vartriangleleft \nu_{i'} \\
&\iff & (\overline{\nu}_{i})_{j} \vartriangleleft \overline{\nu}_{i} \\
\overline{\eta}_{i'} \vartriangleleft (\overline{\eta}_{i})_{j} &\iff & \eta_{i'} \vartriangleleft \eta_{i} \\
&\iff & \nu_{i'} \vartriangleleft \nu_{i} \\
&\iff & (\overline{\nu}_{i'})_{j} \vartriangleleft \overline{\nu}_{i} \\
(\overline{\eta}_{i})_{j} <_{lex} \overline{\eta}_{i'} &\iff & \eta_{i} <_{lex} \eta_{i'} \\
&\iff & \nu_{i} <_{lex} \nu_{i'} \\
&\iff & (\overline{\nu}_{i})_{j} <_{lex} \overline{\nu}_{i'} \\
\overline{\eta}_{i'} <_{lex} (\overline{\eta}_{i})_{j} &\iff & \nu_{i'} <_{lex} \nu_{i} \\
&\iff& \nu_{i'} <_{lex} \nu_{i} \\
&\iff & \overline{\nu}_{i'} <_{lex} (\overline{\nu}_{i})_{j}.
\end{eqnarray*}
\end{proof}

\begin{lem}
\begin{enumerate}
 \item Each tuple \(a^{(n)}_{\eta}\) may be enumerated as \((a_{\nu \frown \eta} : \nu \in 2^{n})\)
\item If \((a_{\eta})_{\eta \in 2^{< \kappa}}\) is strongly indiscernible, then for all \(n\), the \(n\)-fold fattening \((a^{(n)}_{\eta})_{\eta \in 2^{< \kappa}}\) is strongly indiscernible over \(C_{n}\)
\end{enumerate}
\end{lem}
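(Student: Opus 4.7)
The plan is to handle part (1) by a direct induction on $n$ using the recursive definition of fattening, and then reduce part (2) to a quantifier-free statement about $(2^{<\kappa}, \vartriangleleft, <_{lex}, \wedge)$ via (1) and the hypothesis of strong indiscernibility.

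For (1), I would induct on $n$. The base case is just $a^{(0)}_{\eta} = a_{\eta} = a_{\emptyset \frown \eta}$. For the inductive step, $a^{(n+1)}_{\eta} = (a^{(n)}_{0 \frown \eta}, a^{(n)}_{1 \frown \eta})$ and by the inductive hypothesis this concatenates two enumerations of the form $(a_{\nu \frown i \frown \eta} : \nu \in 2^{n})$ for $i = 0, 1$, yielding the enumeration $(a_{\nu' \frown \eta} : \nu' \in 2^{n+1})$ where we list elements of $2^{n+1}$ ending in $0$ first, then those ending in $1$.

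For (2), fix $\bar\eta = (\eta_{i})_{i < l}$ and $\bar\nu = (\nu_{i})_{i < l}$ with $\qftp_{L_{0}}(\bar\eta) = \qftp_{L_{0}}(\bar\nu)$. Using (1), the desired equality of types over $C_{n}$ becomes
\[
\tp\bigl((a_{\mu \frown \eta_{i}})_{\mu \in 2^{n}, i < l}/C_{n}\bigr) = \tp\bigl((a_{\mu \frown \nu_{i}})_{\mu \in 2^{n}, i < l}/C_{n}\bigr),
\]
where $C_{n} = \{a_{\xi} : \xi \in 2^{<n}\}$. By compactness, it suffices to fix any finite $F \subseteq 2^{<n}$ and, by strong indiscernibility of $(a_{\eta})_{\eta \in 2^{<\kappa}}$, to show that the tuples $\bigl((\mu \frown \eta_{i})_{\mu \in 2^{n}, i < l}, (\xi)_{\xi \in F}\bigr)$ and $\bigl((\mu \frown \nu_{i})_{\mu \in 2^{n}, i < l}, (\xi)_{\xi \in F}\bigr)$ have equal quantifier-free $L_{0}$-types in $2^{<\kappa}$.

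To verify this, I would define the bijection $\sigma$ sending $\mu \frown \eta_{i} \mapsto \mu \frown \nu_{i}$ and fixing every $\xi \in F$, extended canonically to the $\wedge$-closure by
\[
(\mu \frown \eta_{i}) \wedge (\mu' \frown \eta_{i'}) = \begin{cases} \mu \frown (\eta_{i} \wedge \eta_{i'}) & \text{if } \mu = \mu', \\ \mu \wedge \mu' & \text{if } \mu \neq \mu', \end{cases}
\]
and $(\mu \frown \eta_{i}) \wedge \xi = \mu \wedge \xi$ (since $l(\xi) < n \leq l(\mu)$). The hypothesis $\qftp_{L_{0}}(\bar\eta) = \qftp_{L_{0}}(\bar\nu)$ makes $\sigma$ well-defined on wedge-terms, and a routine case split shows that it preserves $\vartriangleleft$ and $<_{lex}$: comparisons between $\mu \frown \eta_{i}$ and $\mu' \frown \eta_{i'}$ with $\mu \neq \mu'$ reduce to comparisons of $\mu, \mu'$ (which are fixed), those with $\mu = \mu'$ reduce to comparisons of $\eta_{i}, \eta_{i'}$ (transported via the hypothesis), and comparisons with the $\xi \in F$ depend only on $\mu$ and $\xi$ because $l(\xi) < n$. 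The main obstacle is purely the bookkeeping of these cases; the conceptual content is that attaching a fixed prefix of length $n$ completely decouples the tree structure among the new tuples from the parameters of $C_{n}$, all of which sit at levels strictly below $n$.
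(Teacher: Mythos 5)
Your proof is correct and follows essentially the same route as the paper's: part (1) by the same induction, and part (2) by reducing, via (1) and strong indiscernibility of the original tree, to showing that the index tuples $(\mu \frown \eta_i)_{\mu, i}$ together with the stump indices $\xi \in 2^{<n}$ have the same quantifier-free $L_0$-type as the corresponding tuples built from $\bar\nu$. The paper carries out the verification as a direct case analysis of $\vartriangleleft$ and $<_{lex}$ comparisons rather than packaging it as a bijection $\sigma$, but the $\wedge$-closure decomposition and the case split on whether the length-$n$ prefixes agree are the same.
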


\begin{proof}
(1)  This is trivial for \(n = 0\).  Then if true for \(n\), we have 
\[
 a^{(n+1)}_{\eta} = (a^{(n)}_{0 \frown \eta}, a^{(n)}_{1 \frown \eta}) = ((a_{\nu \frown 0 \frown \eta}: \nu \in 2^{n}), (a_{\nu \frown 1 \frown \eta} : \nu \in 2^{n}) ) = (a_{\xi \frown \eta} : \xi \in 2^{n+1}).
\]
(2)  By (1) we have \(a^{(n+1)}_{\eta} = (a_{\mu \frown \eta} : \mu \in 2^{n})\).  Let \(\overline{\mu} = (\mu \in 2^{\leq n})\).  In order to show indiscernibility over \(C_{n}\) have to show that if \(\eta_{0}, \ldots, \eta_{k-1}, \nu_{0}, \ldots, \nu_{k-1} \in 2^{< \omega}\) and 
\[
 \text{qftp}_{L_{0}}(\eta_{0}, \ldots, \eta_{k-1}) = \text{qftp}_{L_{0}}(\nu_{0} ,\ldots, \nu_{k-1})
\]
then \(\text{qftp}_{L_{0}}(\overline{\mu}, (a_{\mu \frown \eta_{0}} : \mu \in {2^{n}} ), \ldots, (a_{\mu \frown \eta_{k-1}} : \mu \in 2^{n}))\) is equal to \(\text{qftp}_{L_{0}}(\overline{\mu}, (a_{\mu \frown \nu_{0}} : \mu \in 2^{n}), \ldots, (a_{\mu \frown \nu_{k-1}} : \mu \in 2^{n}))\).  To this end, we may assume \(\{\eta_{0}, \ldots, \eta_{k-1}\}\) and \(\{\nu_{0}, \ldots, \nu_{k-1}\}\) are meet-closed.  Then \(2^{\leq n} \cup \{\mu \frown \eta_{i} : \mu \in 2^{n}, i < k\}\) and \(2^{\leq n} \cup \{\mu \frown \nu_{i} : \mu \in 2^{n}, i < k\}\) is also meet-closed and we just have to check that the tuples in the above equation have the same time with respect to the language \(L_{t} = \{\vartriangleleft, <_{lex}\}\).  Choose \(\xi_{0}, \xi_{1}\) from the tuple \((\overline{\mu}, (a_{\mu \frown \eta_{0}} : \mu \in {2^{n}} ), \ldots, (a_{\mu \frown \eta_{k-1}} : \mu \in 2^{n}))\) and \(\rho_{0}, \rho_{1}\) from \((\overline{\mu}, (a_{\mu \frown \nu_{0}} : \mu \in 2^{n}), \ldots, (a_{\mu \frown \nu_{k-1}} : \mu \in 2^{n}))\) so that \(\xi_{i}\) sits in the same position in the enumeration of the tuple as \(\rho_{i}\) for \(i = 0,1\).  Now, we must show that \(\xi_{0} <_{lex} \xi_{1}\) if and only if \(\rho_{0} <_{lex} \rho_{1}\) and \(\xi_{0} \unlhd \xi_{1}\) if and only if \(\rho_{0} \unlhd \rho_{1}\).  Choose arbitrary \(\mu_{0}, \mu_{1} \in 2^{\leq n}\), \(\eta_{i}, \eta_{j}\), \(\nu_{i}, \nu_{j}\).  

\textbf{Case 1}:  \(l(\mu_{0}) = l(\mu_{1}) = n\), \(\xi_{0} = \mu_{0} \frown \eta_{i}\), \(\xi_{1} = \mu_{1} \frown \eta_{j}\), and hence \(\rho_{0} = \mu_{0} \frown \nu_{i}\) and \(\rho_{1} = \mu_{1} \frown \nu_{j}\).  
\begin{eqnarray*}
\mu_{0} \frown \eta_{i} \unlhd \mu_{1} \frown \eta_{j} &\iff& \mu_{0} = \mu_{1} \wedge \eta_{i} \unlhd \eta_{j} \\
&\iff& \mu_{0} = \mu_{1} \wedge \nu_{i} \unlhd \nu_{j} \\
&\iff& \mu_{0} \frown \nu_{i} \vartriangleleft \mu_{1} \frown \nu_{j} \\
\mu_{0} \frown \eta_{i} <_{lex} \mu_{1} \frown \eta_{j} &\iff& \mu_{0} <_{lex} \mu_{1} \vee (\mu_{0} = \mu_{1} \wedge \eta_{i} <_{lex} \eta_{j})  \\
&\iff& \mu_{0} <_{lex} \mu_{1} \vee (\mu_{0} = \mu_{1} \wedge \nu_{i} <_{lex} \nu_{i'}) \\
&\iff& \mu_{0} \frown \nu_{i} <_{lex} \mu_{1} \frown \nu_{j}
\end{eqnarray*}

\textbf{Case 2}:  \(\xi_{0} = \mu_{0}\), \(\xi_{1} = \mu_{1}\), \(\rho_{0} = \mu_{0}\), and \(\rho_{1} = \mu_{1}\).  

Clear.

\textbf{Case 3}:  \(l(\mu_{0}) = n\), \(\xi_{0} = \mu_{0} \frown \eta_{i}\), \(\xi_{1} = \mu_{1}\), \(\rho_{0} = \mu_{0} \frown \nu_{i}\), \(\rho_{1} = \mu_{1}\).

It is never the case that \(\mu_{0} \frown \eta_{i} \vartriangleleft \mu_{1}\) or \(\mu_{0} \frown \nu_{i} \vartriangleleft \mu_{1}\) so it suffices to check \(<_{lex}\):
\begin{eqnarray*}
\mu_{0} \frown \eta_{i} <_{lex} \mu_{1} &\iff& \mu_{0} <_{lex} \mu_{1} \\
&\iff& \mu_{0} \frown \nu_{i} <_{lex} \mu_{1}.
\end{eqnarray*}

\textbf{Case 4}:  \(l(\mu_{1}) = n\), \(\xi_{0} = \mu_{0}\), \(\xi_{1} = \mu_{1} \frown \nu_{j}\), \(\rho_{0} = \mu_{0}\), \(\rho_{1} = \mu_{1} \frown \nu_{j}\).  

\begin{eqnarray*}
\mu_{0} \unlhd \mu_{1} \frown \eta_{j} &\iff& \mu_{0} \unlhd \mu_{1} \\
&\iff& \mu_{0} \unlhd \mu_{1} \frown \nu_{j} \\
\mu_{0} \leq_{lex} \mu_{1} \frown \eta_{j} &\iff& \mu_{0} \leq_{lex} \mu_{1} \\
&\iff& \mu_{0} \leq_{lex} \mu_{1} \frown \nu_{j}
\end{eqnarray*}
\end{proof}

\begin{lem}
If \((a_{\eta})_{\eta \in 2^{<\omega}}\) is strongly indiscernible, then for all natural numbers \(k \geq 1\), the \(k\)-fold elongation \((a'_{\eta})_{\eta \in 2^{<\omega}}\) of \((a_{\eta})_{\eta \in 2^{<\omega}}\) is also strongly indiscernible. 
\end{lem}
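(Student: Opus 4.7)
My plan is to reduce the claim to a combinatorial statement about how the elongation map acts on $L_0$-quantifier-free types of tuples of indices. Writing $b_\eta = (a_{\tilde\eta}, a_{\tilde\eta \frown 0}, \ldots, a_{\tilde\eta \frown 0^{k-1}})$ for the elongated tuple, strong indiscernibility of $(b_\eta)_{\eta \in 2^{<\omega}}$ amounts to showing that for any tuples $\bar\eta = (\eta_0, \ldots, \eta_{n-1})$ and $\bar\nu = (\nu_0, \ldots, \nu_{n-1})$ in $2^{<\omega}$ with the same $L_0$-quantifier-free type,
\[
\tp(b_{\eta_0}, \ldots, b_{\eta_{n-1}}) = \tp(b_{\nu_0}, \ldots, b_{\nu_{n-1}}).
\]
By strong indiscernibility of $(a_\eta)_{\eta \in 2^{<\omega}}$, this will follow once we verify that the joint index tuple $(\tilde\eta_i \frown 0^j)_{i<n,\,j<k}$ has the same $L_0$-quantifier-free type as $(\tilde\nu_i \frown 0^j)_{i<n,\,j<k}$, under a common enumeration.

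To verify this, I will compute, case by case, how each of the atomic $L_0$-relations on pairs of elongated nodes depends on the underlying pair $(\eta, \nu)$ and padding indices $(j_1, j_2)$. Specifically, I expect to establish that $\tilde\eta \frown 0^{j_1} \unlhd \tilde\nu \frown 0^{j_2}$ holds iff $\eta \vartriangleleft \nu$, or $\eta = \nu$ and $j_1 \leq j_2$; that $\tilde\eta \frown 0^{j_1} <_{lex} \tilde\nu \frown 0^{j_2}$ holds iff $\eta <_{lex} \nu$, or $\eta = \nu$ and $j_1 < j_2$; and that the meet $(\tilde\eta \frown 0^{j_1}) \wedge (\tilde\nu \frown 0^{j_2})$ is determined uniformly by $(\eta, \nu, j_1, j_2)$ --- equal to $\tilde\eta \frown 0^{j_1}$ when $\eta \vartriangleleft \nu$ (and symmetrically), to $\tilde\eta \frown 0^{\min(j_1, j_2)}$ when $\eta = \nu$, to $\tilde\mu \frown 0^{k-1}$ when $\eta, \nu$ are incomparable with non-empty meet $\mu = \eta \wedge \nu$, and to $\emptyset$ when $\eta$ and $\nu$ are incomparable with empty meet. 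Each of these identifications follows directly from the definition $\tilde\eta(kr) = \eta(r)$ for $r \leq l(\eta)-1$, $\tilde\eta(i) = 0$ otherwise, and $l(\tilde\eta) = k(l(\eta)-1)+1$.

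The main technical subtlety, and the point that needs care, is the meet computation in the incomparable case: when $\eta$ and $\nu$ are incomparable with meet $\mu$ of length $m \geq 1$, the elongated nodes $\tilde\eta$ and $\tilde\nu$ continue to agree on the zero-padding through position $km - 1$ before first disagreeing at position $km$, so their meet lies exactly $k-1$ positions past the end of $\tilde\mu$, which accounts for the trailing $0^{k-1}$ in the stated formula. Once this offset is tracked correctly, the three identifications above show that every atomic $L_0$-relation and every meet-term on the joint elongated tuple is a function of the $L_0$-quantifier-free type of $\bar\eta$, which gives the desired qftp-preservation and hence strong indiscernibility of the elongation.
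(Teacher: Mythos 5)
Your argument is correct and takes essentially the same route as the paper: both reduce to showing that the map $\eta_i \mapsto (\tilde\eta_i \frown 0^j)_{j<k}$ preserves $L_0$-quantifier-free types of index tuples, and then verify this by tracking $\unlhd$, $<_{lex}$, and the meet under elongation. The only cosmetic difference is that the paper first passes to $\wedge$-closed index sets (so it need only check $\unlhd$ and $<_{lex}$ afterwards), whereas you compute the pairwise meets directly, including the $\tilde\mu\frown 0^{k-1}$ offset; these are interchangeable bookkeeping devices for the same computation.
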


\begin{proof}
Given \(\eta \in 2^{<\omega}\), with \(l(\eta) = n\), we defined \(\tilde{\eta} \in 2^{<\omega}\) to be the element with length \(k(l(\eta) - 1) + 1\) defined by 
\[
\tilde{\eta}(i) = \left\{
\begin{matrix}
\eta(i/k) & \text{ if } k|i \\
0 & \text{ otherwise}
\end{matrix}
\right.
\]
As the \(k\)\emph{-fold elongation} of \((a_{\eta})_{\eta \in 2^{<\omega}}\) is defined to be the tree \((b_{\eta})_{\eta \in 2^{<\omega}}\) where 
\[
b_{\eta} = (a_{\tilde{\eta}}, a_{\tilde{\eta} \frown 0}, \ldots, a_{\tilde{\eta} \frown 0^{k-1}}).
\]
Write \(\overline{\eta}\) for the tuple \((\tilde{\eta}, \tilde{\eta} \frown 0, \ldots, \tilde{\eta} \frown 0^{k-1})\).  We are reduced to showing that if \(\eta_{0}, \ldots, \eta_{l-1}\), \(\nu_{0}, \ldots, \nu_{l-1}\) are elements of \(2^{<\omega}\) so that 
\[
\text{qftp}_{L_{0}}(\eta_{0}, \ldots, \eta_{l-1}) = \text{qftp}_{L_{0}}(\nu_{0}, \ldots, \nu_{l-1})
\]
then 
\[
\text{qftp}_{L_{0}}(\overline{\eta}_{0}, \ldots, \overline{\eta}_{l-1}) = \text{qftp}_{L_{0}}(\overline{\nu}_{0}, \ldots, \overline{\nu}_{l-1}).  
\]
We may assume that \(\{\eta_{i} : i < l\}\) and \(\{\nu_{i} : i < l\}\) are both \(\wedge\)-closed, from which it follows that \(\{\overline{\eta}_{i} : i < l\}\) and \(\{\overline{\nu}_{i} : i < l\}\) are both \(\wedge\)-closed.  So we must check that \((\overline{\eta}_{i} : i < l)\) and \((\overline{\nu}_{i} : i < l)\) have the same quantifier-free type with respect to the language \(L_{t} = \langle \unlhd, <_{lex} \rangle\).  We note
\begin{eqnarray*}
\tilde{\eta}_{i} \frown 0^{l} \unlhd \tilde{\eta}_{j} \frown 0^{l'} &\iff& \tilde{\eta}_{i} \vartriangleleft \tilde{\eta}_{j} \vee (\tilde{\eta}_{i} = \tilde{\eta}_{j} \wedge l \leq l') \\
&\iff& \eta_{i} \vartriangleleft \eta_{j} \vee (\eta_{i} = \eta_{j} \wedge l \leq l') \\
&\iff & \nu_{i} \vartriangleleft \nu_{j} \vee (\nu_{i} = \nu_{j} \wedge l \leq l') \\
&\iff& \tilde{\nu}_{i} \vartriangleleft \tilde{\nu}_{j} \vee (\tilde{\nu}_{i} = \tilde{\nu}_{j} \wedge l \leq l') \\
&\iff& \tilde{\nu}_{i} \frown 0^{l} \vartriangleleft \tilde{\nu}_{j} \frown 0^{l'} \\
\tilde{\eta}_{i} \frown 0^{l} <_{lex} \tilde{\eta}_{j} \frown 0^{l'} &\iff& \tilde{\eta}_{i} <_{lex} \tilde{\eta}_{j} \vee (\tilde{\eta}_{i} = \tilde{\eta}_{j} \wedge l < l') \\
&\iff& \eta_{i} <_{lex} \eta_{j} \vee (\eta_{i} = \eta_{j} \wedge l < l') \\
&\iff& \nu_{i} <_{lex} \nu_{j} \vee (\nu_{i} = \nu_{j} \wedge l < l') \\
&\iff& \tilde{\nu}_{i} <_{lex} \tilde{\nu}_{j} \vee (\tilde{\nu}_{i} = \tilde{\nu}_{j} \wedge l < l') \\
&\iff& \tilde{\nu}_{i} \frown 0^{l} <_{lex} \tilde{\nu}_{j} \frown 0^{l'}.  
\end{eqnarray*}
\end{proof}

\begin{lem}\label{thehfunction}
Suppose $(a_{\eta})_{\eta \in 2^{<\omega}}$ is a strongly indiscernible tree over $C$.
\begin{enumerate}
\item Define a function $h:2^{<\omega} \to 2^{<\omega}$ by $h(\emptyset) = \emptyset$ and  $h(\eta) = h(\nu) \frown 0 \frown \langle i \rangle$ whenever $\eta = \nu \frown \langle i \rangle$.  Then $(a_{h(\eta)})_{\eta \in 2^{<\omega}}$ is strongly indiscernible over $C$.  
\item For each \(n\), define a map \(h_{n}: 2^{<\omega} \to 2^{<\omega}\) by
\[
h_{n}(\eta) =
\left\{
\begin{matrix}
h(\eta) & \text{ if } l(\eta) \leq n \\
h(\nu) \frown \xi & \text{ if } \eta = \nu \frown \xi, l(\nu) = n.  
\end{matrix}
\right.
\]
Then $(a_{h_{n}(\eta)})_{\eta \in 2^{<\omega}}$ is strongly indiscernible over $C$.
\end{enumerate}
\end{lem}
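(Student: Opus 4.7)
The goal is to show that if $\qftp_{L_0}(\bar\eta) = \qftp_{L_0}(\bar\nu)$ for tuples from $2^{<\omega}$, then $\qftp_{L_0}(h(\bar\eta)) = \qftp_{L_0}(h(\bar\nu))$, and analogously for $h_n$; the strong indiscernibility of $(a_\eta)_{\eta \in 2^{<\omega}}$ over $C$ then yields the conclusion directly from the definition.

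Unwinding the recursive definition, $h(i_0, \ldots, i_{m-1}) = (0, i_0, 0, i_1, \ldots, 0, i_{m-1})$, an injective length-doubling map. I would first verify two direct preservation statements: $\eta \unlhd \eta'$ if and only if $h(\eta) \unlhd h(\eta')$ (immediate from the explicit formula), and $\eta <_{lex} \eta'$ if and only if $h(\eta) <_{lex} h(\eta')$ (if $\eta, \eta'$ first differ at position $p$ then $h(\eta), h(\eta')$ first differ at position $2p+1$, with the same relative order of the deciding digits). The meet behaves less cleanly: a direct computation shows $h(\eta)\wedge h(\eta') = h(\eta\wedge\eta')$ when $\eta, \eta'$ are $\unlhd$-comparable, while $h(\eta)\wedge h(\eta') = h(\eta\wedge\eta') \frown \langle 0 \rangle$ when $\eta \perp \eta'$.

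Because $\wedge$ is a function symbol of $L_0$, extending $\bar\eta$ and $\bar\nu$ to their $\wedge$-closures preserves the hypothesis that they share the same $L_0$-qftp, so without loss of generality I may assume $\bar\eta$ and $\bar\nu$ are themselves $\wedge$-closed; then their $L_0$-qftp is determined purely by the atomic relations $=$, $\unlhd$, $<_{lex}$ among the named elements. Next, form the $\wedge$-closure of $h(\bar\eta)$ inside $2^{<\omega}$: by the meet calculation above it equals $h(\bar\eta) \cup \{h(\mu)\frown\langle 0\rangle : \mu = \eta_i\wedge\eta_j \text{ for some incomparable pair } \eta_i, \eta_j \in \bar\eta\}$, and an analogous description holds for $h(\bar\nu)$. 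The $L_0$-isomorphism between $\bar\eta$ and $\bar\nu$ induces a canonical bijection between these two $\wedge$-closures. The preservation statements for $h$, together with the uniform description of the appended nodes $h(\mu)\frown\langle 0\rangle$, make it routine to check that this bijection respects $\unlhd$ and $<_{lex}$, and therefore also $\wedge$, which is now realized internally to the closure. This gives $\qftp_{L_0}(h(\bar\eta)) = \qftp_{L_0}(h(\bar\nu))$, proving (1).

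For (2), the map $h_n$ agrees with $h$ on nodes of length at most $n$ and on longer nodes $\eta = \nu\frown\xi$ with $l(\nu) = n$ acts as the translation $\eta \mapsto h(\nu)\frown\xi$. The analysis is entirely parallel: I would verify preservation of $\unlhd$ and $<_{lex}$ and then compute $h_n(\eta)\wedge h_n(\eta')$, splitting on whether each of $\eta, \eta'$ has length at most $n$ or greater than $n$. In every case the meet equals either $h_n(\eta\wedge\eta')$ or $h_n(\mu)\frown\langle 0\rangle$ for the appropriate $\mu$, with the extra $\langle 0\rangle$ appearing only when both nodes branch apart at a level no greater than $n$. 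The $\wedge$-closure argument concludes as in (1). I expect the main obstacle to be the case analysis in this meet computation for $h_n$: each case is elementary, but enumerating them carefully is required so that the induced bijection of $\wedge$-closures uniformly respects $\unlhd$ and $<_{lex}$.
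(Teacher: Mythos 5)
Your argument is correct and follows the same route as the paper's proof: show $h$ (respectively $h_n$) preserves $\unlhd$ and $<_{lex}$, observe that $h(\eta)\wedge h(\nu)=h(\eta\wedge\nu)\frown\langle 0\rangle$ when $\eta\perp\nu$, pass to $\wedge$-closed tuples, describe the $\wedge$-closure of the image as the image together with the appended nodes $h(\mu)\frown\langle 0\rangle$, and check the induced bijection respects the relations. One small slip in your part (2) summary: the extra $\langle 0\rangle$ appears exactly when the two nodes are incomparable with $l(\eta\wedge\eta')<n$, not $\leq n$ (at $l(\eta\wedge\eta')=n$ the tails already disagree at the first appended coordinate, so the meet is $h(\eta\wedge\eta')=h_n(\eta\wedge\eta')$ with no trailing $0$), but this does not affect the conclusion.
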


\begin{proof}
(1)  At the outset, we note that $\eta \unlhd \nu \iff h(\eta) \unlhd h(\nu)$ and $\eta <_{lex} \nu \iff  h(\eta) <_{lex} h(\nu)$.  The only difficulty arises from $\wedge$ which is not preserved by $h$, because if $\eta \perp \nu$ and $\eta \wedge \nu = \xi$ then $h(\eta) \wedge h(\nu) = h(\xi) \frown 0$.  

It suffices to show that if $\overline{\eta}, \overline{\nu}$ are finite tuples from $2^{<\omega}$ with $\text{qftp}_{L_{0}}(\overline{\eta}) = \text{qftp}_{L_{0}}(\overline{\nu})$ then $\text{qftp}_{L_{0}}(h(\overline{\eta})) = \text{qftp}_{L_{0}}(h(\overline{\nu}))$.  Given such $\overline{\eta}, \overline{\nu}$, it is clear that if $\text{qftp}_{L_{0}}(h(\overline{\eta})) \neq \text{qftp}_{L_{0}}(h(\overline{\nu}))$ then $\text{qftp}_{L_{0}}(h(\overline{\eta}')) \neq \text{qftp}_{L_{0}}(h(\overline{\nu}'))$ where $\overline{\eta}'$ and $\overline{\nu}'$ are the $\wedge$-closures of $\overline{\eta}$ and $\overline{\nu}$ respectively.  So we may assume $\overline{\eta}$ and $\overline{\nu}$ are $\wedge$-closed.  We may assume that the tuple $\overline{\eta} = \langle \eta_{i} : i < k \rangle$ is enumerated so that for some $l \leq k$, if $i < l$, then there are $\eta_{j} \perp \eta_{j'}$ so that $\eta_{j} \wedge \eta_{j'} = \eta_{i}$.  It follows that the $\wedge$-closure of $h(\overline{\eta})$ may be enumerated as $\langle h(\eta_{i}) : i < k \rangle \frown \langle h(\eta_{i}) \frown 0 : i < l \rangle$, and, likewise, the $\wedge$-closure of $h(\overline{\nu})$ can be enumerated as $\langle h(\nu_{i}) : i < k \rangle \frown \langle h(\nu_{i}) \frown 0 : i < l \rangle$.  Now we note that, by definition of $h$, if $i,j < k$
\begin{eqnarray*}
h(\eta_{i}) \vartriangleleft h(\eta_{j}) \frown 0 &\iff& h(\eta_{i}) \frown 0 \vartriangleleft h(\eta_{j}) \\ 
&\iff& h(\eta_{i}) \frown 0 \vartriangleleft h(\eta_{j}) \frown 0 \\
& \iff& h(\eta_{i}) \vartriangleleft h(\eta_{j}) \\
h(\eta_{i}) <_{lex} h(\eta_{j}) \frown 0 &\iff& h(\eta_{i}) \frown 0 <_{lex} h(\eta_{j}) \\ 
&\iff& h(\eta_{i}) \frown 0 <_{lex} h(\eta_{j}) \frown 0 \\
& \iff& h(\eta_{i}) <_{lex} h(\eta_{j}) 
\end{eqnarray*}
And similarly for $\nu_{i}, \nu_{j}$.  As $h$ respects $\vartriangleleft$ and $<_{lex}$, and $\text{qftp}_{L_{0}}(\overline{\eta}) = \text{qftp}_{L_{0}}(\overline{\nu})$, it follows that $\text{qftp}_{L_{0}}(h(\overline{\eta})) = \text{qftp}_{L_{0}}(h(\overline{\nu}))$.  

(2) is entirely similar.  
\end{proof}

\bibliographystyle{plain}
\bibliography{treeproperties}

\begin{thebibliography}{10}

\bibitem{Adler}
Hans Adler.
\newblock Strong theories, burden, and weight.
\newblock Unpublished,
  \url{http://www.logic.univie.ac.at/~adler/docs/strong.pdf}, 2007.

\bibitem{AdlerMock}
Hans Adler.
\newblock Around the strong order property and mock simplicity.
\newblock {\em Unpublished}, July 2008.

\bibitem{Baudisch}
Andreas Baudisch.
\newblock Generic variations of models of {T}.
\newblock {\em The Journal of Symbolic Logic}, 67(03):1025--1038, 2002.

\bibitem{casanovas2011simple}
Enrique Casanovas.
\newblock {\em Simple theories and hyperimaginaries}, volume~39.
\newblock Cambridge University Press, 2011.

\bibitem{ZoePAC2}
Zo{{\'e}} Chatzidakis.
\newblock Simplicity and independence for pseudo-algebraically closed fields.
\newblock In {\em Models and computability ({L}eeds, 1997)}, volume 259 of {\em
  London Math. Soc. Lecture Note Ser.}, pages 41--61. Cambridge Univ. Press,
  Cambridge, 1999.

\bibitem{ZoeFree}
Zo{{\'e}} Chatzidakis.
\newblock Properties of forking in {$\omega$}-free pseudo-algebraically closed
  fields.
\newblock {\em J. Symbolic Logic}, 67(3):957--996, 2002.

\bibitem{ZoeOmegaFreeNotes}
Zo{{\'e}} Chatzidakis.
\newblock Independence in (unbounded) {PAC} fields, and imaginaries.
\newblock Unpublished,
  \url{http://www.logique.jussieu.fr/~zoe/papiers/Leeds08.pdf}, 2008.

\bibitem{MR3129735}
Artem Chernikov.
\newblock Theories without the tree property of the second kind.
\newblock {\em Ann. Pure Appl. Logic}, 165(2):695--723, 2014.

\bibitem{chernikov2012forking}
Artem Chernikov and Itay Kaplan.
\newblock Forking and dividing in {$NTP_2$} theories.
\newblock {\em The Journal of Symbolic Logic}, 77(01):1--20, 2012.

\bibitem{Conant}
Gabriel Conant.
\newblock ``{M}ap of the universe''.
\newblock \url{http://www.forkinganddividing.com/}.

\bibitem{DjSh:692}
Mirna D{\v{z}}amonja and Saharon Shelah.
\newblock {On $\unlhd^{*}$-maximality}.
\newblock {\em Annals of Pure and Applied Logic}, 125(1):119--158, 2004.

\bibitem{Granger}
Nicholas Granger.
\newblock {\em Stability, simplicity, and the model theory of bilinear forms}.
\newblock PhD thesis, University of Manchester, 1999.

\bibitem{Harr}
Gwyneth Harrison-Shermoen.
\newblock {\em Independence Relations in Theories with the Tree Property}.
\newblock PhD thesis, UC Berkeley, 2014.

\bibitem{Hodg}
Wilfrid Hodges.
\newblock {\em Model Theory}, volume~42.
\newblock Cambridge University Press, 1993.

\bibitem{KimInThere}
Byunghan Kim.
\newblock Simplicity, and stability in there.
\newblock {\em J. Symbolic Logic}, 66(2):822--836, 2001.

\bibitem{KimKimNTP1}
Byunghan Kim and Hyeung-Joon Kim.
\newblock Notions around tree property 1.
\newblock {\em Ann. Pure Appl. Logic}, 162(9):698--709, 2011.

\bibitem{KimKimScow}
Byunghan Kim, Hyeung-Joon Kim, and Lynn Scow.
\newblock Tree indiscernibilities, revisited.
\newblock {\em Arch. Math. Logic}, 53(1-2):211--232, 2014.

\bibitem{KimPillay}
Byunghan Kim and Anand Pillay.
\newblock Simple theories.
\newblock {\em Ann. Pure Appl. Logic}, 88(2-3):149--164, 1997.
\newblock Joint AILA-KGS Model Theory Meeting (Florence, 1995).

\bibitem{Scow}
Lynn Scow et~al.
\newblock Indiscernibles, {EM}-{T}ypes, and {R}amsey {C}lasses of {T}rees.
\newblock {\em Notre Dame Journal of Formal Logic}, 56(3):429--447, 2015.

\bibitem{ShelahCT}
S.~Shelah.
\newblock {\em Classification theory and the number of nonisomorphic models},
  volume~92 of {\em Studies in Logic and the Foundations of Mathematics}.
\newblock North-Holland Publishing Co., Amsterdam, second edition, 1990.

\bibitem{shelah1980simple}
Saharon Shelah.
\newblock Simple unstable theories.
\newblock {\em Annals of Mathematical Logic}, 19(3):177--203, 1980.

\bibitem{ShUs:844}
Saharon Shelah and Alex Usvyatsov.
\newblock {More on ${\rm SOP}_1$ and ${\rm SOP}_2$}.
\newblock {\em Annals of Pure and Applied Logic}, 155:16--31, 2008.

\bibitem{TakeuchiTsuboi}
Kota Takeuchi and Akito Tsuboi.
\newblock On the existence of indiscernible trees.
\newblock {\em Ann. Pure Appl. Logic}, 163(12):1891--1902, 2012.

\bibitem{van1984bounds}
Lou van~den Dries and K~Schmidt.
\newblock Bounds in the theory of polynomial rings over fields. {A} nonstandard
  approach.
\newblock {\em Inventiones mathematicae}, 76(1):77--91, 1984.

\bibitem{wagner2000simple}
Frank~Olaf Wagner.
\newblock {\em Simple theories}, volume 503.
\newblock Springer, 2000.

\end{thebibliography}

\end{document}